\documentclass{amsart}
\usepackage{amsmath,amssymb,amsthm,geometry,graphics,color,mdwlist,mathrsfs}
\usepackage[all]{xy}

\DeclareMathOperator{\image}{Im}
\renewcommand{\Im}{\image}
\DeclareMathOperator{\Ker}{Ker}

\DeclareMathOperator*{\colim}{colim}

\DeclareMathOperator{\hocolim}{hocolim}
\DeclareMathOperator{\holim}{holim}
\DeclareMathOperator{\Hom}{Hom}
\DeclareMathOperator{\sHom}{\underline{\Hom}}
\DeclareMathOperator{\injHom}{\overline{\Hom}}
\DeclareMathOperator{\RHom}{\mathrm{R}\!\Hom}
\DeclareMathOperator{\cHom}{\mathscr{H}\hspace{-2pt}{\it om}}
\DeclareMathOperator{\End}{End}

\DeclareMathOperator{\REnd}{\mathrm{R}\!\End}

\def\lotimes{\otimes^\mathrm{L}}

\DeclareMathOperator{\rT}{\mathrm{T}}

\DeclareMathOperator{\Ext}{Ext}

\DeclareMathOperator{\pd}{proj.dim}

\DeclareMathOperator{\gldim}{gl.dim}

\DeclareMathOperator{\cone}{cone}

\DeclareMathOperator{\md}{mod}
\renewcommand{\mod}{\md}
\DeclareMathOperator{\Mod}{Mod}
\DeclareMathOperator{\smod}{\underline{\mod}}
\DeclareMathOperator{\injsmod}{\overline{\mod}}

\DeclareMathOperator{\proj}{proj}

\DeclareMathOperator{\coh}{coh}

\DeclareMathOperator{\res}{res}

\DeclareMathOperator{\sg}{sg}
\DeclareMathOperator{\cosg}{cosg}
\DeclareMathOperator{\defect}{def}

\DeclareMathOperator{\thick}{thick}
\DeclareMathOperator{\per}{per}
\DeclareMathOperator{\pvd}{pvd}
\DeclareMathOperator{\Loc}{Loc}

\DeclareMathOperator{\add}{add}
\DeclareMathOperator{\Add}{Add}

\DeclareMathOperator{\rad}{rad}

\DeclareMathOperator{\Spec}{Spec}

\DeclareMathOperator{\Tot}{Tot}

\def\rD{\mathrm{D}}

\def\rC{\mathrm{C}}

\def\Db{\mathrm{D^b}}
\def\Kb{\mathrm{K^b}}

\newcommand\recollement[3]{\xymatrix{{#1}\ar[r]&{#2}\ar[r]\ar@/_9pt/[l]\ar@/^9pt/[l]&{#3}\ar@/_9pt/[l]\ar@/^9pt/[l] }}
\newcommand\recollementwithmaps[9]{\xymatrix{{#1}\ar[r]|-{#8}&{#2}\ar[r]|-{#5}\ar@/_10pt/[l]_-{#7}\ar@/^10pt/[l]^-{#9}&{#3}\ar@/_10pt/[l]_-{#4}\ar@/^10pt/[l]^-{#6} }}
\def\A{\mathscr{A}}
\def\B{\mathscr{B}}
\def\C{\mathscr{C}}
\def\D{\mathscr{D}}

\def\F{\mathscr{F}}

\def\K{\mathscr{K}}

\def\M{\mathscr{M}}
\def\N{\mathscr{N}}

\def\P{\mathscr{P}}

\def\T{\mathscr{T}}
\def\U{\mathscr{U}}
\def\V{\mathscr{V}}

\def\Y{\mathscr{Y}}

\def\rL{\mathrm{L}}

\def\Ga{\Gamma}

\def\Del{\Delta}

\def\La{\Lambda}

\def\Si{\Sigma}

\def\Z{\mathbb{Z}}

\def\op{\mathrm{op}}
\def\dg{\mathrm{dg}}

\def\rsimeq{\rotatebox{-90}{$\simeq$}}
\def\xsimeq{\xrightarrow{\simeq}}

\def\vsubset{\rotatebox{90}{$\subset$}}

\def\romanenum{\renewcommand\labelenumi{(\roman{enumi})}\renewcommand\theenumi{\roman{enumi}}}

\newtheorem{Thm}{Theorem}[section]
\newtheorem{Lem}[Thm]{Lemma}
\newtheorem{Prop}[Thm]{Proposition}
\newtheorem{Cor}[Thm]{Corollary}

\newtheorem{Prop-Def}[Thm]{Proposition-Definition}
\newtheorem{Thm-Def}[Thm]{Theorem-Definition}

\theoremstyle{definition}
\newtheorem{Def}[Thm]{Definition}
\newtheorem{Ex}[Thm]{Example}

\theoremstyle{remark}
\newtheorem{Rem}[Thm]{Remark}

\newcounter{step}

\def\disoplus{\displaystyle\bigoplus}
\def\textoplus{\textstyle\bigoplus}

\def\Prod{\displaystyle\prod}

\makeatletter

\@addtoreset{equation}{section}
\makeatother

\title{Truncation of dg categories and connective resolutions}
\author{Norihiro Hanihara}
\thanks{This work is supported by JSPS KAKENHI Grant Numbers JP22KJ0737 and JP25K17233}
\subjclass[2020]{18G80, 18G35, 14A22, 16E35, 13D09, 16G10, 13C14}
\keywords{Truncation, connective resolution, non-commutative resolution, cluster tilting, localization, cosingularity category, representation dimension}
\address{Faculty of Mathematics, Kyushu University, 744 Motooka, Nishi-ku, Fukuoka, 819-0395, Japan}
\email{hanihara@math.kyushu-u.ac.jp}

\DeclareMathOperator{\srad}{sup.\!rad}

\date{}

\begin{document}
\begin{abstract}
We study the truncation $(-)^{\leq0}$ of dg categories. We first show that given a dg category $\C$ with shifts, for example a pretriangulated dg category, the canonical functor $\C^{\leq0}\to\C$ is a localization whose kernel is compactly generated by the $0$-th cohomology. 
Next we demonstrate that the derived category of the truncation serves as a triangulated analogue of the Auslander's category of coherent functors over abelian categories. We give a description of the Auslander-Reiten-Serre duality in terms of the inverse dualizing bimodule of the truncation.

Building on truncations, we introduce the notion of connective resolutions of dg categories, defined as a localization functor from a connective dg category. This notion encompasses non-commutative resolutions of algebras in module categories, cluster tilting objects in triangulated categories, and also the truncation as the universal connective resolution. We give a sufficient condition for an object in a triangulated category to give a connective resolution of its enhancement in terms of finiteness of resolution dimension.

Furthermore, we show that every proper dg module over a connective dg algebra is a direct summand of a dg module giving a connective resolution. Consequently, for every proper connective dg algebra, its bounded dg derived category has a connective resolution.

Finally, we give some explicit description of the truncation for some categories including the derived and cluster categories of a Dynkin quiver, and the Yoneda category of a finite dimensional algebra.
\end{abstract}

\maketitle
\setcounter{tocdepth}{1}
\tableofcontents
\section{Introduction}
The concept of non-commutative resolutions \cite{VdB04} is fundamental in interactions between singularity theory and representation theory.
Given a commutative ring $R$, its non-commutative resolution is an $R$-algebra $\Ga$ which has finite global dimension, and serves as an algebraic realization of the derived category of a (possibly imaginary) resolution of $\Spec R$, which therefore captures homological properties of the singularity.  
Among non-commutative resolutions, those {crepant}, namely non-commutative crepant resolutions (NCCRs) \cite{VdB04} are distinguished ones. Representation theoretically, these are precisely cluster tilting modules \cite{Iy07a,Iy07b} in the sense of higher dimensional Auslander-Reiten theory. They also play a crucial role in categorification of cluster algebras \cite{CA1, BMRRT}. 


Notice that these central concepts have a derived categorical nature in both algebraic geometry and in representation theory, and also in their interaction. Indeed, a non-commutative algebra models, or sometimes even becomes equivalent to, the derived category of a variety. Such a perspective appears in, for example, the derived McKay correspondence \cite{BKR,VdB04f}, Bondal-Orlov's conjecture on derived equivalence of (non-commutative) crepant resolutions \cite{BO95,VdB04}, homological minimal model program \cite{We18}, and so on.

In representation theory, the developments in tilting theory reveals the significance of silting objects \cite{KV,AI} which generalize ordinary algebras to derived ones, namely connective differential graded algebras. Also, the concept of cluster tilting objects \cite{Iy07a} which categorify clusters in cluster algebras are formulated in Calabi-Yau triangulated categories called cluster categories \cite{Am09,Ke11}, and the relationship between these kinds of objects is one of central structures in cluster tilting theory \cite{IYa1,HaI2}.
Furthermore, there are various recent developments to extend representation theory in ordinary module categories to the derived setting in terms of extriangulated categories \cite{NP,Wu23,Chen26,MoP}.

By such a derived categorical nature of non-commutative resolutions and cluster tilting objects, it is natural and essential to develop their derived formalism, which is the goal of this paper.
We introduce the notion of {\it connective resolutions} of differential graded (=dg) categories (\ref{def}). This unifies ordinary non-commutative resolutions in module categories, cluster tilting objects in triangulated categories, and so on.
For this we study the {\it truncation} of dg algebras and categories. 
We propose that the truncation serves as an enhancement of an additive subcategory of a triangulated subcategory, which enables us to reasonably formulate the finiteness of global dimension in the derived setting.

\subsection{Truncation of dg categories}
We first present the main construction which is elementary. Given a complex $X$, we denote by $X^{\leq0}$ the truncation at (cohomological) degree $0$, that is, a subcomplex
\[ X^{\leq0}=\left( \xymatrix{ \cdots\ar[r]&X^{-2}\ar[r]&X^{-1}\ar[r]&Z^0X\ar[r]&0\ar[r]&0\ar[r]&\cdots } \right) \]
of $X$, where $Z^0X$ is the $0$-cocycle. Given a dg category, the morphism-wise truncation yields the truncation of dg categories in the following sense.
\begin{Def}
Let $\C$ be a dg category. Its {\it truncation} $\C^{\leq0}$ is the dg category which has
\begin{itemize}
\item the same objects as $\C$, 
\item and the morphism complexes $\C^{\leq0}(A,B):=\C(A,B)^{\leq0}$ for $A,B\in\C^{\leq0}$.
\end{itemize}
There is a canonical functor $\C^{\leq0}\to\C$, which we also sometimes refer to as the truncation.
\end{Def}
Note that the truncation makes the dg category larger in a Morita theoretic sense. For example, the category $\per\C^{\leq0}$ has a silting subcategory $\add\{\C^{\leq0}(-,C)\mid C\in\C^{\leq0}\}$ whose summands are formed by the objects in the additive category $H^0\C$. 
In this way, each summand is viewed as a distinct object in the dg category $\C^{\leq0}$, while it may not be the case without truncating. 
Let us give a simple example illustrating this.
\begin{Ex}
Let $\C=\C^b_\dg(\proj k)$, the canonical dg enhancement of the derived category $\Db(\mod k)$ of a field $k$. Consider the object $M:=k\oplus k[1]\in\C$, or equivalently, the subcategory $\M=\add_\dg\!M\subset\C$. We compare $\M$ and its truncation $\M^{\leq0}$.
\begin{enumerate}
\item The endormphism dg algebra $\C(M,M)$ is (quasi-)isomorphic to the matrix algebra $\begin{pmatrix} k&k\\ k&k\end{pmatrix}$ with diagonal entries in degree $0$, upper right entry in degree $-1$, and lower left one in degree $1$. 
\item The truncated derived endomorphism algebra $\C(M,M)^{\leq0}$ is isomorphic to the upper triangular matrix algebra $\begin{pmatrix} k&k\\ 0&k\end{pmatrix}$ with diagonal entries in degree $0$ and the off-diagonal entry in degree $-1$. Thus, it is presented by the $A_2$-quiver with degree $-1$ arrow between the vertices: $\xymatrix{ \circ\ar[r]^-{-1}&\circ}$.
\end{enumerate}
This shows that while the ordinary dg endomorphism algebra $\C(M,M)$ is just derived Morita equivalent to $k$ and cannot distinguish the objects Morita theoretically, the {\it truncated} dg endomorphism algebra $\C(M,M)^{\leq0}$ can, for example, as the summmands of the silting object $\C(M,M)^{\leq0}\in\per\C(M,M)^{\leq0}$ in its derived category. 
\end{Ex}
This suggests that taking the truncation amounts to regarding a dg category as an ``underlying additive category'', in other words the category of projective modules over the Auslander algebra, with its derived information retained, which is a philosophy we will demonstrate in this paper.

Let us state the fundamental result of this paper, upon which the later concepts are motivated and build. Recall that a dg category $\C$ is {\it pretriangulated} if the Yoneda embedding $H^0\C\to\per\C$ is closed under mapping cones and cocones. Similarly, $\C$ {\it has shifts} if the Yoneda embedding is closed under both positive and negative shifts.
Also, a morphism $\A\to\B$ of dg categories is a {\it localization} if the induction functor $-\lotimes_\A\B$ induces an equivalence $\rD(\A)/\N\xsimeq\rD(\B)$ for some localizing subcategory $\N\subset\rD(\A)$.
\begin{Thm}[=\,\ref{loc}]\label{intro-loc}
Let $\C$ be a dg category with shifts, for example, pretriangulated.
\begin{enumerate}
\item The canonical morphism $\C^{\leq0}\to\C$ is a localization.
\item The kernel of $\rD(\C^{\leq0})\to\rD(\C)$ is compactly generated by $H^0\C$, which is compact in $\rD(\C^{\leq0})$.
\end{enumerate}
In other words, there is an exact sequence
\[ \xymatrix{ \thick_{\rD(\C^{\leq0})}H^0\C\ar[r]&\per\C^{\leq0}\ar[r]&\per\C }. \]
\end{Thm}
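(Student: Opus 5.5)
The plan is to reduce everything to an explicit computation on representable modules. Write $F=-\lotimes_{\C^{\leq0}}\C\colon\rD(\C^{\leq0})\to\rD(\C)$ and $G$ for its right adjoint, the restriction. The standard criterion for a morphism of dg categories to be a localization is that $G$ be fully faithful. Equivalently, the counit $FG\to 1$ is an isomorphism. Since $F$ and $G$ both commute with direct sums and $\rD(\C)$ is compactly generated by the representables $\C(-,C)$, it suffices to prove
\[ \C(-,C)|_{\C^{\leq0}}\lotimes_{\C^{\leq0}}\C\xsimeq\C(-,C) \]
for each $C$. The kernel of $F$ is then the right orthogonal complement of the essential image of $G$, and part (ii) amounts to identifying it.

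The key step uses shifts. For $C\in\C$ and $n\in\Z$, pick objects $C[n]\in\C$ representing the shifts. The restricted module $\C(-,C)|_{\C^{\leq0}}$ is then the colimit of the sequence of representables
\[ \C^{\leq0}(-,C)\to\C^{\leq0}(-,C[1])[-1]\to\C^{\leq0}(-,C[2])[-2]\to\cdots, \]
because $\C(-,C[n])[-n]^{\leq0}=\C(-,C)^{\leq n}$, and the union of the $\tau^{\leq n}$ truncations is $\C(-,C)$. Here the maps come from the natural inclusions, and one works with an honest strict model: replace $\C$ by a quasi-equivalent dg category in which the shift objects exist strictly, for instance inside $\C_\dg(\C)$, so that these inclusions of subcomplexes are genuine dg module maps. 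A sequential colimit of monomorphisms is a homotopy colimit. Since $F$ preserves homotopy colimits and sends $\C^{\leq0}(-,X)$ to $\C(-,X)$, $F$ carries this sequence to
\[ \C(-,C)\to\C(-,C[1])[-1]\to\cdots, \]
a sequence of isomorphisms. Hence the counit is an isomorphism, which proves (i).

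For (ii), consider the $\C^{\leq0}$-module $H^0\C(-,C)$, that is, $\C^{\leq0}(-,C)$ modulo its degree $<0$ part, pushed to the top. It is the cone of the first map $\C^{\leq0}(-,C)\to\C^{\leq0}(-,C[1])[-1]$ up to shift, since the quotient $\C(-,C)^{\leq1}/\C(-,C)^{\leq0}$ is the complex $\C^0/Z^0\to Z^1$, concentrated in degrees $0$ and $1$ with cohomology $H^1$ in degree $1$. So each successive cone is $H^0\C(-,C[n])$ up to shift. In particular these modules lie in $\per\C^{\leq0}$, hence are compact, and $F$ kills them. Conversely, the cone of $\C^{\leq0}(-,C)\to GF\,\C^{\leq0}(-,C)=\C(-,C)$ is a homotopy colimit of objects built from the $H^0\C(-,C[n])$. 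Therefore it lies in $\Loc(H^0\C)$, the localizing subcategory generated by $H^0\C$. Because the representables generate, the unit cone of every object lies in $\Loc(H^0\C)$, so $\Ker F=\Loc(H^0\C)$, which is compactly generated. Neeman's localization theorem then gives the exact sequence on compacts, with the thick closure $\thick_{\rD(\C^{\leq0})}H^0\C$ as kernel. The idempotent completion is automatic, since $\per$ is already Karoubian.

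The main obstacle is the strictification. The truncation is not quasi-invariant in a naive way on morphism complexes; it is only quasi-invariant because $\tau^{\leq0}$ preserves quasi-isomorphisms. I will therefore first check that quasi-equivalent $\C$ yield quasi-equivalent $\C^{\leq0}$, and only then assume strict shifts.
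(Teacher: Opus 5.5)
Your proof is correct, and it takes a somewhat different route from the paper's.

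\textbf{How the paper argues.} For (i), the paper applies $-\lotimes_{\C^{\leq0}}\C$ to the triangle $\C^{\leq0}\to\C\to\C^{>0}$. It then kills $\C^{>0}\lotimes_{\C^{\leq0}}\C$ with Lemma~\ref{ubb}, which says induction kills every uniformly bounded below module. That lemma reduces to $M=H^0\C(-,C)$, observes that $M\lotimes_{\C^{\leq0}}\C$ is then uniformly bounded below, and uses that such a $\C$-module vanishes when $\C$ has shifts (Lemma~\ref{resol}). For (ii), the paper proves a semi-orthogonal decomposition $\rD(\C^{\leq0})=\Loc H^0\C\perp\rD(\C)$. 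It obtains the orthogonality from the explicit dual $\RHom_{\C^{\leq0}}(H^0\C,\C^{\leq0})\simeq H^1\C[-2]$ (Lemma~\ref{ast}). The decomposition triangle is $X\lotimes_{\C^{\leq0}}\C^{>0}[-1]\to X\to X\lotimes_{\C^{\leq0}}\C$. This is exactly the functorial cone of the unit that you use when you reduce to representables.

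\textbf{How your route differs.} You use directly that the stages $\C(-,C)^{\leq n}$ of the exhaustive filtration are shifted representables (Lemma~\ref{kaku}). Induction turns each transition map into an isomorphism: by Yoneda that map is the class in $H^{-1}\C^{\leq0}(C[n],C[n+1])=H^{-1}\C(C[n],C[n+1])$ of the canonical isomorphism $C[n]\simeq C[n+1][-1]$. This gives (i) and $F(H^{n}\C(-,C))=0$ at once, with no boundedness argument. The orthogonality $\RHom_{\C^{\leq0}}(H^0\C,G(-))=0$ then follows by adjunction, without Lemma~\ref{ast}.

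\textbf{What each buys.} The paper's route packages the vanishing on all of $\rD^{u,+}(\C^{\leq0})$ as a lemma that it reuses in \ref{Res} and \ref{back}. Your description $\Ker F=\Loc(H^0\C)$ recovers that too, since uniformly bounded below modules lie in $\Loc(H^0\C)$.

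\textbf{Minor points.}
\begin{enumerate}
\item The strictification you flag as the main obstacle is unnecessary. The $\C(-,C)^{\leq n}\subset\C(-,C)$ are honest dg $\C^{\leq0}$-submodules. They are quasi-isomorphic to $\C^{\leq0}(-,C[n])[-n]$ for any choice of shift object, because truncation preserves quasi-isomorphisms; the paper works with $\C^{\leq i}$ in this way.
\item Neeman's theorem gives that $\per\C^{\leq0}/\thick H^0\C\to\per\C$ is fully faithful and an equivalence up to direct summands. Karoubianness of $\per\C$ does not make the Verdier quotient Karoubian, but ``up to summands'' is exactly what the exact sequence asserts.
\item $\Ker F$ is the class of $X$ with $\Hom(X,G(-))=0$, i.e.\ the left rather than right orthogonal of the image of $G$.
\end{enumerate}
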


We give two accounts of this result.
First, from the spirit that the truncation $\C^{\leq0}$ is the underlying additive category of $\C$, its derived category $\per\C^{\leq0}$ must be the functor category \cite{Au66} of $\C$. We therefore call $\per\C^{\leq0}$ the {\it derived functor category} of $\C$. 
Let $\A$ be an abelian category, let $\mod\A$ be its functor category, that is, the category of finitely presented (contravariant) functors from $\A$ to $\Mod\Z$. 
Now, \ref{intro-loc} can be regarded as a derived analogue of so-called ``Auslander's formula'': 
if one denotes by $\defect\A$ the Serre subcategory of $\mod\A$ consisting of defects (=effaceable functors), then there is an exact sequence
\[ \xymatrix{ \defect\A\ar[r]&\mod\A\ar[r]&\A } \]
of abelian categories. One can observe that $\thick H^0\C$ is precisely the category of ``defects'' for triangles particularly when $\C$ is pretriangulated. We refer to \cite{Chen26} for this perspective.

Second, the exact sequence in \ref{intro-loc} shows that every algebraic triangulated category is a consingularity-type category. Recall that given a dg category $\A$, say connective of finite global dimension, its {\it cosingularity category} is the Verdier quotient 
\[ \cosg\A:=\per\A/\thick(\mod H^0\A) \]
of the perfect derived category by the ``bounded'' derived category.
It is Koszul dual to the construction of singularity categories \cite{Bu21}, and was introduced as the cluster category \cite{Am09} for Calabi-Yau dg algebras. 
The exact sequence stated in \ref{intro-loc} shows that a given triangulated category $\per\C$ is the ``cosingularity category'' of $\C^{\leq0}$ whose ``bounded'' derived category is precisely $\thick H^0\C$. We refer to \cite{Ha4,KY20,To}, also \ref{intro-conn} and \ref{intro-sg-cosg} below for such results realizing a given category as a cosingularity category.

\medskip

Motivated by the perspective that $\per\C^{\leq0}$ is the derived anagloue of the functor category, we give an account of the existence of (Auslander-Reiten-)Serre duality on a triangulated category in terms of derived functor categories along the line of an explantion of Auslander-Reiten(-Serre) duality on a module category in terms of their functor categories. 
More precisely, given a finite dimensional algebra $\La$, there is a commutative diagram
\[ \xymatrix@R=3mm{
	\mod(\mod\La)\ar[r]^-{\tau_2^{-1}}&\mod(\mod\La)\\
	\mod(\injsmod\La)\ar@{}[u]|-\vsubset\ar[r]^-{\tau^{-1}_\ast}&\mod(\smod\La)\ar@{}[u]|-\vsubset, } \]
where the first row is the inverse $\RHom_\M(D\M,-)[2]$ of the $2$-Auslander-Reiten translation for $\M=\mod\La$ on the functor category, and the second row is the functor induced by the stable equivalence $\tau^{-1}\colon\injsmod\La\xsimeq\smod\La$, see \ref{AR}. We prove, analogously, that the Auslander-Reiten translation on a triangulated category $\per\C$ is induced by certain $2$-Auslander-Reiten translation on the derived functor category $\per\C^{\leq0}$. Moreover, we relate such an existence of Auslander-Reiten-Serre duality with Amiot's construction \cite{Am09} of Serre functors on Verdier quotients in the derived sense \cite{HaI,HaK}. These results are summarized as follows.
\begin{Thm}[=\,\ref{DAR}]\label{intro-DAR}
Let $\C$ be a pretriangulated dg category. Suppose that the triangulated category $H^0\C$ has a Serre functor.
\begin{enumerate}
	\item\label{intro-theta} The inverse dualizing bimodule $\theta:=\RHom_{(\C^{\leq0})^e}(\C^{\leq0},(\C^{\leq0})^e)$ is invertible. In fact, there is an isomorphism $\theta[2](-,?)\simeq\C^{\leq0}(-,\tau^{-1}(?))$ in $\rD((\C^{\leq0})^e)$.
	\item\label{intro-cosg-type} There exists a canonical isomorphism $\C\lotimes_{\C^{\leq0}}(\theta[1])^{-1}\lotimes_{\C^{\leq0}}\C\xsimeq D\C$ in $\rD(\C^e)$.
\end{enumerate}
\end{Thm}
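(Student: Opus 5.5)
The plan is to work with the representables $P_A:=\C^{\leq0}(-,A)$, which form a silting subcategory of $\per\C^{\leq0}$, and to compute $\theta$ objectwise through the standard formula $\theta(-,A)\simeq\RHom_{\C^{\leq0}}(D\RHom(P_A,\C^{\leq0})\ldots)$. Rather than that formula, I would use the more concrete route: $\theta$ is the bimodule representing the inverse Serre-type functor $\nu^{-1}=\RHom_{\C^{\leq0}}(D\C^{\leq0},-)$ on $\per\C^{\leq0}$. So first I show that $D\C^{\leq0}(A,-)$ is perfect for every $A$, and then compute $\nu^{-1}(P_A)$.

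\textbf{Step 1 (resolving injectives).} For $A\in\C$, Serre duality $D\C(A,-)\simeq\C(-,SA)$ with $S=\tau[1]$, together with truncation, relates $D(\C(A,-)^{\leq0})$ to $\C(-,SA)^{\geq0}$. Write $\C(-,SA)^{\geq0}$ via the short exact sequence $0\to\C(-,SA)^{\leq -1}\to\C(-,SA)\to\C(-,SA)^{\geq0}\to0$. Next observe that $\C(-,X)^{\leq-1}=P_{X[-1]}[1]$, using shifts in $\C$. The result is a description of $D P^\vee_A$ by an exact triangle whose terms are $P_{SA}$, $P_{SA[-1]}[1]$, and the module $H^0\C(-,SA)$. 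By Theorem~\ref{intro-loc} (ii) the latter is compact. I would then use a triangle $\tau A\to B\to A\to SA[\,]$ / an AR-type computation to resolve $H^0\C(-,SA)$ by representables: $\C(-,X)^{\leq0}\to\C(-,X)^{\leq0}$ via the cone. Concretely, the cone of $P_{X[-1]}[1]\to P_X$ has cohomology $H^0\C(-,X)$. This gives a length-two presentation, which is the source of the shift $[2]$.

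\textbf{Step 2 (computing $\theta$).} Using these presentations, $\nu(P_A)=D\C^{\leq0}(A,-)$ is identified with a two-term complex built from $P_{SA}$ and $P_{SA[-1]}[1]$. After bookkeeping, this gives $\nu(P_{A})\simeq P_{\tau A}[2]$, functorially in $A$. Hence $\nu$ restricts to an autoequivalence of $\add\{P_A\}$ up to the shift $[2]$, so $\nu$ is an autoequivalence of $\per\C^{\leq0}$. Its inverse is $-\lotimes\theta$, giving $\theta[2](-,A)\simeq P_{\tau^{-1}A}$ as bimodules. For the bimodule statement I would construct the map at the dg level via the truncation of the Serre-functor bimodule, $\C(-,\tau^{-1}?)^{\leq0}$, and check that it is a quasi-isomorphism objectwise. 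I expect this step, with its degree/shift bookkeeping and the verification of bimodule naturality, to be the main obstacle.

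\textbf{Step 3 (part (ii)).} Apply $\C\lotimes_{\C^{\leq0}}-\lotimes_{\C^{\leq0}}\C$. By Theorem~\ref{intro-loc}, $P_A\lotimes_{\C^{\leq0}}\C\simeq\C(-,A)$. Therefore $(\theta[1])^{-1}=\C^{\leq0}(-,\tau(?)[1])=\C^{\leq0}(-,S?)$ becomes $\C(-,S?)\simeq D\C(?,-)=D\C$. Naturality of Serre duality gives the canonical map, and the objectwise isomorphism then finishes the argument.
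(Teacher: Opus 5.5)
Your Steps 1--2 rest on a false claim. The module $D\C^{\leq0}(A,-)$ is never perfect over $\C^{\leq0}$ for $A\neq0$, so $\nu(P_A)=D\C^{\leq0}(A,-)\simeq P_{\tau A}[2]$ cannot hold.

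\textbf{Why it fails.} For $i\geq0$ one has $H^i\bigl(D\C^{\leq0}(A,X)\bigr)=D\Hom_{H^0\C}(A,X[-i])$, which is nonzero at $X=A[i]$. So $D\C^{\leq0}(A,-)$ is not uniformly bounded above. Every object of $\per\C^{\leq0}$ is uniformly bounded above, being finitely built from the $P_B$, which live in degrees $\leq0$. A concrete check: for $\C=\per_\dg k$ and $A=k[a]$, the module $D\C^{\leq0}(A,-)$ is supported on the objects $k[j]$ with $j\geq a$, whereas $P_{\tau A}[2]$ is supported on $j<a$.

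\textbf{Where Step 1 goes wrong.} In $0\to\C(-,SA)^{\leq-1}\to\C(-,SA)\to\C(-,SA)^{\geq0}\to0$, the middle term is the restricted $\C$-module $\C(-,SA)$, not $P_{SA}=\C^{\leq0}(-,SA)$. The cone of $P_{SA[-1]}[1]\to P_{SA}$ is only $H^0\C(-,SA)$. The module $\C(-,SA)^{\geq0}$ differs from it by the non-compact $\C^{>0}(-,SA)$.

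\textbf{Consequences.} The Nakayama functor $-\lotimes_{\C^{\leq0}}D\C^{\leq0}$ does not preserve $\per\C^{\leq0}$, and it is not inverse to $-\lotimes_{\C^{\leq0}}\theta$. The bimodules $D\C^{\leq0}$ and $\theta^{-1}$ agree only after restriction to $\N=\thick H^0\C$ (the relative Serre duality \ref{relS}). Globally they differ by a triangle $D\C^{\leq0}\to\theta^{-1}\to D\C[1]$, which is essentially statement (ii) itself. So neither invertibility nor the formula in (i) follows from your argument, and Step 3 inherits the problem. There is also a smaller slip there: $(\theta[1])^{-1}(-,B)\simeq\C^{\leq0}(-,\tau B)[1]=\C^{\leq1}(-,SB)$, not $\C^{\leq0}(-,SB)$, since truncation does not commute with shift. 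This one is harmless after inducing to $\C$.

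\textbf{How to repair it.} Your Step 1 triangle is the right ingredient, but it must be used contravariantly. By \ref{vee}, $\theta\simeq\RHom_{\C^{\leq0}}(D\C^{\leq0},\C^{\leq0})$. Apply $\RHom_{\C^{\leq0}}(-,\C^{\leq0})$ to $(D\C)^{\leq-1}\to D\C\to(D\C)^{\geq0}=D\C^{\leq0}$.
\begin{itemize}
\item The middle term vanishes. Writing $\C(-,X)=\hocolim_p P_{X[p]}[-p]$ gives $\RHom_{\C^{\leq0}}(\C(-,X),P_B)\simeq\holim_p\C(X,B)^{\leq-p}=0$; this is the semi-orthogonality behind \ref{loc} and \ref{LRess}.
\item What remains is $\theta[1]\simeq\RHom_{\C^{\leq0}}((D\C)^{\leq-1},\C^{\leq0})\simeq\RHom_\C(D\C,\C)^{\leq1}$ by \ref{-1}.
\item Objectwise this gives $\theta(A,B)\simeq\C^{\leq0}(A,\tau^{-1}B)[-2]$.
\end{itemize}
This is exactly the paper's proof of \ref{invertible}.

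For (ii) you also need a genuine morphism in $\rD(\C^e)$; objectwise isomorphisms are not enough. The paper obtains it by comparing two triangles: $\C^{\leq0}\lotimes_\N\theta^{-1}\lotimes_\N\C^{\leq0}\to\theta^{-1}\to\C\lotimes_{\C^{\leq0}}\theta^{-1}\lotimes_{\C^{\leq0}}\C$, and the dual of $\C^{\leq0}\lotimes_\N\C^{\leq0}\to\C^{\leq0}\to\C$. The comparison uses relative Serre duality and left/right essentiality. The paper then extracts the bimodule formula in (i) afterwards via \ref{Si^vee}.
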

Here, the inverse $2$-Auslander-Reiten translation $\RHom_\M(D\M,-)[2]$ for the non-derived setting is replaced by the $2$-shifted inverse dualizing bimodule $\theta[2]$ of the truncation as in (\ref{intro-theta}). Also, the relationship between `$2$'-Auslander-Reiten translation for functor category and the `$1$'-Auslander-Reiten translation on the module category is explained as the `shift of degree $1$' appearing in Amiot's construction as in (\ref{intro-cosg-type}).

\medskip
We refer to \cite{Mo1,To} for results of Auslander correspondnces, and to Section \ref{rep} for some results related to the notion of representation dimension \cite{Au71,Iy03}. These are still other instances that the truncation should be regarded as the Auslander algebra.

\subsection{Connective resolutions}
We next introduce the notion of {\it connective resolutions} of dg categories, deriving that of non-commutative resolutions of ordinary algebras, and motivated by our fundamental theorem \ref{intro-loc}.
Recall that a {\it non-commutative resolution} of a Noetherian ring $\La$ is an algebra $\Ga$ of finite global dimension given as $\End_\La(M)$ for $M\in\mod\La$ with $\add\La\subset\add M$. We may allow several objects version and say that a subcategory $\M\subset\mod\La$ gives a non-commutative resolution if $\proj\La\subset\M$ and the functor category $\mod\M$ has finite global dimension. In this sense, the whole module category $\mod\La$ itself is a non-commutative resolution since the functor category has global dimension $\leq2$, and giving a non-commutative resolution amounts to choosing a nice object from the universal non-commutative resolution $\mod\La$.
This background, our localization theorem \ref{intro-loc}, together with the philosophy that $\per\C^{\leq0}$ is the functor category of $\C$, leads to the following definition.
\begin{Def}[=\,\ref{def}]\label{intro-def}
Let $\C$ be a pretriangulated dg category. A {\it connective resolution} of $\C$ is a subcategory $\M\subset\C$ satisfying the following two conditions.
\begin{enumerate}
\item The truncation $\M^{\leq0}$ has finite global dimension, in the sense of \cite{Ye13,Mi21}, see \ref{gldim}.
\item The composite $\M^{\leq0}\to\M\subset\C$ is a localization.
\end{enumerate}
Accordingly, an object $M\in\C$ is said to {\it give a connective resolution} if $\REnd_\C(M)^{\leq0}$ has finite global dimension and $\REnd_\C(M)^{\leq0}\to\C$ is a localization.
\end{Def}

Note that this notion of connective resolutions makes sense only by taking the truncation. Indeed, a usual non-commutative resolution $M$ in a module category $\mod\La$ is a generator of the derived category $\Db(\mod\La)$, so $\REnd_\La(M)$ is Morita equivalent to $\D^b_\dg(\mod\La)$ which is nothing new. However, taking the truncation yields $\REnd_\La(M)^{\leq0}=\End_\La(M)$ which is something different.  

Let us mention some immediate or motivating examples of connective resolutions.
First, if $\C$ is a pretriangulated dg category, then $\C$ itself gives a connective resolution $\C^{\leq0}$ of $\C$. Indeed, the truncation $\C^{\leq0}$ has global dimension $\leq2$ as is the case for ordinary functor categories (see \ref{DFun}) and the map $\C^{\leq0}\to\C$ is a localization by \ref{intro-loc}. From this perspective, giving a connective resolution amounts to choosing a nice subcategory or an object of the universal connective resolution $\C^{\leq0}$ which still localizes into $\C$.
We show that ordinary non-commutative resolution of a ring $\La$ in $\mod\La$ gives a connective resolution of $\D^b_\dg(\mod\La)$ (\ref{R}), and the canonical cluster tilting object in the cluster category gives its connective resolution (\ref{ct}). 
Also we observe that each silting object (\ref{silt}) gives a connective resolution of relevant dg categories. 
These examples indicate the generality of our notion of connective resolutions.
We refer to \ref{intro-conn} below for a general result for a subcategory to give a connective resolution.

Finally, we note that our notion of connective resolutions corresponds to a part of categorical resolution of singularities in the sense of Kuznetsov \cite{Ku08}. Recall that given a variety $X$, its categorical resolution is given by a diagram of (enhanced) triangulated categories
\[ \xymatrix@R=5mm{
	\per_\dg\!X\ar[r]^-{\pi^\ast}\ar@{=}[d]& \A\ar[d]^-{\pi_\ast}\\
	\per_\dg\!X\ar[r]& \D^b_\dg(\coh X) } \]
having suitable adjoints. Our connective resolution is a counterpart of the morphism $\pi_\ast\colon\A\to\D^b_\dg(\coh X)$. Indeed, for a commutative ring $R$ and $A=\End_R(M)$ is a non-commutative resolution, then $M\in\D^b_\dg(\mod R)$ gives both a categorical resolution of $\Spec R$ and a connective resolution of $\D^b_\dg(\mod R)$.


\medskip

We give a sufficient condition for a subcategory of a pretriangulated dg category to give a connective resolution, which can be formulated at the level of triangulated categories. Let $\T$ be a triangulated category and $\A\subset\T$ a full subcategory. A {\it right $\A$-resolution} of $X\in\T$ is a sequence of triangles
\[ \xymatrix{ X_{i+1}\ar[r]&A_i\ar[r]&X_i } \]
for $i\geq0$ with $X_0=X$, $A_i\to X_i$ a right $\A$-approximation (thus $A_i\in\A$). We say that $X\in\T$ {\it has finite right $\A$-resolution} if we may take $X_n\in\A$ for some $n\geq0$. We call $n$ the {\it length} of the resolution. 
\begin{Thm}\label{intro-conn}
Let $\C$ be a pretriangulated dg category and $\M\subset\C$ a full dg subcategory. Suppose that there is $n\geq0$ such that every object in $H^0\C$ has a finite right $H^0\M$-resolution of length at most $n$. 
\begin{enumerate}
	\item The subcategory $\M\subset\C$ gives a connective resolution, which has global dimension at most $n+2$.
	\item Then the kernel of the induction functor $-\lotimes_{\M^{\leq0}}\M\colon\per\M^{\leq0}\to\per\M=\per\C$ is generated by $\mod H^0\M$ as a thick subcategory.
\end{enumerate}
Therefore, there exists an exact sequence
\[ \xymatrix{ \thick_{\rD(\M^{\leq0})}(\mod H^0\M)\ar[r]&\per\M^{\leq0}\ar[r]&\per\C }. \]
\end{Thm}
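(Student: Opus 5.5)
The plan is to reduce to Theorem~\ref{intro-loc}, applied to $\C$ itself, by comparing $\per\M^{\leq0}$ with $\per\C^{\leq0}$. The inclusion $\M\subset\C$ induces $\M^{\leq0}\subset\C^{\leq0}$. Since $\M^{\leq0}(-,M)$ for $M\in\M$ is a direct summand of the silting generator of $\per\C^{\leq0}$, restriction and induction identify $\per\M^{\leq0}$ with $\thick_{\rD(\C^{\leq0})}\{\C^{\leq0}(-,M)\mid M\in\M\}$, fully faithfully.

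The key step is to show that for every $X\in\C$ the representable $\C^{\leq0}(-,X)$ lies in $\thick\bigl(\{\C^{\leq0}(-,M)\}_{M\in\M}\cup\mod H^0\M\bigr)$, with control on lengths. Take a right $H^0\M$-resolution, i.e.\ triangles $X_{i+1}\to A_i\to X_i$. Apply the functor $\C^{\leq0}(-,?)$ and restrict to $\M^{\leq0}$. Because $A_i\to X_i$ is a right $H^0\M$-approximation, the sequence
\[ \C^{\leq0}(-,X_{i+1})\to\C^{\leq0}(-,A_i)\to\C^{\leq0}(-,X_i) \]
restricted to $\M$ is a triangle in $\rD(\M^{\leq0})$. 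Here truncation only fails to be exact in degree $0$, and $H^0$ of the rightmost map is surjective on $\M$. I will verify this by comparing with the triangle $\C(-,X_{i+1})\to\C(-,A_i)\to\C(-,X_i)$ and the long exact cohomology sequence. Concretely, the cone of $\C^{\leq0}(-,X_{i+1})\to\cone(\ldots)$ has cohomology concentrated in degree $0$ (or $1$), equal to the cokernel of $H^0\C(-,A_i)\to H^0\C(-,X_i)$ on $\M$, and this cokernel vanishes by the approximation property. Iterating $n$ times, each restricted $X$ becomes an iterated extension of $n+1$ projectives $\M^{\leq0}(-,A_i)$ with $X_n\in\M$.

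For part (1), this shows that the restricted modules $\C^{\leq0}(-,X)|_\M$ are perfect. Hence $H^0\C(-,X)|_\M$, being the cone of a map between such modules (use shifts: $H^0\C(-,X)$ sits in a triangle with $\C^{\leq0}(-,X[-1])$ and $\C^{\leq0}(-,X)$, as in the proof of \ref{intro-loc}), is also perfect. Every object of $\mod H^0\M$ is a cokernel $H^0\C(-,X)$ of a map in $H^0\M$ via cones, so it is perfect with projective dimension at most $n+2$ with respect to the standard t-structure. Finite global dimension in the sense of \ref{gldim} then follows by the standard d\'evissage: every module with bounded cohomology is built from its $H^0\M$-module cohomologies. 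The degree bookkeeping giving $n+2$ is routine.

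For localization and part (2), note that the kernel of $-\lotimes_{\M^{\leq0}}\M$ consists of $N\in\per\M^{\leq0}$ with $N\lotimes\M=0$. The functor to $\rD(\M)=\rD(\C)$ is the restriction of the localization $\per\C^{\leq0}\to\per\C$ via $\per\M^{\leq0}\to\per\C^{\leq0}$. Its kernel is $\per\M^{\leq0}\cap\thick H^0\C$. By the key step, each generator $H^0\C(-,X)$ restricts into $\thick\mod H^0\M$. Conversely, $\mod H^0\M$ is killed, because $H^0\M(-,M)$ is the cone of $\M^{\leq0}(-,M[-1])\to\M^{\leq0}(-,M)$ after truncation, which becomes an isomorphism in $\per\M$. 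Essential surjectivity onto $\per\C$ holds since $\M$ generates $\C$: each $X$ is an iterated cone of objects of $\M$. The Verdier quotient comparison, using compact generation, then gives the localization and the exact sequence.

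The main obstacle is the key step: checking that the truncated sequence is a genuine triangle in $\rD(\M^{\leq0})$ and that the correction term is exactly an $H^0\M$-module cokernel which vanishes by the approximation property. Everything else is bookkeeping on top of Theorem~\ref{intro-loc}.
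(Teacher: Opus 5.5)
Your reduction to Theorem \ref{loc} via $\per\M^{\leq0}\subset\per\C^{\leq0}$ contains one valid argument that differs from the paper's. If $K\in\per\M^{\leq0}$ satisfies $K\lotimes_{\M^{\leq0}}\M=0$, then $K\lotimes_{\M^{\leq0}}\C^{\leq0}$ dies in $\per\C$, so it lies in $\thick H^0\C$. Restricting back to $\M^{\leq0}$ puts $K$ in $\thick\{H^0\C(-,X)|_\M\}\subseteq\thick(\mod H^0\M)$, since these restrictions are finitely presented by Lemma \ref{cohe}. This is a clean substitute for Lemma \ref{KsubsetH}, which instead uses $K\simeq K\lotimes_{\M^{\leq0}}\M^{>0}[-1]\in\Loc(\mod H^0\M)$ and Neeman.

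The rest has a genuine gap, on the localization. Knowing the kernel of $-\lotimes_{\M^{\leq0}}\M$, and that its image generates $\per\C$, does not make the induced functor from $\per\M^{\leq0}$ modulo that kernel to $\per\C$ fully faithful. In general, a fully faithful functor followed by a Verdier quotient need not induce a fully faithful functor on the quotient by the intersection. So ``Verdier quotient comparison plus compact generation'' does not yet show that $\M^{\leq0}\to\M$ is a localization.

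What is missing is what induction does to truncated representables. Your key step only gives perfectness of $\C^{\leq0}(-,X)|_\M$; you need the canonical map $\C^{\leq0}(-,X)|_\M\lotimes_{\M^{\leq0}}\C\to\C(-,X)$ to be an isomorphism. This does follow from your resolution: the $(n+2)$-angle of the $\M^{\leq0}(-,A_i)$ induces to the $(n+2)$-angle of the $\C(-,A_i)$ built from the untruncated triangles, whose total is $\C(-,X)$. That is Lemma \ref{lr}, and it gives $\C^{\leq0}\lotimes_{\M^{\leq0}}\C\simeq\C$. Full faithfulness of $\rD(\M)\simeq\rD(\C)\to\rD(\M^{\leq0})$ then follows from that of $\rD(\C)\to\rD(\C^{\leq0})$ in Theorem \ref{loc}, as in Proposition \ref{LR}.

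The inclusion of $\mod H^0\M$ in the kernel needs the same identification, and your argument for it is incomplete in three ways. You only treat $H^0\M(-,M)$. The term $\M^{\leq0}(-,M[-1])$ is undefined, since $\M$ need not be closed under shifts; it should be $\C^{\leq0}(-,M[-1])[1]|_\M$. And the claim that the map ``becomes an isomorphism'' is exactly the unproved identification. Also, a general $Y\in\mod H^0\M$ is not of the form $H^0\C(-,X)|_\M$, so the corresponding sentence in your part (1) fails as well.

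The correct argument for both points is as follows. Present $Y$ by $M_1\to M_0$ in $H^0\M$ and complete it to a triangle $N\to M_1\to M_0$ in $H^0\C$. The total of $\C^{\leq0}(-,N)|_\M\to\M^{\leq0}(-,M_1)\to\M^{\leq0}(-,M_0)$ is $Y$. Since $\C^{\leq0}(-,N)|_\M$ has a resolution of length $n$, this gives $\pd Y\leq n+2$ (Proposition \ref{finite}). After induction the total becomes that of $\C(-,N)\to\C(-,M_1)\to\C(-,M_0)$, which is zero (Lemma \ref{HsubsetK}).

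With this in place, your framework can also finish the localization directly. A morphism from $U\lotimes_{\M^{\leq0}}\C^{\leq0}$ to $Z\in\thick H^0\C$ factors through $Z|_\M\lotimes_{\M^{\leq0}}\C^{\leq0}$. That object lies both in the image of $\per\M^{\leq0}$ and in $\thick H^0\C$, so Verdier's criterion gives full faithfulness on the quotient.
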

This extends \ref{intro-loc} to the case of connective resolutions.
Consequently, such a connective resolution realizes a given triangulated category $\per\C$ as the consingularity category of $\M^{\leq0}$. This applies in particular to cluster tilting subcategories, and therefore each $d$-cluster tilting subcategory gives a connective resolution of global dimension $\leq d+1$. We refer to \ref{CT} for the statement.

\medskip
	
We next demonstrate the ubiquity of connective resolutions.
As a first such result, we give a derived analogue of Iyama's proof of finiteness of representation dimension \cite{Iy03}. He constructed a quasi-hereditary algebra by using certain chain of subcategories in a module category, called a rejective chain \cite{Iy05b}. We derive this construction by using a chain of subcategories in a derived category and show that a resulting dg algebra is ``quasi-hereditary'' (\ref{QH}). These invlove an instance of our philosophy that truncation gives an enhancement as an additive category.
\begin{Thm}[=\,\ref{repdim}]\label{intro-repdim}
Let $\La$ be a connective dg algebra. Then for every $M\in\Db(\La)$ there is $N\in\Db(\La)$ containing $M$ as a direct summand such that $\REnd_\La(N)^{\leq0}$ has finite global dimension.
\end{Thm}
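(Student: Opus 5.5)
We follow Iyama's proof of finiteness of representation dimension, transplanted to $\Db(\La)$ through truncation. Put $\T=\Db(\La)$ and let $\C$ be its standard pretriangulated enhancement. By the philosophy of the previous section, $\per\REnd_\La(N)^{\leq0}$ is the derived functor category of $\add N$. So it suffices to build $N$ so that this functor category admits a ``quasi-hereditary'' stratification with finitely many strata, each of finite global dimension. Finite global dimension of $\REnd_\La(N)^{\leq0}$ should then follow from a recollement argument: a dg category glued from finitely many pieces of finite global dimension along suitable bimodules again has finite global dimension.

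The first step is to build a rejective chain. Since $\La$ is connective, $\Db(\La)$ carries the standard t-structure, and $M$ has finitely many nonzero cohomologies, which are finitely presented $H^0\La$-modules. We may take $N_0=M\oplus\La$ and define inductively a decreasing chain of subcategories
\[ \add N_0\supset\add N_1\supset\cdots\supset\add N_m=\add\La\ \text{ or }\ 0. \]
Here $N_{i+1}$ is obtained from $N_i$ by removing the objects that have no nonzero radical endomorphism in the truncated sense. Concretely, we use the ``supremum radical'' $\srad$: powers of the radical of $\REnd(N_i)^{\leq0}$ in degree $0$, together with the negative degree parts, which are nilpotent because cohomology is bounded. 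We add cones, or approximations, to make each inclusion $\add N_{i+1}\subset\add N_i$ \emph{rejective}, meaning every object of $\add N_i$ has a right $\add N_{i+1}$-approximation that is a monomorphism in the appropriate truncated sense. Alternatively, we can mimic Iyama's construction directly: iterate radicals, $N=\bigoplus_j \srad^j(M)$ suitably interpreted. For the chain to terminate we use boundedness: the amplitude of cohomology, together with finite length of $\mod H^0\La$-cohomology when $\La$ is proper, or nilpotence of the radical filtration more generally.

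The second step is to verify that the chain makes $\Ga=\REnd(N)^{\leq0}$ quasi-hereditary in the sense of the result \ref{QH} promised earlier. Each idempotent quotient $\Ga_i/\Ga_i e\Ga_i$, with $e$ corresponding to $N_{i+1}$, must be semisimple-like, that is, of global dimension $0$ in $H^0$ with trivial negative part. The ideals $\Ga_i e\Ga_i$ must also be ``heredity'', i.e.\ $\Ga_i e\lotimes_{e\Ga_i e}e\Ga_i\to\Ga_i e\Ga_i$ a quasi-isomorphism. Then global dimension bounds propagate along the recollements $\per(\Ga_i/\Ga_ie\Ga_i)\to\per\Ga_i\to\per e\Ga_ie$, and by induction $\gldim\Ga<\infty$.

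The main obstacle is the heredity condition. In the module case this follows because rejective approximations are monomorphisms, giving projectivity of the ideal. In the dg setting we need the approximation triangles to make $\Ga_ie\Ga_i$ perfect over $e\Ga_ie$ with the tensor formula, and truncation does not commute with cones. We would try first to choose approximations that are surjective on $H^0$ of truncated Hom-complexes, or minimal right approximations whose cocones lie in the smaller subcategory. We would then use the identification of $\per\C^{\leq0}$ modulo $\thick H^0\C$ from \ref{intro-loc} to control the defect terms.
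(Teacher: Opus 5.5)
Your overall strategy (Iyama's iterated radicals, a rejective chain, and a stratification of $\per\REnd_\La(N)^{\leq0}$ into semisimple pieces) is the one the paper uses. However, the proposal stops exactly where the proof has content, and part of the set-up is wrong.

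The correct $N$ is the one in your ``alternatively'' sentence: $M_0=M$, $M_{i+1}=\srad M_i$ taken over $\REnd_\La(M_i)^{\leq0}$, and $N=\bigoplus_{i<m}M_i$. Here all lower cohomology is kept, and only the top cohomology $H^{\sup}M_i$ is replaced by its radical as an $\End_{\rD(\La)}(M_i)$-module. The integer $m$ exists because the total cohomology dimension drops strictly, which is where $M\in\Db(\La)$ is used. The chain is then the \emph{increasing} one $0\subset\add M_{m-1}\subset\add(M_{m-1}\oplus M_{m-2})\subset\cdots\subset\add N$, and it must start at $0$. Stopping at $\add\La$ would leave the stratum $\per\La^{\leq0}$, which need not have finite global dimension, and adding $\La$ is not needed for this statement. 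Your first description, a decreasing chain from $M\oplus\La$ obtained by ``removing objects'', does not produce such a chain.

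The decisive step is the derived Iyama lemma, Proposition \ref{coss}: $\add_\dg\srad M\subset\C:=\add_\dg(M\oplus\srad M)$ is right rejective and cosemisimple. You flag this as ``the main obstacle'' and only list things you would try, so the argument is not complete.

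The missing idea is an explicit approximation whose cone sits in the top degree; this is what makes ``truncation does not commute with cones'' harmless.
\begin{itemize}
\item Shift so that $\sup M=0$. For $X\in\add M$ indecomposable and not in $\add\srad M$, let $Z\subset H^0X$ be the sum of the images of $H^0f$ over radical maps $f\colon M\to X$.
\item Let $Y$ be the homotopy pullback of $X\to H^0X$ along $Z\hookrightarrow H^0X$. There is a triangle $X^{\leq-1}\to Y\to Z$, and $Y\in\add\srad M$.
\item Then $\cone(Y\to X)\simeq H^0X/Z$ is a module in degree $0$. Every object $W$ in play has $\sup W\leq0$, so $\RHom_\La(W,H^0X/Z)$ has no negative cohomology.
\item Hence $\RHom_\La(W,Y)^{\leq0}\to\RHom_\La(W,X)^{\leq0}$ is an isomorphism in negative degrees and injective in degree $0$. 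Its image in degree $0$ is the radical morphisms $J_{H^0\C}(W,X)$, which coincide with those factoring through $\add\srad M$.
\end{itemize}
This gives rejectivity, since the map is a quasi-isomorphism after restriction to $\add_\dg\srad M$. Via Lemma \ref{rejquot} it also gives cosemisimplicity, since the dg quotient is $H^0\C/J_{H^0\C}$.

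Your ``heredity'' condition on $\Ga_ie\lotimes_{e\Ga_ie}e\Ga_i\to\Ga_ie\Ga_i$ is also not the relevant one in the dg setting, where the quotient is defined through the cone of $\Ga_ie\lotimes_{e\Ga_ie}e\Ga_i\to\Ga_i$. What is needed is that $\Ga_ie$ be perfect (here even representable) over $e\Ga_ie$, which is precisely right rejectivity, together with semisimplicity of the dg quotient. Proposition \ref{qh} then gives, for each step $\C_{i-1}\subset\C_i$ of the chain, $\per\C_i^{\leq0}=\per\C_{i-1}^{\leq0}\perp\D_i$ with $\D_i$ semisimple. This yields a full exceptional collection and finite global dimension. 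Neither the localization theorem nor $\thick H^0\C$ is needed.
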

Note that this is new even for finite dimensional algebras when $M$ is not necessarily a module: every complex of finitely generated module can be extended to a connective resolution, see \ref{cpx}. 

Applying \ref{intro-repdim} to $M=\La$ for a proper dg algebra $\La$, we obtain the following consequence.
\begin{Cor}[=\,\ref{NCR}]
For every connective dg algebra $\La$, its bounded dg derived category $\D^b_\dg(\La)$ has a connective resolution.
\end{Cor}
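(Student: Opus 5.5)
We deduce the corollary from Theorem \ref{intro-repdim} applied to $M=\La\in\Db(\La)$, combined with the criterion of Theorem \ref{intro-conn}. Here $\La$ is proper, so that $\La\in\Db(\La)$; this is how the corollary is used in the abstract, and we assume it throughout.

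\emph{Step 1: choice of $N$.} By Theorem \ref{intro-repdim} there is $N\in\Db(\La)$ with $\La\in\add N$ and $\REnd_\La(N)^{\leq0}$ of finite global dimension. We regard $N$ as an object of the pretriangulated dg category $\C=\D^b_\dg(\La)$; note $\REnd_\La(N)=\C(N,N)$. Condition (1) of Definition \ref{intro-def} then holds, and it remains to prove that $\REnd_\C(N)^{\leq0}\to\C$ is a localization.

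\emph{Step 2: the localization property.} The natural route is to inspect the construction behind Theorem \ref{intro-repdim}. That construction is a derived rejective chain (\ref{QH}), built from successive approximations. From it one should be able to extract that every object of $\Db(\La)$ has a finite right $\add N$-resolution of bounded length. Theorem \ref{intro-conn} would then give the localization directly, with the explicit kernel $\thick(\mod H^0\M)$.

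There is also a more formal argument. The composite $\REnd(N)^{\leq0}\to\REnd(N)\to\C$ factors as a truncation followed by a Morita-type inclusion. The first map is not literally the truncation of a category with shifts, so Theorem \ref{intro-loc} does not apply verbatim. Instead we compare the two induction functors
\[
\per\REnd(N)^{\leq0}\to\per\REnd(N)\to\per\C .
\]
The second functor is fully faithful, and its image is $\thick N$. Since $\La\in\add N$, this image is $\thick N=\per\La$ as a subcategory of $\Db(\La)$; conversely $N\in\Db(\La)$. So "localization onto $\per\C$" must be read on the level of derived categories. Namely, $-\lotimes\C$ from $\rD(\REnd(N)^{\leq0})$ should be a Verdier quotient onto $\rD(\C)$, with compact objects recovering $\Db(\La)$ up to summands. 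For this we need two facts:
\begin{enumerate}
\item the right adjoint (restriction along $\REnd(N)^{\leq0}\to\C$) is fully faithful;
\item the relevant image is generated by the representables.
\end{enumerate}
Full faithfulness reduces to checking the counit on the compact generators $\C(-,X)$, $X\in\Db(\La)$. This is the same computation as in the proof of Theorem \ref{intro-loc}, now using the approximation sequences of $X$ by $\add N$.

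\emph{Main obstacle.} The hard point is Step 2: extracting, from the proof of Theorem \ref{intro-repdim}, uniform finite right $\add N$-resolutions of every object of $\Db(\La)$, and not merely of $M=\La$. If the rejective-chain construction only controls $\add N$ itself, we would first try to enlarge $N$. One option is to replace $M$ by $\La\oplus$ (a generator of $\Db(\La)$ as a thick subcategory). This needs care since $\Db(\La)$ may not be finitely generated, so alternatively we use induction on a truncation length of $X$. In either case the aim is that approximations by $\add N$ terminate after finitely many steps, after which Theorem \ref{intro-conn} concludes.
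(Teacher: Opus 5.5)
Your Step 1 matches the paper's, but Step 2, the only substantial point, has a genuine gap, and the mechanism you propose for it cannot work. Both of your arguments need every $X\in\Db(\La)$ to admit a finite right $\add N$-resolution. This holds for the direct appeal to Theorem~\ref{intro-conn}, and equally for the ``formal'' one, whose counit check you plan to run through approximation sequences as in Lemma~\ref{lr}.

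For a nonzero $N\in\Db(\La)$ such resolutions never exist, however you enlarge $N$. Since $\La$ is connective and $N$ has bounded cohomology, $\Hom_{\rD(\La)}(N,N[-j-i])=0$ for all $i\geq0$ once $j\gg0$. So every morphism from $\add N$ to $N[-j-i]$ is zero. Starting from $X_0=N[-j]$, it follows inductively that each $X_i$ contains $N[-j-i]$ as a direct summand, and hence $X_i$ never lies in $\add N$.

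There is also a conceptual reason. $\Ga=\REnd_\La(N)^{\leq0}$ is proper with finitely many nonzero cohomologies, so $\Ga\in\thick(\mod H^0\Ga)$. If the hypothesis of Theorem~\ref{intro-conn} held, Lemma~\ref{HsubsetK} would give $N\simeq\Ga\lotimes_\Ga N=0$. That criterion is designed for infinite subcategories such as cluster tilting subcategories or the Yoneda category $\Y$, not for a single object of $\Db$ of a proper algebra.

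Relatedly, your identification $\thick N=\per\La$ is wrong. $N$ typically has non-perfect summands, for example $J_\La$ when $\La=k[x]/(x^2)$. The conclusion of the corollary in fact forces $\thick N=\Db(\La)$.

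The missing idea is to get the localization directly from $\La\in\add N$, which is what the paper does. Its proof is one line: since $\add\La\subset\add N$, the functor $-\lotimes_\Ga N\colon\per\Ga\to\Db(\La)$ is a localization. This is the dg analogue of the classical fact used in Proposition~\ref{R}(2). The mechanism behind it is as follows.

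\begin{itemize}
\item Let $e$ be the idempotent of the summand $\La$. By connectivity, $e\Ga e\simeq\REnd_\La(\La)^{\leq0}\simeq\La$.
\item Every $M_i$ produced from $M=\La$ has cohomology in degrees $\leq0$. Hence $-\lotimes_\Ga N$ agrees with the restriction $(-)e\colon\rD(\Ga)\to\rD(\La)$, which is a Verdier quotient with fully faithful adjoints.
\item Finite global dimension enters at this point. Since $\Ga$ is proper of finite global dimension, $\per\Ga=\Db(\Ga)$.
\item Consequently, for $X\in\Db(\La)$ the truncations $\tau^{\leq n}\RHom_\La(N,X)$ of the right adjoint are perfect and still restrict to $X$.
\item From this one checks that $\per\Ga/\Ker\to\Db(\La)$ is an equivalence.
\end{itemize}
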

As a further consequence, we obtain that every singularity category is a cosingularity category.
\begin{Cor}[=\,\ref{sg-cosg}]\label{intro-sg-cosg}
Let $\La$ be a proper connective dg algebra. Then there exists a connective dg algebra $\Si$ of finite global dimensionsuch that $\sg_\dg\!\La\simeq\cosg_\dg\!\La$.
\end{Cor}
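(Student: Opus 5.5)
The plan is to combine the connective resolution of $\D^b_\dg(\La)$ containing $\La$ with a Drinfeld quotient that kills the perfect part. First apply \ref{intro-repdim} to $M=\La\in\Db(\La)$. This gives $N\in\Db(\La)$ with $\La\in\add N$ such that $\Ga:=\REnd_\La(N)^{\leq0}$ has finite global dimension. As in \ref{NCR}, $N$ gives a connective resolution of $\D^b_\dg(\La)$. Since $\La$ is proper and connective, $\Ga$ is connective with finite dimensional $H^0\Ga$, and the composite $\Ga\to\REnd_\La(N)\to\D^b_\dg(\La)$ is a localization. So there is an exact sequence
\[ \xymatrix{ \K\ar[r]&\per\Ga\ar[r]&\Db(\La) }. \]
I would identify the kernel $\K$ with $\thick(\mod H^0\Ga)$, by running the argument of \ref{intro-conn}(2) (or \ref{loc}(2)) for this resolution. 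The idea is that the objects killed are those built from $H^0$ of cones of approximations, which are finite length $H^0\Ga$-modules.

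Next, let $e$ be the idempotent of $\Ga$ corresponding to the summand $\La$ of $N$, so that $e\Ga\mapsto\La$ under the localization. The preimage of $\per\La=\thick\La$ in $\per\Ga$ is $\thick(\K\cup e\Ga)$. Hence
\[ \sg\La=\Db(\La)/\per\La\simeq\per\Ga/\thick(\K\cup e\Ga). \]
Now set $\Si:=\Ga/\Ga e\Ga$, the Drinfeld dg quotient by the object $e$. It is again connective, since we only add generators in degree $-1$, and $H^0\Si=H^0\Ga/H^0\Ga\,e\,H^0\Ga$. There is a recollement-type exact sequence $\thick(e\Ga)\to\per\Ga\to\per\Si$. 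Passing to the quotient by $\K$ gives $\sg\La\simeq\per\Si/\overline{\K}$, where $\overline{\K}$ is the image of $\K$. The image of $\mod H^0\Ga$ under $-\lotimes_\Ga\Si$ lies in $\thick(\mod H^0\Si)$: a simple $H^0\Ga$-module not killed by $e$ maps to a simple $H^0\Si$-module, and those killed by $e$ go to $0$. Conversely, every simple $H^0\Si$-module is such an image. This yields $\overline{\K}=\thick(\mod H^0\Si)$, hence $\sg\La\simeq\cosg\Si$, and all the steps lift to dg enhancements.

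The main obstacle is proving that $\Si$ has finite global dimension. A quotient of a finite global dimension algebra by an idempotent ideal need not have finite global dimension. Here I would exploit the quasi-hereditary (rejective chain) structure from \ref{QH} in the proof of \ref{intro-repdim}. First, arrange $N$ so that $e$ sits at the top of the chain, i.e.\ the first step of the rejective chain removes $\add\La$. Then $\Ga e\Ga$ is a heredity-type ideal, and $\Ga/\Ga e\Ga$ is again quasi-hereditary, so it has finite global dimension. If this ordering cannot be arranged directly, the fallback is to build $N$ from a rejective chain starting at $\per\La\cap\add N$ and check the derived analogue of the heredity condition $\Ga e\lotimes_{e\Ga e}e\Ga\simeq\Ga e\Ga$ step by step.
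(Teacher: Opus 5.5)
Your overall architecture is the paper's: $N\supset\La$ from \ref{repdim}, $\Ga=\REnd_\La(N)^{\leq0}$, the Drinfeld quotient $\Si=\Ga/\Ga e\Ga$, and a comparison of $\per\La\to\per\Ga\to\per\Si$ with $\K\to\per\Ga\to\Db(\La)$. However, two steps do not go through: your identification of $\K$ is wrong, and finite global dimension of $\Si$ is left unproved.

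\textbf{The kernel $\K$.} Since $N\in\Db(\La)$ and $\La$ is proper and connective, $\REnd_\La(N)$ has finite dimensional cohomology that vanishes in degrees $\ll0$, so $\Ga$ is proper. Combined with $\gldim\Ga<\infty$ (and \ref{pds}), this gives $\per\Ga=\thick(\mod H^0\Ga)$. So $\K=\thick(\mod H^0\Ga)$ would force $\Db(\La)\simeq\per\Ga/\K=0$.

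Neither tool you cite applies here. \ref{loc}(2) needs shifts, which $\add N$ lacks. The resolution hypothesis of \ref{Kernel} must also fail, since otherwise \ref{HsubsetK} would kill all of $\per\Ga$. Your description of the images of simples is also reversed: a simple $S$ with $Se=0$ is already a $\Si$-module and is fixed by $-\lotimes_\Ga\Si$, not sent to $0$.

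The correct identification is the paper's $\K\simeq\D^b_\dg(B)$, with $B$ your $\Si$. All $M_i$ have $\sup\leq0$, so $N$ is connective, $\Ga e\simeq N$, and the localization is $X\mapsto Xe$. Hence $\K=\{X\in\per\Ga\mid Xe=0\}=\per\Ga\cap\rD(\Si)$. Properness of $\Ga$ puts $\K$ inside the $\Si$-modules of finite dimensional total cohomology, and these form $\thick(\mod H^0\Si)$. Conversely, each finite dimensional $H^0\Si$-module is a finite dimensional $H^0\Ga$-module, hence perfect over $\Ga$, and it is killed by $e$. So $\K=\thick(\mod H^0\Si)$, its image in $\per\Si$ is itself, and $\sg\La\simeq\cosg\Si$ follows.

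\textbf{Finite global dimension of $\Si$.} The step you call the main obstacle needs no quasi-hereditary input, and the input you propose is not available. In general $e\Ga e\simeq\La$ is not semisimple, so $\Ga e\Ga$ is not heredity-type. Moreover, in \ref{repdim} the summand $\La=M_0$ enters at the last step of the rejective chain rather than forming its first (semisimple) stratum.

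What you need is \ref{gldimA/P}; its contravariant finiteness hypothesis is automatic because $H^0\Ga$ is finite dimensional. Restriction $\rD(\Si)\to\rD(\Ga)$ is fully faithful and carries $\Mod H^0\Si$ into $\Mod H^0\Ga$ and $\mod H^0\Si$ into $\mod H^0\Ga$. So for $X\in\mod H^0\Si$ and $Y\in\Mod H^0\Si$, $\RHom_\Si(X,Y)\simeq\RHom_\Ga(X,Y)$ lies in degrees $[0,\gldim\Ga]$. Hence $\gldim\Si\leq\gldim\Ga$.

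The failure you have in mind concerns the ordinary quotient $H^0\Ga/H^0\Ga eH^0\Ga$, whose derived category need not embed fully faithfully into $\rD(\Ga)$; for the dg quotient it always does. Alternatively, under the separability hypothesis of \ref{sg-cosg}: $\Ga$ is smooth by \ref{converse}, smoothness passes to the localization $\Si$, and then \ref{smooth} applies.
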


As another class of examples of connective resolutions, we discuss the truncation of the {\it Yoneda category} \cite{Ha2}. Let $\La$ be a finite dimensional algebra, and consider the dg category
\[ \Y=\add_\dg\{ A[i]\mid A\in\mod\La, i\in\Z\}\subset\D^b_\dg(\mod\La). \]
It is an enhancement of the Yoneda category which is a categorical counterpart of the Yoneda algebra $\bigoplus_{i\geq0}\Ext^i_\La(M,M)$ when $\La$ is representation finite and $M\in\mod\La$ is an additive generator \cite{Ha2}. Therefore, its truncation $\Y^{\leq0}$ is what should be called the derived Yoneda algebra.
Note that the $\Y$ is a dg category with shifts. Our localization theorem \ref{intro-loc} leads to the following result.
\begin{Thm}[=\,\ref{yoneda-ncr}]
	Let $\La$ be a finite dimensional algebra. Then $\Y\subset\D^b_\dg(\mod\La)$ gives a connective resolution of global dimension at most $3$.
\end{Thm}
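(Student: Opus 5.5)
We have to prove that $\Y\subset\D^b_\dg(\mod\La)$ gives a connective resolution of global dimension at most $3$. By Definition~\ref{intro-def}, this means two things: the composite $\Y^{\leq0}\to\Y\subset\D^b_\dg(\mod\La)$ is a localization, and $\Y^{\leq0}$ has finite global dimension, here $\leq3$.

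\emph{Localization.} $\Y$ is closed under shifts, so Theorem~\ref{intro-loc} applies to it directly. Hence $\Y^{\leq0}\to\Y$ is a localization whose kernel is compactly generated by $H^0\Y$.

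Next we need the inclusion $\Y\subset\D^b_\dg(\mod\La)$ to be a Morita equivalence, i.e.\ to induce $\rD(\Y)\simeq\rD(\D^b_\dg(\mod\La))$. This holds because $\thick\Y=\Db(\mod\La)$: every bounded complex of finitely generated modules is an iterated extension of shifts of modules, via the truncation triangles. The composite of a localization with a Morita equivalence is again a localization, so condition (2) of Definition~\ref{intro-def} holds.

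\emph{Global dimension.} Alternatively, we could apply Theorem~\ref{intro-conn} with $\C=\D^b_\dg(\mod\La)$ and $\M=\Y$. For this we need, for every $X\in\Db(\mod\La)$, a right $H^0\Y$-resolution of uniformly bounded length $n$; the theorem then bounds the global dimension by $n+2$. To get the bound $3$ we want $n=1$.

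We build it as follows. Write $X$ with cohomology modules $H^i=H^i(X)$. The natural map $\bigoplus_i H^i[-i]\to X$ does not exist in general, so instead we use the approximation $A_0=\bigoplus_i P_i[-i]\to X$, where $P_i\to Z^i$ lifts the cycles. This is not canonical, so a better choice is needed.

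The cleaner route is to observe that a right $H^0\Y$-approximation of $X$ must induce, for each module $A$ and integer $j$, a surjection on $\Hom(A[j],-)$. The natural candidate is $\bigoplus_j \tau_{\le j}$-pieces. Concretely, consider the map $Y\to X$ that is surjective on cohomology, taken from $\bigoplus_i H^i(X)[-i]$ wherever such lifts exist.

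I expect this uniform length bound to be the main obstacle. A precise approximation may fail to have its cocone in $\Y$. The fallback is therefore to compute the global dimension directly.

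\emph{Direct computation of global dimension.} $\per\Y^{\leq0}$ is the derived functor category of $\Y$. By the analogue of \ref{DFun}, every object is built from the representables and from $H^0\Y$-modules, and the global dimension bound reduces to a projective-dimension bound for the simple (finitely presented) $\Y^{\leq0}$-modules.

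For $X\in\Db(\mod\La)$ we use the triangle relating $X$ to its top cohomology, $\tau_{<m}X\to X\to H^m[-m]$. This shows the cocone of the approximation $H^0$-map lies in $\Y$ after one further step, so every object admits a resolution of length $1$. Then Theorem~\ref{intro-conn} gives global dimension $\leq1+2=3$.

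The key verification is that the composite
\[ Y=\bigoplus_i \tilde{H}^i[-i]\longrightarrow X, \]
assembled from suitable module covers of the cohomologies, is a right $\add\Y$-approximation with cocone again a sum of shifted modules. This is where $\Ext$-vanishing between modules in negative degrees is used.
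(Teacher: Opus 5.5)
Your localization half is correct and matches the paper: $\Y$ has shifts, so Theorem~\ref{loc} applies to $\Y^{\leq0}\to\Y$, and $\thick\Y=\Db(\mod\La)$ makes $\Y\subset\D^b_\dg(\mod\La)$ a Morita functor. Your plan for the global dimension, Theorem~\ref{conn} with $n=1$, is also what the paper does in substance. It takes a presentation $A_1\to A_0$ in $H^0\Y$, its cocone $L$, and a single triangle $A_3\to A_2\to L$ with $A_2\to L$ a right $H^0\Y$-approximation and $A_3\in\Y$.

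The gap is the one nontrivial input: that \emph{every} $X\in\Db(\mod\La)$ has a right $H^0\Y$-approximation $Y_0\to X$ whose cocone lies in $\Y$. You assert this but never prove it; the paper imports it from \cite[4.2, 4.4]{Ha2}. Your candidate $\bigoplus_i\tilde H^i[-i]\to X$, built from covers of the cohomology modules, does not do the job. A map $M\to H^iX$ lifts to $M[-i]\to X$ only if an obstruction in a higher $\Ext$ group vanishes, which is why, as you note, $\bigoplus_iH^i[-i]\to X$ need not exist. Even when a lift exists it need not be an approximation, because $\Hom_{\Db}(A[j],X)$ contains classes invisible on cohomology (e.g.\ higher $\Ext$ classes). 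Neither the triangle $\tau_{<m}X\to X\to H^mX[-m]$ nor vanishing of negative $\Ext$ between modules produces the required triangle. So the claim that ``every object admits a resolution of length $1$'' is unsupported. (The aside about reducing to simple $H^0\Y$-modules is also unavailable, since $H^0\Y$ is not finite in general, but you do not use it.)

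The idea you discarded, using cycles, repairs this once you choose the right model.
\begin{itemize}
\item Setup: if $X$ has cohomology in $[a,b]$, represent it by a complex $I^\bullet$ of finitely generated injectives with $I^m=0$ for $m<a$. Let $T=(I^a\to\cdots\to I^{b-1}\to Z^b(I^\bullet))$ be its soft truncation, still isomorphic to $X$.
\item Cocone in $\Y$: the inclusion of cycles $Y_0:=\bigoplus_{m=a}^{b}Z^m(T)[-m]\to T$ is a degreewise injective chain map. Its cokernel is $\bigoplus_{m=a}^{b-1}B^{m+1}(T)[-m]$ with zero differential, so the cocone of $Y_0\to X$ lies in $\Y$.
\item Approximation for $m\leq b$: a morphism $A[-m]\to X$ is a homotopy class of chain maps $A[-m]\to I^\bullet$. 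It is represented by a map $A\to Z^m(I^\bullet)=Z^m(T)$, so it factors through the summand $Z^m(T)[-m]$ of $Y_0$.
\item Approximation for $m>b$: use the triangle $Z^b(T)[-b]\to T\to(I^a\to\cdots\to I^{b-1})\to Z^b(T)[-b+1]$. Since $\Hom_{\Db}(A[-m],(I^a\to\cdots\to I^{b-1}))=0$, every such morphism factors through $Z^b(T)[-b]$.
\end{itemize}
With these length-one resolutions, Theorem~\ref{conn} gives $\gldim\Y^{\leq0}\leq3$.
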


\subsection{Examples}
We give explicit descriptions of the truncation. We consider the dg categories related to derived categories of a Dynkin quiver. It is well-known that for a Dynkin quiver $Q$ the Auslander-Reiten quiver of the derived category $\Db(\mod kQ)$ is the infinite translation quiver $\Z Q$ \cite{Hap}, and it is standard \cite{Rie,Hap}, that is, the category $\Db(\mod kQ)$ is presented by the Auslander-Reiten quiver with the mesh relations. We consider the {derived mesh} (or more precisely derived preprojective) category as follows. It is a dg category defined by adding the degree $-1$ arrows to the direction of $\tau^{-1}$, which differentiates into the preprojective relation (\ref{dpp}). For type $A_4$ it looks as below.
\[ \xymatrix@!R=1mm@!C=3mm{
	\ar[r]&\circ\ar[rr]^-{-1}\ar[dr]&&\circ\ar[rr]^-{-1}\ar[dr]&&\circ\ar[rr]^-{-1}\ar[dr]&&\circ\ar[rr]^-{-1}\ar[dr]&&\circ\ar[rr]^-{-1}\ar[dr]&&\circ\\
	\circ\ar[ur]\ar[rr]^-{-1}\ar[dr]&&\circ\ar[ur]\ar[rr]^-{-1}\ar[dr]&&\circ\ar[ur]\ar[rr]^-{-1}\ar[dr]&&\circ\ar[ur]\ar[rr]^-{-1}\ar[dr]&&\circ\ar[ur]\ar[rr]^-{-1}\ar[dr]&&\circ\ar[ur]\ar[dr]\ar[r]&\\
	\ar[r]&\circ\ar[rr]^-{-1}\ar[dr]\ar[ur]&&\circ\ar[rr]^-{-1}\ar[dr]\ar[ur]&&\circ\ar[rr]^-{-1}\ar[dr]\ar[ur]&&\circ\ar[rr]^-{-1}\ar[dr]\ar[ur]&&\circ\ar[rr]^-{-1}\ar[dr]\ar[ur]&&\circ\\
	\circ\ar[ur]\ar[rr]^-{-1}&&\circ\ar[ur]\ar[rr]^-{-1}&&\circ\ar[ur]\ar[rr]^-{-1}&&\circ\ar[ur]\ar[rr]^-{-1}&&\circ\ar[ur]\ar[rr]^-{-1}&&\circ\ar[ur]\ar[r]&  } \]
From the idea that the derived category of an algebra is the ``graded cluster category'' \cite[A.7]{Ha7}, we obtain the description of the truncation of $\D^b_\dg(\mod kQ)$ as the derived preprojective category.
\begin{Thm}[=\,\ref{mesh}]\label{inrto-mesh}
Let $Q$ be a Dynkin quiver. Then the derived preprojective category of $Q$ is quasi-equivalent to the full subcategory of the truncation $\D^b_\dg(\mod kQ)^{\leq0}$ formed by indecomposable objects.
\end{Thm}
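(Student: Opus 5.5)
Let $\D:=\D^b_\dg(\mod kQ)$ and let $\D_{\mathrm{ind}}^{\leq0}\subset\D^{\leq0}$ be the full subcategory on a chosen set of indecomposables, i.e.\ the vertices of $\Z Q$. I will build an explicit dg functor $F$ from the derived preprojective category $\Pi$ of $Q$ to $\D_{\mathrm{ind}}^{\leq0}$ and show that it is a quasi-isomorphism on each morphism complex. Essential surjectivity up to homotopy is automatic, because the objects of $\Pi$ are exactly the vertices of $\Z Q$ and these match the indecomposables by \cite{Hap}.

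Since $\Pi$ is semi-free on the quiver of \ref{dpp}, to define $F$ it suffices to choose images of the generating arrows compatibly with the differential.
\begin{itemize}
\item For a degree $0$ arrow $x\to y$ of $\Z Q$, choose a cocycle representing the corresponding irreducible map in $Z^0\D(x,y)$.
\item For a degree $-1$ arrow $x\to\tau^{-1}x$, use the mesh (AR triangle) $x\to\bigoplus_{x\to y}y\to\tau^{-1}x\to x[1]$. Since $\D$ is pretriangulated, the mesh relation $\sum_y \beta_y\alpha_y$ is a coboundary in $\D(x,\tau^{-1}x)$, because the composite $x\to\bigoplus y\to\tau^{-1}x$ is null-homotopic. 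Choose a homotopy $h_x\in\D(x,\tau^{-1}x)^{-1}$ with $d h_x=\pm\sum_y\beta_y\alpha_y$, and send the degree $-1$ arrow to $h_x$.
\end{itemize}
Degree $-1$ elements lie in $\D^{\leq0}$ with no cocycle condition, so this gives a well-defined dg functor into the truncation.

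Next I would compare cohomology. On the truncation side, $H^{i}\D^{\leq0}(x,y)=\Hom_{\Db}(x,y[i])$ for $i\leq0$ and vanishes for $i>0$. For Dynkin $Q$ and indecomposables $x,y$, the shift $y[i]$ for $i<0$ lies further left in $\Z Q$. So the nonzero negative-degree cohomology is controlled by the positions of $x$ and $y[i]$ in $\Z Q$, with $[1]$ acting as a known automorphism of $\Z Q$ (equal to $\tau^{-1}$ composed with the Nakayama permutation up to some power; for Dynkin quivers $\tau^{-h}=[-2]$).

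On the $\Pi$ side, I would compute $H^*\Pi$ using the ``graded cluster category'' idea \cite[A.7]{Ha7}. The derived preprojective algebra of the Dynkin quiver, suitably graded, has cohomology given by repeated shifts $[-1]$ realized through $\tau^{-1}$-paths. Concretely, filter $\Pi$ by the number of degree $-1$ arrows. The associated graded is a Koszul-type complex whose $H^0$ is the mesh category $k(\Z Q)$. By \cite{Rie,Hap} the mesh category is $\ind\Db(\mod kQ)$, which gives the degree-$0$ comparison. For negative degrees I would show that $H^{-n}\Pi(x,-)$ is the representable functor $H^0(x,-[-n])$, via an induction on $n$ using the AR triangles as a resolution. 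Equivalently, I would prove that the cone of $F$ on $\Pi(x,-)$ is acyclic by checking that $F$ induces an isomorphism of the long exact sequences coming from the mesh triangles.

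The main obstacle is this negative-degree comparison: showing that the homotopies $h_x$ generate the whole of $\Hom(x,y[-n])$ without extra relations, i.e.\ that $H^{<0}$ is exactly right. My first attempt would be to use \ref{intro-loc} together with \ref{DFun}. The truncation has global dimension $\leq2$ and its derived category is generated by representables with kernel $\thick H^0\D$. The minimal projective resolution in $\per\D^{\leq0}$ of a simple functor $S_x$ then has the shape
\[ P_{\tau x}\xrightarrow{}\textstyle\bigoplus P_{y}\to P_x\to S_x, \]
with a degree $-1$ correction coming from the connecting morphism. This says exactly that $\D_{\mathrm{ind}}^{\leq0}$ is a semi-free dg category generated by the $\Z Q$ arrows and degree $-1$ arrows satisfying the derived mesh relation, i.e.\ it is $\Pi$ up to a minimal model. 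I would make this rigorous by computing the $A_\infty$ minimal model of $\D_{\mathrm{ind}}^{\leq0}$ through its Koszul dual, which is the category of simples.
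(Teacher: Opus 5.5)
Your dg functor $F$ is fine, and $H^0F$ is an equivalence. This grants standardness of $\Db(\mod kQ)$ in order to choose the degree-$0$ images so that every mesh vanishes in $H^0$ with the signs of \ref{dpp}; note that the paper deduces standardness from this theorem. But the whole content of the theorem is the negative-degree statement, that $F$ induces $H^{-n}\Pi(z,y)\cong\Hom_{\Db}(z,y[-n])$. Your proposal only lists strategies for this, carries none of them out, and as stated they do not work.

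\begin{itemize}
\item Filtering $\Pi$ by the number of degree $-1$ arrows is just the brutal filtration by cohomological degree, since every term of $d$ removes exactly one $b$. So its associated graded has zero differential and gives nothing.
\item Induction on $n$ is not well founded. The mesh twisted complex expresses $H^{-n}\Pi(z,y)$ through $H^{-n}\Pi(E,y)$ and $H^{-n+1}\Pi(\tau^{-1}z,y)$, so the degree does not drop.
\item The claim that the minimal resolution of $S_x$ ``says exactly'' that the truncation is semi-free on these generators is a non sequitur. Those resolutions only give $\Ext^*(S,S)$ and $m_2$. You would still need all higher $A_\infty$-products to vanish, plus a Koszul double-duality theorem (with its finiteness and completeness hypotheses) to recover the truncation from its simples.
\end{itemize}

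Your idea can be repaired. Induct on the height difference in $\Z Q$ rather than on $n$, where arrows raise height by $1$ and $\tau^{-1}$ raises it by $2$. Use that the mesh twisted complex $\Pi(\tau^{-1}z,-)\to\Pi(E,-)\to\Pi(z,-)$, with its $b$-component, resolves the simple $S^z$ because $\Pi$ is semi-free. Use also that its image resolves $S^z$ over the truncation, by the dual of the $4$-angle in \ref{DFun}. The five lemma then closes the induction. As written, however, this key step is missing. Also, $\tau^{-h}=[2]$, not $[-2]$.

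The paper avoids computing any negative cohomology through a structural observation you do not use. Every indecomposable is a summand of some $\tau^{-n}kQ$, and $\tau^{-1}\simeq-\lotimes_{kQ}\theta_1$ with $\theta_1=\RHom_{kQ}(DkQ,kQ)[1]$. Hence $\RHom_{kQ}(\tau^{-n}kQ,\tau^{-m}kQ)\simeq\theta_1^{\lotimes(m-n)}$.

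Because $kQ$ is hereditary, $\theta_1$ has cohomology only in degrees $-1,0$, and $\theta_1^{-1}\simeq DkQ[-1]$ only in degree $1$. So for $m\geq n$ these complexes lie in degrees $\leq0$, where truncation does nothing. For $m<n$ they lie in degrees $>0$, where truncation kills them.

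The truncation on these objects is therefore the Adams-graded tensor category $\rT_{kQ}(\theta_1)=\Pi_2(kQ)$, spread out as $\Pi_2(kQ)\#\Z$. The standard semi-free model of the $2$-Calabi--Yau completion is the double quiver plus degree $-1$ loops with the preprojective differential. Smashed with $\Z$, it is exactly the derived preprojective quiver (\ref{covering}). All the negative-degree cohomology, and the absence of further generators or relations, comes for free from this identification.
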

By considering its subcategories, it easily leads to the description of the truncated derived endomorphism algebra $\REnd_{kQ}(M)^{\leq0}$ for each $M\in\Db(\mod kQ)$, see for example \ref{2term}.
We also observe (\ref{orbit}) that truncation is compatible with taking the orbit categories. This leads to the description of, for example, the truncation of dg cluster categories of Dynkin quivers, see \ref{cluster}.

\subsection*{Acknowledgements}
Some of the results in this paper, including \ref{loc}, were presented at informal seminars in Paris (August 2022) and Cologne (January 2025). The author would like to thank Bernhard Keller and Gustavo Jasso for valuable discussions, which lead to the interpretation of truncation as the derived additivization. The author would also like to thank Osamu Iyama and Ryu Tomonaga for stimulating discussions.

\section{Preliminaries}
\subsection{Conventions on truncations}
For a complex $X=(\cdots\to X^{i-1}\xrightarrow{d} X^i\xrightarrow{d} X^{i+1}\to\cdots)$ and $i\in\Z$, we denote by
\[ X^{\leq i}=( \cdots\to X^{i-1}\xrightarrow{d} Z^iX\xrightarrow{} 0\to\cdots) \]
the truncated complex, where $Z^iX=\Ker d$ is the $i$-cocycle. This is a subcomplex of $X$ and we denote by $X^{>i}$ the quotient $X/X^{\leq i}$. 

If $A$ is a dg algebra, then the truncation $A^{\leq0}$ is a dg subalgebra, and each $A^{\leq i}$ is a dg (bi)module over $A^{\leq0}$. More generally, if $\A$ is a dg category the morphismwise truncation $\A^{\leq0}$ yields a dg category as follows.
\begin{Def}
Let $\A$ be a dg category. Its {\it truncation} $\A^{\leq0}$ is the dg category with
\begin{itemize}
\item the same objects as $\A$,
\item and with the morphism complex $\A^{\leq0}(A,B):=\A(A,B)^{\leq0}$ for each pair of objects $A$ and $B$.
\end{itemize}
\end{Def}
We have a canonical functor $\A^{\leq0}\to\A$, which we sometimes refer to as the truncation morphism. For each $i\in\Z$, the we denote by $\A^{\leq i}$ the dg $(\A^{\leq0},\A^{\leq0})$-bimodule $(A,B)\mapsto\A(A,B)^{\leq i}$, thus $\A^{\leq i}(A,B)=\A(A,B)^{\leq i}$.

We use the following two easy facts on complexes:
\begin{enumerate}
	\item For each complex $X$, there is a canonical isomorphism $\hocolim_{p\geq1}X^{\leq p}\xsimeq X$.
	\item For each complex $X$, there is a canonical isomorphism $X\xsimeq\holim_{p\geq1}X^{\geq -p}$.
\end{enumerate}

\subsection{Connective dg categories and their global dimension}
We start with the fundamental notion which will be used throughout this papar.
\begin{Def}
A dg category $\A$ is called {\it connective} if $H^i\A(A,B)=0$ for every $A,B\in\A$ and $i>0$.
\end{Def}
If $\A$ is a connective dg category, then the subcategory $\add\{\A(-,A)\mid A\in\A\}$ is a {silting subcategory} \cite{KV,AI} of the perfect derived category $\per\A$.

Following \cite{Ye13,Mi21}, we review the notion of global dimension of connective dg categories.
Let $\A$ be a connective dg category. Then its derived category $\rD(\A)$ has a standard $t$-structure $(T^{\leq0},T^{\geq0})$ given by
\[ 
\begin{aligned}
	T^{\leq0}&=\{X\in\rD(\A)\mid H^iX=0 \text{ for all } i>0 \},\\
	T^{\geq0}&=\{X\in\rD(\A)\mid H^iX=0 \text{ for all } i<0 \},
\end{aligned}
\]
and its heart is equivalent to $\Mod H^0\A$.
Let $\P=\Add\{\A(-,A)\mid A\in\A\}\subset\rD(\A)$ be the subcategory of representable modules.
We say that a dg $\A$-module $M$ is {\it uniformly bounded} (resp. {\it uniformly bounded above} etc.) if there is an interval $[a,b]\subset\Z$ (resp. $a\in\Z$ etc.) such that for all $A\in\A$ we have $MA\in\rD^{[a,b]}(\Mod k)$ (resp. $MA\in\rD^{[-\infty,a]}(\Mod k)$ etc). We denote by $\rD^{u,-}(\A)$ the full subcategory of $\rD(\A)$ consiting of objects which are uniformly bounded below. For $M\in\rD^{u,-}(\A)$ we set $\sup M=\sup\{n\in\Z \mid H^n(MA)\neq0 \text{ for some }A\in\A\}$.

For subcategories $\U$ and $\V$ of a triangulated category $\T$, we put
\[ \U\ast\V=\{ X\in\T\mid \text{there is a triangle } U\to X\to V \text{ for some } U\in\U \text{ and } V\in\V\}. \]
\begin{Def}[\cite{Ye13,Mi21}]\label{PD}
Let $\A$ be a connective dg category. Let $M\in\rD^{u,-}(\A)$ with $\sup M=n$. The {\it projective dimension} of $M$ is the minimum $d\in\Z_{\geq0}$ satisfying the following equivalent conditions.
\begin{itemize}
\item $\RHom_\A(M,-)$ takes $\Mod H^0\A$ to $\rD^{[-n,-n+d]}(k)$.
\item $M\in\P[-n]\ast\P[-n+1]\ast\cdots\P[-n+d]$.
\end{itemize}
\end{Def}

For our purpose, we only consider projective dimensions of finitely presented modules over ``coherent'' dg categories.
\begin{Lem}\label{pds}
Let $\A$ be a connective dg category such that $H^0\A$ is right coherent and each $H^i\A$ is finitely presented as a right $H^0\A$-module. For $M\in\mod H^0\A$, the following are equivalent.
\begin{enumerate}
\romanenum
\item\label{largepd} $\pd M\leq n$ in the sense of \ref{PD}.
\item\label{smallpd} $M\in(\add\A)\ast\cdots\ast(\add\A[n])$.
\end{enumerate}
\end{Lem}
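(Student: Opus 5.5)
The implication (ii)$\Rightarrow$(i) follows directly from the second characterization in \ref{PD}. If $M\in(\add\A)\ast\cdots\ast(\add\A[n])$, then in particular $M\in\P\ast\P[1]\ast\cdots\ast\P[n]$, because $\add\A\subset\P$. Since $M$ lies in the heart, $\sup M=0$ when $M\neq0$, so $\pd M\leq n$. Alternatively, one can check the first characterization by induction along the triangles: $\RHom_\A(\A(-,A)[i],N)\simeq N(A)[-i]$ lies in degree $-i$ for $N\in\Mod H^0\A$, so $\RHom_\A(M,N)\in\rD^{[0,n]}$ after the shifts are accounted for with the conventions of \ref{PD}.

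For (i)$\Rightarrow$(ii) I will build a finite ``projective resolution'' step by step, using coherence to keep everything finitely presented. The plan is to construct triangles
\[ M_{i+1}[i]\to P_i\to M_i[i]\quad\text{(suitably shifted)},\qquad M_0=M,\ P_i\in\add\A[i], \]
which peel off one projective at a time. Concretely, set $X_0=M$. Given $X_j\in T^{\leq -j}$ whose top cohomology $H^{-j}X_j$ is finitely presented over $H^0\A$, choose a finite set of generators and hence a map $P_j\to X_j$ with $P_j\in\add\A[j]$ that is surjective on $H^{-j}$. Let $X_{j+1}$ be the shifted cocone, so that there is a triangle $X_{j+1}\to P_j\to X_j$, up to the shift convention. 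The long exact sequence shows $X_{j+1}\in T^{\leq -j-1}$, and its top cohomology $H^{-j-1}X_{j+1}$ sits in an exact sequence
\[ H^{-j-1}P_j\to H^{-j-1}X_j\to H^{-j-1}X_{j+1}\to H^{-j}P_j\to H^{-j}X_j\to 0. \]
Here $H^{-j-1}P_j$ is a sum of copies of $H^{-1}\A(-,A)$, which is finitely presented by hypothesis. Right coherence of $H^0\A$ means finitely presented modules form an abelian subcategory. Hence, if every cohomology of $X_j$ is finitely presented, which holds inductively because all $H^i$ of the $P$'s and of $M$ are, then so is every cohomology of $X_{j+1}$. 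So I will carry the stronger inductive hypothesis that all cohomologies of $X_j$ are finitely presented.

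After $n$ steps we have $M\in\add\A\ast\cdots\ast\add\A[n-1]\ast X_n$, with the ordering of $\ast$ matched to the triangles, and $X_n\in T^{\leq -n}$ with finitely presented cohomology. The main obstacle is to show that $X_n\in\add\A[n]$ using $\pd M\leq n$. I will argue as follows. By the first characterization in \ref{PD}, $\RHom_\A(M,N)\in\rD^{[0,n]}$ for all $N\in\Mod H^0\A$. Dimension shifting along the triangles gives $\Hom(X_n,N[n+1])\cong\Hom(M,N[2n+1])$ up to shift bookkeeping, and the latter vanishes. Moreover $\Hom(X_n,N[m])=0$ for all $m>n$, so $X_n[-n]$ is connective with $\RHom(X_n[-n],N)$ concentrated in degree $0$ for every $N$ in the heart. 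A standard argument then applies. First, $\Hom(X_n[-n],H^0(X_n[-n])[1])=0$, together with the truncation triangle $\tau^{\leq -1}\to X_n[-n]\to H^0$, shows that $X_n[-n]\to H^0(X_n[-n])$ behaves projectively. Next, take a finite projective cover $P\to X_n[-n]$ surjective on $H^0$, with $P\in\add\A$, which exists by finite presentation. Its cocone $K$ lies in $T^{\leq0}$. The vanishing of $\Hom(X_n[-n],K[1])$, checked by induction over the cohomologies of $K$, each in the heart, using boundedness or a Milnor-colimit argument in the unbounded case, makes the triangle split. Hence $X_n[-n]$ is a summand of $P$, so $X_n[-n]\in\add\A$ and $X_n\in\add\A[n]$. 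This last step, in particular the control of the possibly unbounded-below cocone $K$, is where I expect the real work. I would handle it by noting that $\Hom(X_n[-n],-)$ is right-bounded on $T^{\leq 0}$ via a homotopy-limit argument over the truncations $K^{\geq -p}$.
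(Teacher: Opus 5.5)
Your proposal is correct and follows the paper's route. You build triangles with (shifted) $P_j\in\add\A$ surjective on top cohomology, keep everything finitely presented via coherence, and stop when the remainder has projective dimension $0$ and splits off its approximation; the paper simply cites \cite[2.13, 2.14]{Mi21} for the last two facts, which you reprove by dimension shifting and a homotopy-limit argument (the $\lim^1$ term vanishes because $\Hom(Y,\tau^{\geq -p-1}K)\to\Hom(Y,\tau^{\geq -p}K)$ is surjective).

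Only the shift bookkeeping needs fixing. $X_{j+1}$ must be the cone of $P_j\to X_j$, not the cocone. Dimension shifting gives $\Hom(X_n,N[m])\cong\Hom(M,N[m])$ for $m>n$, not $\Hom(M,N[2n+1])$. Finally, $N(A)[-i]$ sits in degree $i$, not $-i$.
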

\begin{proof}
	The implication (\ref{smallpd})$\Rightarrow$(\ref{largepd}) is clear, so we prove the converse. Assume that $\pd M\leq n$. By the coherence assumptions on $\A$ and since $M\in\mod H^0\A$, there are triangles $M_{i+1}\to P_i\to M_i$ with $M_0=M$, $P_i\in\add\A$ and $H^0P_i\to H^0M_i$ is surjective. It follows from \cite[2.13]{Mi21} that $M_l$ must have projective dimension $0$ for some $l\leq n$. By the proof of \cite[2.14]{Mi21}, such $M_l$ is a direct summand of $P_l$, and hence in $\add\A$. This proves (\ref{smallpd}).
\end{proof}
	
We use the following variant(!) of the notion of global dimension of \cite{Mi21} for connective dg categories. This is called weak global dimension in \cite{To}.
\begin{Def}\label{gldim}
Let $\A$ be a connective dg category. The (right) {\it global dimension} of $\A$ is the supremum of projective dimensions of objects in the category of finitely presented modules $\mod H^0\A\subset\rD(\A)$.
\[ \gldim\A=\sup\{\pd M\mid M\in\mod H^0\A\}. \]
\end{Def}
If $H^0\A$ is (left and right) coherent and $H^i\A$ is finitely presented on each side, then by \cite{To} the above notion is left-right symmetric: $\gldim\A=\gldim\A^\op$.

We next show that finiteness of global dimension is preserved under dg quotients, compare \cite[3.10]{Ke11}.
\begin{Prop}\label{gldimA/P}
	Let $\P$ be a connective subcategory of a connective dg category $\A$. Then $\A/\P$ is connective. Also, if $H^0\P\subset H^0\A$ is contravariantly finite, then $\gldim\A/\P\leq\gldim\A$.
\end{Prop}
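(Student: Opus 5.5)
We need to prove two things: that $\A/\P$ is connective, and that $\gldim\A/\P\leq\gldim\A$ when $H^0\P\subset H^0\A$ is contravariantly finite. Write $\B=\A/\P$ for the Drinfeld dg quotient and $\pi\colon\A\to\B$ for the quotient functor.

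\emph{Connectivity.} By Drinfeld's construction, the quotient $\B$ has the same objects as $\A$. It is obtained by freely adjoining, for each $P\in\P$, a contracting homotopy $h_P$ of degree $-1$ with $d h_P=1_P$. Hence $\B(X,Y)$ is the complex of sums of chains
\[ X\to P_1\xrightarrow{h}P_1\to P_2\xrightarrow{h}\cdots\to Y. \]
Each chain contributes a tensor product of morphism complexes of $\A$, shifted to the left by the number of $h$'s. Filtering by the number of $h$'s, the graded pieces are tensor products of connective complexes (up to quasi-isomorphism, since $\A$ is connective) shifted into nonpositive degrees. A spectral sequence argument, or equivalently closure of $\rD^{\leq0}(k)$ under extensions and homotopy colimits, then gives $H^{>0}\B(X,Y)=0$. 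Alternatively, one can use that $\B(-,Y)$ is the image of $\A(-,Y)$ under the localization $\rD(\A)\to\rD(\A)/\Loc\P$. Since $\P$ is connective, the right orthogonal of $\Loc\P$ is compatible with the standard t-structure, and the triangle $\Gamma_\P\A(-,Y)\to\A(-,Y)\to\B(-,Y)$ with $\Gamma_\P\A(-,Y)\in\Loc(\P)\subset T^{\leq0}$ shows $\B(-,Y)\in T^{\leq 0}$.

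\emph{Global dimension.} First, $H^0\B=H^0\A/[\P]$, the ideal quotient; this follows from the description above, since in degree $0$ the only new relations come from $d(h)=1$. So a finitely presented $H^0\B$-module $N$ is a finitely presented $H^0\A$-module annihilated by $\P$. Let $d=\gldim\A$. View $N\in\mod H^0\A$; by Definition~\ref{PD}, $N\in\P_\A\ast\P_\A[1]\ast\cdots\ast\P_\A[d]$ (using Lemma~\ref{pds} where the coherence hypotheses are available). Applying the induction functor $-\lotimes_\A\B$, which is exact and sends representables to representables, gives $N\lotimes_\A\B\in\P_\B\ast\cdots\ast\P_\B[d]$. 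It therefore suffices to show $N\lotimes_\A\B\simeq N$, i.e.\ that $N$, viewed in $\rD(\A)$, lies in the right orthogonal $\Loc(\P)^{\perp}$. Equivalently, we need $\RHom_\A(\A(-,P),N)=N(P)=0$ for all $P\in\P$. This holds because $N$ is a module over $H^0\A/[\P]$: the identity of $P$ acts as zero, so $N(P)=0$. Then the localization identifies $N$ with its image, whose projective dimension over $\B$ is at most $d$.

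\emph{Main obstacle.} The subtle point is where contravariant finiteness enters. It ensures that finitely presented $H^0\B$-modules are finitely presented over $H^0\A$, so that $\gldim\A$ applies to them. Given a presentation $\B(-,X_1)\to\B(-,X_0)\to N\to0$ over $H^0\B$, the $H^0\A$-module $N$ is the cokernel of
\[ H^0\A(-,X_1)\oplus H^0\A(-,P_{X_0})\to H^0\A(-,X_0), \]
where $P_{X_0}\to X_0$ is a right $H^0\P$-approximation; the approximation accounts for the maps factoring through $\P$. I would check this carefully. The rest is formal.
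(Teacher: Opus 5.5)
Your proof is correct and uses the same ingredients as the paper's. The quotient $\A\to\A/\P$ is a localization with fully faithful restriction. The map $H^0\A\to H^0(\A/\P)$ is full with kernel the ideal $[\P]$. Contravariant finiteness is used exactly to make finitely presented $H^0(\A/\P)$-modules finitely presented over $H^0\A$, and your cokernel presentation for this is right.

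The routes differ in two places.
\begin{enumerate}
\item For connectivity, the paper uses only the triangle $\A\lotimes_\P\A\to\A\to\A/\P$, whose first term is connective; this is model-independent. Your main argument instead goes through Drinfeld's explicit model, filtered by the number of contracting homotopies, which is fine over a field.
\item For the dimension bound, the paper uses the $\RHom$-characterization in Definition~\ref{PD}: full faithfulness of restriction gives $\RHom_{\A/\P}(X,Y)\simeq\RHom_\A(X,Y)$, which is bounded by $\gldim\A$. You instead use the $\ast$-characterization and push a resolution over $\A$ along $-\lotimes_\A\A/\P$, using that inducing a restricted module returns the module. This is the adjoint form of the same fact, with the small advantage of producing the resolution over $\A/\P$ explicitly.
\end{enumerate}
Your check $N(P)=0$ is redundant, since $N$ is restricted from $\A/\P$ and so automatically lies in $\Loc(\P)^{\perp}$, but it is harmless.

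There is one slip, in your alternative connectivity argument: $\Loc(\P)\subset T^{\leq0}$ is false, because $\Loc(\P)$ is closed under all shifts. What holds is that $\Gamma_\P\A(-,Y)\simeq(\A\lotimes_\P\A)(-,Y)$ is a derived tensor product of connective modules over the connective category $\P$. It therefore lies in the subcategory generated by $\P$ under coproducts, extensions and nonnegative shifts only, and that subcategory is contained in $T^{\leq0}$. This is precisely the paper's argument. Your primary argument via the Drinfeld filtration is unaffected.
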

\begin{proof}
	By the triangle $\A\lotimes_\P\A\to\A\to\A/\P$, we see that the dg quotient $\A/\P$ is connective. The same triangle shows that the canonical map $H^0\A\to H^0(\A/\P)$ is surjective (on each morphism space), and that $H^0(\A/\P)(-,A)$ is finitely presented for each $A\in\A$ as an $H^0\A$-module since $H^0\P\subset H^0\A$ is contravariantly finite.
	Since $\A\to\A/\P$ is a dg quotient, the restriction functor $\rD(\A/\P)\to\rD(\A)$ is fully faithful, and it restricts to the functor between the hearts $\Mod H^0(\A/\P)\to\Mod H^0\A$, and therefore to $\mod H^0(\A/\P)\to\mod H^0\A$ since $H^0(\A/\P)(-,A)\in\mod H^0\A$. 
	It follows that for each $X,Y\in\mod H^0(\A/\P)$, we have $\RHom_{\A/\P}(X,Y)\xsimeq\RHom_\A(X,Y)$ is concentrated in degree $\leq\gldim\A$. Therefore, we deduce that $\gldim\A/\P\leq\gldim\A$, as desired.
\end{proof} 

We next observe that finiteness of global dimension is preserved under triangular extensions.
Let $\A$ and $\B$ be dg categories, let $M$ be a $(\B,\A)$-bimodule. Then, let $\C$ be the dg category whose
\begin{itemize}
\item objects are disjoint union of objects of $\A$ and $\B$,
\item morphisms are given by $\C(U,V)=\begin{cases} \A(U,V)& \text{ if } U,V\in\A \\ M(U,V) & \text{ if } U\in\A \text{ and } V\in\B \\ 0& \text{ if } U\in\B \text{ and } V\in\A \\ \B(U,V) & \text{ if } U,V\in\B\end{cases}$.
\end{itemize}
We denote this category by $\C=\begin{pmatrix}\A&0\\ M&\B\end{pmatrix}$.
Then a dg $\C$-module $Z$ is given by a triple $(X,Y;f)$, where $X$ is an $\A$-module, $Y$ is a $\B$-module, and $f$ is an $\A$-linear map $Y\lotimes_\B M\to X$. 
\begin{Lem}\label{ext}
Let $Z=(X,Y;f)\in\rD(\C)$ and $Z^\prime=(X^\prime,Y^\prime;f^\prime)\in\rD(\C)$, and denote $N=\cone f\in\rD(\A)$. Then there is a triangle
	\[ \xymatrix{ \RHom_\A(N,X^\prime)\ar[r]&\RHom_\C(Z,Z^\prime)\ar[r]&\RHom_\B(Y,Y^\prime) }. \]
\end{Lem}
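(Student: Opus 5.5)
We use the standard recollement for the triangular dg category $\C=\begin{pmatrix}\A&0\\ M&\B\end{pmatrix}$, given by the idempotent objects of $\A$ and $\B$. Let $e_\A,e_\B$ denote the two "idempotents", i.e.\ the full subcategories $\A,\B\subset\C$. Restriction to $\B$ sends $Z=(X,Y;f)\mapsto Y$. Its left adjoint is induction $j_!\colon Y\mapsto(Y\lotimes_\B M,Y;\mathrm{id})$. Restriction to $\A$ sends $Z\mapsto X$ and has the fully faithful right adjoint $i_\ast\colon X\mapsto(X,0;0)$. Here we use that $\C(U,V)=0$ for $U\in\B,V\in\A$, so that $\A$-modules extend by zero. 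The plan is to write $Z$ in a triangle built from these pieces and apply $\RHom_\C(-,Z')$.

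First I write down the canonical triangle in $\rD(\C)$
\[ j_!Y \to Z \to i_\ast N, \]
i.e.\ $(Y\lotimes_\B M,Y;\mathrm{id})\to(X,Y;f)\to(N,0;0)$. The first map is the counit of the adjunction $j_!\dashv j^\ast$, given by $f$ on the $\A$-component and the identity on the $\B$-component. Its cone is computed componentwise, since restrictions to $\A$ and $\B$ are exact and jointly detect objects. On $\B$ the cone is $\cone(\mathrm{id}_Y)=0$, and on $\A$ it is $\cone f=N$. So the cone is the object with $\B$-part $0$, which is $i_\ast N$. I should check that the structure map of the cone is the zero map $0\lotimes_\B M\to N$; this is forced because the $\B$-part vanishes.

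Next I apply $\RHom_\C(-,Z')$ to get the triangle
\[ \RHom_\C(i_\ast N,Z')\to\RHom_\C(Z,Z')\to\RHom_\C(j_!Y,Z'). \]
The right term is $\RHom_\B(Y,j^\ast Z')=\RHom_\B(Y,Y')$ by the adjunction $j_!\dashv j^\ast$. This is just the dg Yoneda/induction adjunction, since $j_!=-\lotimes_\B e_\B\C$ and $e_\B\C$ restricted to $\B$ is $\B$. For the left term I need $\RHom_\C(i_\ast N,Z')\simeq\RHom_\A(N,X')$, i.e.\ that $i_\ast$ is left adjoint to restriction to $\A$. This holds because $i_\ast$ is also induction $-\lotimes_\A\A\C$: the module $\A\C$, the restriction of $\C$ to $\A$ in the relevant variable, is just $\A$ extended by $0$, precisely because $\C(\B,\A)=0$. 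Hence $i_\ast=\mathrm{ind}_\A$, and $\mathrm{ind}_\A\dashv\mathrm{res}_\A$ gives the claim, with $\mathrm{res}_\A Z'=X'$.

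The main obstacle is bookkeeping: getting the variance of the triangular category right, i.e.\ which of $\A,\B$ is a "projective" corner. This decides whether induction from $\A$ coincides with extension by zero, and whether induction from $\B$ produces $Y\lotimes_\B M$ on the $\A$-part. I would verify both on representables $\C(-,U)$ for $U\in\A$ and $U\in\B$, then extend by the fact that both sides are triangle functors commuting with sums. Given those identifications, the triangle follows formally.
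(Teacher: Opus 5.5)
Your proof is correct. The paper gives no argument of its own here; it only says the lemma is shown as in [BHMPT, 6.3]. Your recollement argument is therefore a self-contained proof of it. It uses:
\begin{itemize}
\item the triangle $j_!j^\ast Z\to Z\to i_\ast i^\ast Z$, with $j_!Y=(Y\lotimes_\B M,Y;\mathrm{id})$ and $i^\ast Z=\cone f=N$;
\item the adjunctions $j_!\dashv\res_\B$ and $i_\ast=\mathrm{ind}_\A\dashv\res_\A$.
\end{itemize}

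The more hands-on route usually taken for triangular matrix algebras is different. One writes $\RHom_\C(Z,Z')$ as the homotopy fibre of
\[ \RHom_\A(X,X')\oplus\RHom_\B(Y,Y')\to\RHom_\A(Y\lotimes_\B M,X'),\qquad (g,h)\mapsto gf-f'(h\lotimes_\B M). \]
Then one compares it with $\RHom_\A(N,X')=\mathrm{fib}(f^\ast)$ via the octahedral axiom. Your version avoids the explicit fibre computation and shows the maps in the triangle are induced by $j_!Y\to Z\to i_\ast N$.

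There is one slip to fix. In your first paragraph you call $i_\ast\colon X\mapsto(X,0;0)$ the fully faithful \emph{right} adjoint of restriction to $\A$. It is the \emph{left} adjoint, which is what you correctly use later. The right adjoint of $\res_\A$ is coinduction $X'\mapsto(X',\RHom_\A(M,X');\mathrm{ev})$, while $i_\ast$ is right adjoint to $Z\mapsto\cone f$.

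Also, your claim that the cone is $i_\ast N$ can be made precise. If $W\in\rD(\C)$ has $W|_\B\simeq0$, then the counit $i_\ast(W|_\A)\to W$ is an isomorphism. This holds because it becomes an isomorphism after restricting to $\A$ (since $\res_\A i_\ast\simeq\mathrm{id}$) and to $\B$ (where both sides vanish).
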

\begin{proof}
	This is shown in a similar way as in \cite[6.3]{BHMPT}.
\end{proof}
	
\begin{Prop}\label{SOD}
Let $\A$ and $\B$ be connective dg categories, let $M$ be a $(\B,\A)$-bimodule with $\sup M\leq0$, and let $\C=\begin{pmatrix}\A&0\\ M&\B\end{pmatrix}$. Then there are inequalities
\[ \sup\{\gldim\A,\gldim\B\}\leq \gldim\C\leq \sup\{ \gldim\A, \pd_\A\! M+\gldim\B+1\}. \]
\end{Prop}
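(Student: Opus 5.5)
Both inequalities come from Lemma~\ref{ext} together with the description of $H^0\C$-modules. Since $\sup M\leq 0$, the category $\C$ is connective, and $H^0\C=\begin{pmatrix}H^0\A&0\\ H^0M&H^0\B\end{pmatrix}$. A module in the heart $\Mod H^0\C$ is therefore a triple $(X,Y;f)$ with $X\in\Mod H^0\A$, $Y\in\Mod H^0\B$, and $f\colon Y\lotimes_\B M\to X$, where $Y\lotimes_\B M$ has $\sup\leq0$.

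\emph{Lower bound.} For $X,X'\in\mod H^0\A$, view them as $\C$-modules $(X,0;0)$ and $(X',0;0)$; these lie in $\mod H^0\C$ because the representable $\C(-,A)$ for $A\in\A$ is $(\A(-,A),0)$. Lemma~\ref{ext} gives $\RHom_\C\simeq\RHom_\A(X,X')$, hence $\gldim\A\leq\gldim\C$, using the first characterization of projective dimension in \ref{PD}. For $\B$, take $Y,Y'\in\mod H^0\B$ and consider the $\C$-modules $(0,Y;0)$ and $(H^0(Y'\lotimes_\B M),Y';\mathrm{can})$. This choice is designed so that $\RHom_\C((0,Y;0),-)$ detects $\RHom_\B(Y,-)$. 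The cleaner alternative is to note that $\B\simeq e\C e$ with restriction $Z\mapsto Y$, whose left adjoint $-\lotimes_\B e\C$ sends $Y$ to $(Y\lotimes_\B M,Y;\mathrm{id})$, with $N=\cone(\mathrm{id})=0$. Lemma~\ref{ext} then gives $\RHom_\C((Y\lotimes M,Y;\mathrm{id}),(0,Y';0))\simeq\RHom_\B(Y,Y')$. The target $(0,Y';0)$ lies in the heart. The source lies in the heart only up to truncation, so I would instead test with $\RHom_\C(Z,(0,Y';0))$ for $Z=(H^0(Y\lotimes M),Y;\mathrm{can})\in\mod H^0\C$. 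Here $N=\cone(Y\lotimes M\to H^0(Y\lotimes M))$ is concentrated in degrees $\leq -1$ and $X'=0$, so the triangle again yields $\RHom_\B(Y,Y')$. This gives $\gldim\B\leq\gldim\C$.

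\emph{Upper bound.} Take $Z=(X,Y;f)$ and $Z'=(X',Y';f')$ in $\mod H^0\C$. Lemma~\ref{ext} reduces the bound to showing that $\RHom_\B(Y,Y')$ is concentrated in degrees $\leq\gldim\B$, which holds by definition, and that $\RHom_\A(N,X')$ is concentrated in degrees $\leq\sup\{\gldim\A,\pd_\A M+\gldim\B+1\}$. The second part needs an estimate on $N=\cone(Y\lotimes_\B M\to X)$. Since $\pd Y\leq g_B:=\gldim\B$, we have $Y\in\P_\B\ast\P_\B[1]\ast\cdots\ast\P_\B[g_B]$. Tensoring with $M$ gives
\[ Y\lotimes_\B M\in \add M\ast\cdots\ast(\add M)[g_B], \]
where $M$ stands for the $\A$-modules $M(-,B)$ with $\sup\leq 0$ and $\pd\leq p:=\pd_\A M$. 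Hence $\RHom_\A(Y\lotimes M,X')$ is concentrated in degrees $\leq g_B+p$. For $N$, the triangle $Y\lotimes M\to X\to N$ gives a long exact sequence in which $\RHom_\A(N,X')$ sits between $\RHom_\A(X,X')$, concentrated in degrees $\leq\gldim\A$, and $\RHom_\A(Y\lotimes M,X')[-1]$, concentrated in degrees $\leq g_B+p+1$. This gives exactly the required bound.

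The main obstacle is justifying the first characterization in \ref{PD}, namely testing projective dimension only against $\mod H^0\C$ (Definition~\ref{gldim}) while intermediate objects such as $Y\lotimes M$ and $N$ are not in the heart. I would handle this by working with the $\ast$-decompositions and the boundedness of $\RHom$ into heart objects, as above, which only requires $\sup\leq0$ and the projective dimension bounds. The identification of $\mod H^0\C$ with triples is the other point to check, which I would do via representables.
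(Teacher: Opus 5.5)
\textbf{Upper bound: a gap shared with the paper.} Your upper bound follows the paper's argument: Lemma~\ref{ext}, the triangle estimate for $N=\cone f$, and $\pd_\A(Y\lotimes_\B M)\le\pd_\B Y+\pd_\A M$. It also inherits the paper's gap. The step ``$\RHom_\A(X,X')$ is concentrated in degrees $\le\gldim\A$'' needs $X\in\mod H^0\A$, because Definition~\ref{gldim} only controls finitely presented modules.

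For $Z=(X,Y;f)\in\mod H^0\C$ you do get $Y\in\mod H^0\B$, since the representables $H^0\C(-,A)$ vanish on $\B$. But $X$ is a cokernel of a map between finite sums of $H^0\A(-,A)$'s and $H^0M(-,B)$'s. So $X$ is finitely presented only when the $H^0M(-,B)$ are.

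This cannot be patched under the stated hypotheses. Here is a counterexample.
\begin{itemize}
\item Let $R$ be a von Neumann regular $k$-algebra with a module $Q$ of projective dimension $\ge3$. Such rings exist, e.g.\ free Boolean rings on many generators, by Pierce.
\item Let $M=(F_1\to F_0)$ be the first two terms of a free resolution of $Q$, and take $\A=R$, $\B=k$.
\item Finitely presented $R$-modules are projective, so $\gldim\A=\gldim\B=0$. Also $\pd_\A M\le1$, so the claimed bound is $2$.
\item Take the representable $Z=H^0\C(-,B)=(Q,k;\mathrm{can})\in\mod H^0\C$. The triangle $(\tau^{\le-1}M,0;0)\to\C(-,B)\to Z$ gives $\RHom_\C(Z,(W,0;0))\simeq\RHom_R(\Omega^2Q,W)[-2]$.
\item Its $H^3$ is $\Ext^1_R(\Omega^2Q,W)$, which is nonzero for suitable $W\in\Mod R$ because $\Omega^2Q$ is not projective. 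Hence $\gldim\C\ge3$.
\end{itemize}

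The paper's proof makes the same tacit step (``$\pd X\le\gldim\A$''), so the defect is in the statement. Your proof goes through unchanged in either of two cases:
\begin{itemize}
\item you add the hypothesis that each $H^0M(-,B)$ is finitely presented over $H^0\A$, which gives $X\in\mod H^0\A$; or
\item you read $\gldim$ as the supremum over all of $\Mod H^0$.
\end{itemize}
Two smaller points. The test objects $Z'$ must range over $\Mod H^0\C$, not $\mod H^0\C$. And the ``main obstacle'' you name is not the real one: heart membership of $Y\lotimes M$ and $N$ is already handled by your $\ast$-estimate. What is missing is the finiteness of $X$.

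\textbf{Lower bound: a genuinely different route.} The paper writes $\A\simeq\C/\B$ and $\B\simeq\C/\A$ and applies \ref{gldimA/P}. For $\B\simeq\C/\A$, that proposition needs $H^0\A\subset H^0\C$ to be contravariantly finite, i.e.\ each $H^0M(-,B)$ finitely generated over $H^0\A$. This is not assumed, and the restriction $(0,Y;0)$ of $Y\in\mod H^0\B$ need not be finitely presented over $H^0\C$.

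Your test pair $Z=(H^0(Y\lotimes_\B M),Y;\mathrm{can})$, $Z'=(0,Y';0)$ avoids this. The $\A$-term of Lemma~\ref{ext} vanishes because $X'=0$, so the degrees of $N$ play no role. Moreover $Z$ is finitely presented with no extra hypothesis: $H^0(Y\lotimes_\B M)=Y\otimes_{H^0\B}H^0M$, and right exactness identifies $Z$ with the cokernel of $H^0\C(-,B_1)\to H^0\C(-,B_0)$ for any presentation $H^0\B(-,B_1)\to H^0\B(-,B_0)\to Y\to0$. You assert $Z\in\mod H^0\C$ without this check and should include it.

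For $\A$, your use of $(X,0;0)$ is just the paper's quotient argument unwound. So your lower bound holds exactly as stated, whereas the paper's needs an unstated finiteness assumption.
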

\begin{proof}
	Note that $\A$ is a dg quotient of $\C$ by $\B$, and similarly $\B$ is a dg quotient of $\C$ by $\A$, thus the first inequality follows from \ref{gldimA/P}.
	
	To prove the second one, we note some easy obvervations.
	\begin{enumerate}
	\item If $L_1\to L_2\to L_3$ is a triangle, then $\pd L_3\leq \sup\{\pd L_2,\pd L_1+1\}$.
	\item We have $\pd_\A(Y\lotimes_\B M)\leq \pd_\B\!Y+\pd_\A\!M$.
	\end{enumerate}
	Indeed, the first assertion is clear, and the second one follows from the adjunction $\RHom_\A(Y\lotimes_\B M,-)=\RHom_\B(Y,\RHom_\A(M,-))$.
	
	Now, let $Z=(X,Y;f)$ present an $H^0\C$-module, in which case $X$ is an $H^0\A$-module and $Y$ is an $H^0\B$-module. Let $N=\cone(Y\lotimes_\B M\xrightarrow{f} X)\in\rD(\A)$ as above. Then the desired inequality follows as
	\[
	\begin{aligned}
		\gldim\C&\leq \sup\{\pd N,\gldim\B\} && \text{(by \ref{ext})} \\
		&\leq \sup\{ \pd X,\pd(Y\lotimes_\B M)+1,\gldim\B\} && \text{(by (1))}\\
		&\leq \sup\{ \gldim\A, \pd_\A\! M+\pd_\B\! Y+1,\gldim\B\}&& \text{(by (2))}\\
		&\leq\sup\{ \gldim\A, \pd_\A\! M+\gldim\B+1\}.&&\qquad\qquad\qedhere 
	\end{aligned}
	\]
\end{proof}

Finally, we discuss the relationship between another possible notion of finiteness of global dimension. Recall that a dg category is {\it smooth} if it is perfect as a bimodule.
\begin{Prop}\label{smooth}
Each smooth connective dg category has finite global dimension.
\end{Prop}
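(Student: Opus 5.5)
We use the standard argument that a perfect diagonal bimodule bounds the projective dimension of every module by the "amplitude" of a perfect resolution of the diagonal. Let $\A$ be smooth and connective. Since $\A\in\per(\A^e)$ and $\A^e$ is connective, $\add\{\A^e(-,(A,B))\}$ is a silting subcategory of $\per\A^e$. So $\A$ lies in a finite extension
\[ \A\in\P^e[a]\ast\P^e[a+1]\ast\cdots\ast\P^e[b] \]
for some integers $a\leq b$, where $\P^e$ denotes the finite direct sums of representables $\A(-,A)\otimes\A(B,-)$. This uses the standard fact that perfect objects over a connective dg category lie in finite extensions of shifted additive closures of representables. Put $d=b-a$.

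Now let $M\in\mod H^0\A$ and apply $M\lotimes_\A-$ to this filtration. We have $M\lotimes_\A\A\simeq M$, and
\[ M\lotimes_\A(\A(-,A)\otimes\A(B,-))\simeq M(B)\otimes_k\A(-,A). \]
Since $M$ is concentrated in degree $0$, $M(B)$ is a $k$-module in degree $0$, and $M(B)\otimes_k\A(-,A)$ is a direct sum of copies of $\A(-,A)$, possibly infinite. Here we assume, as is implicit, that $\A$ is $k$-flat (or that $k$ is a field), so $M(B)$ is flat or free over $k$; otherwise a k-projective cofibrant replacement of $\A$ handles it. Hence $M\lotimes_\A-$ sends $\P^e[i]$ into $\Add\P[i]$, and we get
\[ M\in\Add\P[a]\ast\cdots\ast\Add\P[b], \]
with $\P=\Add\{\A(-,A)\}$.

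It remains to bound projective dimension via the first criterion of \ref{PD}. For $N\in\Mod H^0\A$, the complex $\RHom_\A(\Add\P[i],N)$ is a product of $N(A)[-i]$, concentrated in degree $i$. So $\RHom_\A(M,N)$ is concentrated in $[a,b]$, with $a,b$ independent of $M$ and $N$. Since $\sup M=0$, the criterion requires amplitude in $[0,d']$. Upper degree $\leq b$ gives $\pd M\leq\max(b,0)$, and since $\RHom_\A(M,N)$ has no negative cohomology for modules in the heart, this is a valid bound. Thus $\gldim\A\leq\max(b,0)<\infty$.

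The main obstacle is step one: making the filtration of $\A$ by shifted representable bimodules precise, including that the silting property holds for the possibly non-small-generated $\per\A^e$ and that the filtration length is finite. We justify it by noting that $\per\A^e=\thick(\text{representables})$. With the silting co-t-structure, every object of $\per\A^e$ then lies in $\add\P^e[a]\ast\cdots\ast\add\P^e[b]$, by the standard result of \cite{AI}. Flatness over $k$ is a secondary technical point.
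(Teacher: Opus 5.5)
Your argument is correct and is essentially the paper's proof: filter the diagonal bimodule by shifted representable bimodules, apply $M\lotimes_\A-$ to obtain a filtration of $M$ by shifted objects of $\Add\P$, and read off the projective dimension. The only real difference is that you allow an arbitrary window $[a,b]$ and discard negative degrees because $M,N$ lie in the heart, whereas the paper uses $\sup\A=0$ to start the filtration at $\A^e$ directly. Your variant has the small advantage of not needing the fact that a perfect object with $\sup=0$ admits a filtration starting in degree $0$.

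One correction to your side remark about the base ring. The step $M(B)\otimes_k\A(-,A)\in\Add\P$ genuinely requires $k$ to be a field, which the paper tacitly assumes. Neither $k$-flatness of $\A$ nor a cofibrant replacement of $\A$ rescues it, and in fact the statement fails over a general base. For instance, $\A=k=k_0[x]/(x^2)$ is smooth and connective over $k$, but it has infinite global dimension.
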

\begin{proof}
	Let $\A$ be a smooth dg category, that is, $\A\in\per\A^e$. If $\A$ is connective, then so is $\A^e$, and since $\sup\A=0$, we have $\A\in\A^e\ast\A^e[1]\ast\cdots\ast\A^e[n]$ for some $n\geq0$. Then for each $X\in\Mod H^0\A$, applying $X\lotimes_\A-$ yields $X\in\A\ast\A[1]\ast\cdots\ast\A[n]$, thus $\pd X\leq n$.
\end{proof}

We also prove the converse under certain finiteness assumptions.
\begin{Prop}\label{converse}
Let $A$ be a connective dg algebra over a field $k$. Suppose that each cohomology is finite dimensional and $H^0A/\rad H^0A$ is separable over $k$. If $A$ has finite global dimension then $A$ is smooth.
\end{Prop}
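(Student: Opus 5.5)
We aim to show that $A\in\per A^e$, i.e.\ that $A$ is a compact object of $\rD(A^e)$. The plan is to reduce this to a finiteness statement for the simple module $S:=H^0A/\rad H^0A$, and then transfer it to the bimodule level using separability.

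First, we use finite global dimension. Since each $H^iA$ is finite dimensional, $H^0A$ is a finite dimensional algebra, hence coherent, and each $H^iA$ is finitely presented. So \ref{pds} applies to $S\in\mod H^0A$. With $d=\gldim A<\infty$ we get $S\in\add A\ast\add A[1]\ast\cdots\ast\add A[d]$; in particular $S\in\per A$.

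Second, we pass from modules to bimodules. By a standard Koszul-duality type argument, $\rD(A)$ is generated by $S$ in a suitable sense, namely $\thick S$ contains all finite dimensional modules. But we need a statement about $A$ as a bimodule, and for this we use the following criterion: for a connective dg algebra with finite dimensional cohomology and $S$ separable over $k$, $A$ is smooth if and only if $S\in\per A$.

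The intended route is the minimal semifree resolution of the diagonal. Since $S$ is separable, $S^e=S\otimes_kS^\op$ is semisimple, and $S$ is projective as an $S^e$-module. We build a minimal semifree resolution of $A$ over $A^e$ by iterated cones of $A\otimes_k V_i\otimes_k A$, where the $V_i$ are graded $S$-bimodules. These cells correspond to $\Ext_{A^e}(A,S\otimes_k S^\op)$, or to $\Tor^{A^e}(A,S^e)$. Separability gives
\[ S\lotimes_A A\lotimes_A S\simeq S\lotimes_A S \]
as $S$-bimodules, and this complex computes the generators $V=\bigoplus V_i$. Finite global dimension, via $S\in\A\ast\cdots\ast\A[d]$ with finite multiplicities, shows that $\Tor^A(S,S)$ is finite dimensional in total: it is concentrated in degrees $[-d,0]$, and each piece is finite dimensional because $S$ is a perfect complex built from finitely many projectives. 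Hence the minimal resolution of $A$ over $A^e$ has finitely many cells, so $A\in\per A^e$.

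The main obstacle is making "minimal semifree resolution with generators $\Tor^A(S,S)$" rigorous for dg algebras that are not finite dimensional at chain level. One option is to first replace $A$ by a minimal model, that is, a complete augmented $S$-algebra built from a finite-dimensional quiver with degrees $\leq 0$, which is possible by connectivity, finite-dimensional cohomology and separability. An alternative is to argue directly with the bimodule triangle
\[ \Omega\to A\otimes_kA\to A. \]
Here one checks that $A\otimes_kS\otimes_kA$-type cells suffice, using the lifting property. There is also a convergence issue: a possibly infinite cell filtration must terminate. This is controlled by the vanishing of $\Tor^A(S,S)$ outside $[-d,0]$ together with Nakayama for the minimal model, and we expect this part to need the most care.
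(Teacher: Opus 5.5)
Your reduction is the paper's argument with $\mathrm{Tor}$ in place of $\Ext$. The paper notes that $\RHom_{A^e}(A,\cHom_k(S,S))\simeq\RHom_A(S,S)$ is bounded by finite global dimension; separability guarantees that $\cHom_k(S,S)$ is the direct sum of all simple $H^0(A^e)$-modules. You use instead $A\lotimes_{A^e}(S\otimes_kS^{\op})\simeq S\lotimes_AS$, which by \ref{pds} is concentrated in degrees $[-d,0]$ with finite dimensional cohomology. That is fine. One correction: separability is not what gives $S\lotimes_AA\lotimes_AS\simeq S\lotimes_AS$, which is trivial. Its role is to make $S\otimes_kS^{\op}$ semisimple, so that it is the top of $H^0(A^e)=H^0A\otimes_kH^0A^{\op}$ and can detect the generators of a minimal resolution over $A^e$.

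The gap is the step you defer, namely that bounded $\mathrm{Tor}$ against the top forces a finite minimal resolution of $A$ over $A^e$. This is the actual content of the proof, and you only list possible strategies for it. Minimal models are unnecessary. A complete model would even add a problem, since $A\in\per A^e$ refers to the uncompleted $A\otimes_kA^{\op}$. The paper isolates the needed statement as Lemma~\ref{pd} and proves it directly in the derived category, as follows.
Let $B$ be connective with finite dimensional cohomology in each degree; this holds for $B=A^e$ because $A$ is connective. Let $M$ satisfy $\sup M\leq0$ with finite dimensional cohomology in each degree, as $M=A$ does. Choose triangles $M_{i+1}\to P_i\to M_i$ with $M_0=M$, $P_i\in\add B$, and $H^0P_i\to H^0M_i$ a projective cover over the finite dimensional algebra $H^0B$.
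Each $M_i$ again has $\sup\leq0$ and finite dimensional cohomology. Hence the cone of $\Tot(P_n\to\cdots\to P_0)\to M$ is $M_{n+1}[n+1]$, and the homotopy colimit recovers $M$. Let $S_B$ be the top of $H^0B$ on the appropriate side. Minimality makes the maps $\Hom_B(P_i,S_B)\to\Hom_B(P_{i+1},S_B)$ vanish, so $\RHom_B(M,S_B)\simeq\prod_n\Hom_B(P_n,S_B)[-n]$. Boundedness then forces $P_n=0$ for $n\gg0$, so $M$ is perfect.
In your $\mathrm{Tor}$ version, the same triangles and their long exact sequences give $H^{-n}(M\lotimes_BS_B)\cong H^0P_n\otimes_{H^0B}S_B$. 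This is exactly the ``Nakayama plus vanishing'' you anticipated. Inserting this lemma completes your proof.
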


We need one preparation.
\begin{Lem}[{c.f.\,\cite[4.2.2]{KeL}}]\label{pd}
Let $A$ be a connective dg algebra such that each cohomology is finite dimensional and let $S$ be the direct sum of all simple $H^0A$-modules. Let $M$ be a dg module which is bounded above and each cohomology is finite dimensional. Then $M\in\per A$ if and only if $\RHom_A(M,S)\in\per k$.
\end{Lem}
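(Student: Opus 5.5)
We prove the two implications separately. Throughout, $A$ is connective with finite-dimensional cohomologies, $S$ is the sum of the simple $H^0A$-modules, and we write $A\ast A[1]\ast\cdots$ for iterated extensions in $\rD(A)$. We use the standard $t$-structure on $\rD(A)$, whose heart is $\Mod H^0A$, together with the fact that $S$ lies in this heart.

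\emph{Only if.} Suppose $M\in\per A$. Then $M\in\thick A$, so $\RHom_A(M,S)\in\thick\RHom_A(A,S)=\thick S_k$. Since $S$ is finite dimensional over $k$, this lies in $\per k$.

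\emph{If.} Suppose $\RHom_A(M,S)\in\per k$. We build a minimal projective resolution of $M$ step by step.

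\begin{enumerate}
\item Let $n=\sup M$. We may assume $M\neq0$; otherwise there is nothing to prove.
\item The module $H^nM$ is finite dimensional, hence finitely generated over $H^0A$. Take its projective cover $H^0P_0\to H^nM$, where $P_0\in\add A$ is a lift of an idempotent decomposition. Since each $H^0A$-projective lifts to $\add A$, this is possible.
\item Because $\Hom(A[-n],M)=H^nM$, the cover lifts to a morphism $P_0[-n]\to M$. Let $M_1$ be its cocone, so we have a triangle $M_1\to P_0[-n]\to M$.
\item Since $H^nP_0[-n]\to H^nM$ is surjective, $\sup M_1\leq n$. Each $H^iM_1$ is finite dimensional, because all $H^iA$ are.
\item By minimality of the projective cover, the map $\Hom(P_0[-n],S[-n])\to\Hom(M_1,S[-n])$ vanishes. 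Applying $\RHom_A(-,S)$ to the triangle then gives an isomorphism
\[ H^{j}\RHom_A(M_1,S)\simeq H^{j+1}\RHom_A(M,S)\quad\text{in the relevant range}, \]
after accounting for the contribution $\RHom(P_0[-n],S)$, which is $\Hom(P_0,S)$ placed in degree $n$.
\end{enumerate}

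The key bookkeeping identity is $\dim H^{-n}\RHom_A(M,S)=\dim\Hom_{H^0A}(H^nM,S)$, which equals the number of summands of $P_0$. This holds because $S$ is in the heart and $M\in T^{\leq n}$.

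Iterating produces triangles $M_{i+1}\to P_i[-n_i]\to M_i$ with $n_i$ non-increasing. The multiplicity of simple tops in degree $-m$ equals $\dim H^{-m}\RHom_A(M,S)$, since all the connecting maps are minimal. Finiteness of $\RHom_A(M,S)$ therefore gives two conclusions. First, only finitely many $P_i$ are nonzero. Second, for large $i$, $\RHom_A(M_i,S)=0$.

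It remains to show that $\RHom_A(M_i,S)=0$ forces $M_i=0$. If $M_i\neq0$, then $\sup M_i=m$ is finite and $H^mM_i$ is a nonzero finite-dimensional $H^0A$-module. Such a module has a simple quotient, giving a nonzero map $M_i\to H^mM_i[-m]\to S'[-m]$, which is a contradiction. Hence $M$ is a finite iterated extension of shifts of objects of $\add A$, so $M\in\per A$.

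\textbf{Main obstacle.} The hardest part is the degree bookkeeping in the middle steps. We must show that the minimal resolution really makes the degrees and dimensions of $\RHom_A(M_i,S)$ strictly "peel off" from $\RHom_A(M,S)$. This has to come out finite in total, not just finite in each degree. That requires the vanishing $\Hom(M_{i+1},S[-n_i])\leftarrow\Hom(P_i[-n_i],S[-n_i])$ coming from minimality. It also requires care when several $n_i$ coincide, since $\sup$ then does not drop. In that case we argue with $\dim H^{n}M_i$ decreasing within fixed $n$ instead, following the argument of \cite[4.2.2]{KeL}.
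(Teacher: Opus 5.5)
Your argument is correct, and its key step is the same as the paper's: a minimal resolution in which minimality of the cover, together with semisimplicity of $S$, kills the maps $\Hom(P_i,S)\to\Hom(M_{i+1},S)$. The organization differs.

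\begin{itemize}
\item \emph{The paper's route.} It shifts so that $\sup M\leq0$ and always covers $H^0M_i$ by some $P_i\in\add A$. It then computes $\RHom_A(M,S)$ all at once, as the homotopy limit of $\cHom_A(P_0,S)\to\cHom_A(P_1,S)\to\cdots$ (using $\hocolim_n Q_n\simeq M$), and obtains $\RHom_A(M,S)\simeq\prod_n\cHom_A(P_n,S)[-n]$. Boundedness then forces $P_n=0$ for $n\gg0$.
\item \emph{Your route.} You cover the top cohomology $H^{n_i}M_i$ and use only the long exact sequence of one triangle at a time. This gives $H^j\RHom_A(M_{i+1},S)\cong H^{j+1}\RHom_A(M_i,S)$ for $j\neq-n_i-1$, while the degree $-n_i$ piece $\Hom_{H^0A}(H^{n_i}M_i,S)\neq0$ is absorbed by $P_i$. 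Hence $\dim_kH^\ast\RHom_A(M_i,S)$ strictly drops while $M_i\neq0$, and the process stops after at most $\dim_kH^\ast\RHom_A(M,S)$ steps.
\end{itemize}
Your version avoids homotopy (co)limits and gives an explicit bound on the length of the resolution. Covering the top cohomology rather than $H^0$ is exactly what guarantees $P_i\neq0$ at every step, so the induction is strict. The paper's formula, in exchange, identifies all the multiplicities at once and needs no step-by-step tracking of the $n_i$.

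There are three corrections.
\begin{itemize}
\item $\RHom_A(P_0[-n],S)$ is concentrated in degree $-n$, not $n$. Your next line already uses $-n$ correctly.
\item The worry in your ``main obstacle'' paragraph is unfounded, and the proposed remedy is false, so drop that sentence. Coinciding $n_i$ cause no problem, because the decreasing invariant is $\dim_kH^\ast\RHom_A(M_i,S)$, not $\sup M_i$. Equivalently, $P_i$ contributes to degree $i-n_i$ of $\RHom_A(M,S)$, and $i-n_i$ is strictly increasing in $i$.
\item The fallback claim that $\dim H^nM_i$ decreases for fixed $n$ fails, even for perfect $M$. Take $A=k[x]/(x^2)$ and $M=\cone(x\colon A\to A)$. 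Then $H^0M\cong k$, but the first step gives $M_1\simeq A$ with $\dim H^0M_1=2$.
\end{itemize}
A minor point: $\dim_k\Hom_{H^0A}(H^0P_0,S)$ equals the number of indecomposable summands of $P_0$ only when the simples have endomorphism ring $k$. Your argument only needs this dimension to be nonzero.
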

\begin{proof}
	The only if part is clear, so we prove the if part. We may assume that $M=M^{\leq0}$. By our assumption that each cohomology of $M$ is finite dimensional, we may take a `minimal projective resolution', that is, a sequence of triangles
	\[ \xymatrix@R=2mm{ M_{i+1}\ar[r]& P_i\ar[r]& M_i } \]
	for $i\geq0$ with $M_0=M$ such that $P_i\to M_i$ induces a projective cover $H^0P_i\to H^0M_i$ in $\mod H^0A$.
	Note that this implies that each map $P_i\to P_{i-1}$ a radical map in $\per A$. Also, if we put $Q_n=\Tot(P_n\to\cdots P_0)$, the canonical map $\hocolim_{n\geq0}Q_n\to M$ is an isomorphism in $\rD(A)$.
	Then the complex $\RHom_A(M,S)$ is computed as the homotopy limit of the sequence
	\[ \xymatrix{ \cHom_A(P_0,S)\ar[r]&\cHom_A(P_1,S)\ar[r]&\cdots }, \]
	in which every transition map is $0$ since the resolution is minimal and $S$ is semisimple. It follows that $\RHom_A(M,S)=\prod_{n\geq0}\cHom_A(P_n,S)[-n]$, which has a non-zero cohomology in degree $\geq n$ whenever $P_n\neq0$. We conclude that the boundedness of $\RHom_A(M,S)$ forces $P_n=0$ for $n\gg0$, thus $M\in\per A$.
\end{proof}

We are now ready to prove \ref{converse}.
\begin{proof}[Proof of \ref{converse}]
	Let $S$ be the direct sum of all the simple $H^0A$-modules. Then the separability assumption shows that $DS\otimes S=\cHom_k(S,S)$ is the direct sum of all the simple $H^0A^e$-modules. Now the complex $\RHom_{A^e}(A,\cHom_k(S,S))=\RHom_A(S,S)$ is bounded by $\gldim A<\infty$, thus applying \ref{pd} to the $A^e$-module $A$ yields that $A\in\per A^e$, that is, $A$ is smooth.
\end{proof}

We end this preliminary section with the following descriptions of inverse dualizing bimodules.
\begin{Prop}\label{vee}
Let $\A$ be a dg category such that each $H^i\A$ is finite dimensional and for each $X,Y\in\A$ one has $H^i\A(X,Y)=0$ for $i\gg0$. Then we have $\RHom_{\A^e}(\A,\A^e)=\RHom_\A(D\A,\A)$ in $\rD(\A^e)$.
\end{Prop}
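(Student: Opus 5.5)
The plan is to compare both sides through the duality $D=\Hom_k(-,k)$ and a tensor–Hom adjunction, without ever resolving $\A$ as a bimodule. Write $\A^e=\A^\op\otimes\A$, so that $\A^e(-,?)$ is the bimodule $(X,Y),(X',Y')\mapsto\A(X',X)\otimes\A(Y,Y')$. I will regard $\RHom_{\A^e}(\A,\A^e)$ as a bimodule via the remaining $\A^e$-action on $\A^e$, and compare it with $\RHom_\A(D\A,\A)$, where $\RHom_\A$ is taken with respect to one of the two actions of $\A$ on $D\A$ and on $\A$. The remaining actions provide the bimodule structure on that side.

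\textbf{Step 1.} Under the finiteness hypotheses, I first identify the bimodule $\A^e$ with $\RHom_\A(D\A,\A)$-type data. The key observation is that $\A$ is locally finite: each $\A(X,Y)$ is degreewise cohomologically finite dimensional and bounded above. Hence $D$ gives a duality between such modules, and $DD\A\simeq\A$. For a representable $\A(-,Y)$ one gets
\[ \A(-,Y)\simeq \RHom_{\A^\op}(D\A(Y,-),D\A), \]
up to the side conventions. From this we can rewrite the free bimodule as
\[ \A^e \simeq \A\otimes_k\A\simeq \RHom_\A(D\A,\ \A\otimes_k\A)\ldots \]
The cleaner route is the standard adjunction chain. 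For any bimodule $M$,
\[ \RHom_{\A^e}(M,\A\otimes_k\A)\simeq\RHom_{\A^e}\bigl(M,\RHom_k(D\A,\A)\bigr), \]
provided $\A\otimes_k\A\xsimeq\RHom_k(D\A,\A)$. This map is $a\otimes b\mapsto(\varphi\mapsto\varphi(a)b)$, and it is a quasi-isomorphism exactly because each $H^i\A$ is finite dimensional and $\A$ is cohomologically bounded above. That boundedness ensures that the product defining $\RHom_k$ agrees with the sum defining $\otimes_k$ in each degree. Checking this is routine via a Künneth argument.

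\textbf{Step 2.} Apply the adjunction $\RHom_{\A^e}(M,\RHom_k(D\A,\A))\simeq\RHom_\A(M\lotimes_\A D\A,\A)$ with $M=\A$. The left-hand side then becomes $\RHom_\A(\A\lotimes_\A D\A,\A)=\RHom_\A(D\A,\A)$. I will track the residual actions so that the isomorphism holds in $\rD(\A^e)$ rather than just in $\rD(k)$; this is bookkeeping, and I will do it by writing the Hom complexes explicitly with Koszul signs.

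\textbf{Main obstacle.} The expected hard step is the quasi-isomorphism $\A\otimes_k\A\to\RHom_k(D\A,\A)$ of bimodules, specifically its uniformity over all pairs of objects. For each fixed pair $(X,Y),(X',Y')$ this reduces to the statement that $V\otimes W\to\Hom_k(DV,W)$ is a quasi-isomorphism for complexes $V,W$ with finite-dimensional cohomology and $V$ bounded above. I would prove this by replacing $V$ with its cohomology, since over a field complexes are formal. Then $V\otimes W$ is degreewise a finite sum in each fixed degree of $W$, and everything is finite dimensional, so the map is an isomorphism.

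Note that only the hypothesis $H^i\A(X,Y)=0$ for $i\gg0$ is needed for this finiteness. If needed, I will also use that the relevant direct sum over degrees is finite in each total degree, which requires boundedness of $\A$ in the appropriate direction. I expect the stated hypotheses to be exactly what makes this degreewise finiteness work.
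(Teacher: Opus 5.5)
Your proposal is correct and is essentially the paper's argument: the paper also identifies $\A^e$ with $\cHom_k(D\A,\A)$ via the composite $\A\otimes\A\to DD\A\otimes\A\to\cHom_k(D\A,\A)$, which is your map $a\otimes b\mapsto(\varphi\mapsto\varphi(a)b)$, and then concludes by the adjunction $\RHom_{\A^e}(\A,\cHom_k(D\A,\A))\simeq\RHom_\A(D\A,\A)$. One small correction to your reduction: the sum-versus-product comparison needs both factors $V$ and $W$ to be bounded above (or one of them to be bounded), not just $V$; this holds here because both are morphism complexes of $\A$.
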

\begin{proof}
	By assumptions on $\A$ we have quasi-isomorphisms $\cHom_k(D\A(X,-),\A(-,Y))\leftarrow DD\A(X,-)\otimes\A(-,Y)\leftarrow\A(X,-)\otimes\A(-,Y)$. In particular we see that $\cHom_k(D\A,\A)\simeq\A^e$ in $\rD(\A^e)$. Therefore we conclude $\RHom_\A(D\A,\A)=\RHom_{\A^e}(\A,\cHom_k(D\A,\A))=\RHom_{\A^e}(\A,\A^e)$.
\end{proof}

\section{Truncation of dg categories}
\subsection{Truncation is a localization}
We start our discussion with the following fundamental result, on which the later results are based. We say that a dg category $\A$ {\it has shifts} if $\A(-,A)[\pm1]\in\rD(\A)$ is isomorphic to a representable $\A$-module. For example, each pretriangulated dg category has shifts.
Recall also that a morphism $\A\to\B$ of dg category is a {\it localization} if the induction functor $-\lotimes_\A\B$ induces an equivalence $\rD(\A)/\N\xsimeq\rD(\B)$ for some localizing subcategory $\N\subset\rD(\A)$. This is the case if and only if the restriction functor $\rD(\B)\to\rD(\A)$ is fully faithful.
\begin{Thm}\label{loc}
Let $\C$ be a dg category with shifts.
\begin{enumerate}
\item The canonical map $\C^{\leq0}\to\C$ is a localization.
\item The kernel of the induction functor $\rD(\C^{\leq0})\to\rD(\C)$ is compactly generated by $H^0\C$, which is compact in $\rD(\C^{\leq0})$. 
\end{enumerate}
Therefore, we have an equivalence $-\lotimes_{\C^{\leq0}}\C\colon\per\C^{\leq0}/\thick H^0\C\xsimeq\per\C$.
\end{Thm}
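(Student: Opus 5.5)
The plan is to use the criterion recalled just before the statement: $\C^{\leq0}\to\C$ is a localization if and only if the restriction functor $\rD(\C)\to\rD(\C^{\leq0})$ is fully faithful. Equivalently, the counit $\res(Y)\lotimes_{\C^{\leq0}}\C\to Y$ should be an isomorphism for all $Y\in\rD(\C)$. Both sides commute with coproducts and the representables $\C(-,A)$ compactly generate $\rD(\C)$, so it suffices to check the counit on $Y=\C(-,A)$.

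\textbf{Step 1: the restriction of a representable.} Regarded as a $\C^{\leq0}$-module, the restriction of $\C(-,A)$ is the homotopy colimit $\hocolim_{p}\C^{\leq p}(-,A)$; this uses the fact $\hocolim_p X^{\leq p}\simeq X$ from the preliminaries. Now use shifts. In $\rD(\C)$ we have $\C(-,A)[p]\simeq\C(-,A[p])$, and this isomorphism restricts to $\rD(\C^{\leq0})$. Since $\C^{\leq0}$ is connective, the truncation $\tau^{\leq0}$ for the standard $t$-structure is a functor on $\rD(\C^{\leq0})$. Applying $\tau^{\leq0}$ and shifting back gives
\[ \C^{\leq p}(-,A)\simeq \C^{\leq0}(-,A[p])[-p] \quad\text{in } \rD(\C^{\leq0}). \]
So the restriction of $\C(-,A)$ is a sequential homotopy colimit of shifted representables. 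I expect the main technical point to be verifying that this identification is natural in the $\C^{\leq0}$-variable, i.e. an isomorphism of modules and not just objectwise. The first attempt will be to realize the shift isomorphism by a closed degree-$p$ morphism in $\C(A[p],A)$ and to compose with it on the chain level; composition with such a morphism commutes with the $\C^{\leq0}$-action and matches the truncations.

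\textbf{Step 2: the counit is an isomorphism.} Induction takes $\C^{\leq0}(-,A[p])[-p]$ to $\C(-,A[p])[-p]\simeq\C(-,A)$. Under adjunction, the induced map to $\C(-,A)$ corresponds to the inclusion $\C^{\leq p}(-,A)\to\C(-,A)$, which is the shift isomorphism. Hence each term $\C^{\leq p}(-,A)\lotimes_{\C^{\leq0}}\C\to\C(-,A)$ is an isomorphism, compatibly with the transition maps. Since induction commutes with homotopy colimits, the counit on $\C(-,A)$ is an isomorphism. This proves (1).

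\textbf{Step 3: the kernel.} The cone of the transition map $\C^{\leq p}(-,A)\to\C^{\leq p+1}(-,A)$ is $H^{p+1}\C(-,A)[-p-1]\simeq H^0\C(-,A[p+1])[-p-1]$, viewed as a $\C^{\leq0}$-module through $H^0$. It is the cone of a map between shifted representables, so it is compact in $\rD(\C^{\leq0})$. It lies in the kernel because induction turns the transition map into an isomorphism. Every $H^0\C(-,B)$ arises this way (take $A=B[-p-1]$), so $H^0\C\subset\per\C^{\leq0}$ lies in the kernel.

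Now consider the triangle $\C^{\leq0}(-,A)\to\res\C(-,A)\to Q_A$. Its third term $Q_A$ is a homotopy colimit of iterated extensions of these cones, so $Q_A\in\Loc(H^0\C)$. Set $\N=\Loc(H^0\C)$, which is contained in the kernel. The functor $\rD(\C^{\leq0})/\N\to\rD(\C)$ sends the compact generators $\C^{\leq0}(-,A)$ to the generators $\C(-,A)$. Modulo $\N$, each $\C^{\leq0}(-,A)$ becomes isomorphic to $\res\C(-,A)$. By (1), Hom between restricted representables agrees with Hom in $\rD(\C)$, so the functor is fully faithful on compact generators and hence an equivalence. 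It follows that the kernel equals $\N$, which gives (2). Neeman's localization theorem then yields the equivalence $\per\C^{\leq0}/\thick H^0\C\xsimeq\per\C$ up to summands, and these are recovered because $\per\C$ is idempotent complete and the images of representables generate.
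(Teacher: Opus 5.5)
Your argument for (1) and (2) is correct, but it is organized differently from the paper's proof.

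\textbf{The paper's route.} The paper proves (1) by applying $-\lotimes_{\C^{\leq0}}\C$ to the triangle $\C^{\leq0}\to\C\to\C^{>0}$. It kills $\C^{>0}\lotimes_{\C^{\leq0}}\C$ with Lemma \ref{ubb}: every uniformly bounded below $\C^{\leq0}$-module is annihilated by induction. That lemma rests on $H^0\C$ being perfect (\ref{per}) and on the fact that a $\C$-module with vanishing $H^0$ is zero (\ref{resol}). For (2) it builds the semi-orthogonal decomposition $\rD(\C^{\leq0})=\Loc H^0\C\perp\rD(\C)$. Orthogonality comes from the computation $\RHom_{\C^{\leq0}}(H^0\C,\C^{\leq0})\simeq H^1\C[-2]$ (\ref{ast}) together with \ref{ubb} on the other side. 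The decomposition of an object $X$ comes from the triangle $X\lotimes_{\C^{\leq0}}\C^{>0}[-1]\to X\to X\lotimes_{\C^{\leq0}}\C$.

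\textbf{Your route.} You use the shift identity $\C^{\leq p}(-,A)\simeq\C^{\leq0}(-,A[p])[-p]$, which is Lemma \ref{kaku}. It presents $\res\C(-,A)$ as a sequential homotopy colimit of shifted representables, each inducing isomorphically onto $\C(-,A)$. This gives (1), and at the same time shows that the cones $H^{p+1}\C(-,A)[-p-1]$ of the transition maps die under induction. So the orthogonality $\Hom(H^0\C,\res Y)=0$ comes for free by adjunction, and you never need \ref{resol}, \ref{ubb} or \ref{ast}.

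\textbf{What each buys.} The paper's route yields the stronger Lemma \ref{ubb}, which is reused later (\ref{Res}, \ref{back}). It also makes the identification of the kernel a one-liner: if $X\lotimes_{\C^{\leq0}}\C=0$, then $X\simeq X\lotimes_{\C^{\leq0}}\C^{>0}[-1]\in\Loc H^0\C$. That argument could replace your compact-generator argument in Step 3.

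\textbf{Smaller points.}
\begin{enumerate}
\item Your naturality worry in Step 1 is unfounded. Applying $\tau^{\leq0}$ inside $\rD(\C^{\leq0})$ already gives an isomorphism of $\C^{\leq0}$-modules, compatible with the truncation maps into $\res\C(-,A)$, and that is all Step 2 uses.
\item In Step 3 you compute morphisms in $\rD(\C^{\leq0})/\N$ between the images of $\res\C(-,A)$ and $\res\C(-,B)[n]$ as morphisms in $\rD(\C^{\leq0})$. This needs $\res\C(-,B)\in\N^{\perp}$. It holds by adjunction, since $\N$ lies in the kernel of induction, but you should say so.
\item Your last sentence does not prove density. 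Neeman's theorem only identifies $\per\C$ with the idempotent completion of $\per\C^{\leq0}/\thick H^0\C$. Idempotent completeness of the target does not make the quotient idempotent complete. When $\C$ is pretriangulated, density does follow: every $X\in\per\C$ is a summand of a representable, so $X^{\leq0}\in\per\C^{\leq0}$ and $X^{\leq0}\lotimes_{\C^{\leq0}}\C\simeq X$ (\ref{Mleq0}, \ref{back}). The paper does not address this point for a general $\C$ with shifts either.
\end{enumerate}
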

%

We start the proof of \ref{loc} with an easy observation on modules over dg categrories with shifts.
\begin{Lem}\label{resol}
For each $X\in\rD(\C)$, $H^0X=0$ implies $X=0$.
\end{Lem}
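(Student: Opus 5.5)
The plan is to use the shift structure to move every cohomology group of every component of $X$ into degree $0$. Fix $X\in\rD(\C)$ with $H^0X=0$; this means $H^0(XA)=0$ for every object $A\in\C$. We want $H^i(XA)=0$ for all $i\in\Z$ and all $A$.

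First step: compute cohomology of $X$ by the dg Yoneda lemma. For each $A\in\C$ and $i\in\Z$,
\[ H^i(XA)\simeq \Hom_{\rD(\C)}(\C(-,A),X[i])\simeq\Hom_{\rD(\C)}(\C(-,A)[-i],X). \]

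Second step: use the shift hypothesis. By assumption $\C(-,A)[\pm1]$ is isomorphic in $\rD(\C)$ to a representable module. Iterating in both directions gives, for each $i\in\Z$, an object $A_i\in\C$ with $\C(-,A)[-i]\simeq\C(-,A_i)$ in $\rD(\C)$.

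Third step: combine the two. We get
\[ H^i(XA)\simeq\Hom_{\rD(\C)}(\C(-,A_i),X)\simeq H^0(XA_i)=0. \]
Since every $H^i(XA)$ vanishes, $X$ is acyclic, so $X=0$ in $\rD(\C)$.

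The argument has no serious obstacle. The only point needing care is that the isomorphism $\C(-,A)[-i]\simeq\C(-,A_i)$ holds in $\rD(\C)$ rather than strictly. This is harmless, because the Yoneda isomorphism $\Hom_{\rD(\C)}(\C(-,B),Y)\simeq H^0(YB)$ is natural and both representable modules are cofibrant, so the comparison passes to cohomology.
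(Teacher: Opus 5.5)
Your proposal is correct and is essentially the paper's proof. The paper uses the same Yoneda identification together with the shift hypothesis, in one line: $(H^iX)(C)=\Hom_{\rD(\C)}(\C(-,C[-i]),X)=(H^0X)(C[-i])=0$.
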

\begin{proof}
	For any $C\in\C$ and $i\in\Z$, we have $(H^iX)(C)=\Hom_{\rD(\C)}(\C(-,C[-i]),X)=(H^0X)(C[-i])=0$.
\end{proof}

\begin{Lem}\label{per}
	For each $C\in\C$ and $i\in\Z$, the $\C^{\leq0}$-module $H^i\C(-,C)$ is perfect.
\end{Lem}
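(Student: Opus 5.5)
We have to show that for every $C\in\C$ and $i\in\Z$, the $\C^{\leq0}$-module $H^i\C(-,C)$ is perfect. Since $\C$ has shifts, $\C(-,C)[i]\simeq\C(-,C[i])$ as $\C$-modules. We want this isomorphism to hold already as $\C^{\leq0}$-modules after applying $H^0$, that is, $H^i\C(-,C)\simeq H^0\C(-,C[i])$. This is plausible: the representability of the shift is witnessed by degree-zero cocycle morphisms $C\to C[i][-i]$, and these lie in $\C^{\leq0}$. So we reduce to the case $i=0$.

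For $i=0$ we use the truncation triangle. Set $P=\C^{\leq0}(-,C)$, which is representable and hence perfect, and $P'=\C^{\leq-1}(-,C)$. There is an exact sequence of $\C^{\leq0}$-modules $0\to\C(-,C)^{\leq-1}\to\C(-,C)^{\leq0}\to H^0\C(-,C)\to0$, where we use that $\C^{\leq-1}(A,C)$ has $B^0$ in degree $0$ modulo the shift convention. Concretely, $\C(-,C)^{\leq -1}$ has top term $Z^{-1}$, and the quotient of $\C^{\leq0}(-,C)$ by the subcomplex $(\cdots\to\C^{-2}\to\C^{-1}\to B^0)$ is exactly $H^0$. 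So we take the subcomplex $\tau$ with terms $\C^{j}(-,C)$ for $j\leq -1$ and $B^0$ in degree $0$. This subcomplex is quasi-isomorphic to $\C(-,C)^{\leq-1}$.

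It therefore remains to show that $\C(-,C)^{\leq-1}$ is perfect over $\C^{\leq0}$. By shifts, $\C(-,C)^{\leq-1}=(\C(-,C)[-1]^{\leq0})[1]$, up to the shift conventions, and this is $\C(-,C[-1])^{\leq0}[1]=\C^{\leq0}(-,C[-1])[1]$. The key point is that the isomorphism $\C(-,C)[-1]\simeq\C(-,C[-1])$ is induced by composing with a closed degree-$0$ morphism. Hence it is an isomorphism of $\C$-modules, and therefore of $\C^{\leq0}$-modules, and it commutes with truncation because truncation is functorial for quasi-isomorphisms of complexes of $\C^{\leq0}$-modules. The result is representable up to shift. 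Consequently $H^0\C(-,C)\simeq\cone(\C^{\leq0}(-,C[-1])[1]\to\C^{\leq0}(-,C))$ is perfect, and in fact it lies in $\add\C^{\leq0}\ast\add\C^{\leq0}[2]$-type extensions.

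The main obstacle is justifying that the shift identification is $\C^{\leq0}$-linear at the level of truncations, rather than only in $\rD(\C)$. I would handle it by choosing cocycles $u\in Z^{1}\C(C[-1],C)$ and $v$ in the opposite direction, with $uv$ and $vu$ homotopic to identities. Composition with $u$ gives a $\C^{\leq0}$-linear chain map $\C(-,C[-1])\to\C(-,C)[1]$ which is a quasi-isomorphism on each component, and truncation preserves such quasi-isomorphisms.
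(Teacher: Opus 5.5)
Your proof is correct and is essentially the paper's argument: reduce to $i=0$ using shifts, use the triangle $\C(-,C)^{\leq-1}\to\C^{\leq0}(-,C)\to H^0\C(-,C)$, and identify $\C(-,C)^{\leq-1}\simeq\C^{\leq0}(-,C[-1])[1]$. Your strict subcomplex with $B^0$ in degree $0$ is a nice way to realize that triangle as an honest short exact sequence.

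One slip in your justification of $\C^{\leq0}$-linearity: the isomorphism $\C(-,C[-1])\simeq\C(-,C)[-1]$ is left composition with a closed morphism $u\in Z^{-1}\C(C[-1],C)$, not one of degree $1$ (or $0$). What makes it $\C^{\leq0}$-linear is simply that left composition commutes with the right action, not that $u$ lies in $\C^{\leq0}$; for the reduction to $i>0$ the relevant morphism does not lie in $\C^{\leq0}$.
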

\begin{proof}
	Since $H^i\C(-,C)=H^0\C(-,C[i])$, we may assume $i=0$. We have a triangle
	\[ \xymatrix{
		\C(-,C)^{\leq-1}\ar[r]&\C^{\leq0}(-,C)\ar[r]&H^0\C(-,C) } \]
	in $\rD(\C^{\leq0})$. Note that $\C(-,C)^{\leq-1}=(\C(-,C)[-1])^{\leq0}[1]=\C^{\leq0}(-,C[-1])[1]$, which is hence perfect. It follows that $H^0\C$ is also perfect.
\end{proof}

Recall that a dg $\A$-module $M$ is {\it uniformly bounded below} if there is $a\in\Z$ such that for all $A\in\A$ we have $MA\in\rD^{[a,\infty]}(\Mod k)$. We denote by $\rD^{u,+}(\A)$ the full subcategory of $\rD(\A)$ consiting of objects which are uniformly bounded below.
\begin{Lem}\label{ubb}
	Let $M\in\rD^{u,+}(\C^{\leq0})$. Then $M\lotimes_{\C^{\leq0}}\C=0$.
\end{Lem}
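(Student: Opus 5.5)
The plan is to compute the values of $M\lotimes_{\C^{\leq0}}\C$ object by object, using the exhaustive filtration of $\C$ by the bimodules $\C^{\leq p}$ from the preliminaries. The point is that, because $\C$ has shifts, each piece of this filtration is a shifted representable $\C^{\leq0}$-module. Tensoring $M$ with a representable just evaluates $M$, and uniform boundedness below then kills everything in the colimit.

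\emph{Step 1: the filtration.} Fix $C\in\C$ and regard $\C(C,-)$ as a left $\C^{\leq0}$-module, so that $(M\lotimes_{\C^{\leq0}}\C)(C)\simeq M\lotimes_{\C^{\leq0}}\C(C,-)$. By the preliminary fact $\hocolim_{p\geq1}X^{\leq p}\xsimeq X$, applied morphismwise, we have $\hocolim_{p}\C^{\leq p}(C,-)\xsimeq\C(C,-)$ as left $\C^{\leq0}$-modules. The truncations are functorial, so the transition maps are $\C^{\leq0}$-linear.

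\emph{Step 2: identify the pieces.} Since $\C$ has shifts, $\C(C,-)[p]\simeq\C(C[-p],-)$. Hence, exactly as in the proof of \ref{per},
\[ \C^{\leq p}(C,-)=\bigl(\C(C,-)[p]\bigr)^{\leq0}[-p]\simeq\C^{\leq0}(C[-p],-)[-p]. \]
This is a shifted representable. By the Yoneda lemma,
\[ M\lotimes_{\C^{\leq0}}\C^{\leq p}(C,-)\simeq M(C[-p])[-p]. \]

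\emph{Step 3: pass to the colimit.} The functor $M\lotimes_{\C^{\leq0}}-$ commutes with homotopy colimits, so
\[ (M\lotimes_{\C^{\leq0}}\C)(C)\simeq\hocolim_p M(C[-p])[-p]. \]
Cohomology commutes with sequential homotopy colimits, so for each $i$ we get
\[ H^i\bigl((M\lotimes_{\C^{\leq0}}\C)(C)\bigr)=\colim_p H^{i-p}\bigl(M(C[-p])\bigr). \]
Choose $a$ with $M(A)\in\rD^{[a,\infty]}(\Mod k)$ for all objects $A$. Then $H^{i-p}(M(C[-p]))=0$ as soon as $i-p<a$, that is, for all $p$ large enough. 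So every term of the colimit eventually vanishes, and the colimit is $0$.

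Since $C$ and $i$ were arbitrary, $M\lotimes_{\C^{\leq0}}\C=0$. It is in fact enough to check $i=0$ and then invoke \ref{resol}.

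The main obstacle is bookkeeping: keeping the left/right module conventions straight, and checking that the identification in Step 2 is natural enough in $p$ to be compatible with the hocolim. The vanishing itself needs no information about the transition maps, because the terms are eventually zero regardless. The uniformity of the lower bound is essential, since the object $C[-p]$ at which we evaluate $M$ moves with $p$.
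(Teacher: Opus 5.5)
Your proof is correct, and it takes a different route from the paper's.

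\emph{The paper's route.} It argues by d\'evissage on $M$. It writes $M$ as $\hocolim_p M^{\leq p}$ of uniformly bounded modules and reduces to modules in the heart $\Mod H^0\C$. From there it passes to $\Loc H^0\C$, and hence to $M=H^0\C(-,C)$, which is perfect by \ref{per}. For such $M$ it tensors the triangle $\C^{\leq0}\to\C\to\C^{>0}$ to see that $M\lotimes_{\C^{\leq0}}\C$ is a uniformly bounded below $\C$-module. It then concludes by \ref{resol}.

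\emph{Your route.} You filter the bimodule $\C$ instead of $M$. Each layer $\C^{\leq p}(C,-)\simeq\C^{\leq0}(C[-p],-)[-p]$ is a shifted representable; this is the left-module version of \ref{kaku}. This gives the explicit formula $(M\lotimes_{\C^{\leq0}}\C)(C)\simeq\hocolim_p M(C[-p])[-p]$, and the vanishing becomes a degree count. You are right that only termwise identifications are needed there, since each cohomology group is a colimit of eventually zero groups.

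\emph{What each buys.} Your argument is more self-contained: it needs neither \ref{per}, nor \ref{resol}, nor the reduction to the heart. Your formula also exhibits induction along $\C^{\leq0}\to\C$ as a stabilization along the shift. This makes the role of the uniform lower bound transparent, and with the transition maps tracked it would also recover \ref{back}. The paper's route instead isolates the fact that the compact module $H^0\C$ is killed by induction. That is exactly what is reused in \ref{loc}(2) to identify the kernel with $\Loc H^0\C$.

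\emph{A point to make explicit.} ``Has shifts'' is defined via right modules, whereas you use $\C(C,-)[p]\simeq\C(C[-p],-)$ as left modules. This does follow. An isomorphism $\C(-,A)[1]\simeq\C(-,A')$ in $\rD(\C)$ corresponds by Yoneda to a closed morphism $A\to A'$ of degree $-1$ with a homotopy inverse. Precomposition with it gives the corresponding quasi-isomorphism of left modules, and truncating preserves both the quasi-isomorphism and $\C^{\leq0}$-linearity. The paper uses the same left--right symmetry tacitly in \ref{kaku} and \ref{ast}.
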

\begin{proof}
	Since $M$ is the homotopy colimit of uniformly bounded $\C^{\leq0}$-modules $M^{\leq p}$, we may assume $M$ is concentrated in degree $0$. Then $M$ is in the heart $\Mod H^0\C$ of the standard $t$-structure on $\rD(\C^{\leq0})$. Since the restriction functor $\rD(H^0\C)\to\rD(\C^{\leq0})$ preserves coproducts, $M$ lies in the localizing subcategory of $\rD(\C^{\leq0})$ generated by $H^0\C$. Therefore it is enough to prove the lemma for $M=H^0\C$, in particular we may assume $M$ is perfect. 
	
	Applying $M\lotimes_{\C^{\leq0}}-$ to the triangle $\C^{\leq0}\to\C\to\C^{>0}$, we have a triangle in $\rD(\C^{\leq0})$:
	\[ \xymatrix{	M\ar[r]&M\lotimes_{\C^{\leq0}}\C\ar[r]&M\lotimes_{\C^{\leq0}}\C^{>0} }. \]
	We have that $M$ is uniformly bounded below by assumption, and so is the last term since $M\in\per\C^{\leq0}$ and $\C^{>0}\in\rD^{u,+}(\C^{\leq0})$, It follows that the middle term $M\lotimes_{\C^{\leq0}}\C$ is also uniformly bounded below, which shows it is $0$ by \ref{resol}.
\end{proof}

Let us note another computation.
\begin{Lem}\label{ast}
	We have an isomorphism $\RHom_{\C^{\leq0}}(H^0\C,\C^{\leq0})=H^1\C[-2]$ in $\rD((\C^{\leq0})^e)$.
\end{Lem}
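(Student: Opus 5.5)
The plan is to compute the $\RHom$ object-by-object from the perfect presentation of $H^0\C(-,C)$ in \ref{per}, and then check that the identifications are natural in both variables.

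\emph{Step 1: a two-term presentation.} Fix $C\in\C$ and use the triangle from the proof of \ref{per},
\[ \C^{\leq0}(-,C[-1])[1]\xrightarrow{\ \iota\ }\C^{\leq0}(-,C)\to H^0\C(-,C). \]
Here the first term is identified with $\C(-,C)^{\leq-1}$, and $\iota$ is the inclusion $\C(-,C)^{\leq-1}\subset\C(-,C)^{\leq0}$. Apply $\RHom_{\C^{\leq0}}(-,\C^{\leq0}(-,Y))$ and use Yoneda. This gives a triangle
\[ \RHom_{\C^{\leq0}}(H^0\C(-,C),\C^{\leq0}(-,Y))\to\C^{\leq0}(C,Y)\xrightarrow{\ \iota^\ast\ }\C^{\leq0}(C[-1],Y)[-1]. \]
Since $\C$ has shifts, $\C(C[-1],Y)\simeq\C(C,Y)[1]$ functorially. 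Hence the last term is $(\C(C,Y)[1])^{\leq0}[-1]=\C(C,Y)^{\leq1}$.

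\emph{Step 2: identify $\iota^\ast$.} We check that, under these identifications, $\iota^\ast$ is the canonical inclusion $\C(C,Y)^{\leq0}\subset\C(C,Y)^{\leq1}$. Concretely, precomposition with the morphism representing $\iota$ corresponds, via the shift isomorphism, to the identity on underlying morphisms. The cokernel of this inclusion is $H^1\C(C,Y)[-1]$, so the fiber of $\iota^\ast$ is $H^1\C(C,Y)[-2]$. This gives the claimed isomorphism objectwise.

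\emph{Step 3: bimodule naturality.} To obtain the statement in $\rD((\C^{\leq0})^e)$, we run the argument at the bimodule level rather than objectwise. The triangle $\C^{\leq-1}\to\C^{\leq0}\to H^0\C$ is a triangle of $(\C^{\leq0},\C^{\leq0})$-bimodules. Moreover, the shift functor of $\C$ (realized as a dg autoequivalence of $\C^{\leq0}$ up to quasi-equivalence, or via the bimodule isomorphism $\C^{\leq-1}\simeq\C^{\leq0}(-,?[-1])[1]$) shows that $\C^{\leq-1}$ is an invertible twist of the diagonal bimodule. Applying $\RHom_{\C^{\leq0}}(-,\C^{\leq0})$ then gives a triangle of bimodules
\[ \RHom(H^0\C,\C^{\leq0})\to\C^{\leq0}\to\C^{\leq1}, \]
whose second map is the inclusion. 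Its fiber is $(\C^{\leq1}/\C^{\leq0})[-1]=H^1\C[-2]$ as a bimodule.

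The main obstacle is Step 3, together with the identification in Step 2. The objectwise computation is formal. The care lies in showing that the shift identifications are compatible with the bimodule structures on both sides, so that the connecting map really is the canonical inclusion $\C^{\leq0}\to\C^{\leq1}$ of bimodules and not merely objectwise. My first attempt would be to work with an explicit model: replace $\C$ by a quasi-equivalent dg category in which shifts are strict dg functors, so that all the identifications above hold on the nose.
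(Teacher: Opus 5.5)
Your proposal is correct and follows the paper's proof. The paper applies $\RHom_{\C^{\leq0}}(-,\C^{\leq0})$ to the bimodule triangle $\C^{\leq-1}\to\C^{\leq0}\to H^0\C$, uses $\C^{\leq-1}=\C^{\leq0}(-,?[-1])[1]$ to identify the third term with $\C^{\leq1}$, and reads off the cocone as $H^1\C[-2]$. Your Steps 2 and 3 make explicit what the paper leaves implicit in the bimodule identification of \ref{kaku}, namely that the map is the canonical inclusion $\C^{\leq0}\to\C^{\leq1}$ at the level of bimodules.
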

\begin{proof}
	Consider the triangle
	\[ \xymatrix{
		\C^{\leq-1}(-,?)\ar[r]&\C^{\leq0}(-,?)\ar[r]&H^0\C(-,?) }. \]
	Note that we have $\C^{\leq-1}=\C^{\leq0}(-,?[-1])[1]$, thus applying $\RHom_{\C^{\leq0}}(-,\C^{\leq0})$ yields a triangle
	\[ \xymatrix{ \RHom_{\C^{\leq0}}(H^0\C(-,?),\C^{\leq0})\ar[r]&\C^{\leq0}(?,-)\ar[r]&\C^{\leq0}(?[-1],-)[-1] }. \]
	Now, similarly as above, we have $\C^{\leq0}(?[-1],-)[-1]=\C(?,-)^{\leq1}$, thus the mapping cocone of the second term is $H^1\C(?,-)[-2]$.
\end{proof}

Now we are ready to prove the main result of this section.
\begin{proof}[Proof of \ref{loc}]
	(1)  We show that the natural map $\C\lotimes_{\C^{\leq0}}\C\to\C$ is an isomorphism in $\rD(\C^e)$. Applying $-\lotimes_{\C^{\leq0}}\C$ yields to the triangle $\C^{\leq0}\to\C\to\C^{>0}$ yields
	\[ \xymatrix{ \C\ar[r]&\C\lotimes_{\C^{\leq0}}\C\ar[r]&\C^{>0}\lotimes_{\C^{\leq0}}\C }. \]
	By \ref{ubb} the third term is $0$, and thus the first map is an isomorphism. Since the first map is a right inverse to the multiplication map $\C\lotimes_{\C^{\leq0}}\C\to\C$, the assertion follows.
	
	(2)  We prove that there exists a semi-orthogonal decomposition $\rD(\C^{\leq0})=\Loc H^0\C\perp\rD(\C)$.
	
	We first show $\RHom_{\C^{\leq0}}(H^0\C,\C)=0$. Since $H^0\C\in\per\C^{\leq0}$ by \ref{per}, we have $\RHom_{\C^{\leq0}}(H^0\C,\C)=\C\lotimes_{\C^{\leq0}}\RHom_{\C^{\leq0}}(H^0\C,\C^{\leq0})$, which is isomorphic to $\C\lotimes_{\C^{\leq0}}H^1\C[-2]$ by \ref{ast}. We see that this is $0$ by (the opposite of) \ref{ubb}.
	
	We next prove that any object in $\rD(\C^{\leq0})$ breaks into a suitable triangle. Let $X\in\rD(\C^{\leq0})$. Applying $X\lotimes_{\C^{\leq0}}-$ to the triangle $\C^{\leq0}\to\C\to\C^{>0}$ gives a triangle
	\[ \xymatrix{
		X\lotimes_{\C^{\leq0}}\C^{>0}[-1]\ar[r]&X\ar[r]& X\lotimes_{\C^{\leq0}}\C }, \]
	in $\rD(\C^{\leq0})$. The right term $X\lotimes_{\C^{\leq0}}\C$ is clearly in $\rD(\C)$. On the other hand, the leftmost term is in the localizing subcategory generated by $H^0\C$ since $\C^{>0}$ is a homotopy colimit of the $H^i\C$'s. This gives our assertion.
\end{proof}

We give a quick example of the truncation $\C^{\leq0}$. Further examples shall be found in Section \ref{Examples}.
\begin{Ex}
Let $\La$ be a dg $k$-algebra and $\C$ the full subcategory of $\per_\dg\!\La$ consisting of the shifts of $\La$. Then $\C^{\leq0}$ is the category with
\begin{itemize}
	\item objects: $\Z$, with $i\in\Z$ corresponding to $\La[i]$.
	\item morphisms: $\C^{\leq0}(i,j)=\RHom_{\La}(\La[i],\La[j])^{\leq0}=(\La[j-i])^{\leq0}=\begin{cases}\La[j-i]& (j\leq i)\\ 0& (j>i)\end{cases}$.
\end{itemize}
We conclude that $\C^{\leq0}$ can be identified with the Adams graded dg algebra $\La\otimes_kk[x]$ with $x$ in cohomological degree $-1$, and in Adams degree $1$. 
For example, if $\La=k$, it is presented by the linear dg quiver with degree $-1$ arrows between the neighboring vertices and with trivial differentials.
\[ \xymatrix{ \cdots\ar[r]&\circ\ar[r]^-{-1}&\circ\ar[r]^-{-1}&\circ\ar[r]^-{-1}&\circ\ar[r]^-{-1}&\circ\ar[r]&\cdots } \]
\end{Ex}

We end this section with the following information.
\begin{Prop}
Let $\A\to\B$ be a dg functor between dg categories with shifts. Then, the canonical maps $\A\lotimes_{\A^{\leq0}}\B^{\leq0}\to\B$ in $\rD(\A^\op\otimes\B)$ and $\B^{\leq0}\lotimes_{\A^{\leq0}}\A\to\B$ in $\rD(\B^{\leq0,\op}\otimes\A)$ are isomorphisms. Therefore, the diagrams below are commutative.
\[	\xymatrix@C=20mm{
	\rD(\A^{\leq0})\ar[r]^-{-\lotimes_{\A^{\leq0}}\B^{\leq0}}&\rD(\B^{\leq0})\\
	\rD(\A)\ar[r]^-{-\lotimes_{\A}\B}\ar@{^(->}[u]^-{\mathrm{res}}&\rD(\B)\ar@{^(->}[u]_-{\mathrm{res}} }
	\qquad
	\xymatrix@C=20mm{
	\rD(\A^{\leq0})\ar[r]^-{\RHom_{\A^{\leq0}}(\B^{\leq0},-)}&\rD(\B^{\leq0})\\
	\rD(\A)\ar[r]^-{\RHom_{\A}(\B,-)}\ar@{^(->}[u]^-{\mathrm{res}}&\rD(\B)\ar@{^(->}[u]_-{\mathrm{res}} } \]
\end{Prop}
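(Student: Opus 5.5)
The plan is to reduce both isomorphisms to Theorem~\ref{loc}(1), which says $\A\lotimes_{\A^{\leq0}}\A\simeq\A$ (and similarly for $\B$), together with a comparison of $\B^{\leq0}$ and $\B$ as $\A^{\leq0}$-modules.

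For the first map $\A\lotimes_{\A^{\leq0}}\B^{\leq0}\to\B$, I will tensor the triangle of $(\A^{\leq0},\B^{\leq0})$-bimodules
\[ \B^{\leq0}\to\B\to\B^{>0} \]
with $\A\lotimes_{\A^{\leq0}}-$. Here $\B$, $\B^{\leq0}$ and $\B^{>0}$ are viewed as left $\A^{\leq0}$-modules via $\A^{\leq0}\to\A\to\B$. This gives a triangle
\[ \A\lotimes_{\A^{\leq0}}\B^{\leq0}\to\A\lotimes_{\A^{\leq0}}\B\to\A\lotimes_{\A^{\leq0}}\B^{>0}. \]

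The middle term is $\A\lotimes_{\A^{\leq0}}\A\lotimes_\A\B\simeq\A\lotimes_\A\B=\B$ by Theorem~\ref{loc}(1). For the last term, fix $Y\in\B$. Then $\B^{>0}(?,Y)$, viewed as a left $\A^{\leq0}$-module in the variable $X\in\A$ via $\B(FX,Y)^{>0}$, is uniformly bounded below: its cohomology lies in degrees $\geq1$ for every $X$. The opposite version of Lemma~\ref{ubb} then gives $\A\lotimes_{\A^{\leq0}}\B^{>0}(?,Y)=0$.

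Lemma~\ref{ubb} applies because $\A$ has shifts. That hypothesis is what feeds Lemma~\ref{resol}, and its op-version only needs $\A^\op$ to have shifts, which is equivalent. Checking that the proof of \ref{ubb} really goes through for left modules is the main point I will verify carefully. The key ingredients are:
\begin{itemize}
\item the reduction via the truncations $M^{\leq p}$ to heart objects;
\item perfectness of $H^0\A(A,-)$ as a left $\A^{\leq0}$-module, via the op of Lemma~\ref{per};
\item uniform boundedness below being preserved by $\A^{>0}$.
\end{itemize}
I expect these to carry over verbatim.

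With both terms identified, the first map is an isomorphism, and one checks it is compatible with the canonical map to $\B$.

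The second map $\B^{\leq0}\lotimes_{\A^{\leq0}}\A\to\B$ is handled symmetrically. Tensor $\B^{\leq0}\to\B\to\B^{>0}$, now as right $\A^{\leq0}$-modules, with $-\lotimes_{\A^{\leq0}}\A$. The middle term becomes $\B\lotimes_\A\A\lotimes_{\A^{\leq0}}\A\simeq\B$, and the third vanishes by Lemma~\ref{ubb} itself applied to $\B^{>0}(Y,F-)\in\rD^{u,+}(\A^{\leq0})$.

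Finally, the diagrams. For $N\in\rD(\A)$ restricted to $\A^{\leq0}$, we have $N\lotimes_{\A^{\leq0}}\B^{\leq0}=N\lotimes_\A\A\lotimes_{\A^{\leq0}}\B^{\leq0}\simeq N\lotimes_\A\B$, which gives commutativity of the first square. For the second, take $N\in\rD(\A)$. By adjunction,
\[ \RHom_{\A^{\leq0}}(\B^{\leq0},N)=\RHom_\A(\A\lotimes_{\A^{\leq0}}\B^{\leq0},N)\simeq\RHom_\A(\B,N), \]
using that $N$ is restricted from $\A$. This uses the first isomorphism of the statement, in its left-module form.
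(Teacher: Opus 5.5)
Your argument is correct and is the paper's proof: tensor the triangle $\B^{\leq0}\to\B\to\B^{>0}$ with $\A\lotimes_{\A^{\leq0}}-$, identify the middle term with $\B$ via the localization \ref{loc}(1), and kill $\A\lotimes_{\A^{\leq0}}\B^{>0}$ by the opposite version of Lemma~\ref{ubb}; the paper writes out only this half and the left square. One harmless slip: in the $\RHom$ square, adjunction with restriction produces $\B^{\leq0}\lotimes_{\A^{\leq0}}\A$, so the square uses the second isomorphism (with $\B^{\leq0}$ as a right $\A^{\leq0}$-module), which you have also proved.
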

\begin{proof}
	We only prove half of the assertions. Since $\A^{\leq0}\to\A$ is a localization, the natural map $\A\lotimes_{\A^{\leq0}}\B\to \A\lotimes_\A\B=\B$ is an isomorphism in $\rD(\A^\op\otimes\B)$. We see that the mapping cone of the morphism $\A\lotimes_{\A^{\leq0}}\B^{\leq0}\to \B$ is $\A\lotimes_{\A^{\leq0}}\B^{>0}$, which is $0$ by \ref{ubb}. This shows the commutativity of the left square.
\end{proof}

\subsection{Basic properties}
We first note a simple observation which has already been used in the proof of \ref{loc}. For each $i\in\Z$, we regard $\C^{\leq i}$ the bimodule over $\C^{\leq0}$, thus by definition, $\C^{\leq i}(C_1,C_2)=\C(C_1,C_2)^{\leq i}$.
\begin{Lem}\label{kaku}
For each $i\in\Z$, we have $\C^{\leq i}(-,?)=\C^{\leq0}(-,?[i])[-i]$ as $(\C^{\leq0},\C^{\leq0})$-bimodules.
\end{Lem}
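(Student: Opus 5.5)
The plan is to construct the bimodule isomorphism directly at the level of morphism complexes, using that $\C$ has shifts, and then check that it respects the actions of $\C^{\leq0}$ on both sides. Since $\C$ has shifts, for each object $C_2$ we choose an object $C_2[i]\in\C$ together with a closed degree $-i$ isomorphism (in $H^0$) $\C(-,C_2[i])\simeq\C(-,C_2)[i]$ of $\C$-modules. I would first replace $\C$ by a quasi-equivalent dg category in which these shifts are realized strictly, for instance by passing to the closure of $\C$ under formal shifts inside $\rC_\dg(\C)$. This gives genuine isomorphisms of complexes
\[ \C(C_1,C_2[i])\cong\C(C_1,C_2)[i], \]
natural in $C_1$ and compatible with composition on the right by morphisms in $\C(C_2,C_2')$.

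Next I would apply the truncation. For a complex $X$ we have
\[ (X[i])^{\leq0}=(X^{\leq i})[i], \]
since $(X[i])^n=X^{n+i}$ and $Z^0(X[i])=Z^iX$. Applying this to $X=\C(C_1,C_2)$ gives
\[ \C^{\leq0}(C_1,C_2[i])\cong\C(C_1,C_2)^{\leq i}[i], \]
and shifting by $[-i]$ yields $\C^{\leq i}(C_1,C_2)\cong\C^{\leq0}(C_1,C_2[i])[-i]$.

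It remains to check bimodule compatibility. On the left, precomposition by $\C^{\leq0}(C_1',C_1)$ commutes with the isomorphism because the shift identification is natural in the first variable. On the right, the action of $f\in\C^{\leq0}(C_2,C_2')$ on $\C^{\leq0}(-,?[i])$ goes through the functor $?\mapsto ?[i]$, that is, through $f[i]$. The shift functor on the shift-closed model is a dg functor that preserves degrees of morphisms, so it restricts to $\C^{\leq0}$, and $f[i]$ corresponds to $f$ under the identification, up to the standard sign $(-1)^{i|f|}$.

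The main obstacle is this bookkeeping. One has to make the assignment $C\mapsto C[i]$ into an honest dg endofunctor of $\C^{\leq0}$, which requires the strictification step above or working in the derived category of bimodules with a quasi-functor. One also has to handle the Koszul signs arising from the shift. Once that is in place, the degree-by-degree identity $(X[i])^{\leq0}=X^{\leq i}[i]$ finishes the proof. This identity is precisely the one already used in Lemma~\ref{per} and Lemma~\ref{ast}.
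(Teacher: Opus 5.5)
Your proposal is correct and follows the paper's reasoning. The paper states this lemma without a separate proof; it relies on the identity $(X[i])^{\leq0}=X^{\leq i}[i]$ applied to $X=\C(C_1,C_2)$, together with $\C(-,C[i])\simeq\C(-,C)[i]$, exactly as in Lemmas~\ref{per} and~\ref{ast}, and your strictification via formal shifts in $\rC_\dg(\C)$ and your sign check on the action through $f\mapsto f[i]$ just make explicit the bookkeeping the paper leaves implicit.
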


\begin{Lem}\label{inv}
Let $\C$ be a dg category with shifts. Then for each $i\in\Z$, we have canonical isomorphisms
\[ \begin{aligned}
	&\C^{\leq i}\lotimes_{\C^{\leq0}}\C^{\leq j}\simeq\C^{\leq i+j}, \\
	&\C^{\leq j-i}\simeq\RHom_{\C^{\leq0}}(\C^{\leq i},\C^{\leq j})
	\end{aligned} \]
in $\rD((\C^{\leq0})^e)$. Therefore, the $(\C^{\leq0},\C^{\leq0})$-bimodule $\C^{\leq i}$ is invertible.
\end{Lem}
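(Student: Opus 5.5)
The plan is to reduce everything to Lemma~\ref{kaku}, which says that each $\C^{\leq i}$ is a ``twist'' of the diagonal bimodule $\C^{\leq0}$ by the shift autoequivalence. Since $\C$ has shifts, the assignment $C\mapsto C[i]$ is, up to quasi-equivalence of the underlying module categories, an automorphism $\sigma_i$ of $\C$. Truncation is defined morphismwise, so $\sigma_i$ also acts on $\C^{\leq0}$. Concretely, the representable $\C^{\leq0}(-,C[i])$ is still a representable $\C^{\leq0}$-module, namely the one attached to the object $C[i]$. Hence the bimodule $\C^{\leq0}(-,?[i])$ is the graph bimodule of the functor $?\mapsto ?[i]$ on $\C^{\leq0}$. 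By Lemma~\ref{kaku} we then have $\C^{\leq i}\simeq {}_{1}(\C^{\leq0})_{\sigma_i}[-i]$, meaning $\C^{\leq0}$ twisted on one side by $\sigma_i$ and then shifted by $-i$.

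The key steps are as follows.
\begin{enumerate}
\item Check that the bimodule $\C^{\leq0}(-,?[i])$ is right-representable: for each object $B$, the right module $\C^{\leq0}(-,B[i])$ is representable. In particular it is perfect and flat, so the derived tensor product can be computed on the nose.
\item Compute $\C^{\leq i}\lotimes_{\C^{\leq0}}\C^{\leq j}$ using Yoneda. Tensoring the representable $\C^{\leq0}(-,B[i])$ with $\C^{\leq0}(?,-'[j])$ gives $\C^{\leq0}(-,B[i][j])$. Keeping track of the shifts $[-i]$ and $[-j]$, this yields $\C^{\leq0}(-,?[i+j])[-i-j]=\C^{\leq i+j}$, again by Lemma~\ref{kaku}.
\item For the $\RHom$ statement, use Yoneda again. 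We get $\RHom_{\C^{\leq0}}(\C^{\leq0}(-,A[i])[-i],\C^{\leq0}(-,B[j])[-j])=\C^{\leq0}(A[i],B[j])[i-j]$. Shifting both arguments by $-i$ identifies this with $\C^{\leq0}(A,B[j-i])[i-j]=\C^{\leq j-i}(A,B)$.
\item Invertibility follows from step 2 with $j=-i$, since $\C^{\leq i}\lotimes\C^{\leq -i}\simeq\C^{\leq0}$, and the same holds in the other order.
\end{enumerate}

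The main obstacle is naturality. Each isomorphism above is evident objectwise, but it must be an isomorphism in $\rD((\C^{\leq0})^e)$, so it has to be compatible with both actions. The issue is that ``has shifts'' only provides an object $C[i]$ representing $\C(-,C)[i]$ up to quasi-isomorphism, not a strict dg functor.

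To handle this, I would first avoid choices by working directly with the bimodules $\C^{\leq i}(-,?)=\C(-,?)^{\leq i}$ themselves. There are canonical multiplication maps $\C^{\leq i}\otimes_{\C^{\leq0}}\C^{\leq j}\to\C^{\leq i+j}$ induced by composition, since $Z^i\cdot Z^j\subset Z^{i+j}$ and truncations multiply correctly. Adjoint to these are maps $\C^{\leq j-i}\to\RHom_{\C^{\leq0}}(\C^{\leq i},\C^{\leq j})$. These maps are bimodule morphisms by construction. It then only remains to check that they are quasi-isomorphisms, and that can be done one object at a time, where the Yoneda computations of steps 2 and 3 apply.
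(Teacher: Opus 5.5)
Your proposal is correct and follows the paper's proof: it rewrites $\C^{\leq i}$ via Lemma~\ref{kaku} as $\C^{\leq0}(-,?[i])[-i]$, computes the derived tensor product and $\RHom$ by Yoneda, and reads off invertibility from the case $j=-i$. Your extra step is also sound: you first build the comparison maps from composition (cocycles compose to cocycles, so $\C^{\leq i}\otimes_{\C^{\leq0}}\C^{\leq j}\to\C^{\leq i+j}$ is defined) and then check quasi-isomorphism objectwise, which makes explicit the bimodule naturality that the paper's chain of identifications leaves implicit.
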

\begin{proof}
	By \ref{kaku} we have $\C^{\leq i}(-,?)=\C^{\leq0}(-,?[i])[-i]$, so $(\C^{\leq i}\lotimes_{\C^{\leq0}}\C^{\leq j})(-,?)=\C^{\leq0}(\sim,?[i])[-i]\lotimes_{\C^{\leq0}}\C^{\leq0}(-[-j],\sim)[-j]=\C^{\leq0}(-[-j],?[i])[-i-j]=\C^{\leq i+j}(-,?)$. This proves the first isomorphism. The second one is proved similarly.
\end{proof}

We also repeat \ref{per}.
\begin{Lem}\label{matakaku}
For each $i\in\Z$, we have $H^i\C\in\per\C^{\leq0}$.
\end{Lem}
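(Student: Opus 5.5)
The statement repeats \ref{per}, so the plan is to redo that argument using the bimodule identification of \ref{kaku}, working in a dg category $\C$ with shifts.

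\emph{Reduction to $i=0$.} Since $\C$ has shifts, for each $C\in\C$ there is an object $C[i]\in\C$ with $\C(-,C[i])\simeq\C(-,C)[i]$ as $\C$-modules. Restricting along $\C^{\leq0}\to\C$ and taking cohomology gives $H^i\C(-,C)\simeq H^0\C(-,C[i])$ as $H^0\C$-modules, hence as $\C^{\leq0}$-modules. So it suffices to show that each $H^0\C(-,C)$ is perfect. The meaning of ``$H^i\C\in\per\C^{\leq0}$'' we aim for is the objectwise one: each $H^i\C(-,C)$ lies in $\per\C^{\leq0}$. This is the sense in which $H^0\C$ is later treated as a set of compact generators.

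\emph{The defining triangle.} For $C\in\C$ use the short exact sequence of complexes of $\C^{\leq0}$-modules
\[ 0\to\C(-,C)^{\leq-1}\to\C(-,C)^{\leq0}\to H^0\C(-,C)\to0. \]
It is exact because the quotient of $(\cdots\to X^{-1}\to Z^0X)$ by $(\cdots\to X^{-2}\to Z^{-1}X)$ is $(X^{-1}/Z^{-1}X\to Z^0X)$, and this complex is quasi-isomorphic to $H^0X$. The sequence therefore gives a triangle in $\rD(\C^{\leq0})$.

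\emph{Perfectness of the outer terms.} The middle term is the representable $\C^{\leq0}(-,C)$, which is perfect. By \ref{kaku} with $i=-1$, the first term is $\C^{\leq0}(-,C[-1])[1]$, a shifted representable, so it is also perfect. Perfect modules form a thick subcategory, so the cone $H^0\C(-,C)$ is perfect.

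The only step that needs any care is the identification $\C(-,C)^{\leq-1}\simeq\C^{\leq0}(-,C[-1])[1]$ as $\C^{\leq0}$-modules. To check it, fix a homotopy equivalence $\C(-,C[-1])\simeq\C(-,C)[-1]$ of $\C$-modules coming from the shift. Truncation is functorial and commutes with shifting up to reindexing, $(X[-1])^{\leq0}=X^{\leq-1}[-1]$. Hence the equivalence truncates to a quasi-isomorphism of $\C^{\leq0}$-modules, which is exactly \ref{kaku} for $i=-1$, so we may quote that lemma directly.
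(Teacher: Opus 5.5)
Your proof is correct and follows the paper's argument for \ref{per} step for step. You reduce to $i=0$ using shifts, use the triangle $\C(-,C)^{\leq-1}\to\C^{\leq0}(-,C)\to H^0\C(-,C)$ in $\rD(\C^{\leq0})$, and identify the first term with the shifted representable $\C^{\leq0}(-,C[-1])[1]$. Your extra checks (exactness of the sequence, $(X[-1])^{\leq0}=X^{\leq-1}[-1]$, and truncation preserving quasi-isomorphisms) are details the paper leaves implicit.
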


\begin{Prop}\label{Res}
Let $\C$ be a dg category with shifts.
Then, the category $\thick H^0\C$ consists of all the perfect $\C^{\leq0}$-modules which are uniformly bounded. 
Therefore, every autoequivalence on $\per\C^{\leq0}$ restricts to $\thick H^0\C$.
\end{Prop}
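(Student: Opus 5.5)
Write $\mathscr{B}$ for the full subcategory of $\per\C^{\leq0}$ consisting of uniformly bounded modules. The plan is to prove the two inclusions $\thick H^0\C\subset\mathscr{B}$ and $\mathscr{B}\subset\thick H^0\C$ separately.

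\emph{The inclusion $\thick H^0\C\subset\mathscr{B}$.} Each $H^0\C(-,C)$ is perfect by \ref{per}. It is concentrated in degree $0$, hence uniformly bounded. Perfect modules are closed under cones, shifts and summands. So is uniform boundedness, since a bound can be chosen uniformly in $A\in\C$ and the bounds of finitely many objects combine. Hence $\thick H^0\C\subset\mathscr{B}$.

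\emph{The inclusion $\mathscr{B}\subset\thick H^0\C$.} Let $M\in\per\C^{\leq0}$ be uniformly bounded, with cohomology in degrees $[a,b]$. I argue by induction on $b-a$, using the standard $t$-structure on $\rD(\C^{\leq0})$; this makes sense because $\C^{\leq0}$ is connective. The truncation triangle $\sigma^{<b}M\to M\to H^b(M)[-b]$ reduces the claim to two facts:
\begin{itemize}
\item every perfect uniformly bounded module has perfect truncations;
\item every module in the heart $\Mod H^0\C$ that is perfect lies in $\thick H^0\C$.
\end{itemize}
Truncations need not preserve perfection in general, so I would avoid them and argue instead via the localization of \ref{loc}.
\begin{itemize}
\item By \ref{ubb}, any $M\in\rD^{u,+}(\C^{\leq0})$ satisfies $M\lotimes_{\C^{\leq0}}\C=0$. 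So $M$ lies in the kernel of the induction functor $\per\C^{\leq0}\to\per\C$.
\item By \ref{loc}(2), this kernel is compactly generated by $H^0\C$. Its compact objects are therefore $\thick H^0\C$, by Neeman's theorem on compactly generated localizing subcategories.
\item Since $M$ is compact in $\rD(\C^{\leq0})$ and lies in $\Loc H^0\C$, it is compact there too. Hence $M\in\thick H^0\C$.
\end{itemize}
This argument only needs uniform boundedness below, and it bypasses the truncation issue entirely. The main point to check carefully is exactly this identification of $\per\C^{\leq0}\cap\Loc H^0\C$ with $\thick H^0\C$.

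\emph{The consequence for autoequivalences.} I would like to say that an autoequivalence $F$ of $\per\C^{\leq0}$ preserves $\thick H^0\C$ because it preserves boundedness, but boundedness must first be characterized intrinsically. I would use the silting subcategory $\P=\add\{\C^{\leq0}(-,C)\}$. A perfect $M$ is uniformly bounded if and only if $\Hom(P,M[i])=0$ for $|i|\gg0$ uniformly in $P\in\P$, which follows from Yoneda.

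An autoequivalence $F$ need not preserve $\P$, so this criterion alone does not suffice. Instead I would characterize $\thick H^0\C$ intrinsically as the objects $M$ such that, for every $N\in\per\C^{\leq0}$, one has $\Hom(N,M[i])=0$ for $|i|\gg0$.
\begin{itemize}
\item For $M$ bounded and $N$ perfect, this vanishing holds. Indeed, $N$ lies in some $\P[-n]\ast\cdots\ast\P[n]$, and $M$ is bounded.
\item Conversely, taking $N=\bigoplus$ of finitely many representables does not give a uniform bound. So I would instead use that $\per\C^{\leq0}$ has a \emph{classical generator} only when $\C$ has finitely many objects, and otherwise argue per object.
\end{itemize}
In any case, the intended proof takes $N$ ranging over representables with the boundedness condition ``$\Hom(N,M[i])=0$ for $|i|\gg0$ for all $N$'' as the intrinsic property. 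I expect reconciling this pointwise condition with the required uniformity to be the main obstacle. I would try to resolve it by noting that perfect $M$ lies in $\thick$ of finitely many representables, so only finitely many objects matter.
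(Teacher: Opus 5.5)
Your proof of the characterization is correct and follows the paper's argument. \ref{per} gives $\thick H^0\C\subset\mathscr{B}$. For the reverse inclusion the paper likewise combines \ref{ubb} with \ref{loc}; your appeal to Neeman's theorem (the compact objects of $\Loc H^0\C$ are exactly $\thick H^0\C$) spells out what the paper compresses into ``by \ref{loc}''. You are also right that only uniform boundedness below is needed, since perfect $\C^{\leq0}$-modules are automatically uniformly bounded above.

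The genuine gap is in the final sentence about autoequivalences. The paper's proof does not argue this sentence separately, and your argument for it does not work. The property you propose as intrinsic, namely $\Hom(N,M[i])=0$ for $|i|\gg0$ for every $N\in\per\C^{\leq0}$, does not characterize $\thick H^0\C$. For fixed $M$ the class of such $N$ is thick, so the condition only says that each $M(A)$ has bounded cohomology, with a bound allowed to depend on $A$.

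This already fails in the paper's example $\C=\{k[i]\}_{i\in\Z}\subset\per_\dg k$. Take $M=\C^{\leq0}(-,k)$. Then $M(k[i])\simeq k[-i]$ for $i\leq0$ and $M(k[i])=0$ for $i>0$, so $M$ satisfies your condition. But $M\lotimes_{\C^{\leq0}}\C\simeq\C(-,k)\neq0$, so $M\notin\thick H^0\C$.

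Your proposed remedy does not help. That $M$ lies in the thick closure of finitely many representables constrains the generators of $M$, not the test objects $A$. Uniform boundedness quantifies over all $A$, and no finite set of tests suffices once $H^0\C$ has infinitely many indecomposables.

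Your criterion is valid when $H^0\C$ has an additive generator $G$ (test against $N=\C^{\leq0}(-,G)$). Otherwise you need a genuinely intrinsic description. For example, when $\C$ is pretriangulated and $\per\C$ has a Serre functor, $\thick H^0\C$ is the class of those $X\in\per\C^{\leq0}$ for which $D\Hom(X,-)$ is representable on $\per\C^{\leq0}$. Every $k$-linear triangle autoequivalence preserves this class.
\begin{itemize}
\item One inclusion comes from \ref{relS} together with \ref{invertible}, which give $D\Hom(X,-)\simeq\Hom(-,X\lotimes_{\C^{\leq0}}\theta^{-1})$.
\item Conversely, a perfect representing object $Z$ is uniformly bounded above. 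This forces $\Hom(X,\C^{\leq0}(-,C)[j])=0$ for $j\ll0$, uniformly in $C$.
\item Hence $\RHom_{\C^{\leq0}}(X,\C(-,C))\simeq\hocolim_p\RHom_{\C^{\leq0}}(X,\C^{\leq0}(-,C[p]))[-p]=0$ for all $C$, so $X\lotimes_{\C^{\leq0}}\C=0$.
\end{itemize}
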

\begin{proof}
	We know by \ref{matakaku} that $H^0\C\in\per\C^{\leq0}$, and each $H^0\C(-,C)$ is clearly uniformly bounded. To see the converse inclusion, if $M$ is uniformly bounded, then $M\lotimes_{\C^{\leq0}}\C=0$ by \ref{ubb}, thus $M\in\thick H^0\C$ by \ref{loc}.
\end{proof}

\begin{Lem}\label{back}
Let $\C$ be a dg category with shifts. Then $M^{\leq 0}\lotimes_{\C^{\leq0}}\C\xsimeq M$ for all $M\in\rD(\C)$.
\end{Lem}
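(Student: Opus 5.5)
We need to show that for $M\in\rD(\C)$, regarded as a $\C^{\leq0}$-module by restriction, the truncation $M^{\leq0}$ satisfies $M^{\leq0}\lotimes_{\C^{\leq0}}\C\xsimeq M$.

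Throughout, $M^{\leq0}$ means the objectwise truncation of $M$ viewed as a $\C^{\leq0}$-module. This is a $\C^{\leq0}$-submodule of $M$: since morphisms of $\C^{\leq0}$ live in non-positive degrees and cocycles in degree $0$ go to cocycles, the subcomplex $M(C)^{\leq0}$ is preserved. We therefore have a triangle
\[ \xymatrix{ M^{\leq0}\ar[r]&M\ar[r]&M^{>0} } \]
in $\rD(\C^{\leq0})$, whose third term $M^{>0}$ is uniformly bounded below, with cohomology in degrees $\geq1$ at every object.

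Apply $-\lotimes_{\C^{\leq0}}\C$ to this triangle. By \ref{ubb}, the third term $M^{>0}\lotimes_{\C^{\leq0}}\C$ vanishes, so $M^{\leq0}\lotimes_{\C^{\leq0}}\C\to M\lotimes_{\C^{\leq0}}\C$ is an isomorphism. It then remains to identify $M\lotimes_{\C^{\leq0}}\C$ with $M$ for $M\in\rD(\C)$, that is, $M\lotimes_{\C^{\leq0}}\C=M\lotimes_\C\C\lotimes_{\C^{\leq0}}\C$. For this we use \ref{loc}(1): its proof shows the multiplication $\C\lotimes_{\C^{\leq0}}\C\to\C$ is an isomorphism in $\rD(\C^e)$. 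Hence $M\lotimes_{\C^{\leq0}}\C\simeq M\lotimes_\C(\C\lotimes_{\C^{\leq0}}\C)\simeq M$.

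Finally, we check that the composite $M^{\leq0}\lotimes_{\C^{\leq0}}\C\to M\lotimes_{\C^{\leq0}}\C\to M$ is the canonical map, namely the adjoint of the inclusion $M^{\leq0}\to M$ under the induction–restriction adjunction. This holds because the counit of that adjunction is exactly the multiplication map.

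The only point needing care is that \ref{ubb} applies to arbitrary uniformly bounded below modules, not just perfect ones. The lemma is stated for all $M\in\rD^{u,+}(\C^{\leq0})$, so no further argument is needed beyond the observation that $M^{>0}$ is uniformly bounded below, which holds since its cohomology is concentrated in degrees $\geq1$ uniformly in $C$.
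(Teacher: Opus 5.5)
Your proof is correct and follows the paper's argument: you apply $-\lotimes_{\C^{\leq0}}\C$ to the triangle $M^{\leq0}\to M\to M^{>0}$, kill the third term with \ref{ubb}, and identify the middle term with $M$ using the localization property from \ref{loc}. Your extra checks, that $M^{\leq0}$ is a $\C^{\leq0}$-submodule and that the resulting isomorphism is the canonical counit map, are details the paper leaves implicit.
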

\begin{proof}
	Applying $-\lotimes_{\C^{\leq0}}\C$ to the triangle $M^{\leq0}\to M\to M^{>0}$ yields
	\[ \xymatrix{ M^{\leq0}\lotimes_{\C^{\leq0}}\C\ar[r]&M\lotimes_{\C^{\leq0}}\C\ar[r]& M^{>0}\lotimes_{\C^{\leq0}}\C }. \]
	Here, the middle term is isomorphic to $M\lotimes_{\C}\C=M$ since $\C^{\leq0}\to\C$ is a localization by \ref{loc}, and the last term is $0$ by \ref{ubb}. We deduce that the first map, which identifies with $M^{\leq0}\lotimes_{\C^{\leq0}}\C\to M$, is an isomorphism.
\end{proof}

\begin{Lem}\label{Mleq0}
Let $\C$ be a pretriangulatd dg category. 
\begin{enumerate}
	\item\label{Mper} $M^{\leq 0}\in\per\C^{\leq0}$ for every $M\in\per\C$.
	\item\label{M0} $H^0M\in\thick H^0\C$ in $\rD(\C^{\leq0})$ for every $M\in\per\C$. In particular, $H^0M\in\per\C^{\leq0}$.
\end{enumerate}
\end{Lem}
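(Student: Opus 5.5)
Here $M^{\leq0}$ denotes the restriction of $M$ to $\C^{\leq0}$, truncated objectwise; since $\rD(\C)\to\rD(\C^{\leq0})$ is fully faithful (\ref{loc}), this is the truncation for the standard $t$-structure on $\rD(\C^{\leq0})$. The plan is to prove (\ref{Mper}) first by reduction to representables, and then to deduce (\ref{M0}) from it using \ref{Res}.

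For (\ref{Mper}), take $M\in\per\C$. Since $\C$ is pretriangulated, $H^0\C\simeq\per\C$ up to direct summands. So $M$ is a direct summand of a representable $\C(-,C)$, and being a direct summand is preserved by truncation. It therefore suffices to treat $M=\C(-,C)$. In that case $M^{\leq0}=\C(-,C)^{\leq0}=\C^{\leq0}(-,C)$, which is representable and hence perfect. One subtle point needs checking: truncation of the restricted module agrees with morphismwise truncation. This holds because $\C(-,C)^{\leq0}$ is a dg $\C^{\leq0}$-submodule with the correct cohomology, and the map to $\C(-,C)$ is an isomorphism on $H^{\leq0}$. So this part is essentially formal.

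For (\ref{M0}), use the triangle $M^{\leq-1}\to M^{\leq0}\to H^0M$ in $\rD(\C^{\leq0})$. We have $M^{\leq-1}=(M[-1])^{\leq0}[1]$, and $M[-1]\in\per\C$, so by (\ref{Mper}) both $M^{\leq-1}$ and $M^{\leq0}$ are perfect. Hence $H^0M\in\per\C^{\leq0}$. Moreover $H^0M$ is uniformly bounded, since it is concentrated in degree $0$ objectwise. By \ref{Res}, the perfect uniformly bounded modules are exactly $\thick H^0\C$, so $H^0M\in\thick H^0\C$.

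The only step requiring care is identifying the $t$-structure truncation with the objectwise one in (\ref{Mper}). This is immediate because the standard $t$-structure on $\rD(\C^{\leq0})$ is detected objectwise on cohomology, so I expect no real obstacle. Alternatively, (\ref{M0}) follows directly in the representable case: there $H^0M=H^0\C(-,C)$, which is a generator of $\thick H^0\C$, and thick subcategories are closed under direct summands.
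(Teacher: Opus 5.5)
Your proof is correct. Part (\ref{Mper}) is essentially the paper's argument: you reduce to a representable $\C(-,C)$ using the pretriangulated hypothesis, note that $\C(-,C)^{\leq0}=\C^{\leq0}(-,C)$, and pass to direct summands.

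For part (\ref{M0}), your main route differs from the paper's. You first obtain perfectness of $H^0M$ from the triangle $M^{\leq-1}\to M^{\leq0}\to H^0M$, applying (\ref{Mper}) to $M$ and $M[-1]$. You then invoke \ref{Res}, which characterizes $\thick H^0\C$ as the perfect, uniformly bounded modules.

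The paper instead argues directly, as in your closing remark. If $M$ is a summand of $\C(-,C)$, then $H^0M$ is a summand of $H^0\C(-,C)$, so $H^0M\in\add H^0\C\subset\thick H^0\C$. The "in particular'' then follows from \ref{per}.

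The direct argument is elementary and gives the sharper conclusion $H^0M\in\add H^0\C$. Your main route is valid and not circular, since \ref{ubb}, \ref{loc} and \ref{Res} precede and do not use this lemma. But it passes through the full localization theorem and the compact-generation argument behind \ref{Res}, which is far heavier than needed.

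One small point: fully faithfulness of restriction is not what identifies objectwise truncation with the standard $t$-structure truncation on $\rD(\C^{\leq0})$. The identification holds because $\C^{\leq0}$ is strictly connective. Hence the objectwise truncation is a dg $\C^{\leq0}$-submodule, and the standard $t$-structure is detected on objectwise cohomology.
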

\begin{proof}
	Since $\C$ is pretriangulated, each perfect $\C$-module is a direct summand of a representable $\C$-module. Therefore, if $M\in\per\C$, then $M^{\leq0}$ is a direct summand of a representable $\C^{\leq0}$-module, thus the assertion (\ref{Mper}) follows. Similarly, if $M\in\per\C$, then $H^0M\in\add H^0\C\subset\thick H^0\C$, which proves (\ref{M0}).
\end{proof}

We note the following ``Matlis duality''.
\begin{Prop}\label{matlis}
Let $\C$ be a dg category with shifts.
The multiplication map $\C^{\leq0}\to\REnd_{\C^{\leq0}}(\C^{>0})$ is an isomorphism.
\end{Prop}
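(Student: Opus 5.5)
We work with the triangle $\C^{\leq0}\to\C\to\C^{>0}$ of $(\C^{\leq0},\C^{\leq0})$-bimodules and apply $\RHom_{\C^{\leq0}}(-,\C^{>0})$ to its left-hand side as right modules. This gives a triangle of bimodules
\[ \RHom_{\C^{\leq0}}(\C^{>0},\C^{>0})\to\RHom_{\C^{\leq0}}(\C,\C^{>0})\to\RHom_{\C^{\leq0}}(\C^{\leq0},\C^{>0})=\C^{>0}. \]
Comparing it with the original triangle rotated, $\C^{\leq0}\to\C\to\C^{>0}$, reduces the claim to two steps: first, the middle term is $\C$, via a map compatible with the multiplication maps; second, the map to the third term is the canonical projection $\C\to\C^{>0}$. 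The five lemma on the induced morphism of triangles then shows that the multiplication map $\C^{\leq0}\to\REnd_{\C^{\leq0}}(\C^{>0})$ is an isomorphism. The compatibility of the maps is routine, since all maps are induced by multiplication.

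The key computation is $\RHom_{\C^{\leq0}}(\C,\C^{>0})$. Since $\C^{\leq0}\to\C$ is a localization by \ref{loc}, restriction $\rD(\C)\to\rD(\C^{\leq0})$ is fully faithful and right adjoint to $-\lotimes_{\C^{\leq0}}\C$. Hence
\[ \RHom_{\C^{\leq0}}(\C,\C^{>0})=\RHom_{\C}(\C,\RHom_{\C^{\leq0}}(\C,\C^{>0})), \]
and it suffices to understand $\RHom_{\C^{\leq0}}(\C,-)$ applied to $\C^{>0}$. To do this, I apply $\RHom_{\C^{\leq0}}(\C,-)$ to $\C^{\leq0}\to\C\to\C^{>0}$. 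The middle term gives $\RHom_{\C^{\leq0}}(\C,\C)=\RHom_\C(\C\lotimes_{\C^{\leq0}}\C,\C)=\C$, using $\C\lotimes_{\C^{\leq0}}\C\simeq\C$ from the proof of \ref{loc}(1). So it remains to show
\[ \RHom_{\C^{\leq0}}(\C,\C^{\leq0})=0. \]

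This vanishing is the main obstacle, and it is the dual of \ref{ubb}. Write $\C=\hocolim_p\C^{\leq p}$. Then $\RHom_{\C^{\leq0}}(\C,\C^{\leq0})=\holim_p\RHom_{\C^{\leq0}}(\C^{\leq p},\C^{\leq0})=\holim_p\C^{\leq -p}$ by \ref{inv}. The transition maps are the inclusions $\C^{\leq-p-1}\to\C^{\leq-p}$, since they are induced by the inclusions $\C^{\leq p}\to\C^{\leq p+1}$ and \ref{inv} is canonical. The homotopy limit of the tower $\cdots\subset\C^{\leq-2}\subset\C^{\leq-1}\subset\C^{\leq0}$ is zero: its $\lim$ is $\bigcap_p\C^{\leq -p}=0$ in each cohomological degree, and $\lim^1$ vanishes because on cohomology the tower is eventually constant, equal to $0$, in each fixed degree (Mittag-Leffler). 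The one point needing care is that the isomorphisms of \ref{inv} are natural in $p$, so that the tower really has these transition maps; I would check this directly from \ref{kaku}.

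With the vanishing in hand, $\RHom_{\C^{\leq0}}(\C,\C^{>0})\simeq\RHom_{\C^{\leq0}}(\C,\C)\simeq\C$, and the map to $\RHom_{\C^{\leq0}}(\C^{\leq0},\C^{>0})=\C^{>0}$ is restriction along $\C^{\leq0}\to\C$, which is the projection. This completes the morphism of triangles and proves the proposition.
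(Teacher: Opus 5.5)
Your proof is correct, but it takes a genuinely different route from the paper's.

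\textbf{The paper's route.} The paper never invokes the localization theorem for this statement. It writes $\C^{>0}=\hocolim_p\C^{[1,p]}$ and computes each $\RHom_{\C^{\leq0}}(\C^{[1,p]},\C^{>0})\simeq\C^{(-p,0]}$ by induction. The input is the piecewise duality $\RHom_{\C^{\leq0}}(H^i\C,\C^{>0})[1]\simeq H^{-i+1}\C$ of Lemma \ref{dual}, which comes from $\RHom_{\C^{\leq0}}(\C^{\leq i},\C^{>0})\simeq\C^{>-i}$ via \ref{kaku}. It then identifies $\holim_p\C^{(-p,0]}$ with $\C^{\leq0}$ by comparing Milnor triangles.

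\textbf{Your route.} You resolve the first argument by the triangle $\C^{\leq0}\to\C\to\C^{>0}$ and get $\RHom_{\C^{\leq0}}(\C,\C)\simeq\C$ from Theorem \ref{loc}(1). Everything then reduces to the vanishing $\RHom_{\C^{\leq0}}(\C,\C^{\leq0})=0$.

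\textbf{What each buys.} The paper's approach yields the finer statement \ref{dual} about the individual cohomology bimodules, and it is independent of \ref{loc}. Yours has a simpler limit argument and isolates a useful lemma. Since $\rD(\C)$ is generated by the $\C(-,Y)$, your vanishing says that perfect $\C^{\leq0}$-modules are right orthogonal to $\rD(\C)\subset\rD(\C^{\leq0})$. This is exactly the vanishing used later in \ref{invertible} for $\RHom_{\C^{\leq0}}(D\C,\C^{\leq0})$.

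\textbf{Naturality in $p$ is not needed.} In each fixed degree $n$, the tower $H^n\RHom_{\C^{\leq0}}(\C^{\leq p},\C^{\leq0})\simeq H^n(\C^{\leq -p})$ is zero for $p>-n$. So both $\lim$ and $\lim^1$ vanish in the Milnor sequence, whatever the transition maps are. Your chain-level remark that $\bigcap_p\C^{\leq -p}=0$ is beside the point, since the transition maps are injective rather than surjective and so the chain-level limit does not compute the homotopy limit.

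\textbf{The final step needs less than a full morphism of triangles.} It suffices that the square $\C^{\leq0}\to\RHom_{\C^{\leq0}}(\C^{>0},\C^{>0})\to\RHom_{\C^{\leq0}}(\C,\C^{>0})\simeq\C$ commutes, i.e.\ agrees with the inclusion $\C^{\leq0}\to\C$. The long exact cohomology sequence of $\RHom_{\C^{\leq0}}(\C^{>0},\C^{>0})\to\C\to\C^{>0}$, with the second map the projection, then shows two things. The left term has no cohomology in positive degrees, and it maps isomorphically onto $H^{\leq0}\C$.
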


We need a preparation. For each $p,q\in\Z\cup\{\pm\infty\}$ with $p\leq q$, define the bimodule $\C^{[p,q]}$ by the triangle $\C^{\leq p-1}\to\C^{\leq q}\to\C^{[p,q]}$. We also define some obvious variations like $\C^{(p,q]}$.
\begin{Lem}\label{dual}
\begin{enumerate}
\item\label{hitotsu} For each $i\in\Z$, there is a canonical isomorphism $H^{-i+1}\C\xsimeq\RHom_{\C^{\leq0}}(H^i\C,\C^{>0})[1]$ in $\rD((\C^{\leq0})^e)$.
\item\label{takusan} For each $p\geq1$ there is a canonical isomorphism $\C^{(-p,0]}\xsimeq\RHom_{\C^{\leq0}}(\C^{[1,p]},\C^{>0})$ in $\rD((\C^{\leq0})^e)$.
\end{enumerate}
\end{Lem}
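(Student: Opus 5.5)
For (\ref{hitotsu}) I will reduce to the case $i=0$ using shifts. By \ref{kaku} and the definition of $H^i$ as a bimodule, $H^i\C(-,?)=H^0\C(-,?[i])$. Twisting the first argument of $\RHom_{\C^{\leq0}}(-,\C^{>0})$ by the autoequivalence $?\mapsto ?[i]$ then turns the target $H^{-i+1}\C$ into $H^{1}\C$ up to the same twist. So it suffices to show $\RHom_{\C^{\leq0}}(H^0\C,\C^{>0})[1]\simeq H^1\C$.

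For the case $i=0$, apply $\RHom_{\C^{\leq0}}(H^0\C,-)$ to the triangle $\C^{\leq0}\to\C\to\C^{>0}$. The middle term vanishes, as shown in the proof of \ref{loc}(2), so
\[ \RHom_{\C^{\leq0}}(H^0\C,\C^{>0})\simeq\RHom_{\C^{\leq0}}(H^0\C,\C^{\leq0})[1]. \]
By \ref{ast} the right-hand side is $H^1\C[-2][1]=H^1\C[-1]$. Shifting by $1$ gives $H^1\C$. Canonicity and bimodule structure follow because each step is functorial in both variables, since the triangles used are triangles of bimodules. I must also check that the connecting map is the natural "multiplication" map. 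I expect this bookkeeping of bimodule structures and twists to be the only delicate point in part (\ref{hitotsu}).

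For (\ref{takusan}) I will induct on $p$. For $p=1$ we have $\C^{[1,1]}=H^1\C[-1]$ and $\C^{(-1,0]}=H^0\C$. Part (\ref{hitotsu}) with $i=1$ gives $H^0\C\simeq\RHom(H^1\C,\C^{>0})[1]=\RHom(H^1\C[-1],\C^{>0})$, as required.

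For the inductive step, use the triangle $\C^{[1,p]}\to\C^{[1,p+1]}\to H^{p+1}\C[-p-1]$ and apply $\RHom_{\C^{\leq0}}(-,\C^{>0})$. By (\ref{hitotsu}) the term $\RHom(H^{p+1}\C[-p-1],\C^{>0})$ is $H^{-p}\C[p]$. This yields a triangle
\[ H^{-p}\C[p]\to\RHom(\C^{[1,p+1]},\C^{>0})\to\RHom(\C^{[1,p]},\C^{>0})\simeq\C^{(-p,0]}. \]
This should be compared with the triangle $H^{-p}\C[p]\to\C^{(-p-1,0]}\to\C^{(-p,0]}$. Both sides receive natural maps from $\C^{(-p-1,0]}$, coming from the action map $\C^{\leq0}\to\RHom(\C^{[1,p+1]},\C^{>0})$, which kills $\C^{\leq -p-1}$ for degree reasons. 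Compatibility of these maps with the induction isomorphism and with (\ref{hitotsu}) then gives a morphism of triangles, and the five lemma finishes the step.

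The main obstacle is constructing that natural map. Choosing it as the canonical action map is what makes the isomorphisms canonical, and it is what lets the compatibility on the $H^{-p}$ piece be verified rather than merely asserted. Proposition \ref{matlis} will presumably follow by passing to the homotopy limit over $p$.
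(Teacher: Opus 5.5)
\textbf{Part (1).} Your proof is correct but argues differently from the paper. You reduce to $i=0$ by the shift twist. You then apply $\RHom_{\C^{\leq0}}(H^0\C,-)$ to $\C^{\leq0}\to\C\to\C^{>0}$ and use the vanishing $\RHom_{\C^{\leq0}}(H^0\C,\C)=0$ from the proof of \ref{loc}. This gives $\RHom_{\C^{\leq0}}(H^0\C,\C^{>0})\simeq\RHom_{\C^{\leq0}}(H^0\C,\C^{\leq0})[1]$, and \ref{ast} finishes.

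The paper instead applies $\RHom_{\C^{\leq0}}(-,\C^{>0})$ directly to $\C^{\leq i-1}\to\C^{\leq i}\to H^i\C[-i]$. By \ref{kaku} and Yoneda, $\RHom_{\C^{\leq0}}(\C^{\leq i},\C^{>0})\simeq\C^{>-i}$. The left term is therefore the cocone of the projection $\C^{>-i}\to\C^{>-i+1}$, namely $H^{-i+1}\C[i-1]$.

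Your route is shorter given the localization machinery. The paper's route is self-contained, needs no reduction to $i=0$, and produces an isomorphism that is visibly induced by composition in $\C$. Yours is a connecting morphism composed with \ref{ast}, so that property is not visible.

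\textbf{Part (2).} That last point matters here. Your induction is what the paper's ``immediate induction'' amounts to. Your setup is right, including the factorization of the action map $\C^{\leq0}\to\RHom_{\C^{\leq0}}(\C^{[1,p+1]},\C^{>0})$ through $\C^{(-p-1,0]}$ for degree reasons.

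However, the step you flag is a real gap, not bookkeeping. The triangle $H^{-p}\C[p]\to\RHom_{\C^{\leq0}}(\C^{[1,p+1]},\C^{>0})\to\C^{(-p,0]}$ fixes the middle term only up to its extension class. To get a morphism of triangles, you need the composite $H^{-p}\C[p]\to\C^{(-p-1,0]}\to\RHom_{\C^{\leq0}}(\C^{[1,p+1]},\C^{>0})$ to equal your isomorphism from (1) followed by the map induced from $\C^{[1,p+1]}\to H^{p+1}\C[-p-1]$. Otherwise the comparison on the $H^{-p}\C[p]$-terms is just some endomorphism of $H^{-p}\C[p]$, with no reason to be invertible. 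With your version of (1) this compatibility is not automatic, and you leave it unverified.

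The cleanest fix is to drop the induction. Apply $\RHom_{\C^{\leq0}}(-,\C^{>0})$ to $\C^{\leq0}\to\C^{\leq p}\to\C^{[1,p]}$. By \ref{kaku}, the last two terms become the natural projection $\C^{>-p}\to\C^{>0}$, whose cocone is $\C^{(-p,0]}$. This gives (2) at once. The isomorphism is induced by the action of $\C^{\leq0}$ and is compatible with the transition maps in $p$, which is exactly what passing to the homotopy limit in \ref{matlis} requires.
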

\begin{proof}
	(\ref{hitotsu})  Consider the triangle
	\[ \xymatrix{ \C^{\leq i-1}(-,?)\ar[r]&\C^{\leq i}(-,?)\ar[r]& H^i\C(-,?)[-i] } \]
	in $\rD((\C^{\leq0})^e)$. By \ref{kaku}, we have $\RHom_{\C^{\leq0}}(\C^{\leq i},\C^{>0})=\RHom_{\C^{\leq0}}(\C^{\leq0}(-,-[i])[-i],\C^{>0})=\C^{>0}(-[i],-)[i]=\C^{>-i}$, so applying $\RHom_{\C^{\leq0}}(-,\C^{>0})$ to the above triangle yields 
	\[ \xymatrix{ \RHom_{\C^{\leq0}}(H^i\C[-i],\C^{>0})\ar[r]&\C^{>-i}\ar[r]&\C^{>-i+1} }, \]
	and therefore $\RHom_{\C^{\leq0}}(H^i\C[-i],\C^{>0})\simeq H^{-i+1}\C[i-1]$, which gives the assertion.
	
	(\ref{takusan})  This follows from (\ref{hitotsu}) by an immediate induction.
\end{proof}

\begin{proof}[Proof of \ref{matlis}]
	Since $\C^{>0}$ is the homotopy colimit $\hocolim_{p\geq1}\C^{[1,p]}$ of its truncations, we have an isomorphism $\RHom_{\C^{\leq0}}(\C^{>0},\C^{>0})=\RHom_{\C^{\leq0}}(\hocolim_{p\geq1}\C^{[1,p]},\C^{>0})=\holim_{p\geq1}\RHom_{\C^{\leq0}}(\C^{[1,p]},\C^{>0})$, which gives a triangle in the first row below.
	\[ \xymatrix@R=5mm{
		\RHom_{\C^{\leq0}}(\C^{>0},\C^{>0})\ar[r]&\Prod_{p\geq1}\RHom_{\C^{\leq0}}(\C^{[1,p]},\C^{>0})\ar[r]&\Prod_{p\geq1}\RHom_{\C^{\leq0}}(\C^{[1,p]},\C^{>0})\\
		\C^{\leq0}\ar[r]\ar[u]&\Prod_{p\geq1}\C^{(-p,0]}\ar[r]\ar[u]_-\rsimeq&\Prod_{p\geq1}\C^{(-p,0]}\ar[u]_-\rsimeq } \]
	On the other hand, we have a triangle in the second row, together with the canonical morphism to the first row, in which the right two maps are isomorphisms by \ref{dual}(\ref{takusan}). It follows that so is the leftmost one, as desired. 
\end{proof}

We next recall the following notion.
\begin{Def}[\cite{HaK}]
Let $\A\subset\B$ be an inclusion of dg categories.
\begin{enumerate}
	\item We say that it is {\it right essential} if the restriction functor $\per\B\to\rD(\A)$ is fully faithful.
	\item We say that it is {\it left essential} if the restriction functor $\per\B^\op\to\rD(\A^\op)$ is fully faithful.
\end{enumerate}
\end{Def}
For example, given a smooth connective dg algebra $A$, the inclusion $\D^b_\dg(A)\to\per_\dg\!A$ is left essential (see \cite{HaK}).

\begin{Prop}\label{LRess}
Let $\C$ be a dg category with shifts. The inclusion $\thick_\dg\!H^0\C\to\per_\dg\!\C^{\leq0}$ is left and right essential.
\end{Prop}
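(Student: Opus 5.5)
The plan is to treat right essentiality by hand, using the semi-orthogonal decomposition from the proof of \ref{loc}, and then to deduce left essentiality from the right essentiality of the opposite category $\C^\op$ (which also has shifts) via duality. Throughout write $\T=\thick_\dg\!H^0\C$.

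\emph{Right essentiality.} For $P\in\per\C^{\leq0}$, the restriction of the representable $\per_\dg\!\C^{\leq0}$-module of $P$ to $\T$ is $X\mapsto\RHom_{\C^{\leq0}}(X,P)$. The objects $H^0\C$ are compact generators of $\Loc H^0\C$, so $\rD(\T)\simeq\Loc H^0\C$. Under this equivalence the restriction functor becomes $i^!$, the right adjoint of the inclusion $i\colon\Loc H^0\C\to\rD(\C^{\leq0})$.

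In the proof of \ref{loc}(2) we have the triangle
\[ P\lotimes_{\C^{\leq0}}\C^{>0}[-1]\to P\to P\lotimes_{\C^{\leq0}}\C. \]
Its first term lies in $\Loc H^0\C$, and its last term lies in $\rD(\C)\subset(H^0\C)^{\perp}$. So the first term is $i i^!P$. By adjunction, full faithfulness of $i^!$ on $\per\C^{\leq0}$ reduces to the vanishing
\[ \Hom(P\lotimes_{\C^{\leq0}}\C,\,Q[n])=0 \quad\text{for all } P,Q\in\per\C^{\leq0},\ n\in\Z. \]
It suffices to check this on representables, that is, to show $\RHom_{\C^{\leq0}}(\C(-,A),\C^{\leq0}(-,B))=0$. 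In bimodule form the claim is $\RHom_{\C^{\leq0}}(\C,\C^{\leq0})=0$.

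To prove this, write $\C=\hocolim_{p}\C^{\leq p}$. By \ref{inv}, $\RHom_{\C^{\leq0}}(\C^{\leq p},\C^{\leq0})\simeq\C^{\leq -p}$, so
\[ \RHom_{\C^{\leq0}}(\C,\C^{\leq0})\simeq\holim_{p}\C^{\leq-p}. \]
The transition maps here are the inclusions $\C^{\leq-p-1}\subset\C^{\leq-p}$. One must check that the isomorphisms of \ref{inv} are compatible with the transition maps $\C^{\leq p}\to\C^{\leq p+1}$; this is naturality in \ref{kaku}. In each fixed cohomological degree the tower is eventually $0$, so both $\lim$ and $\lim^1$ vanish and the homotopy limit is $0$. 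This is the step I expect to need the most care: one must make sure the identification of $\holim$ is honest on the nose for this tower of subcomplexes, but degreewise stabilization at $0$ should settle it.

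\emph{Left essentiality.} Use the duality $\RHom_{\C^{\leq0}}(-,\C^{\leq0})\colon(\per\C^{\leq0})^\op\xsimeq\per(\C^{\leq0})^\op=\per(\C^\op)^{\leq0}$. By \ref{ast}, this duality sends $H^0\C(-,C)$ to $H^1\C(C,-)[-2]=H^0\C^\op(-,C[1])$ up to shift. Hence it identifies $\T^\op$ with $\thick_\dg\!H^0(\C^\op)$. The left essentiality of $\T\subset\per_\dg\!\C^{\leq0}$ thereby becomes the right essentiality of $\thick_\dg\!H^0\C^\op\subset\per_\dg(\C^\op)^{\leq0}$. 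Since $\C^\op$ has shifts, this is the case already proved.
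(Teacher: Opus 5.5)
Your proof is correct, but it takes a different route from the paper's.

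The paper proves right essentiality by computing the derived endomorphisms of the restricted generator. From the localization triangle it identifies $\C^{\leq0}\lotimes_\N\C^{\leq0}\simeq\C^{>0}[-1]$, so that $\REnd_\N(\C^{\leq0})\simeq\REnd_{\C^{\leq0}}(\C^{>0})$. It then invokes the ``Matlis duality'' \ref{matlis}, $\C^{\leq0}\xsimeq\REnd_{\C^{\leq0}}(\C^{>0})$, whose proof goes through \ref{dual} and a comparison of two homotopy-limit triangles.

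You instead use the semi-orthogonal decomposition of \ref{loc} to turn full faithfulness into the vanishing $\RHom_{\C^{\leq0}}(\C,\C^{\leq0})=0$. Via the triangle $\C^{>0}[-1]\to\C^{\leq0}\to\C$, this is equivalent to the paper's statement. You prove it directly as $\holim_p\C^{\leq-p}=0$ using \ref{inv}. This bypasses \ref{dual} and \ref{matlis} entirely. The paper's version, in exchange, produces the Matlis isomorphism as a reusable by-product, which is used again in \ref{dualizing}.

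Your worry about compatibility of the isomorphisms of \ref{inv} with the transition maps is unnecessary. For vanishing, the Milnor sequence only needs each cohomology tower $H^n\RHom_{\C^{\leq0}}(\C^{\leq p},\C^{\leq0})\cong H^n\C^{\leq-p}$ to be eventually zero. That holds whatever the transition maps are.

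Finally, the paper proves only right essentiality and leaves the left case to symmetry. Your reduction makes that symmetry explicit: the duality $\RHom_{\C^{\leq0}}(-,\C^{\leq0})$ exchanges $\thick H^0\C$ and $\thick H^0\C^\op$, by \ref{ast} applied to both $\C$ and $\C^\op$ together with involutivity on perfect modules.
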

\begin{proof}
	We only prove the right essentialness. Write $\N=\thick_\dg\!H^0\C$.
	By the triangle $\C^{\leq0}\lotimes_\N\C^{\leq0}\to\C^{\leq0}\to\C$, the first term is isomorphic to $\C^{>0}[-1]$. It follows that we have canonical isomorphisms $\REnd_\N(\C^{\leq0})\xsimeq\REnd_{\C^{\leq0}}(\C^{\leq0}\lotimes_\N\C^{\leq0})\simeq\REnd_{\C^{\leq0}}(\C^{>0})$, which is isomorphic to $\C^{\leq0}$ by \ref{matlis}. 
\end{proof}

\section{Derived functor categories and Auslander-Reiten-Serre duality}\label{coherent}
Given an abelian category $\A$, for example, the category of finitely generated modules over a finite dimenisonal algebra, Auslander proposed \cite{Au66} to study the functor category $\mod\A$ of finitely presented functors over $\A$.
He established some basic properties of the functor category, 
which later lead to the development of Auslander-Reiten theory \cite{Au71,AR74}.

The aim of this section is to demonstrate that the derived category $\per\C^{\leq0}$ of the truncation plays a role as the functor category. We give an account of existence of almost split sequences, or a Serre functor in $\per\C$ along the line of Auslander-Reiten duality \cite{Au66,AR74,Au78}. Then we give a modern account in terms of cosingularity-type categories (\ref{DAR}).

%

\subsection{Classical theory for module categories}
For comparison, we start with a review of the classical Auslander-Reiten theory from a point of view of functor categories \cite{Au66} and Auslander-Reiten duality \cite{AR74}. 
Let $\La$ be a finite dimensional algebra, and let $\mod(\mod\La)$ be the functor category, that is, the category of finitely presented (contravariant) functors from $\mod\La$ to $\Mod\Z$. The fundamental result we recall is the following.
\begin{Prop}[\cite{Au66}]\label{Fun}
Let $\La$ be a finite dimensional algebra. Then $\mod\La$ (as an underlying additive category) has global dimension $\leq2$.
\end{Prop}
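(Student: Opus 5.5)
The plan is to prove the classical statement directly: every finitely presented functor $F\in\mod(\mod\La)$ has projective dimension at most $2$. Since $\mod\La$ is an abelian category with kernels, the argument only uses the existence of kernels in $\mod\La$ together with Yoneda's lemma. Here the projective objects of $\mod(\mod\La)$ are the summands of representables $\Hom_\La(-,X)$, i.e.\ the representables themselves, as $\mod\La$ is idempotent complete.

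First, take a projective presentation of $F$. Because $F$ is finitely presented, there is an exact sequence
\[ \Hom_\La(-,X_1)\xrightarrow{(-,f)}\Hom_\La(-,X_0)\to F\to 0 \]
for some morphism $f\colon X_1\to X_0$ in $\mod\La$. By Yoneda's lemma, every natural transformation between representables is of the form $(-,f)$, so the presentation is indeed induced by an actual morphism $f$.

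Next, take $K=\Ker f$ in the abelian category $\mod\La$; this is the step where abelianness is used. Left exactness of $\Hom_\La(Y,-)$ for each $Y\in\mod\La$ then gives exact sequences
\[ 0\to\Hom_\La(Y,K)\to\Hom_\La(Y,X_1)\to\Hom_\La(Y,X_0) \]
naturally in $Y$. Splicing with the presentation yields an exact sequence of functors
\[ 0\to\Hom_\La(-,K)\to\Hom_\La(-,X_1)\to\Hom_\La(-,X_0)\to F\to0 .\]

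This is a projective resolution of length $2$, because representable functors are projective in $\mod(\mod\La)$ by Yoneda's lemma. Hence $\pd F\leq2$ for every $F$, and so the global dimension is at most $2$. If one also wants the derived version of \ref{gldim}, namely $\RHom(F,G)$ concentrated in degrees $[0,2]$, this follows immediately from the same resolution.

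There is no real obstacle. The only point to check is that the kernel $K$ stays in $\mod\La$, and this holds because $\La$ is finite dimensional and finitely generated modules over it form an abelian category. Dually, the same argument applied to cokernels gives the statement for covariant functors.
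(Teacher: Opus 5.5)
Your proposal is correct and is essentially the paper's proof: you take a presentation $\Hom_\La(-,X_1)\to\Hom_\La(-,X_0)\to F\to0$ and realize the first map via Yoneda as $f\colon X_1\to X_0$. Completing with $\Ker f$ and using left exactness of $\Hom$ then gives a length-$2$ resolution by representables, exactly as the paper does.
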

\begin{proof}
	For comparison, we include the well-established proof. Let $X\in\mod(\mod\La)$, so it has a finite presentation $\Hom_\La(-,B)\to\Hom_\La(-,C)\to X\to 0$ for some $B,C\in\mod\La$. By Yoneda lemma, the first morphism in the functor category is presented by a morphism $B\to C$ in $\mod\La$. Extend it to the kernel to get a left exact sequence $0\to A\to B\to C$ in $\mod\La$, which yields an exact sequence
	\[ \xymatrix{ 0\ar[r]&\Hom_\La(-,A)\ar[r]&\Hom_\La(-,B)\ar[r]&\Hom_\La(-,C)}. \]
	This gives a projective resolution of $X$ of length $2$, thus $\pd X\leq 2$ and $\gldim(\mod(\mod\La))\leq2$.
\end{proof}

We next turn to Auslander-Reiten duality. We consider some Serre subcategories of the functor category $\mod(\mod\La)$. Let $\smod\La$ be the (projectively) stable module category, and dually $\injsmod\La$ be the injectively stable module category. Then the functor category $\mod(\smod\La)$ has enough projectives $\sHom_\La(-,C)$ for $C\in\mod\La$. The injectives have two descriptions; one as $D\sHom_\La(C,-)$, and the other as $\Ext^1_\La(-,A)$ for $A,C\in\mod\La$, which are related by the Auslander-Reiten duality \cite{AR74}. 
These are summarized as an equivalence
\[ \xymatrix{ \tau_\ast\colon\mod(\smod\La)\ar[r]^-\simeq&\mod(\injsmod\La) } \]
taking $M\in\mod(\smod\La)$ to its induction along $\tau$; it is characterized as the (right) exact functor taking $\sHom_\La(-,C)$ to $\injHom_\La(-,\tau C)=D\Ext^1_\La(C,-)$.

We first observe that this equivalence induced by ($1$-)Auslander-Reiten translation comes from the $2$-Auslander-Reiten translation $\tau_2$ on $\mod(\mod\La)$. Note that since the functor category has global dimension $2$, we have a functorial $\tau_2=D\Ext^2_\M(-,\M)$ and $\tau_2^{-1}=\Ext^2_\M(D\M,-)$ for $\M=\mod\La$.
\begin{Prop}\label{AR}
Let $\La$ be a finite dimensional algebra. The diagram below is commutative.
\[ \xymatrix@R=3mm{
	\mod(\mod\La)\ar[r]^-{\tau_2}&\mod(\mod\La)\\
	\mod(\smod\La)\ar@{}[u]|-\vsubset\ar[r]^-{\tau_\ast}&\mod(\injsmod\La)\ar@{}[u]|-\vsubset } \]
\end{Prop}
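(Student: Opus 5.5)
Write $\M=\mod\La$. The plan is to compute $\tau_2=D\Ext^2_\M(-,\M)$ on projective objects of $\mod(\smod\La)$ and then extend by exactness.

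\emph{Step 1: both functors are right exact on $\mod(\smod\La)$.} First, $\mod(\smod\La)$ is the full subcategory of $\mod(\mod\La)$ of functors vanishing on $\proj\La$. Any $X\in\mod(\smod\La)$ has a presentation $\sHom_\La(-,B)\to\sHom_\La(-,C)\to X\to0$. Since $X$ vanishes on projectives, the proof of \ref{Fun} gives a projective resolution
\[0\to\Hom_\La(-,A)\to\Hom_\La(-,P\oplus B)\to\Hom_\La(-,C)\to X\to 0,\]
obtained from a right $\proj$-approximation $P\to C$ and the kernel $A$ of $P\oplus B\to C$. Such objects have $\Hom_\M(X,\M)=0$, because $\M$-projectives $\Hom(-,M)$ have depth and $X$ is killed on $\La$. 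They also have $\Ext^1_\M(X,\M)=0$ by the usual grade argument: $\Hom_\M$ of the resolution into $\Hom(-,M)$ gives $\Hom(C,M)\to\Hom(P\oplus B,M)\to\Hom(A,M)$, which is exact where needed using that $0\to A\to P\oplus B\to C\to0$ is a short exact sequence of modules. Hence $\tau_2=D\Ext^2_\M(-,\M)$ is the only nonzero cohomology of the dual of the resolution. Therefore $\tau_2$ restricted to $\mod(\smod\La)$ is exact, and in particular right exact. The same holds for $\tau_\ast$.

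\emph{Step 2: compute $\tau_2$ on projectives.} By right exactness it suffices to check that $\tau_2\sHom_\La(-,C)\simeq\injHom_\La(-,\tau C)$ naturally in $C$, and to verify that both targets lie in $\mod(\injsmod\La)$. Take the minimal projective presentation $P_1\to P_0\to C\to0$ and the exact sequence $0\to\Omega^2C\to P_1\to P_0\to C\to 0$. Then
\[0\to\Hom(-,\Omega^2C)\to\Hom(-,P_1)\to\Hom(-,P_0)\to\Hom(-,C)\to\sHom(-,C)\to0\]
is a projective resolution in $\mod\M$ of length~$3$. Note, however, that Step 1 gave a length-$2$ resolution $0\to\Hom(-,\Omega C)\to\Hom(-,P_0)\to\Hom(-,C)\to\sHom(-,C)\to0$, which is the one to use.

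Applying $\Hom_\M(-,\M)$ and Yoneda gives a complex of representable covariant functors $\Hom(C,-)\to\Hom(P_0,-)\to\Hom(\Omega C,-)$. Its cokernel at the last term is $\Ext^1_\La(C,-)$, by the long exact sequence of $0\to\Omega C\to P_0\to C\to0$. Hence
\[\Ext^2_\M(\sHom(-,C),\M)\simeq\Ext^1_\La(C,-),\]
and so $\tau_2\sHom(-,C)=D\Ext^1_\La(C,-)\simeq\injHom_\La(-,\tau C)$ by Auslander--Reiten duality \cite{AR74}. All of these identifications are natural in $C$ within $\smod\La$. This agrees with the characterization of $\tau_\ast$.

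\emph{Main obstacle.} The delicate point is naturality and compatibility on morphisms. We must check that the isomorphism $\tau_2\sHom(-,C)\simeq\tau_\ast\sHom(-,C)$ is natural for maps in $\smod\La$ and not merely in $\mod\La$, i.e.\ that maps factoring through projectives act as zero. This holds because both sides vanish on $\proj\La$ and $\Ext^1_\La(C,-)$ is functorial in the stable category. Once naturality is established, the two right exact functors agree on projectives, hence agree on all of $\mod(\smod\La)$, giving the commutative square.
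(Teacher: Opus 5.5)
Your proof is correct, but it is organized differently from the paper's.

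\textbf{The paper's route.} It never reduces to projectives. For an arbitrary $M\in\mod(\smod\La)$ it picks a short exact sequence $0\to A\to B\to C\to0$ in $\mod\La$ whose induced sequence $0\to\Hom_\La(-,A)\to\Hom_\La(-,B)\to\Hom_\La(-,C)\to M\to0$ is a projective resolution. It then proceeds as follows:
\begin{enumerate}
\item It reads off $\tau_2M$ as the kernel of $D\Hom_\La(A,-)\to D\Hom_\La(B,-)$.
\item It notes that this is also the kernel of $D\injHom_\La(A,-)\to D\injHom_\La(B,-)$.
\item It rewrites that kernel by Auslander--Reiten duality as $\Ker(\Ext^1_\La(\tau^{-1}(-),A)\to\Ext^1_\La(\tau^{-1}(-),B))$.
\item It recognizes this as $M\circ\tau^{-1}$ from the long exact $\Ext$-sequence of $0\to A\to B\to C\to0$.
\end{enumerate}

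\textbf{Your route.} You first show that $\Hom_\M(X,\M)=\Ext^1_\M(X,\M)=0$ for $X\in\mod(\smod\La)$, so that $\tau_2$ is exact there. You then reduce to $M=\sHom_\La(-,C)$. There the sequence is $0\to\Omega C\to P_0\to C\to0$, and the computation collapses to $\Ext^2_\M(\sHom_\La(-,C),\M)\simeq\Ext^1_\La(C,-)$. You conclude with the characterization of $\tau_\ast$ as the right exact functor sending $\sHom_\La(-,C)$ to $\injHom_\La(-,\tau C)$.

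\textbf{Comparison.} Your route pays for the extra exactness step, but it produces an isomorphism that is manifestly natural in $M$. The paper's object-by-object computation leaves that naturality implicit. The paper's route, in turn, avoids comparing right exact functors and exhibits $\tau_2M=M\circ\tau^{-1}$ directly for each $M$.

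The naturality issue you flag as the main obstacle is mild. Both functors of $C$ are defined on all of $\mod\La$ and vanish on projectives, so naturality for morphisms of $\mod\La$ already descends to $\smod\La$.

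\textbf{One sentence to delete.} Do not call $0\to\Hom_\La(-,\Omega^2C)\to\Hom_\La(-,P_1)\to\Hom_\La(-,P_0)\to\Hom_\La(-,C)\to\sHom_\La(-,C)\to0$ a projective resolution. It is not exact at $\Hom_\La(-,P_0)$: the kernel of $\Hom_\La(Y,P_0)\to\Hom_\La(Y,C)$ is $\Hom_\La(Y,\Omega C)$, and a map $Y\to\Omega C$ need not lift to $P_1$. Since you then use the length-$2$ resolution instead, nothing downstream is affected.
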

\begin{proof}
	We prove $\tau_2M=M\circ \tau^{-1}$ for each $M\in\mod(\smod\La)$. Pick an exact sequence $0\to A\to B\to C\to0$ in $\mod\La$ such that
	\[ \xymatrix{ 0\ar[r]&\Hom_\La(-,A)\ar[r]&\Hom_\La(-,B)\ar[r]&\Hom_\La(-,C)\ar[r]&M\ar[r]& 0} \]
	is exact. Note that the sequence
	\[ \xymatrix{ \sHom_\La(-,A)\ar[r]&\sHom_\La(-,B)\ar[r]&\sHom_\La(-,C)\ar[r]&M\ar[r]& 0} \]
	over the stable category is also exact.
	Applying $-\lotimes_{\M}D\M$ to the first sequence, where $\M=\mod\La$, we get an exact sequence
	\[ \xymatrix{ 0\ar[r]&\tau_2M\ar[r]&D\Hom_\La(A,-)\ar[r]&D\Hom_\La(B,-)\ar[r]&D\Hom_\La(C,-)\ar[r]& 0 }. \]
	Dually as above, we also have an exact sequence
	\[ \xymatrix{ 0\ar[r]&\tau_2M\ar[r]&D\injHom_\La(A,-)\ar[r]&D\injHom_\La(B,-)\ar[r]&D\injHom_\La(C,-) }, \]
	which is isomorphic by the Auslander-Reiten duality to
	\[ \xymatrix{ 0\ar[r]&\tau_2M\ar[r]&\Ext^1_\La(\tau^{-1}(-),A)\ar[r]&\Ext^1_\La(\tau^{-1}(-),B)\ar[r]&\Ext^1_\La(\tau^{-1}(-),C) }. \]
	Now, the second and the last exact sequences show $M\circ\tau^{-1}=\tau_2M$, as desired.
\end{proof}

We give derived versions of these results.
First one is the following analogue of \ref{Fun} from the prespective that $\C^{\leq0}$ is the ``underlying additive category'' and $\per\C^{\leq0}$ is the ``functor category''.
\begin{Prop}\label{DFun}
Let $\C$ be a pretriangulated dg category. Then $\C^{\leq0}$ has global dimension $\leq2$.
\end{Prop}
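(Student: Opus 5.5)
We mimic the proof of \ref{Fun}, replacing kernels in $\mod\La$ by cocones of triangles in $H^0\C$. Let $X\in\mod H^0\C$, where $H^0\C$ is viewed as $H^0(\C^{\leq0})$. Choose a finite presentation
\[ H^0\C(-,B)\xrightarrow{f_\ast}H^0\C(-,C)\to X\to0 \]
coming from a morphism $f\colon B\to C$ in $H^0\C$. Since $\C$ is pretriangulated, we can complete $f$ to a triangle $A\xrightarrow{g}B\xrightarrow{f}C\to A[1]$ in $H^0\C$. Applying $H^0\C(-,?)$ gives an exact sequence
\[ H^0\C(-,C[-1])\to H^0\C(-,A)\to H^0\C(-,B)\to H^0\C(-,C). \]
Unlike the abelian case, the leftmost map need not vanish, so we cannot simply read off a length-$2$ projective resolution. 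We will therefore work at the dg level with the representables $\C^{\leq0}(-,?)$, which are the projectives in the sense of \ref{PD}.

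Concretely, we realize the triangle as a cone: up to replacing objects in the pretriangulated $\C$, we may take $A=\mathrm{cocone}(f)$ as dg modules, so $\C(-,A)\simeq\mathrm{cocone}(\C(-,B)\to\C(-,C))$ in $\rD(\C)$. We then form the cone $Y$ of $\C^{\leq0}(-,B)\to\C^{\leq0}(-,C)$ in $\rD(\C^{\leq0})$. Its cohomology is computed from the long exact sequence. In degree $0$ we get $X$. In degree $-1$ we get the kernel of $H^0\C(-,B)\to H^0\C(-,C)$, which equals the image of $H^0\C(-,A)$. In degrees $\leq-2$ we get the analogous terms with shifts.

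The cleaner route is to compare $Y$ with $\C^{\leq0}(-,A)[1]$. We compare the truncations $(\mathrm{cone}(\C(-,B)\to\C(-,C)))^{\leq0}=\C(-,A[1])^{\leq0}=\C^{\leq0}(-,A[1])$ with $Y$, and the cone of this comparison should be concentrated in degree $0$ and equal to $X$ up to a shift. Precisely, the cone of $\C^{\leq0}(-,B)\to\C^{\leq0}(-,C)$ agrees with the truncation $\C(-,A[1])^{\leq0}$ in degrees $\leq-1$. In degree $0$ it differs only by the cokernel versus the cocycles. The discrepancy is exactly $X$.

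This yields a triangle relating $X$, $Y\in\add\C^{\leq0}\ast\add\C^{\leq0}[1]$, and $\C^{\leq0}(-,A)[1]$. From it we obtain
\[ X\in\add\C^{\leq0}\ast\add\C^{\leq0}[1]\ast\add\C^{\leq0}[2], \]
and hence $\pd X\leq2$ by \ref{PD}.

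Equivalently, and this is the check we will actually write, we verify the first criterion of \ref{PD}. For $N\in\Mod H^0\C$, the complex $\RHom(X,N)$ can be computed from the three-term complex $N(C)\to N(B)\to N(A)$, using $\RHom(\C^{\leq0}(-,D),N)=N(D)$ concentrated in degree $0$. This forces $\RHom(X,N)$ to lie in degrees $[0,2]$.

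The main obstacle is the degree bookkeeping in the comparison triangle: showing that the cone of $Y\to\C^{\leq0}(-,A[1])$ has cohomology only $X$ in a single degree. This uses that $\C$ has shifts (via \ref{kaku}), so that $H^{-i}$ of the truncated representables are $H^0\C(-,?[-i])$, and then a five-lemma argument on the long exact sequence of the triangle $A\to B\to C$.
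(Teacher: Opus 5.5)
Your architecture matches the paper's: take $A$ to be the dg cocone of $f$ and show that $\C^{\leq0}(-,A)\to\C^{\leq0}(-,B)\to\C^{\leq0}(-,C)\to X$ is a $4$-angle in $\per\C^{\leq0}$, so that $X\in\add\C^{\leq0}\ast\add\C^{\leq0}[1]\ast\add\C^{\leq0}[2]$. The paper asserts this $4$-angle without further comment. Insisting on the genuine dg cocone is the right precaution, since the comparison map depends on the chosen null-homotopy of $fg$.

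However, the bookkeeping you single out as the key step is wrong as written. You identify $\C^{\leq0}(-,A)[1]$ with $\C(-,A[1])^{\leq0}=\C^{\leq0}(-,A[1])$, but in fact $\C^{\leq0}(-,A)[1]=\C(-,A[1])^{\leq-1}$. The cone of $Y\to\C^{\leq0}(-,A[1])$ is indeed concentrated in degree $0$. There, however, it equals $\operatorname{coker}\bigl(X\hookrightarrow H^0\C(-,A[1])\bigr)\cong\Ker\bigl(H^0\C(-,B[1])\to H^0\C(-,C[1])\bigr)$, not $X$. For example, if $C=0$ and $B\neq0$, then $X=0$ while this cone is $H^0\C(-,B[1])\neq0$.

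There are two further errors. First, $H^{-1}Y\cong H^0\C(-,A)$, not $\Ker(H^0\C(-,B)\to H^0\C(-,C))$; your own later five-lemma comparison in degrees $\leq-1$ contradicts that claim. Second, the ``equivalent'' check via $N(C)\to N(B)\to N(A)$ is not independent. It computes $\RHom(X,N)$ only once you know $X$ is the total of the three representables, which is exactly the step in question.

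The repair is short. Use the map $\C^{\leq0}(-,A)[1]\to Y$ whose components are $g$ and the canonical degree $-1$ null-homotopy of $fg$; this homotopy lies in $\C^{\leq0}$ because its degree is $-1$.
\begin{itemize}
\item Composed with the inclusion $Y\hookrightarrow\cone(f_\ast)$, this map is the restriction of the quasi-isomorphism $\C(-,A)[1]\xrightarrow{\simeq}\cone(f_\ast)$.
\item Both $\C^{\leq0}(-,A)[1]\to\C(-,A)[1]$ and $Y\to\cone(f_\ast)$ are isomorphisms on $H^{\leq-1}$. For the second this follows from the five lemma, since $H^i\C^{\leq0}=H^i\C$ for $i\leq0$. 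Hence $\C^{\leq0}(-,A)[1]\to Y$ is also an isomorphism on $H^{\leq-1}$.
\item Since $H^{\geq0}(\C^{\leq0}(-,A)[1])=0$, $H^0Y=X$ and $H^{>0}Y=0$, the cone of this map is $X$. This gives the triangle $\C^{\leq0}(-,A)[1]\to Y\to X$ you wanted.
\end{itemize}

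Your heuristic ``compare with the $\leq0$-truncation of the untruncated cone; the discrepancy in degree $0$ is $X$'' does work at the other end of the triangle. Set $K:=\cone(\C^{\leq0}(-,A)\to\C^{\leq0}(-,B))$. The map $K\to\C(-,C)^{\leq0}$ is an isomorphism on cohomology in degrees $\leq-1$, and in degree $0$ it is the inclusion $\Im f_\ast\subset H^0\C(-,C)$, so its cone is $X$. The two triangles $\C^{\leq0}(-,A)\to\C^{\leq0}(-,B)\to K$ and $K\to\C^{\leq0}(-,C)\to X$ are precisely the paper's $4$-angle.
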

\begin{proof}
This can be proved exactly sa in \ref{Fun}. We may assume that $\C$ is idempotent complete. 
Let $X\in\mod H^0\C$, so it has a finite presentation $H^0\C(-,B)\to H^0\C(-,C)\to X\to 0$ for some $B,C\in\C$. By Yoneda lemma, the first morphism in the functor category is presented by a morphism $B\to C$ in $H^0\C$. Extend it to the mapping cocone to get a triangle $A\to B\to C$ in $H^0\C$, or equivalently a triangle $\C(-A)\to\C(-,B)\to\C(-,C)$ in $\per\C$. This yields a $4$-angle, or a concatanation of two triangles, in $\per\C^{\leq0}$:
\[ \xymatrix{ \C^{\leq0}(-,A)\ar[r]&\C^{\leq0}(-,B)\ar[r]&\C^{\leq0}(-,C)\ar[r]& X}. \]
This gives a projective resolution of $X$ of length $2$. Therefore $\pd X\leq 2$ and $\gldim\C^{\leq0}\leq2$.
%
\end{proof}

We next discuss Auslander-Reiten-Serre duality, which is the main content of this section.
Note first that while every module category has the Auslander-Reiten duality, not all triangulated categories have a Serre duality. We first give a necessary and sufficient condition (\ref{dualizing}) for a triangulated category to have Serre functors in terms of the derived functor category $\per\C^{\leq0}$, which can be seen as a derived analogue of being a dualizing variety. 
Also, since the derived functor category $\per\C^{\leq0}$ does not admit Serre functors, we use the inverse dualizing bimodule of $\C^{\leq0}$ in order to exhibit certain Serre duality in $\per\C^{\leq0}$.

Under these variations, the following first main result of this section gives an analogue of \ref{AR}. Recall that a bimodule $M$ over a dg category $\A$ is called {\it invertible} if $-\lotimes_\A M$ gives an equivalence $\rD(\A)\to\rD(\A)$. 

In fact, we also give a direct description of the Serre functor on the triangulated category $H^0\C$ in terms of the inverse dualizing bimodule $\theta$ of $\C^{\leq0}$ as the following relationship shows.
\begin{Thm}\label{DAR}
Let $\C$ be a pretriangulated dg category. Suppose that the triangulated category $H^0\C$ has a Serre functor $\nu$. 
Let $\theta:=\RHom_{(\C^{\leq0})^e}(\C^{\leq0},(\C^{\leq0})^e)$ be the inverse dualizing bimodule of $\C^{\leq0}$.
\begin{enumerate}
\item\label{theta} There exists a canonical isomorphism $\theta(-,?)[2]\simeq\C^{\leq0}(-,\nu^{-1}(?)[1])$ in $\rD((\C^{\leq0})^e)$. In particular, the bimodule $\theta$ is invertible.
\item\label{cosg-type} There is a canonical isomorphism $\C\lotimes_{\C^{\leq0}}(\theta[1])^{-1}\lotimes_{\C^{\leq0}}\C\xsimeq D\C$ in $\rD(\C^e)$.
\end{enumerate}
\end{Thm}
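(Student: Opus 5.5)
Throughout, write $\A:=\C^{\leq0}$. The bimodule $\theta$ will be computed in two stages, following the proof of \ref{AR}: first compare $\theta$ with $\RHom_\A(D\A,\A)$, then evaluate the latter using the resolution of \ref{DFun} and the Serre functor. We may assume $\C$ is idempotent complete and Hom-finite; Hom-finiteness is implicit in the existence of a Serre functor. Then each $H^i\A(X,Y)$ is finite dimensional and vanishes for $i>0$, so \ref{vee} applies and gives $\theta\simeq\RHom_\A(D\A,\A)$ in $\rD(\A^e)$. For (\ref{theta}) it therefore suffices to identify the $\A$-module $\RHom_\A(D\A(C,-),\A(-,?))$ with $\A(-,\nu^{-1}C[1])[-2]$ naturally in $C$.

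Our first step is to resolve $D\A(C,-)$ by representables. We use the triangle $\C^{\leq-1}\to\A\to H^0\C$ together with $D$. Since $D$ reverses truncations, $D\A(C,-)$ is concentrated in degrees $\geq0$. Serre duality gives $DH^i\C(C,-)\simeq H^{-i}\C(-,\nu C)$, so $H^0(D\A(C,-))=DH^0\C(C,-)\simeq H^0\C(-,\nu C)$. Using $\C(-,X)^{\geq0}\simeq D(\C(\nu^{-1}X,-)^{\leq0})$ at the cochain level (Serre duality lifts to a dg enhancement, since the pretriangulated structure makes $D\C(C,-)$ representable by $\nu C$), we expect $D\A(C,-)\simeq\C(-,\nu C)^{\geq0}$. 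This is the cone of $\C(-,\nu C)^{\leq-1}\to\C(-,\nu C)$, restricted to $\A$.

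The second step is to compute $\RHom_\A$ out of $\C(-,\nu C)^{\geq0}$ by combining two earlier results. By \ref{loc}, $\RHom_\A(\C(-,X),\A)$ is computed inside the localization. \ref{ast} gives $\RHom_\A(H^0\C,\A)\simeq H^1\C[-2]$, and iterating it over the filtration of $\C^{\geq0}$ by the $H^i\C$ should produce $\RHom_\A(\C(-,X)^{\geq0},\A(-,Y))\simeq\C(X,Y[1])^{\leq?}[-2]$. The shift by $1$ comes from $H^0\mapsto H^1$, and the $[-2]$ from \ref{ast}. Setting $X=\nu C$ yields $\A(\nu C,?[1])[-2]$, viewed as a module in the $C$-variable, which is $\A(-,\nu^{-1}(?)[1])[-2]$ after reindexing. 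Invertibility of $\theta$ then follows because $\nu^{-1}[1]$ is an autoequivalence of $\C$, so $\A(-,\nu^{-1}(?)[1])$ is an invertible bimodule, just as $\C^{\leq i}$ is in \ref{inv}. The main obstacle is exactly this bookkeeping: making \ref{ast} work for the infinite-length module $\C^{\geq0}$ via $\holim$ (as in \ref{matlis}), and keeping track of truncation indices so that the answer is exactly the $\leq0$ truncation. I would first check the case of $H^0$ terms, where \ref{ast} directly gives $H^1\C[-2]$, and then pass to the limit with the Mittag-Leffler argument from \ref{matlis}.

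For (\ref{cosg-type}), invert the result of (\ref{theta}): $(\theta[1])^{-1}\simeq\A(-,\nu(?)[-1])[1]\cdot[-1]$, which should be $\A(-,\nu(?))$ up to the appropriate reindexing. Tensoring on both sides with $\C$ over $\A$, and using \ref{back} and \ref{inv} (the bimodules $\C^{\leq i}$ are shifted copies of $\A$), gives $\C\lotimes_\A\A(-,\nu ?)\lotimes_\A\C\simeq\C(-,\nu ?)$. Serre duality then identifies this with $D\C$. Finally, we verify that the map is canonical by tracing it through the evaluation morphism.
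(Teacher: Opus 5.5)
Your route is correct in substance, but it runs in the opposite direction to the paper's. The paper first obtains invertibility of $\theta$ (\ref{invertible}). It then proves (2) by an Amiot-type argument: it compares the triangle $\C^{\leq0}\lotimes_\N\theta^{-1}\lotimes_\N\C^{\leq0}\to\theta^{-1}\to\C\lotimes_{\C^{\leq0}}\theta^{-1}\lotimes_{\C^{\leq0}}\C$ (with $\N=\thick_\dg\!H^0\C$) against the dual of $\C^{\leq0}\lotimes_\N\C^{\leq0}\to\C^{\leq0}\to\C$, using relative Serre duality \ref{relS} and the two-sided essentiality \ref{LRess}. Only afterwards does it deduce (1) from (2), via \ref{Si^vee} applied to $M=D\C$; this step also needs projectivity of $\theta[2]$ (\ref{proj}) and \ref{leq-ind}. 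You instead compute $\theta$ directly from \ref{vee} and the truncation triangle of $D\C$, getting (1) first and then (2) by inverting and inducing. Your route is shorter and makes (1) explicit. The paper's route exhibits (2) as the cosingularity-type Serre functor on $\per\C^{\leq0}/\thick H^0\C$, which is where the shift by one comes from. Its intermediate result \ref{Si^vee} also works for arbitrary invertible $M$, which is what yields the Calabi--Yau corollaries.

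Two repairs are needed. First, your \emph{main obstacle} can be bypassed, and \ref{loc} is the wrong citation. That result gives orthogonality from $\Loc H^0\C$ to $\rD(\C)$, not from $\C$-modules into $\A=\C^{\leq0}$ (your notation). What you need is $\RHom_\A(\C(-,X),\A(-,Y))=0$. This holds because $\C(-,X)=\hocolim_p\A(-,X[p])[-p]$ by \ref{kaku}, so the complex equals $\holim_p\C(X,Y)^{\leq-p}=0$. Now use the triangle $\C(-,X)^{\leq-1}\to\C(-,X)\to\C(-,X)^{\geq0}$ together with $\C(-,X)^{\leq-1}=\A(-,X[-1])[1]$. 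This gives at once $\RHom_\A(\C(-,X)^{\geq0},\A(-,Y))\simeq\A(X[-1],Y)[-2]=\A(X,Y[1])[-2]$. No filtration by the $H^i\C$ is needed, and no truncation index is left undetermined. This is exactly the computation inside the paper's proof of \ref{invertible}.

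Second, in (2) you misidentify the inverse. In fact $(\theta[1])^{-1}\simeq\A(-,\nu(?)[-1])[1]=\C(-,\nu ?)^{\leq-1}$, which is not isomorphic to $\A(-,\nu ?)$ as an $\A$-bimodule, since their $H^0$ differ. Your conclusion still holds, because $M^{\leq i}\lotimes_\A\C\simeq M$ for every $i$ by \ref{ubb}. To make the isomorphism genuinely canonical, work with $D\C$ itself rather than a chosen lift of $\nu$:
\begin{itemize}
\item The canonical triangle $(D\C)^{\leq-1}\to D\C\to D\A$ yields $\theta[1]\simeq\RHom_\A((D\C)^{\leq-1},\A)$.
\item Hence $(\theta[1])^{-1}\simeq(D\C)^{\leq-1}$ by \ref{-1}.
\item The map $\C\lotimes_\A(D\C)^{\leq-1}\lotimes_\A\C\to\C\lotimes_\A D\C\lotimes_\A\C$ is an isomorphism, because its cone $\C\lotimes_\A D\A\lotimes_\A\C$ vanishes by \ref{ubb}.
\item Finally $\C\lotimes_\A D\C\lotimes_\A\C\simeq D\C$ by \ref{loc}.
\end{itemize}
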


%

We remark that these results are obtained by the perspective that the quotient $\per\C^{\leq0}\to\per\C$ is of ``consingularity type'' (see the proof of \ref{DAR}) which explains the ``shift of degree by $1$'' in some results above.

\subsection{Invertible bimodules}
We prepare the behavior of truncations of invertible bimodules.
We start with the following general observation.
\begin{Lem}\label{each}
Let $\A$ be a connective dg category, and let $M$ be an invertible bimodule over $\A$. The following are equivalent.
\begin{enumerate}
\romanenum
\item $M(-,A)\in\add\A$ for each $A\in\A$.
\item $M(A,-)\in\add\A^\op$ for each $A\in\A$.
\end{enumerate}
In this case, we say $M$ is {\rm projective on each side}.
\end{Lem}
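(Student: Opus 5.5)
The plan is to pass through the inverse bimodule and use the silting theory of $\per\A$. Let $N$ denote the inverse of $M$, so that $-\lotimes_\A M$ and $-\lotimes_\A N$ are mutually quasi-inverse autoequivalences of $\rD(\A)$. Recall the standard identification for an invertible bimodule: $N\simeq\RHom_\A(M,\A)$ and $M\simeq\RHom_\A(N,\A)$ as bimodules. Here $\RHom_\A$ is taken over the right $\A$-structure, and the remaining actions are the induced ones. The key observation is that $M(A,-)$ can be recovered from $N(-,A)$ by $\A$-duality:
\[ M(A,-)\simeq\RHom_\A(N(-,A),\A). \]
Since $\RHom_\A(\A(-,B),\A)\simeq\A(B,-)$ by Yoneda, condition (ii) follows as soon as $N(-,A)\in\add\A$ for every $A\in\A$. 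So for (i)$\Rightarrow$(ii) it suffices to show that (i) for $M$ implies (i) for $N$. The converse (ii)$\Rightarrow$(i) is the same argument applied to $\A^\op$ and the bimodule $M$ viewed over $\A^\op$.

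To transfer (i) from $M$ to $N$, I will use that $F:=-\lotimes_\A M$ is an autoequivalence of $\rD(\A)$. It therefore preserves compact objects and restricts to an autoequivalence of $\per\A$. By (i), $F$ sends each representable $\A(-,A)$ to $M(-,A)\in\add\A$, so $F(\add\A)\subset\add\A$.

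Since $\A$ is connective, $\add\A$ is a silting subcategory of $\per\A$, as recalled in the preliminaries. The image $F(\add\A)$ is again silting, being the image of a silting subcategory under a triangle autoequivalence. We now have two silting subcategories $F(\add\A)\subset\add\A$ with one contained in the other. By the standard result of Aihara--Iyama \cite{AI}, a silting subcategory contained in another has the same additive closure. Hence $\add F(\add\A)=\add\A$. So every $\A(-,A)$ is a direct summand of some $F(P)$ with $P\in\add\A$. Applying $F^{-1}=-\lotimes_\A N$ gives $N(-,A)=F^{-1}(\A(-,A))\in\add P\subset\add\A$, which is (i) for $N$.

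The main obstacle I expect is this silting comparison step, namely deducing $F^{-1}(\add\A)\subset\add\A$ from $F(\add\A)\subset\add\A$. If one prefers to avoid quoting \cite{AI}, I would argue directly with the standard $t$-structure instead. Since $F(\A)\subset\add\A$, $F$ preserves the aisle $T^{\leq0}$, which is the smallest cocomplete pre-aisle containing $\A$. Both $F$ and $F^{-1}$ have the same image of $\per\A$, and the objects of $\add\A$ are characterized inside $\per\A\cap T^{\leq0}$ by $\Hom(-,T^{\leq-1})=0$ vanishing conditions. From this one checks that $F^{-1}(\A(-,A))$ lies in $T^{\leq0}$ and is $\Hom$-orthogonal to $T^{\leq0}[1]$, hence is projective, i.e.\ in $\add\A$. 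The remaining steps, the bimodule duality $M(A,-)\simeq\RHom_\A(N(-,A),\A)$ and Yoneda, are routine.
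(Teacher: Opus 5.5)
Your proof is correct and is essentially the paper's argument. Condition (i) says that $F=-\lotimes_\A M$ sends the silting subcategory $\add\A$ of $\per\A$ into itself, so \cite[2.18]{AI} forces $F(\add\A)=\add\A$; hence the inverse $N$ satisfies $N(-,A)\in\add\A$, dualizing via $M(A,-)\simeq\RHom_\A(N(-,A),\A)$ gives (ii), and the converse follows by passing to $\A^\op$.

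The $t$-structure fallback in your last paragraph is not needed. As written it also does not justify $F^{-1}(\A(-,A))\in T^{\leq0}$, since right $t$-exactness of $F$ need not pass to $F^{-1}$ (e.g.\ $F=[1]$), so rely on the Aihara--Iyama comparison.
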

\begin{proof}
	We only prove (i)$\Rightarrow$(ii). The assumption means that the equivalence $-\lotimes_\A M\colon\per\A\to\per\A$ takes $\add\A$ to itself. Note that the restricted functor $\add\A\to\add\A$ is in fact an equivalence. Indeed, the subcategory $\add\A$, and so it its subcategory $\add M$ since $M$ is invertible. By \cite[2.18]{AI}, we deduce that $\add\A=\add M$ and hence the claim.
	Now, the claim shows that the inverse of $M$, given by the isomorphic bimodules $\RHom_\A(M,\A)\simeq\RHom_{\A^\op}(M,\A)$, also restricts to an equivalence on $\add\A$. This shows that $\RHom_{\A^\op}(M,\A)\in\add\A$, and hence $M\in\add\A^\op$ by dualizing. 
\end{proof}

\begin{Lem}\label{-1}
Let $\C$ be a pretriangulated dg category. Suppose that $M$ is an invertible bimodule over $\C$.
\begin{enumerate}
\item\label{n} The bimodule $M^{\leq n}$ over $\C^{\leq0}$ is invertible for each $n\in\Z$.
\item\label{-n} We have an isomorphism $\RHom_{\C^{\leq0}}(M^{\leq n},\C^{\leq0})\simeq\RHom_\C(M,\C)^{\leq-n}$.
\end{enumerate}
\end{Lem}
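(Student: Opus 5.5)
Since $\C$ is pretriangulated, I would first reduce to the case where $M$ is given by an autoequivalence. An invertible bimodule $M$ over $\C$ induces an autoequivalence $-\lotimes_\C M$ of $\per\C$. Because $\C$ is pretriangulated (and we may assume it idempotent complete), every perfect module is representable up to summands. So $M(-,C)\simeq\C(-,F C)$ for an autoequivalence $F$ of $H^0\C$, functorially in $C$. More precisely, I would replace $\C$ by a quasi-equivalent model in which $F$ lifts to a dg functor, so that $M(-,?)\simeq\C(-,F?)$ as bimodules. Truncating, $M^{\leq n}(-,?)=\C(-,F?)^{\leq n}$. The shift trick of \ref{kaku} then gives
\[ M^{\leq n}(-,?)\simeq\C^{\leq0}(-,F(?)[n])[-n]. \]
This identifies $M^{\leq n}$ with the bimodule given by the dg autoequivalence $F\circ[n]$ of $\C^{\leq0}$, composed with the shift $[-n]$ of complexes.

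For (\ref{n}), restriction along the dg autoequivalence $F[n]$ of $\C^{\leq0}$ is an equivalence of $\rD(\C^{\leq0})$, and shifting degrees by $[-n]$ is also an equivalence. Hence $-\lotimes_{\C^{\leq0}}M^{\leq n}$ is an equivalence. Concretely, an inverse is given by $\C^{\leq0}(-,F^{-1}(?)[-n])[n]$. Checking the composition reduces, by Yoneda, to the computation in \ref{inv}: $\C^{\leq0}(\sim,G?)\lotimes_{\C^{\leq0}}\C^{\leq0}(-,H\sim)\simeq\C^{\leq0}(-,HG?)$ for dg autoequivalences $G,H$.

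For (\ref{-n}), I would use that $M$ is projective on each side, so Yoneda gives
\[ \RHom_{\C^{\leq0}}(M^{\leq n},\C^{\leq0})(?,-)=\RHom_{\C^{\leq0}}(\C^{\leq0}(-,F?[n])[-n],\C^{\leq0})\simeq\C^{\leq0}(F?[n],-)[n]. \]
On the other side, $\RHom_\C(M,\C)(?,-)\simeq\C(F?,-)$, and its truncation satisfies $\C(F?,-)^{\leq-n}\simeq\C^{\leq0}(F?[n],-)[n]$, again by the shift identity of \ref{kaku}. Comparing the two expressions gives the isomorphism. Naturality as bimodules follows because both identifications are induced by the same Yoneda maps.

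The main obstacle is the rigidification step: passing from an invertible bimodule to an honest dg functor $F$, compatibly with truncation. Truncation is not invariant under arbitrary quasi-isomorphisms of bimodules unless they are compatible with the $\C^{\leq0}$-structure. To handle this, I would work with the bimodule $M$ itself rather than $F$. Representability gives, for each $C$, a quasi-isomorphism $\C(-,FC)\to M(-,C)$. Truncation preserves quasi-isomorphisms of complexes, since $(-)^{\leq n}$ is exact on cohomology in degrees $\le n$. It therefore yields quasi-isomorphisms $\C^{\leq 0}$-linearly in $-$, and the isomorphism $M^{\leq n}(-,?)\simeq\C(-,F?)^{\leq n}$ holds in $\rD((\C^{\leq0})^e)$. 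The remaining point is compatibility in the second variable, which follows by applying the same argument with the roles of the two sides exchanged.
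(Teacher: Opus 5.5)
Your proposal is correct and follows essentially the paper's route: you lift $-\lotimes_\C M$ to a dg functor $F$, use the shift identity of Lemma \ref{kaku} to identify $M^{\leq n}$ with the (shifted) graph bimodule of $F^{\leq0}$, and deduce invertibility. The paper differs only in minor ways: it first reduces to $n=0$ and obtains part (2) by showing, via a dg lift $G$ of the inverse, that the inverse of $M^{\leq0}$ is $\RHom_\C(M,\C)^{\leq0}$, whereas you compute both sides of (2) directly by Yoneda, which is equally valid. Your worry in the last paragraph also dissolves more simply than you suggest, with no variable-by-variable argument needed: truncation is a functor on dg bimodules that preserves quasi-isomorphisms, so the isomorphism $M\simeq\C(-,F?)$ in $\rD(\C^e)$ truncates directly to one in $\rD((\C^{\leq0})^e)$.
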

\begin{proof}
	By $M^{\leq n}=(M[n])^{\leq0}[-n]$, we may assume that $n=0$. Indeed, this is clear for (\ref{n}), and for (\ref{-n}) note that the left-hand-side is $\RHom_{\C^{\leq0}}(M^{\leq n},\C^{\leq0})=\RHom_{\C^{\leq0}}((M[n])^{\leq0}[-n],\C^{\leq0})=\RHom_{\C^{\leq0}}((M[n])^{\leq0},\C^{\leq0})[n]$ while the right-hand-side is $\RHom_\C(M,\C)^{\leq-n}=(\RHom_\C(M,\C)[-n])^{\leq0}[n]=\RHom_\C(M[n],\C)^{\leq0}[n]$.
	
	Since $\C$ is pretriangulated, there is a dg functor $F\colon\C\to\C$ such that $H^0F\colon H^0\C\to H^0\C$ coincides with $-\lotimes_{\C}M$ up to an isomorphism.
	We claim that $-\lotimes_{\C^{\leq0}}M^{\leq0}\colon\rD(\C^{\leq0})\to\rD(\C^{\leq0})$ is nothing but the induction along $F^{\leq0}\colon\C^{\leq0}\to\C^{\leq0}$. Indeed, for each object $C$ in $\C^{\leq0}$ which is also an object of $\C$, we have $\C^{\leq0}(-,C)\lotimes_{\C^{\leq0}}M^{\leq0}=M^{\leq0}(-,C)$, which is by definition $M(-,C)^{\leq0}=\C(-,FC)^{\leq0}=\C^{\leq0}(-,FC)=\C^{\leq0}(-,F^{\leq0}C)$. This proves the claim. 
	
	Now, take a dg functor $G\colon\C\to\C$ which represents the inverse $-\lotimes_\C\RHom_\C(M,\C)$. Since the functors $F$ and $G$ are mutually inverse on $\C$, so are $F^{\leq0}$ and $G^{\leq0}$ on $\C^{\leq0}$. By the above claim, this shows $-\lotimes_{\C^{\leq0}}M^{\leq0}$ and $-\lotimes_{\C^{\leq0}}\RHom_\C(M,\C)^{\leq0}$ are mutually inverse. This proves both (\ref{n}) and (\ref{-n}).
	%
	%
\end{proof}

\begin{Lem}\label{proj-leq}
Let $\C$ be a pretriangulated dg category, and let $X$ be an invertible bimodule over $\C^{\leq0}$. Suppose that $X$ is projective on each side. Then the canonical map $X\to(\C\lotimes_{\C^{\leq0}}X\lotimes_{\C^{\leq0}}\C)^{\leq0}$ is an isomorphism.
\end{Lem}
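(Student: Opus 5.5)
The plan is to reduce the claim to a statement about representable modules, and then to a functor-level computation as in the proof of \ref{-1}.

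First I use that $X$ is projective on each side (\ref{each}). Then the equivalence $-\lotimes_{\C^{\leq0}}X$ on $\per\C^{\leq0}$ sends $\add\C^{\leq0}$ to itself and restricts to an autoequivalence of it. Assuming, as I may, that $\C$ is idempotent complete (the truncation of a summand being a summand of the truncation), I choose for each $C\in\C$ an object $FC\in\C$ with $X(-,C)\simeq\C^{\leq0}(-,FC)$ in $\rD(\C^{\leq0})$. By Yoneda, $H^0$ of these isomorphisms makes $C\mapsto FC$ into an autoequivalence $F$ of the additive category $H^0\C$, and $X$ is identified, as a right module on each representable, with the graph bimodule $\C^{\leq0}(-,F?)$.

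Next I compute $\C\lotimes_{\C^{\leq0}}X$ objectwise: for each $C$, $\C\lotimes_{\C^{\leq0}}X(-,C)=\C\lotimes_{\C^{\leq0}}\C^{\leq0}(-,FC)=\C(-,FC)$. Tensoring on the other side with $\C$ and using that $\C^{\leq0}\to\C$ is a localization (\ref{loc}), in the form $\C\lotimes_{\C^{\leq0}}\C\simeq\C$, I expect $\C\lotimes_{\C^{\leq0}}X\lotimes_{\C^{\leq0}}\C$ evaluated at $(-,C)$ to be $\C(-,FC)$. Here the right factor is handled by writing the left $X$-action via the inverse $G$ of $F$, symmetrically by \ref{each}. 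Truncating then gives $\C(-,FC)^{\leq0}=\C^{\leq0}(-,FC)\simeq X(-,C)$. The canonical map $X\to(\C\lotimes X\lotimes\C)^{\leq0}$ is induced by the unit $\C^{\leq0}\to\C$ on both sides, and it factors through $(-)^{\leq0}$ because $X$ is connective on representables. Under the identifications above it is the identity on $\C^{\leq0}(-,FC)$. So the map is an isomorphism on every $(-,C)$, and a morphism in $\rD((\C^{\leq0})^e)$ which is an isomorphism after evaluation at each object is an isomorphism.

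The main obstacle is making the objectwise identification genuinely bimodule-compatible. I need the identification $\C\lotimes X\lotimes\C(-,C)\simeq\C(-,FC)$ to be compatible with the canonical map, not merely an abstract isomorphism, and I need the left $\C$-module structure to come out right. To handle this I would avoid a dg lift of $F$ and argue only one-sidedly. I would view $\C\lotimes_{\C^{\leq0}}X\lotimes_{\C^{\leq0}}\C(-,C)$ as the right $\C$-module $(X(-,C)\lotimes_{\C^{\leq0}}\C)$, then as a left-$\C$ thing, and use \ref{back}-type arguments in which the cone is killed by \ref{ubb}. Concretely, the cone of $X\to\C\lotimes X$ is $\C^{>0}\lotimes X$, which is uniformly bounded below on the representable side; hence it is annihilated by $-\lotimes\C$ and contributes nothing in degrees $\leq0$. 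Carrying this out for both sides reduces the isomorphism on $H^{\leq0}$ to the trivial one.
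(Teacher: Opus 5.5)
Your final paragraph is essentially the paper's proof: the paper cites \cite{HaK} for $X\lotimes_{\C^{\leq0}}\C\simeq\C\lotimes_{\C^{\leq0}}X\lotimes_{\C^{\leq0}}\C$ and then uses right projectivity to get $(X(-,C)\lotimes_{\C^{\leq0}}\C)^{\leq0}=\C(-,FC)^{\leq0}\simeq X(-,C)$. You instead prove the cited isomorphism directly via \ref{ubb}, which is valid and self-contained: left projectivity gives $X(D,-)\simeq\C^{\leq0}(D',-)$, hence $(\C^{>0}\lotimes_{\C^{\leq0}}X)(D,B)\simeq\C^{>0}(D',B)$, which lies uniformly in degrees $\geq1$. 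The formula in your second paragraph has the sides mixed up, since $X(-,C)$ is a right $\C^{\leq0}$-module and the correct objectwise statement is $X(-,C)\lotimes_{\C^{\leq0}}\C\simeq\C(-,FC)$; your worries there about a dg lift of $F$ disappear once the cone argument is in place.
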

\begin{proof}
	By \cite{HaK} we know that there is an isomorphism $X\lotimes_{\C^{\leq0}}\C\simeq\C\lotimes_{\C^{\leq0}}X\lotimes_{\C^{\leq0}}\C$ in $\rD((\C^{\leq0})^\op\otimes\C)$, thus we have to show that the canonical map $X\to (X\lotimes_{\C^{\leq0}}\C)^{\leq0}$ is an isomorphism. Since $X$ is projective as a right $\C^{\leq0}$-module, for each $C\in\C^{\leq0}$ there is $C^\prime\in\C^{\leq0}$ such that $X(-,C)\simeq\C^{\leq0}(-,C^\prime)$. It follows that $(X(-,C)\lotimes_{\C^{\leq0}}\C)^{\leq0}=\C(-,C^\prime)^{\leq0}=X(-,C)$, as desired.
\end{proof}

Recall from \ref{Res} that every autoequivalence on $\per\C^{\leq0}$ restricts to $\thick H^0\C$. It follows that every autoequivalence $F$ on $\per\C^{\leq0}$ induces an autoequivalence $\overline{F}$ on the Verdier quotient $\per\C$. If $F$ is given by an invertible bimodule $X$ over $\C^{\leq0}$, then by \cite{HaK}, the autoequivalence $\overline{F}$ on $\per\C$ is given by $\C\lotimes_{\C^{\leq0}}X\lotimes_{\C^{\leq0}}\C$. Similarly, since the quasi-inverse $F^{-1}$ of $F$ induces the one $\overline{F}^{-1}$ of $\overline{F}$, we have $(\C\lotimes_{\C^{\leq0}}X\lotimes_{\C^{\leq0}}\C)^{-1}\simeq\C\lotimes_{\C^{\leq0}}X^{-1}\lotimes_{\C^{\leq0}}\C$ for each invertible bimodule $X$ over $\C^{\leq0}$. 
\begin{Lem}\label{leq-ind}
Let $\C$ be a pretriangulated dg category, let $X$ be an invertible bimodule over $\C^{\leq0}$, and let $M$ be an invertible bimodule over $\C$. Suppose that $X$ is projective on each side.
Then the maps
	\[
	\xymatrix{ \C\lotimes_{\C^{\leq0}}-\lotimes_{\C^{\leq0}}\C\colon\Hom_{\rD((\C^{\leq0})^e)}(X,M^{\leq0})\ar@<2pt>[r]&\Hom_{\rD(\C^e)}(\C\lotimes_{\C^{\leq0}}X\lotimes_{\C^{\leq0}}\C,M)\ar@<2pt>[l]\colon(-)^{\leq0}
	} \]
	are mutually inverse, and preserves and detects isomorphisms.
\end{Lem}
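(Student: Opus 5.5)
The plan is to show that each map is a bijection by reducing to the two composites, using the adjunctions relating $\rD((\C^{\leq0})^e)$ and $\rD(\C^e)$. We write $\bar X:=\C\lotimes_{\C^{\leq0}}X\lotimes_{\C^{\leq0}}\C$.

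First, two adjunctions apply. Restriction along $(\C^{\leq0})^e\to\C^e$ is right adjoint to $\C\lotimes_{\C^{\leq0}}-\lotimes_{\C^{\leq0}}\C$. Hence
\[ \Hom_{\rD(\C^e)}(\bar X,M)\simeq\Hom_{\rD((\C^{\leq0})^e)}(X,M). \]
Next, by \ref{proj-leq} we have $X\simeq \bar X^{\leq0}$. Moreover, $X(-,C)\simeq\C^{\leq0}(-,C')$ is connective on each side, so $X$ lies in the aisle of $\rD((\C^{\leq0})^e)$ consisting of bimodules with cohomology in degrees $\leq0$. Truncation $(-)^{\leq0}$ on bimodules is the right adjoint of the inclusion of this aisle, so
\[ \Hom(X,M)\simeq\Hom(X,M^{\leq0}). \]
Here the inclusion $M^{\leq0}\to M$ induces the isomorphism; concretely, a map $X\to M$ lands in $M^{\leq0}$ because $X$ is connective objectwise. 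Composing the two isomorphisms gives a bijection $\Hom(X,M^{\leq0})\simeq\Hom(\bar X,M)$.

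We then check that this composite bijection is the map $\C\lotimes-\lotimes\C$ of the statement, followed by identifying $\C\lotimes_{\C^{\leq0}} M^{\leq0}\lotimes_{\C^{\leq0}}\C\simeq M$. The latter holds by \ref{back} applied on both sides, together with the bimodule analogue via \ref{loc}(1).

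Next we show that $(-)^{\leq0}$ is the inverse. Applying $(-)^{\leq0}$ to $f\colon \bar X\to M$ gives $\bar X^{\leq0}\to M^{\leq0}$, and this is a map $X\to M^{\leq0}$ by \ref{proj-leq}. Unwinding the adjunction, the composite of $X\to \bar X^{\leq0}$ with $f^{\leq0}$ is exactly the adjoint of $f$. So $(-)^{\leq0}$ inverts the bijection.

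The main obstacle is making the truncation of bimodules honest. $(-)^{\leq0}$ is defined objectwise, and we must know it is a functor on $\rD(\C^e)\to\rD((\C^{\leq0})^e)$ that is right adjoint to inclusion of connective bimodules. We will take the standard $t$-structure on $\rD((\C^{\leq0})^e)$, which exists since $(\C^{\leq0})^e$ is connective, and check that its truncation agrees objectwise with $M(-,?)^{\leq0}$.

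Finally, for isomorphisms: if $g\colon X\to M^{\leq0}$ is an isomorphism, then $\C\lotimes g\lotimes\C$ is an isomorphism onto $\C\lotimes M^{\leq0}\lotimes\C\simeq M$. Conversely, if $f\colon\bar X\to M$ is an isomorphism, then so is $f^{\leq0}$, and hence $g$ is an isomorphism.
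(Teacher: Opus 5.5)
Your proof is correct. It follows a different, and more complete, route than the paper's.

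The paper's proof only checks well-definedness and preservation of isomorphisms. The leftgoing map $(-)^{\leq0}$ is well defined by \ref{proj-leq}. The rightgoing map is well defined via $\C\lotimes_{\C^{\leq0}}M^{\leq0}\lotimes_{\C^{\leq0}}\C\simeq M$, which the paper obtains as in \ref{-1} by realizing the invertible bimodule $M$ by a dg auto-quasi-equivalence of $\C$. Both maps then preserve isomorphisms because they are induced by functors. That the two maps are mutually inverse is asserted rather than argued.

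Your argument supplies that missing piece. You factor the correspondence through two adjunctions:
\begin{itemize}
\item induction $\dashv$ restriction along $(\C^{\leq0})^e\to\C^e$;
\item aisle inclusion $\dashv$ $\tau^{\leq0}$ on the strictly connective category $(\C^{\leq0})^e$, where $\tau^{\leq0}$ is the objectwise truncation.
\end{itemize}
Together these give a genuine bijection $\Hom(X,M^{\leq0})\simeq\Hom(\bar X,M)$. Naturality of truncation identifies $(-)^{\leq0}$ as its inverse, once you note that the canonical map of \ref{proj-leq} is the unit $X\to\bar X$ factored through $\bar X^{\leq0}$. Detection of isomorphisms is then formal.

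Your route also economizes on hypotheses. You identify $\C\lotimes_{\C^{\leq0}}M^{\leq0}\lotimes_{\C^{\leq0}}\C$ with $M$ using \ref{back} in one variable together with $\C\lotimes_{\C^{\leq0}}\C\simeq\C$ from \ref{loc}. That identification holds for every $\C$-bimodule, so your argument never uses invertibility of $M$. The only properties of $X$ you need are objectwise connectivity, which follows from projectivity on one side, and \ref{proj-leq}. The paper's route is shorter but leans on the dg-functor description of invertible bimodules, and leaves the inverse relation implicit.
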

\begin{proof}
	We have $X\xsimeq(\C\lotimes_{\C^{\leq0}}X\lotimes_{\C^{\leq0}}\C)^{\leq0}$ by \ref{proj-leq}, so the leftgoing map $(-)^{\leq0}$ is well-defind.
	Also, if $M$ is invertible, so is $M^{\leq0}$ by \ref{-1}. We also have $\C\lotimes_{\C^{\leq0}}M^{\leq0}\lotimes_{\C^{\leq0}}\C\xsimeq M$, which can be seen, as in \ref{-1}, describing $M$ as induced by a dg auto-quasi-equivalence on $\C$. Therefore, the rightgoing map is also well-defined.
	Being induced by a functor, the maps preserve isomorphisms.
\end{proof}

\subsection{Auslander-Reiten-Serre duality via truncation}
Recall that a {\it Serre functor} on a triangulated category $\T$ is a trianlge autoequivalence $\nu$ on $\T$ such that there is a functorial isomorphism
\[ D\Hom_{\T}(X,Y)\simeq\Hom_{\T}(Y,\nu X) \]
for each $X,Y\in\T$. For example, it is well-known \cite{Hap} that for a finite dimensional $A$ which is Iwanaga-Gorenstein, then $-\lotimes_ADA$ is Serre functor of $\per A$.
By a slight abuse of terminology, we will sometimes say that a pretriangulated dg category $\C$ has a Serre functor if the triangulated category $H^0\C$ does.

We start with elementary observations on Serre functors. 
\begin{Lem}\label{DA}
Let $\C$ be a dg category. Then the following are equivalent.
\begin{enumerate}
\romanenum
\item The triangulated category $\per\C$ has a Serre functor.
\item The bimodule $D\C$ over $\C$ is invertible.
\end{enumerate}
\end{Lem}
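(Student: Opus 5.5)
The plan is to compare both conditions with the behaviour of the functor $-\lotimes_\C D\C$ on representables. By Yoneda, for $C\in\C$ and $M\in\rD(\C)$ we have $\Hom_{\rD(\C)}(\C(-,C),M)=H^0M(C)$. We also have the standard adjunction isomorphism
\[ D\RHom_\C(P,\C(-,C))\simeq \RHom_\C(\C(-,C),P\lotimes_\C D\C)\quad\text{for } P\in\per\C, \]
obtained by dévissage from $P=\C(-,C')$, where both sides equal $D\C(C',C)=D\C(C,?)$ evaluated at $C'$. In particular the functor $\nu:=-\lotimes_\C D\C$ satisfies the Serre-duality formula $D\Hom(P,Q)\simeq\Hom(Q,\nu P)$ for all $P,Q\in\per\C$, by dévissage in $Q$ as well. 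The only obstruction to $\nu$ being a Serre functor is therefore whether it is an autoequivalence of $\per\C$.

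For (ii)$\Rightarrow$(i): if $D\C$ is invertible, then $-\lotimes_\C D\C$ is an autoequivalence of $\rD(\C)$. It preserves compact objects, so it restricts to an autoequivalence of $\per\C$. Combined with the formula above, this restriction is a Serre functor. Note that the triangle structure is automatic, since the functor is given by a dg bimodule.

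For (i)$\Rightarrow$(ii): let $\nu$ be a Serre functor on $\per\C$. For each $C$ the functor $D\Hom_{\per\C}(\C(-,C),-)$ is represented by $\nu\C(-,C)\in\per\C$. On the other hand, $D\C(C,?)$ computes this functor on representables. Since $D\C(C,?)$ and $\nu\C(-,C)$ corepresent the same cohomological functor on all shifts of representables, I would identify $D\C(C,?)\simeq\nu\C(-,C)$ as objects of $\rD(\C)$. This shows that $D\C(C,?)$ is perfect for each $C$, so $-\lotimes_\C D\C$ preserves $\per\C$. By the duality formula it then agrees with $\nu$ on $\per\C$, using uniqueness of Serre functors via Yoneda. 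Hence it is an autoequivalence of $\per\C$. Because $\per\C$ compactly generates $\rD(\C)$ and $-\lotimes_\C D\C$ commutes with coproducts, the functor is an equivalence of $\rD(\C)$, that is, $D\C$ is invertible.

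The main obstacle is the identification $D\C(C,?)\simeq\nu\C(-,C)$ as dg modules rather than merely as cohomology functors. Here I would use the functorial isomorphism $H^0\RHom(\C(-,C'),D\C(C,?))=D H^0\C(C',C)\simeq \Hom(\C(-,C'),\nu\C(-,C))$. I would then lift it to a morphism in $\rD(\C)$ by choosing the map corresponding to the trace $\Hom(\C(-,C),\nu\C(-,C))\to k$, i.e.\ a morphism $\nu\C(-,C)\to D\C(C,?)$ obtained from the dual of the trace, which induces the isomorphism on all $H^i$ at every object. The second delicate point is the fact that an equivalence on compacts which preserves coproducts is an equivalence on $\rD(\C)$; for this I would invoke the standard compact generation argument.
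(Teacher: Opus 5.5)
Your argument is correct, and its core in (i)$\Rightarrow$(ii) is exactly the paper's argument. That core is the morphism $\nu\C(-,C)\to D\C(C,?)$ obtained from the trace via $\Hom_{\rD(\C)}(N,D\C(C,?))\simeq DH^0(N(C))$. It is shown to be an isomorphism by testing against the shifted representables $\C(-,C')[i]$, which works because the trace induces the Serre pairing.

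You diverge from the paper in the last step. The paper takes the perfectness of $D\C(C,?)$ and reruns the argument for $\C^\op$ with the Serre functor $S^{-1}$ to get perfectness on the left. It then concludes invertibility from two-sided perfectness together with the isomorphisms $\C\xsimeq\REnd_\C(D\C)$ and $\C^\op\xsimeq\REnd_{\C^\op}(D\C)$, which implicitly use Hom-finiteness through the double dual. You instead use the duality formula for $-\lotimes_\C D\C$ and Yoneda on $\per\C$ to identify $-\lotimes_\C D\C|_{\per\C}$ with $\nu$. You then pass to $\rD(\C)$ by compact generation and preservation of coproducts.

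Your route needs only the right-module side and no double-dual argument. It also makes explicit that the Serre functor is $-\lotimes_\C D\C$, which the paper later uses when it says the Serre functor ``is given by the dg bimodule $D\C$''. The paper's route, in exchange, directly verifies the bimodule-level criterion for invertibility: perfect on both sides, with the double-centralizer isomorphisms.

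There is one small index slip in your first paragraph. For $P=\C(-,C')$ both sides equal $D\C(C',C)$, which is $D\C(C',?)$ evaluated at $C$, not $D\C(C,?)$ evaluated at $C'$.
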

\begin{proof}
	(ii)$\Rightarrow$(i)  If $D\C$ is invertible, then $-\lotimes_\C D\C$ gives a Serre functor of $\per\C$.
	
	(i)$\Rightarrow$(ii)  Let $S$ be a Serre functor of $\per\C$ and consider the functors
	\[ \Hom_{\per\C}(-,S\C(-,C)) \text{ and } \Hom_{\rD(\C)}(-,D\C(C,-))|_{\per\C} \]
	on $\per\C$. Since both are isomorphic to $X\mapsto D(XC)$ by Serre duality and the $k$-dual, by Yoneda lemma, there is a morphism $S\C(-,C)\to D\C(C,-)$ in $\rD(\C)$ inducing a natural isomorphism of the functors on $\per\C$. Now, we may substitute representable modules to the functorial isomorphism, which shows that the $S\C(-,C)\to D\C(C,-)$ is an isomorphism in $\rD(\C)$.	
	
	The above claim in particular shows that $D\C(C,-)$ is perfect as a right $\C$-module. Since $\per\C^\op$ has a Serre functor $S^{-1}$, we also get that $D\C$ is pefect on the left. Since we clearly have $\C\xsimeq\REnd_\C(D\C)$ and $\C^\op\xsimeq\REnd_{\C^\op}(D\C)$, we conclude that $D\C$ is invertible.
\end{proof}	

We next relate the existence of a Serre functor and the truncation.
\begin{Lem}\label{Serre-dual}
Let $\C$ be a pretriangulated category. The following are equivalent.
\begin{enumerate}
\romanenum
\item The triangulated category $\per\C$ has a Serre functor.
\item We have $\thick H^0\C=\thick D H^0\C$ in $\rD(\C^{\leq0})$. 
\end{enumerate}
\end{Lem}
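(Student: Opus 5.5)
We work throughout in $\rD(\C^{\leq0})$, viewing $H^0\C$ as the family of $\C^{\leq0}$-modules $H^0\C(-,C)$ and $DH^0\C$ as the family $D H^0\C(C,-)$. The argument compares these two families object by object. Since $\C$ is pretriangulated, $\per\C\simeq H^0\C$ up to idempotent completion. We may therefore read (i) as the existence of a Serre functor $\nu$ on $H^0\C$, after reducing to the idempotent complete case as in the proof of \ref{DFun}.

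For (i)$\Rightarrow$(ii), Serre duality gives $DH^0\C(C,X)\simeq H^0\C(X,\nu C)$, naturally in $X$. Hence $DH^0\C(C,-)\simeq H^0\C(-,\nu C)$ as $H^0\C$-modules, and therefore as $\C^{\leq0}$-modules concentrated in degree $0$. This places every object of $DH^0\C$ in $H^0\C$, so $\thick DH^0\C\subset\thick H^0\C$. Since $\nu$ is essentially surjective, every $H^0\C(-,C')$ has the form $DH^0\C(\nu^{-1}C',-)$, which gives the reverse inclusion.

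For (ii)$\Rightarrow$(i), the goal is to show that each $DH^0\C(C,-)$ is representable in $H^0\C$. First we check that $DH^0\C(C,-)$ lies in the heart of the standard $t$-structure and belongs to $\thick H^0\C$. By \ref{Res}, it is then a perfect $\C^{\leq0}$-module that is uniformly bounded. Next we argue as in \ref{DFun}: objects of the heart lying in $\per\C^{\leq0}$ are finitely presented $H^0\C$-modules, say
\[ \operatorname{Coker}\bigl(H^0\C(-,B)\to H^0\C(-,C')\bigr). \]
Completing $B\to C'$ to a triangle $A\to B\to C'$ gives a length-two projective resolution of this module.

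The key point is to use injectivity. The functor $DH^0\C(C,-)$ is injective in $\Mod H^0\C$ because it is the dual of a projective. Since it is finitely presented, it lies in the Frobenius abelian category $\mod H^0\C$ of the triangulated category $H^0\C$ (Freyd). In that category injectives coincide with projectives, so $DH^0\C(C,-)$ is a summand of some $H^0\C(-,C'')$, hence representable by idempotent completeness. This makes the functor $\nu$ defined by $DH^0\C(C,-)\simeq H^0\C(-,\nu C)$ a right Serre functor.

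Symmetrically, every $H^0\C(-,C')$ lies in $\thick DH^0\C$ and in the heart, so it is a finitely copresented object on the dual side. The dual argument, applied to $D$ of it as a left module, shows that $\nu$ is essentially surjective and hence a Serre functor. By \ref{DA} this is equivalent to (i).

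The main obstacle is the step from $\thick$-membership to a statement about the heart. We need that an object of the heart lying in $\thick H^0\C$ is finitely presented, and that its injectivity in $\Mod H^0\C$ forces representability. I would handle this through the characterization in \ref{Res} together with the Frobenius property of $\mod H^0\C$. If those do not suffice, the fallback is to use the localization sequence of \ref{loc} and \ref{LRess}.
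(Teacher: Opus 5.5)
You have the same overall argument as the paper: (i)$\Rightarrow$(ii) via $DH^0\C(C,-)\simeq H^0\C(-,\nu C)$. For (ii)$\Rightarrow$(i), $DH^0\C(C,-)$ is injective in $\Mod H^0\C$, hence injective, hence projective, in the Frobenius category $\mod H^0\C$, hence representable; then the same argument is run for $\C^\op$.

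However, the one non-formal step is asserted rather than proved: that $DH^0\C(C,-)$, an object of $\thick H^0\C$ concentrated in degree $0$, is a \emph{finitely presented} $H^0\C$-module. ``Arguing as in \ref{DFun}'' runs in the wrong direction. \ref{DFun} starts from a presentation $\operatorname{Coker}(H^0\C(-,B)\to H^0\C(-,C'))$ and builds a length-two resolution, whereas you must produce that presentation. None of the tools you list supplies it. \ref{Res} only gives that the module is perfect and uniformly bounded. The Frobenius property can only be used once you are already in $\mod H^0\C$. \ref{loc} and \ref{LRess} say nothing about finite presentation. Injectivity in $\Mod H^0\C$ alone does not force representability, so the argument cannot start without this step.

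There are two short ways to fill the gap.
\begin{itemize}
\item The paper's route: $\add\{\C^{\leq0}(-,C)\mid C\in\C\}$ is silting in $\per\C^{\leq0}$. Since $X_0=DH^0\C(C,-)$ is perfect and concentrated in degree $0$, \cite{AI} gives triangles $X_{i+1}\to P_i\to X_i$ with $P_i\in\add\C^{\leq0}$ right approximations. Hence each $H^0P_i\to H^0X_i$ is surjective, and the long exact sequences give an exact sequence $H^0P_1\to H^0P_0\to DH^0\C(C,-)\to 0$.
\item An elementary alternative: $H^0\C$ is triangulated, so it has weak kernels. Thus $\mod H^0\C$ is an abelian subcategory of $\Mod H^0\C$, closed under kernels, cokernels, extensions and summands. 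Therefore the objects of $\rD(\C^{\leq0})$ all of whose cohomologies lie in $\mod H^0\C$ form a thick subcategory containing $H^0\C$. So every object of $\thick H^0\C$ has finitely presented cohomology.
\end{itemize}
Either argument applied to $\C^\op$ also justifies your ``finitely copresented'' claim in the second half. With this step supplied, the rest of your argument goes through as in the paper.
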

\begin{proof}
	Note first that a triangulated category $\T$ has a Serre functor if and only if $D\Hom_{\T}(T,-)\in\Mod\T$ and $D\Hom_{\T}(-,T)\in\Mod\T^\op$ are representable for each $T\in\T$.
	
	(i)$\Rightarrow$(ii)  By the above remark, we have that $DH^0\C$ is representable on each side, which immediately implies the assertion.
	
	(ii)$\Rightarrow$(i)  Suppose that $\thick DH^0\C\subset\thick H^0\C$ in $\rD(\C^{\leq0})$. In particular, the $\C^{\leq0}$-module $DH^0\C(C,-)$ is perfect for each $C\in\C$. Since $\C^{\leq0}$ is a silting subcategory of $\per\C^{\leq0}$ and $DH^0\C(C,-)$ is concentrated in degree $0$, we have triangles
	\[ \xymatrix{ X_{i+1}\ar[r]&P_i\ar[r]^-{f_i}&X_{i} } \]
	for $i\geq0$ with $X_0=DH^0\C(C,-)$, $P_i\in\add\C^{\leq0}$, and $f_i$ a right $\C^{\leq0}$-approximation \cite{AI}. In particular, we get a sequence $P_1\to P_0\to DH^0\C(C,-)$ which gives an exact sequence $H^0P_1\to H^0P_0\to DH^0\C(C,-)\to 0$ in $\Mod H^0\C$. This shows that $DH^0\C(C,-)$ is finitely presented. Since the category $\mod H^0\C$ of finitely presented $H^0\C$-modules is a Frobenius category, we see that $DH^0\C(C,-)$ is representable.
	
	Similarly, suppose that $\thick H^0\C\subset\thick DH^0\C$ in $\rD(\C^{\leq0})$. Then taking the dual shows that we have $\thick DH^0\C\subset\thick H^0\C$ in $\rD(\C^{\leq0,\op})$. By the above argument for $\C^\op$, we get that $DH^0\C(-,C)$ is representable $H^0\C^\op$-module for each $C\in\C$.
	
	We conclude by these assertions that $H^0\C$ has a Serre functor.
%
%
%
\end{proof}



We also have a characterization in terms of the truncation. Recall from \ref{matlis} that there is a canonical isomorphism $\C^{\leq0}\to\REnd_{\C^{\leq0}}(\C^{>0})$, and hence we have an equivalence $\thick_{\rD(\C^{\leq0})}\C^{>0}\simeq\per\C^{\leq0}$. We may regard (\ref{>0}) below can be seen as $\C^{\leq0}$ is a ``derived dualizing variety'' \cite{AR74}.
\begin{Lem}\label{dualizing}
Let $\C$ be a pretriangulated dg category with shifts. Then the following are equivalent.
\begin{enumerate}
\romanenum
\item\label{Serre3} The triangulated category $\per\C$ has a Serre functor.
\item\label{>0} The functor $D$ induces a duality $\per\C^{\leq0}\leftrightarrow\thick_{\rD(\C^{\leq0,\op})}\C^{>0}\simeq\per\C^{\leq0,\op}$.
\end{enumerate}
\end{Lem}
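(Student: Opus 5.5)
Our plan is to reduce Lemma~\ref{dualizing} to Lemma~\ref{Serre-dual}, using the Matlis duality of Proposition~\ref{matlis} and the localization sequence of Theorem~\ref{loc}. Throughout, $D$ denotes the $k$-dual. It sends $\C^{\leq0}$-modules to $\C^{\leq0,\op}$-modules and is a duality on modules with finite dimensional cohomology. We also use $D(\C^{\leq0})\simeq(D\C)^{\geq0}$ bimodule-wise.

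\emph{(\ref{>0})$\Rightarrow$(\ref{Serre3}).} First, $D$ is an exact contravariant functor sending $\per\C^{\leq0}$ onto $\thick\C^{>0}$. Hence it sends uniformly bounded objects to uniformly bounded objects. By Proposition~\ref{Res}, applied on both sides, and by the identification $\thick\C^{>0}\simeq\per\C^{\leq0,\op}$ via $\REnd(\C^{>0})\simeq\C^{\leq0}$, the duality should match $\thick_{\rD(\C^{\leq0})}H^0\C$ with the corresponding subcategory $\thick H^0\C^{\op}$ on the other side. Concretely, $D(H^0\C(C,-))=DH^0\C(C,-)$ must lie in the thick closure of the $H^0\C(-,C')$ after transport. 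The cleanest route is to show that the contravariant duality forces $DH^0\C\in\thick H^0\C$ in $\rD(\C^{\leq0})$, and symmetrically. This is condition~(ii) of Lemma~\ref{Serre-dual}, which then gives a Serre functor.

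\emph{(\ref{Serre3})$\Rightarrow$(\ref{>0}).} Let $\nu$ be the Serre functor. By Lemma~\ref{DA}, $D\C$ is an invertible $\C$-bimodule, given by $\C(-,\nu ?)$. By Lemma~\ref{-1}, each truncation $(D\C)^{\leq n}$ is an invertible $\C^{\leq0}$-bimodule, represented by $\C^{\leq0}(-,\nu ?[n])[-n]$. We then compute $D(\C^{\leq0}(-,C))=(D\C(-,C))^{\geq0}$. This equals $\C(C,\nu -)^{\geq0}$, which is $\C^{>0}$-type up to the cone of an $H^0$-term, namely $\C^{>-1}(C,\nu-)$. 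By Lemma~\ref{kaku} this is a shift of $\C^{>0}(C,\nu(-)[-1])$, hence lies in $\thick\C^{>0}$. This shows that $D$ maps $\per\C^{\leq0}$ into $\thick\C^{>0}$. Conversely, $D\C^{>0}(C,-)$ is computed to be a shifted truncation $(D\C)^{<0}$, hence perfect by the same invertibility. Since $D$ is a duality on objects with finite dimensional cohomology, which all objects involved have (we assume $\C$ is Hom-finite, implicit in having a Serre functor), the two maps are mutually inverse. Full faithfulness follows from $D$ being a duality on such modules.

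The main obstacle is the first direction: extracting $DH^0\C\in\thick H^0\C$ purely from the abstract duality. If the transport argument is murky, we would instead use directly that $D$ sends the silting generator $\C^{\leq0}(-,C)$ to $D\C(-,C)^{\geq0}\in\thick\C^{>0}$. We would then apply $-\lotimes_{\C^{\leq0}}\C$ on the opposite side together with Theorem~\ref{loc} to conclude that $D\C(C,-)$ lies in $\per\C$. This identifies $DH^0\C(C,-)$ as representable, exactly as in the proof of Lemma~\ref{DA}.
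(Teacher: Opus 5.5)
Your direction (i)$\Rightarrow$(ii) is fine and is the paper's argument. Truncating $D\C(-,?)\simeq\C(?,\nu(-))$ gives $D\C^{\leq0}(-,C)\simeq\C^{>0}(\tau^{-1}C,-)[1]$ and $D\C^{>0}(C,-)\simeq\C^{\leq0}(-,\tau C)[1]$.

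The direction (ii)$\Rightarrow$(i), however, has a genuine gap. Your transport argument yields only one of the two inclusions in condition (ii) of Lemma~\ref{Serre-dual}. Up to shift, $H^0\C(C,-)$ is the cocone of $\C^{>0}(C[1],-)\to\C^{>1}(C[1],-)\simeq\C^{>0}(C,-)[-1]$, so it lies in $\thick_{\rD(\C^{\leq0,\op})}\C^{>0}$. The hypothesis then gives $DH^0\C(C,-)\in\per\C^{\leq0}$, and Proposition~\ref{Res} puts it in $\thick H^0\C$.

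The other inclusion needs $DH^0\C(-,C)\in\thick_{\rD(\C^{\leq0,\op})}H^0\C$. The hypothesis only places $DH^0\C(-,C)$ in $\thick_{\rD(\C^{\leq0,\op})}\C^{>0}$, whose objects are typically not perfect, so Proposition~\ref{Res} for $\C^{\op}$ does not apply. The identification $\thick\C^{>0}\simeq\per\C^{\leq0,\op}$ coming from Proposition~\ref{matlis} is an abstract equivalence, and nothing shows it matches uniformly bounded modules on the two sides. So your claim that the duality matches $\thick H^0\C$ with $\thick H^0\C^{\op}$ is exactly the unproved point.

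Your fallback fails outright. Every object of $\thick_{\rD(\C^{\leq0,\op})}\C^{>0}$, in particular $D\C(-,C)^{\geq0}$, is uniformly bounded below. It is therefore killed by $\C\lotimes_{\C^{\leq0}}-$ (the opposite version of Lemma~\ref{ubb}), so inducing produces $0$ and tells you nothing about $D\C(C,-)$.

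The missing idea, which is how the paper argues, is to extract finiteness from cohomology before localizing. The generators $\C^{>0}(C,-)$ have cohomology $H^i\C(C,-)\simeq H^0\C(C[-i],-)$, which is representable. Also $\mod H^0\C^{\op}$ is abelian because $H^0\C$ is triangulated. Hence every object of $\thick_{\rD(\C^{\leq0,\op})}\C^{>0}$ has finitely presented cohomology, and the same holds for $\per\C^{\leq0}$.

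Apply this to $D\C^{\leq0}(-,C)$ in degree $0$ and to $D\C^{>0}(C[1],-)$ in degree $-1$. This shows that $DH^0\C(-,C)$ and $DH^0\C(C,-)$ are finitely presented. They are injective objects of the Frobenius categories $\mod H^0\C^{\op}$ and $\mod H^0\C$ respectively. Hence they are projective, i.e. representable up to direct summands, which is Serre duality on $H^0\C$.

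Your fallback can be repaired on one side by dualizing $\C^{>0}(C,-)$ instead. Then $(D\C(C,-))^{<0}\in\per\C^{\leq0}$ induces to $D\C(C,-)$ by Lemma~\ref{ubb} and Theorem~\ref{loc}, so $D\C(C,-)\in\per\C$. The other side still needs the cohomology argument.
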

\begin{proof}
%
	(\ref{Serre3})$\Rightarrow$(\ref{>0})  We have to show that $D\C^{\leq0}(-,C)\in\thick\C^{>0}$ and $D\C^{>0}(C,-)\in\per\C^{\leq0}$. Suppose that $\per\C$ has a Serre functor. Then by \ref{DA} it is given by the dg bimodule $D\C$, which implies that there is a dg functor $\nu\colon\C\to\C$ such that $D\C(-,?)\simeq\C(?,\nu(-))$ in $\rD(\C^e)$. It follows by truncating that $D\C^{\leq0}(-,C)=\C^{\geq0}(C,\nu(-))=\C^{>0}(\tau^{-1}C,-)[1]\in\thick\C^{>0}$, and $D\C^{>0}(C,-)=\C^{<0}(-,\nu C)=\C^{\leq0}(-,\tau C)[1]\in\per\C^{\leq0}$.
	
	(\ref{>0})$\Rightarrow$(\ref{Serre3})  The assumption means $D\C^{\leq0}(-,C)\in\thick_{\rD(\C^{\leq0,\op})}\C^{>0}$ and $D\C^{\leq0}(C,-)\in\thick_{\rD(\C^{\leq0})}\C^{>0}$ for every $C\in\C$.
	It is easy to see that if $X\in\thick_{\rD(\C^{\leq0,\op})}\C^{>0}$, then $H^iX\in\mod H^0\C^\op$ for every $i\in\Z$. 
	This shows that $D H^0\C(-,C)\in\mod H^0\C^\op$ and $DH^0\C(C,-)\in\mod H^0\C$ for every $C\in\C$. It follows that $DH^0\C$ is represetable on each side, and hence $\per\C$ has a Serre functor.
\end{proof}

We next turn to the invertibility of inverse dualizing bimodules of the truncation. In what follows, we write
\[ \theta=\RHom_{\C^{\leq0}}(\C^{\leq0},(\C^{\leq0})^e), \]
the inverse dualizing bimodule of $\C^{\leq0}$.
\begin{Prop}\label{invertible}
Let $\C$ be a pretriangulated dg category which has a Serre functor. Then the inverse dualizing bimodule $\theta$ of $\C^{\leq0}$ is invertible.
\end{Prop}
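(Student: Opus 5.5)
Since $\theta$ is defined as the derived dual of the diagonal bimodule, my plan is to compute it through a resolution of $\C^{\leq0}$ over $(\C^{\leq0})^e$. The resolution should come from the fact (the proof of \ref{DFun}) that every finitely presented $H^0\C$-module has a length-$2$ projective resolution given by a triangle. Concretely, I would first realize the diagonal $H^0\C$-bimodule through the triangle
\[ \C^{\leq-1}\to\C^{\leq0}\to H^0\C \]
of $(\C^{\leq0})^e$-modules, where $\C^{\leq-1}(-,?)=\C^{\leq0}(-,?[-1])[1]$ by \ref{kaku}. This reduces the computation of $\theta=\RHom_{(\C^{\leq0})^e}(\C^{\leq0},(\C^{\leq0})^e)$ to two pieces: the dual of $\C^{\leq-1}$, which is an invertible twist of the diagonal, and the dual of $H^0\C$ viewed as a bimodule. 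Applying $\RHom_{(\C^{\leq0})^e}(-,(\C^{\leq0})^e)$ gives a triangle relating $\theta$, the bimodule dual of $\C^{\leq-1}$ (namely $\theta$ twisted by the shift, $\C^{\leq1}\lotimes\theta$ by \ref{inv}), and $\RHom_{(\C^{\leq0})^e}(H^0\C,(\C^{\leq0})^e)$.

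The key computation is $\RHom_{(\C^{\leq0})^e}(H^0\C,(\C^{\leq0})^e)$. By the Serre functor hypothesis and \ref{Serre-dual}, \ref{dualizing}, the modules $H^0\C(-,C)$ and $DH^0\C(C,-)$ generate the same thick subcategory. Moreover, one-sided duals of $H^0\C$ are computed by \ref{ast}, namely $\RHom_{\C^{\leq0}}(H^0\C,\C^{\leq0})=H^1\C[-2]$. I would then use the standard formula expressing the bimodule dual through the one-sided dual, valid because $H^0\C$ is perfect on each side by \ref{per}:
\[ \RHom_{(\C^{\leq0})^e}(H^0\C,(\C^{\leq0})^e)\simeq \RHom_{\C^{\leq0}}(H^0\C,\C^{\leq0})\ \text{twisted by }\theta. \]
The aim is an identification $\theta[2]\simeq\C^{\leq0}(-,\nu^{-1}(?)[1])$, i.e. $\theta[2]\simeq(\C(-,\nu^{-1}?[1]))^{\leq0}$, which is the truncation of the invertible $\C$-bimodule $(D\C)^{-1}[1]$. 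Invertibility of $\theta$ then follows from \ref{-1}(\ref{n}).

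Rather than the triangle computation, a cleaner route is to test $\theta$ against representables. We have
\[ \RHom_{(\C^{\leq0})^e}(\C^{\leq0},\C^{\leq0}(-,A)\otimes\C^{\leq0}(B,?))\simeq\RHom_{\C^{\leq0}}(\RHom(\ldots)) \]
and more usefully, for perfect $\C^{\leq0}$ with finite global dimension ($\leq2$ by \ref{DFun}), $\theta$ corepresents the functor $X\mapsto\RHom_{\C^{\leq0}}(\C^{\leq0},X)$ twisted by $D$. So I would compare, for $X$ representable, $\RHom(X\lotimes\theta,\ -)$ with the $2$-shifted ``Serre-type'' pairing obtained from the length-$2$ resolutions
\[ \C^{\leq0}(-,A)\to\C^{\leq0}(-,B)\to\C^{\leq0}(-,C)\to X \]
attached to triangles $A\to B\to C$. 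Applied to the Auslander–Reiten triangle $\nu C[-1]\to E\to C$, these resolutions exhibit the simple functor at $C$ and show that $\theta[2]$ sends the projective at $C$ to the projective at $\nu^{-1}C[1]$, mirroring the classical \ref{AR}.

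The main obstacle is making the bimodule identification canonical, not just objectwise. Objectwise, I would show $\theta(-,C)[2]\simeq\C^{\leq0}(-,\nu^{-1}C[1])$ using \ref{ast} and the Serre duality $D\C(C,-)\simeq\C(-,\nu C)$ from \ref{DA}. To upgrade this, I would use that $\theta[2]$ is then projective on each side (\ref{each}) and apply \ref{leq-ind} with $M=\C(-,\nu^{-1}?[1])$: a map of $\C$-bimodules after inducing along $\C^{\leq0}\to\C$, which is an isomorphism, yields an isomorphism of $\C^{\leq0}$-bimodules. This last step also needs the circularity checked, i.e. that $\theta$ is invertible before \ref{leq-ind} applies. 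I expect to handle this by proving perfectness of $\theta$ on each side directly from the objectwise computation.
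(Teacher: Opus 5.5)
Your proposal has a genuine gap: nothing in it actually establishes that $\theta$ is invertible. Your target, a bimodule isomorphism $\theta[2]\simeq\C^{\leq0}(-,\nu^{-1}(?)[1])$ followed by \ref{-1}, is the right one. The problem is that none of your routes reaches it.

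\begin{itemize}
\item \textbf{The triangle route carries no information.} By your own formulas for the two bimodule duals, the dualized triangle is (up to the side on which you tensor) just $-\lotimes_{\C^{\leq0}}\theta$ applied to $H^1\C[-2]\to\C^{\leq0}\to\C^{\leq1}$. So any $\theta$ satisfies it.
\item \textbf{The objectwise identification has no mechanism.} You claim $\theta(-,C)[2]\simeq\C^{\leq0}(-,\nu^{-1}C[1])$ follows from \ref{ast} and \ref{DA}, but give no link between $\theta(-,C)$ and those results. The natural link is the relative Serre duality \ref{relS}. That duality only tests $\theta(-,C)$ against $X\in\thick H^0\C$, not arbitrary $X$, so you would also need left essentiality \ref{LRess} to recover $\theta(-,C)$ itself.
\item \textbf{The upgrade to bimodules is circular.} It relies on \ref{each} and \ref{leq-ind}, and both already assume $\theta$ is invertible.
\item \textbf{Your proposed repair is insufficient.} Showing $\theta$ is perfect, or even representable, on each side does not give invertibility. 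You would still need the left action map $\C^{\leq0}(C,C')\to\RHom_{\C^{\leq0}}(\theta(-,C),\theta(-,C'))$ to be a quasi-isomorphism, and the $\theta(-,C)$ to generate $\per\C^{\leq0}$. Abstract isomorphisms $\theta(-,C)[2]\simeq\C^{\leq0}(-,\nu^{-1}C[1])$ say nothing about whether the left action of $\C^{\leq0}$ on $\theta$ corresponds to $\nu^{-1}[1]$ on morphisms.
\end{itemize}

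The missing idea is to get the bimodule identification directly from the invertible $\C$-bimodule $D\C$ (\ref{DA}). The paper applies $\RHom_{\C^{\leq0}}(-,\C^{\leq0})$ to the triangle $(D\C)^{\leq-1}\to D\C\to(D\C)^{\geq0}=D(\C^{\leq0})$ in $\rD((\C^{\leq0})^e)$.
\begin{itemize}
\item The dual of $D\C$ vanishes, because $D\C$ is a perfect $\C$-module (\ref{LRess}). Concretely, $\RHom_{\C^{\leq0}}(\C(-,C),\C^{\leq0}(-,C'))\simeq\holim_p\C(C,C')^{\leq-p}=0$.
\item The dual of $D(\C^{\leq0})$ is $\theta$, by \ref{vee}.
\item The dual of $(D\C)^{\leq-1}$ is $\RHom_\C(D\C,\C)^{\leq1}$, which is invertible by \ref{-1}.
\end{itemize}
Hence $\theta[1]\simeq\RHom_\C(D\C,\C)^{\leq1}$ as bimodules. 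This is exactly your target identification, obtained with no objectwise step and no circularity.

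If you prefer to keep a Serre-duality approach, the identification must be made at the bimodule level from the start: \ref{relS} as an isomorphism natural in both variables, combined with left essentiality \ref{LRess}. It cannot be upgraded afterwards via \ref{leq-ind}.
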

\begin{proof}
	By \ref{DA}, the bimodule $D\C$ is an invertible over $\C$. Consider the triangle
	\[ \xymatrix{ (D\C)^{\leq-1}\ar[r]&D\C\ar[r]&(D\C)^{\geq0} } \]
	in $\rD((\C^{\leq0})^e)$. Note that the last term is the dual of $\C^{\leq0}$. Applying $\RHom_{\C^{\leq0}}(-,\C^{\leq0})$, we get a triangle
	\[ \xymatrix{ \RHom_{\C^{\leq0}}(D\C,\C^{\leq0})\ar[r]&\RHom_{\C^{\leq0}}((D\C)^{\leq-1},\C^{\leq0})\ar[r]&\RHom_{\C^{\leq0}}(D\C^{\leq0},\C^{\leq0})[1] }. \]
	Since $D\C\in\per\C$ by assumption, the first term vanishes by \ref{LRess} (see \cite{HaK}). By \ref{-1}, the second term is the dual of an invertible bimodule $(D\C)^{\leq-1}$ over $\C^{\leq0}$, thus it is invertible. Also, the last term is $\RHom_{\C^{\leq0}}(\C^{\leq0},(\C^{\leq0})^e)=\theta$ by \ref{vee}.
	We therefore see that $\theta$ is invertible.
\end{proof}

The next property is the relative Serre duality.
\begin{Lem}\label{rel}
Let $\C$ be a dg category with shifts such that each cohomology is finite dimensional. There exist functorial isomorphisms
\[ \xymatrix{ D\RHom_{\C^{\leq0}}(X,Y)\ar@{-}[r]^-\simeq&\RHom_{\C^{\leq0}}(Y\lotimes_{\C^{\leq0}}\theta,X) } \]
for all $X\in\thick DH^0\C$ and $Y\in\per\C^{\leq0}$.
\end{Lem}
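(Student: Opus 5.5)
The plan is the standard relative Serre duality argument for the inverse dualizing bimodule (as in \cite{Ke11}, Amiot \cite{Am09}), reduced by d\'evissage to the case $X=DH^0\C(C,-)$.

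\textbf{Step 1: make both sides exact in $X$ and $Y$.} First I record that both sides of the asserted isomorphism are triangle functors in each variable (contravariant or covariant as appropriate). Then it suffices to construct a natural transformation and check that it is an isomorphism on generators:
\begin{itemize}
\item $Y=\C^{\leq0}(-,C')$ for $Y\in\per\C^{\leq0}$;
\item $X=DH^0\C(C,-)$, up to shift, for $X\in\thick DH^0\C$.
\end{itemize}
Note that $DH^0\C(C,-)$ is a right $\C^{\leq0}$-module, namely the dual of the left module $H^0\C(C,-)$.

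\textbf{Step 2: construct the natural map.} For $Y$ perfect we have
\[ \RHom_{\C^{\leq0}}(Y\lotimes_{\C^{\leq0}}\theta,X)\simeq X\lotimes_{\C^{\leq0}}\RHom_{\C^{\leq0}}(Y\lotimes\theta,\C^{\leq0}). \]
Using $\theta=\RHom_{(\C^{\leq0})^e}(\C^{\leq0},(\C^{\leq0})^e)$ and perfectness of $Y$, one gets the standard identification
\[ \RHom_{\C^{\leq0}}(Y,X)\simeq \RHom_{(\C^{\leq0})^e}(\C^{\leq0},\cHom_k(Y,X)) \]
(Hochschild-type). The duality pairing then comes from the canonical evaluation map
\[ \RHom_{(\C^{\leq0})^e}(\C^{\leq0},N)\to D\bigl(\C^{\leq0}\lotimes_{(\C^{\leq0})^e}DN\bigr)\ \text{ for } N=\cHom_k(Y,X). \]
This map is an isomorphism once $\C^{\leq0}$ is ``perfect enough'' against $N$, e.g.\ when $N$ has finite-dimensional total cohomology in each component.

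Concretely, I would instead run the classical argument:
\begin{enumerate}
\item For $X$ with each $X(C)$ finite dimensional, $X\simeq DDX$.
\item Hence $\RHom(Y,X)\simeq D(Y\lotimes_{\C^{\leq0}}DX)$ for $Y$ perfect (tensor–hom adjunction plus $D$).
\item Then rewrite $Y\lotimes DX$ as $\RHom_{\C^{\leq0}}(\RHom(Y,\C^{\leq0}),DX)$, and similarly rewrite $\RHom(Y\lotimes\theta,X)$.
\end{enumerate}
Both sides thereby reduce to the comparison
\[ \RHom_{\C^{\leq0}}(DX,\C^{\leq0})\simeq X\lotimes_{\C^{\leq0}}\theta\ \text{ or its dual}, \]
i.e.\ to the identity $\RHom_{\C^{\leq0}}(M,\C^{\leq0})\simeq \RHom_{(\C^{\leq0})^e}(\C^{\leq0},M\otimes\C^{\leq0})\simeq M\lotimes\theta$, valid for $M$ perfect. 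Here $M=DX$ must be perfect over $(\C^{\leq0})^{\op}$.

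\textbf{Step 3: the key input, perfectness of $DX$.} This is where $X\in\thick DH^0\C$ is used. We have $D(DH^0\C(C,-))=H^0\C(C,-)$, which is perfect over $\C^{\leq0,\op}$ by Lemma \ref{per} applied to $\C^\op$; note $\C^\op$ also has shifts. Hence $DX\in\thick H^0\C^\op\subset\per\C^{\leq0,\op}$. Since $\C^{\leq0}(-,C')$ and $H^0\C$ are pointwise finite-dimensional (by the hypothesis that each cohomology is finite dimensional), the double-dual steps are legitimate. With $DX$ perfect, the natural map
\[ DX\lotimes\RHom_{(\C^{\leq0})^e}(\C^{\leq0},(\C^{\leq0})^e)\to\RHom_{(\C^{\leq0})^e}(\C^{\leq0},DX\otimes\C^{\leq0}) \]
is an isomorphism on the generators $H^0\C$, and hence everywhere by d\'evissage. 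Combining this with Step 2 gives the functorial isomorphism.

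\textbf{Expected obstacle.} I expect the main difficulty to be the finiteness bookkeeping. The modules involved, e.g.\ $\C^{\leq0}(-,C')$, are pointwise finite-dimensional in each degree but unbounded below. So I need to check that each use of $D D\simeq\mathrm{id}$, and each exchange of $\lotimes$ with $\RHom$, only involves one perfect argument together with one locally finite argument. To control this I would evaluate everything on representables and use that $\Hom$ out of perfect objects commutes with the needed limits.
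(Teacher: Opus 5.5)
This is essentially the paper's proof. The paper computes $\RHom_{\C^{\leq0}}(X,Y)=\RHom_{(\C^{\leq0})^e}(\C^{\leq0},\cHom_k(X,Y))=\RHom_{(\C^{\leq0})^e}(\C^{\leq0},DX\otimes Y)=Y\lotimes_{\C^{\leq0}}\theta\lotimes_{\C^{\leq0}}DX$ in one step for every perfect $Y$ and then dualizes. It uses only that $X$ takes values in $\per k$, and that $DX\otimes Y$ is perfect over $(\C^{\leq0})^e$ because $DX\in\thick H^0\C^{\op}\subset\per\C^{\leq0,\op}$, which is exactly your Step 3; so your d\'evissage and reduction to $Y=\C^{\leq0}$ are harmless but unnecessary.

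Do fix your displayed comparison, where $X$ and $DX$ are swapped. It should read $\RHom_{\C^{\leq0}}(X,\C^{\leq0})\simeq\RHom_{(\C^{\leq0})^e}(\C^{\leq0},DX\otimes\C^{\leq0})\simeq\theta\lotimes_{\C^{\leq0}}DX$. As written, with a perfect module $M$ in the first slot, it would give $\C^{\leq0}\simeq\theta$ when $M$ is the regular module.
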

\begin{proof}
We follow \cite[4.1]{Ke08} with care. For each $X\in\thick DH^0\C$ and $Y\in\per\C^{\leq0}$, we have the following sequence of isomorphisms
\[
\begin{aligned}
	\RHom_{\C^{\leq0}}(X,Y)&=\RHom_{(\C^{\leq0})^e}(\C^{\leq0},\cHom_k(X,Y))\\
	&=\RHom_{(\C^{\leq0})^e}(\C^{\leq0},DX\otimes Y) && \text{(since $X$ takes values in $\per k$)}\\
	&=Y\lotimes_{\C^{\leq0}}\theta\lotimes_{\C^{\leq0}}DX && \text{(by $DX\in\per\C^{\leq0,\op}$ and $Y\in\per\C^{\leq0}$)}\\
\end{aligned}
\]
Therefore, taking the dual, we get the desired isomorphism.
\end{proof}

\begin{Prop}\label{relS}
Let $\C$ be a dg category with shifts such that $\per\C$ has a Serre functor. There exist functorial isomorphisms
\[ \xymatrix{ D\RHom_{\C^{\leq0}}(X,Y)\ar@{-}[r]^-\simeq&\RHom_{\C^{\leq0}}(Y\lotimes_{\C^{\leq0}}\theta,X) } \]
for all $X\in\thick H^0\C$ and $Y\in\per\C^{\leq0}$.
\end{Prop}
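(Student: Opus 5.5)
The plan is to reduce \ref{relS} to \ref{rel}. The point is that a Serre functor forces $\thick H^0\C=\thick DH^0\C$, so the relative Serre duality already proved for $X\in\thick DH^0\C$ applies verbatim to $X\in\thick H^0\C$.

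First, I check that the hypotheses of \ref{rel} hold. \ref{rel} requires each cohomology of $\C$ to be finite dimensional. When $\per\C$ has a Serre functor, the $k$-duals $D\Hom(X,Y)\simeq\Hom(Y,\nu X)$ are themselves Hom-spaces, which by itself does not force finiteness. I will therefore read the statement, as the paper tacitly does throughout this section, with $k$-linear Hom-finiteness understood: by \ref{DA}, $D\C$ is an invertible bimodule, and $DD\C\simeq\C$ then requires finite dimensional cohomology. If this turns out not to follow formally, I will add it as a standing assumption. In any case, the only place finiteness enters the proof of \ref{rel} is the step $\cHom_k(X,Y)=DX\otimes Y$. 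For $X\in\thick H^0\C$ this needs only that $X$ takes values in $\per k$, and that holds as soon as $H^0\C$ is Hom-finite.

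Second, I identify the two thick subcategories. By \ref{Serre-dual} (i)$\Rightarrow$(ii), applied to the pretriangulated hull (or directly, since $\C$ has shifts and the argument there only uses that $DH^0\C(C,-)$ and $DH^0\C(-,C)$ are representable $H^0\C$-modules), we get $\thick H^0\C=\thick DH^0\C$ in $\rD(\C^{\leq0})$. Concretely, $DH^0\C(C,-)\simeq H^0\C(-,\nu C)$ as $\C^{\leq0}$-modules, because the restriction of a module concentrated in degree $0$ is determined by its $H^0\C$-module structure. Hence every $X\in\thick H^0\C$ lies in $\thick DH^0\C$.

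Third, I apply \ref{rel} to $X\in\thick H^0\C=\thick DH^0\C$ and $Y\in\per\C^{\leq0}$, which gives the functorial isomorphism $D\RHom_{\C^{\leq0}}(X,Y)\simeq\RHom_{\C^{\leq0}}(Y\lotimes_{\C^{\leq0}}\theta,X)$. I also need the step in that proof which uses $DX\in\per\C^{\leq0,\op}$. For $X\in\thick DH^0\C$ this is $DX\in\thick H^0\C^{\op}$, which is perfect over $\C^{\leq0,\op}$ by \ref{matakaku} applied to $\C^\op$, since $\C^\op$ also has shifts. Functoriality is inherited from \ref{rel}.

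The main obstacle I expect is the second step. \ref{Serre-dual} is stated for pretriangulated $\C$, whereas here $\C$ only has shifts. I would handle this by noting that $\C^{\leq0}$ and $\per\C$ do not change, up to Morita equivalence compatible with truncation, if we pass to the full subcategory of representables. Alternatively, I would rerun the (i)$\Rightarrow$(ii) argument directly: the Serre functor on $\per\C$ makes $DH^0\C(C,-)$ the restriction of a representable $H^0\C(-,\nu C)$ whenever $\nu C$ is representable, and in the general case it is a direct summand of a finite extension of shifts of representables, hence still in $\thick H^0\C$ by \ref{Mleq0}(\ref{M0})-type reasoning.
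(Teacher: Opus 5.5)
Your argument is the paper's own proof: finiteness of the cohomology of $\C$ from the Serre functor, then \ref{Serre-dual} to get $\thick H^0\C=\thick DH^0\C$, then \ref{rel}. Two of your hedges are wrong, though. First, finiteness does follow formally: applying Serre duality twice gives $\Hom_{\per\C}(X,Y)\cong DD\Hom_{\per\C}(X,Y)$, which is impossible for an infinite dimensional space. Second, neither of your fallbacks for $\C$ that merely has shifts works: truncation is not Morita invariant, and $H^0$ of a perfect $\C$-module need not lie in $\thick H^0\C$ without pretriangulatedness (e.g.\ for $\C=\{A[i]\}_{i\in\Z}$ with $A=k[\epsilon]/(\epsilon^2)$ and $M=\cone(\epsilon)$). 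The paper simply cites \ref{Serre-dual}, which is stated for pretriangulated $\C$; that is also the only setting in which \ref{relS} is used later.
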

\begin{proof}
	Since $\per\C$ has a Serre functor, each cohomology of $\C$ is finite dimensional. Then the assertion follows from \ref{rel} and \ref{Serre-dual}.
\end{proof}

\begin{Lem}\label{proj}
Let $\C$ be a pretriangulated dg category with a Serre functor. Then for each $C\in\C$, the $\C^{\leq0}$-module $\theta(-,C)[2]\in\rD(\C^{\leq0})$ is representable.
\end{Lem}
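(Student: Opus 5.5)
The idea is to compute $\theta(-,C)$ directly, using the triangle that already appeared in the proof of \ref{invertible}, and to read off representability from the Serre functor.

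Write $\nu$ for a dg functor on $\C$ with $D\C(-,?)\simeq\C(?,\nu(-))$, as in the proof of \ref{dualizing}. By \ref{vee}, $\theta=\RHom_{\C^{\leq0}}(D\C^{\leq0},\C^{\leq0})$. The hypotheses of \ref{vee} hold, since the Serre functor forces finite-dimensional cohomology of $\C$, and $\C^{\leq0}$ is connective.

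Step 1: identify $D\C^{\leq0}$. Dualizing the truncation gives $D(\C^{\leq0})=(D\C)^{\geq0}$. Using $D\C(-,?)\simeq\C(?,\nu-)$, the bimodule $(D\C)^{\geq 0}(-,?)$ is $\C^{\geq0}(?,\nu -)$.

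Step 2: relate this to truncations. Consider the triangle of bimodules $(D\C)^{\leq-1}\to D\C\to(D\C)^{\geq0}$ and apply $\RHom_{\C^{\leq0}}(-,\C^{\leq0})$ in the right variable.
\begin{itemize}
\item The middle term dies by \ref{LRess}, since $D\C(-,C)\in\per\C$ and $\C^{\leq0}$ lies in the left orthogonal side of the decomposition, exactly as in \ref{invertible}. This gives $\theta\simeq\RHom_{\C^{\leq0}}((D\C)^{\leq-1},\C^{\leq0})[-1]$.
\item $(D\C)^{\leq-1}(-,C)=\C(C,\nu-)^{\leq-1}$ is the truncation of the invertible bimodule $D\C$. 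So \ref{-1}(\ref{-n}) with $n=-1$ gives $\RHom_{\C^{\leq0}}((D\C)^{\leq-1},\C^{\leq0})\simeq\RHom_\C(D\C,\C)^{\leq1}$.
\item $\RHom_\C(D\C,\C)$ is the inverse bimodule, namely $\C(-,\nu^{-1}?)$.
\end{itemize}
Hence
\[ \theta(-,C)\simeq\C(-,\nu^{-1}C)^{\leq1}[-1]. \]
By \ref{kaku},
\[ \C(-,\nu^{-1}C)^{\leq1}=\C^{\leq0}(-,\nu^{-1}C[1])[-1], \]
so
\[ \theta(-,C)[2]\simeq\C^{\leq0}(-,\nu^{-1}(C)[1]), \]
which is representable.

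The main obstacle is keeping the variance and shifts straight: which side $D$ and the $\RHom$ act on, and whether \ref{-1} applies on the correct side. Where signs are unclear, I would first test the formula on $\C=\C^b_\dg(\proj k)$ with $\nu=\mathrm{id}$. I would also check the vanishing claim in Step 2 with care. It requires $\RHom_{\C^{\leq0}}(M,\C^{\leq0})=0$ for $M\in\rD(\C)$ perfect over $\C$. This follows from $\RHom_{\C^{\leq0}}(\C(-,C'),\C^{\leq0})=\RHom_{\C^{\leq0}}(\C^{>0},\C^{\leq0})$-type computations, or from the left essentialness in \ref{LRess}. If that route fails, I would instead prove the vanishing directly: $\C=\hocolim\C^{\leq p}$ and the $\C^{\leq p}$ are invertible, so $\RHom_{\C^{\leq0}}(\C(-,C'),\C^{\leq0})$ is a homotopy limit of $\C^{\leq -p}$-terms, whose limit vanishes because each degree stabilizes at $0$.
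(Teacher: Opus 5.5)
Your argument is correct, but it follows a different route from the paper's proof of this lemma.

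\textbf{The paper's route.} The paper argues abstractly. Relative Serre duality (\ref{relS}) gives $\RHom_{\C^{\leq0}}(\theta,X)\simeq D\RHom_{\C^{\leq0}}(X,\C^{\leq0})$ for $X\in\mod H^0\C$. The length-two resolution from \ref{DFun} shows that the right-hand side is concentrated in a single degree. Then \ref{pds} is invoked to conclude that $\theta[2]$ is projective. Which projective it is, namely $\C^{\leq0}(-,\nu^{-1}C[1])$, is only identified later, via \ref{DAR}(\ref{cosg-type}) and \ref{Si^vee}, and both of those use this lemma.

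\textbf{Your route.} You push the triangle from the proof of \ref{invertible} one step further. Once the middle term vanishes, the connecting map gives $\theta[1]\simeq\RHom_{\C^{\leq0}}((D\C)^{\leq-1},\C^{\leq0})$. Then \ref{-1}(\ref{-n}) and \ref{kaku} identify this explicitly. The variance works out: you compute in the variable where $D\C$ is representable as $\C(?,\nu Y)$, and everything is natural in $Y$ because $\nu^{\leq0}$ is a dg autoequivalence of $\C^{\leq0}$. The resulting formula agrees with \ref{DAR}(\ref{theta}).

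\textbf{What each approach buys.}
\begin{itemize}
\item Your approach obtains \ref{DAR}(\ref{theta}) immediately, without relative Serre duality or the later theorems.
\item It gives genuine representability, rather than only membership in $\add\C^{\leq0}$, which would need idempotent completeness to upgrade.
\item It avoids applying \ref{pds}, which is formulated for modules in the heart, to $\theta[2]$, which is not a priori in the heart.
\item The price is that you need a dg model $\nu$ of the Serre functor in order to apply \ref{-1}.
\item The paper's argument uses duality only at the level of $\RHom$, and it deliberately mirrors the classical Auslander--Reiten argument for functor categories.
\end{itemize}

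\textbf{One correction.} Your gloss on the vanishing of the middle term is wrong as stated. The semi-orthogonal decomposition of \ref{loc} gives $\Hom(\Loc H^0\C,\rD(\C))=0$, which is the wrong direction. Moreover $\C^{\leq0}\notin\Loc H^0\C$, since $\C^{\leq0}\lotimes_{\C^{\leq0}}\C=\C\neq0$. The vanishing you need, $\RHom_{\C^{\leq0}}(M,\C^{\leq0})=0$ for $M\in\per\C$, comes from either of two arguments:
\begin{itemize}
\item Right essentialness in \ref{LRess}. Restriction to $\thick H^0\C$ is fully faithful on $\per\C^{\leq0}$, while it kills $M$ by \ref{loc}(2).
\item Your fallback, which is fine. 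By \ref{inv}, $\RHom_{\C^{\leq0}}(\C(-,C'),\C^{\leq0})\simeq\holim_p\C^{\leq-p}(C',-)$. In each cohomological degree this inverse system is eventually zero, so the Milnor sequence gives $0$.
\end{itemize}
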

\begin{proof}
	By \ref{relS} we have an isomorphism $\RHom_{\C^{\leq0}}(\theta,X)=D\RHom_{\C^{\leq0}}(X,\C^{\leq0})$ for every $X\in\mod H^0\C$. Taking as in \ref{DFun} a triangle $A\to B\to C$ in $\C$ such that $\C^{\leq0}(-,A)\to\C^{\leq0}(-,B)\to\C^{\leq0}(-,C)\to X$ is a $4$-angle, we see that $\RHom_{\C^{\leq0}}(X,\C^{\leq0})$ is concentrated in degree $2$, and hence $\RHom_{\C^{\leq0}}(\theta,X)$ in degree $-2$. It follows from \ref{pds} that $\theta[2]$ is projective.
\end{proof}
	
We are now ready to prove the main result of this subsection. Recall that we denote $\N=\thick_\dg\!H^0\C\subset\per_\dg\!\C^{\leq0}$.
\begin{proof}[Proof of \ref{DAR}]
	We first prove (\ref{cosg-type}). We denote by $\theta^{-1}$ the inverse of $\theta$. Since it restricts to an equivalence of $\thick H^0\C$ by \ref{Res}, we get by \cite{HaK} a triangle in the first row of the diagram below. 
Also, dualizing the standard triangle $\C^{\leq0}\lotimes_\N\C^{\leq0}\to\C^{\leq0}\to\C$ yields a one in the second row below. 
\[ \xymatrix{
	\C^{\leq0}\lotimes_\N\theta^{-1}\lotimes_\N\C^{\leq0}\ar[r]\ar[d]^-\rsimeq& \theta^{-1}\ar[r]\ar[d]^-\rsimeq& \C\lotimes_{\C^{\leq0}}\theta^{-1}\lotimes_{\C^{\leq0}}\C\ar@{-->}[d] \\
	D\C^{\leq0}\ar[r]&D(\C^{\leq0}\lotimes_\N\C^{\leq0})\ar[r]&D\C[1]} \]
We claim that these triangles fit into a commutative diagram whose vertical maps are isomorphisms.
First, the relative Serre duality \ref{relS} reads as an isomorphism $D\C^{\leq0}\simeq\theta^{-1}$ in $\rD(\N^\op\otimes\C^{\leq0})$, and hence also $D\N\simeq \theta^{-1}|_{\N^e}$. Therefore, using left and right essentiality \ref{LRess} of $\N\to \per_\dg\!\C^{\leq0}$, we get
\[
\begin{aligned}
	\C^{\leq0}\lotimes_\N\theta^{-1}\lotimes_\N\C^{\leq0}&=\C^{\leq0}\lotimes_\N D\C^{\leq0}\lotimes_\N\C^{\leq0}\\
	&=\C^{\leq0}\lotimes_\N D\RHom_{\N^\op}(\C^{\leq0},\C^{\leq0}) &&\text{(by adjunction)}\\
	&=\C^{\leq0}\lotimes_\N D\C^{\leq0}&& \text{(by left essentiality)} \\
	&=D\RHom_\N(\C^{\leq0},\C^{\leq0}) &&\text{(by adjunction)}\\
	&=D\C^{\leq0}, &&\text{(by right essentiality)} 
\end{aligned}
\]
thus the left vertical map is an isomorphism. Similarly, we have isomorphisms
\[
\begin{aligned}
	D(\C^{\leq0}\lotimes_\N\C^{\leq0})&=\RHom_{\N^\op}(\C^{\leq0},D\C^{\leq0}) && \text{(by adjuction)}\\
	&=\RHom_{\N^\op}(\C^{\leq0},\theta^{-1})&&\text{(by \ref{relS})}\\
	&=\theta^{-1},&& \text{(by left essentiality)}
\end{aligned}
\]
thus the middle vertical map is an isomorphism. 
Now we verify that the square is commutative. The commutativity amounts to saying that the map living in the upper right and the lower left corner in the diagram below is mapped into the same cohomology class in the lower right corner. 
\[ \xymatrix{
	&\RHom_{(\C^{\leq0})^e}(\theta^{-1},D(\C^{\leq0}\lotimes_\N\C^{\leq0}))\ar[d]\\
	\RHom_{(\C^{\leq0})^e}(\C^{\leq0}\lotimes_\N\theta^{-1}\lotimes_\N\C^{\leq0},D\C^{\leq0})\ar[r]&\RHom_{(\C^{\leq0})^e}(\C^{\leq0}\lotimes_\N\theta^{-1}\lotimes_\N\C^{\leq0},D(\C^{\leq0}\lotimes_\N\C^{\leq0})) } \]
By adjunction, the diagram below is isomorphic to the identity diagram on $\RHom_\N(D\N,D\N)$, and each relevant morphism corresponds to its idendity element. This proves the commutatativity, and we therefore get the desired isomorphism $\C\lotimes_{\C^{\leq0}}\theta^{-1}\lotimes_{\C^{\leq0}}\C\xsimeq D\C[1]$. Note that this is an isomorphism between $(\C,\C)$-bimodules as $(\C^{\leq0},\C^{\leq0})$-bimodules, but since $\C^{\leq0}\to\C$ is a localization by \ref{loc}, this is indeed an isomorphism of $(\C,\C)$-bimodules. 
\end{proof}


\subsection{Inverse dualizing bimodule of the truncation}
We provide another perspective on the computation of inverse dualilzing bimodules of the truncation.
\begin{Thm}\label{Si^vee}
Let $\C$ be a pretriangulated dg category with a Serre functor, and let $M$ be an invertible bimodule over $\C$. Denote $\theta=\RHom_{(\C^{\leq0})^e}(\C^{\leq0},(\C^{\leq0})^e)$ the inverse dualizing bimodule of $\C^{\leq0}$.
\begin{enumerate}
\item\label{leq0} There is a canonical isomorphism
\[ \xymatrix{ \RHom_{(\C^{\leq0})^e}(\theta[1],\RHom_{\C}(M,\C)^{\leq1})\ar[r]^-\simeq&\RHom_{\C^e}(M,D\C)^{\leq0} } \]
\item\label{nondeg} Let $f\colon\theta[1]\to\RHom_{\C}(M,\C)^{\leq1}$ in $\rD((\C^{\leq0})^e)$ and $g\colon M\to D\C$ in $\rD(\C^e)$ be morphisms corresponding to each other under the map in (\ref{leq0}). Then $f$ is an isomorphism if and only if $g$ is.
\end{enumerate}
\end{Thm}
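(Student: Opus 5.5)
We work with the isomorphism $D\C^{\leq0}\simeq\theta^{-1}$ in $\rD(\N^\op\otimes\C^{\leq0})$ coming from \ref{relS}, together with the invertibility of $\theta$ (\ref{invertible}) and the representability of $\theta[2]$ on each side (\ref{proj}, \ref{each}). The plan is to reduce (\ref{leq0}) to the adjunction \ref{leq-ind}. The invertible $\C$-bimodule will be $M':=\RHom_\C(M,\C)[1]$, with $\RHom_\C(M,\C)^{\leq1}=(M')^{\leq0}[1]$. The projective-on-each-side $\C^{\leq0}$-bimodule will be $X:=\theta[2]$. After shifting by $[1]$, the left-hand side of (\ref{leq0}) becomes a morphism space of the form $\RHom_{(\C^{\leq0})^e}(X,(M')^{\leq0})$. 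Lemma \ref{leq-ind} is stated on $\Hom$, but applying it to all shifts of $M'$ will upgrade it to complexes, truncated in nonpositive degrees. This gives
\[ \RHom_{(\C^{\leq0})^e}(\theta[1],\RHom_\C(M,\C)^{\leq1})\simeq\RHom_{\C^e}(\C\lotimes_{\C^{\leq0}}\theta[1]\lotimes_{\C^{\leq0}}\C,\RHom_\C(M,\C)[1])^{\leq0}. \]
The truncation $(-)^{\leq0}$ on the right appears because \ref{leq-ind} only controls maps into $M^{\leq0}$, not into all shifts. I will verify that nonnegative shifts are exactly the ones reached by identifying $\Hom(X,(M'[i])^{\leq0})$ with $H^i$ for $i\leq0$.

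Next, rewrite the induced bimodule using \ref{DAR}(\ref{cosg-type}). By the discussion before \ref{leq-ind}, inverses commute with $\C\lotimes_{\C^{\leq0}}-\lotimes_{\C^{\leq0}}\C$. Hence $\C\lotimes\theta[1]\lotimes\C\simeq(D\C)^{-1}$, where $(D\C)^{-1}=\RHom_\C(D\C,\C)$ is the inverse of the invertible bimodule $D\C$ (\ref{DA}). Then, since tensoring with invertible bimodules is an autoequivalence of $\rD(\C^e)$, we obtain
\[ \RHom_{\C^e}((D\C)^{-1},\RHom_\C(M,\C)[1])\simeq\RHom_{\C^e}(M,D\C), \]
up to tracking the shift $[1]$ carefully. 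This gives (\ref{leq0}), and canonicity follows from the canonicity of each step.

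For (\ref{nondeg}), I will use the last clause of \ref{leq-ind}: the bijection preserves and detects isomorphisms. Under the identifications above, $f$ corresponds to a morphism $\C\lotimes\theta[1]\lotimes\C\to\RHom_\C(M,\C)[1]$ of invertible $\C$-bimodules, and this morphism is an isomorphism if and only if $f$ is. Transporting it through the autoequivalences $(D\C)\lotimes_\C-$ and $-\lotimes_\C M$ yields $g$, and these autoequivalences also preserve and detect isomorphisms.

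The main obstacle is the bookkeeping of shifts and of the truncation degree. I must make sure that the $[1]$ in $\theta[1]$ and the $\leq1$ combine so that \ref{leq-ind} applies with $X=\theta[2]$ projective on each side. Otherwise the adjunction would have to be applied to $\theta[1]$, which is not projective. I will first check the case $M=D\C$, where both sides should contain the identity, as a sanity test of the degrees.
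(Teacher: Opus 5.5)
Your proposal is correct and is essentially the paper's proof. The paper obtains (\ref{leq0}) by applying $\RHom_{(\C^{\leq0})^e}(\theta[1],-)$ to the triangle $\RHom_\C(M,\C)^{\leq1}\to\RHom_\C(M,\C)\to\RHom_\C(M,\C)^{\geq2}$, using adjunction, \ref{DAR}(\ref{cosg-type}), and the fact from \ref{proj} that $\theta[2]$ is concentrated in degrees $\leq0$ (the same fact your degreewise comparison of $(M')^{\leq0}[i]$ with $(M'[i])^{\leq0}$ for $i\leq0$ needs), and it obtains (\ref{nondeg}) from a shift of \ref{leq-ind} exactly as you do. One bookkeeping correction: by \ref{kaku}, $\RHom_\C(M,\C)^{\leq1}=(M')^{\leq0}[-1]$, not $(M')^{\leq0}[1]$, so the left-hand side equals $\RHom_{(\C^{\leq0})^e}(\theta[2],(M')^{\leq0})$ with no extra shift, your middle display should read $\RHom_{\C^e}(\C\lotimes_{\C^{\leq0}}\theta[1]\lotimes_{\C^{\leq0}}\C,\RHom_\C(M,\C))^{\leq0}$, and then $\RHom_{\C^e}((D\C)^{-1},\RHom_\C(M,\C))\simeq\RHom_{\C^e}(M,D\C)$ holds with no residual shift.
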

\begin{proof}
	(\ref{leq0})  Put $M^\ast=\RHom_\C(M,\C)$ and consider the triangle $(M^\ast)^{\leq1}\to M^\ast\to (M^\ast)^{\geq2}$ in $\rD((\C^{\leq0})^e)$. Applying $\RHom_{(\C^{\leq0})^e}(\theta[1],-)$ yields a triangle
	\[ \xymatrix{ \RHom_{(\C^{\leq0})^e}(\theta[1],(M^\ast)^{\leq1})\ar[r]&\RHom_{(\C^{\leq0})^e}(\theta[1],M^\ast)\ar[r]&\RHom_{(\C^{\leq0})^e}(\theta[1],(M^\ast)^{\geq2}) }. \]
	Now, the middle term is isomorphic to $\RHom_{\C^e}(\C\lotimes_{\C^{\leq0}}\theta[1]\lotimes_{\C^{\leq0}}\C,M^\ast)$ by adjunction, which, by taking the inverse, is isomorphic to $\RHom_{\C^e}(M,D\C)$ by \ref{DAR}(\ref{cosg-type}).
	Note next that the last term is concentrated in degree $>0$ since $\theta[1]$ is concentrated in degree $\leq1$ by \ref{proj}.
	Therefore, that the middle map is a quasi-isomorphism in degree $\leq0$, which proves (\ref{leq0}).
	
	(\ref{nondeg})  The construction in (\ref{leq0}) may be summarized as the diagram
	\[ \xymatrix{ \Hom_{\rD((\C^{\leq0})^e)}(\theta[1],\RHom_{\C}(M,\C)^{\leq1})\ar[r]&\Hom_{\rD(\C^e)}((D\C)^\ast M^\ast)\ar[r]&\Hom_{\rD(\C^e)}(M,D\C) }, \]
	where the first map is induced by $\C\lotimes_{\C^{\leq0}}-\lotimes_{\C^{\leq0}}\C$ and second one by $(-)^\ast$.
	Clearly, the second map preserves and detects isomorphisms.
	Also, since $\theta[1]$ is a shift of a projective, it follows from a shift of \ref{leq-ind} that the first map preserves and detects isomorphisms.
	We therefore obtain the desired assertion.	
\end{proof}

When a triangulated category of a pretriangulated dg category has a Serre functor $\nu$, we denote by $\tau=\nu[-1]$ the Auslander-Reiten translation.
\begin{Cor}
Let $\C$ be a pretriangulated dg category with a Serre functor. Then there is a canonical isomorphism $\theta[2](-,?)\simeq\C^{\leq0}(-,\tau^{-1}(?))$ in $\rD((\C^{\leq0})^e)$.
\end{Cor}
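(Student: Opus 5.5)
We apply Theorem \ref{Si^vee} with the invertible $\C$-bimodule $M:=D\C$. This is invertible by \ref{DA}, since $H^0\C$ has a Serre functor $\nu$. Then $g=\mathrm{id}\colon D\C\to D\C$ is an isomorphism in $\rD(\C^e)$. By \ref{Si^vee}(\ref{leq0}) it corresponds to a morphism
\[ f\colon\theta[1]\to\RHom_\C(D\C,\C)^{\leq1} \]
in $\rD((\C^{\leq0})^e)$, and by \ref{Si^vee}(\ref{nondeg}) $f$ is an isomorphism. This choice of $g$ is what makes the resulting isomorphism canonical.

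It remains to identify the target. Since $D\C$ is invertible and represents $\nu$, i.e.\ $D\C(-,?)\simeq\C(-,\nu(?))$ (as used in the proof of \ref{dualizing}), its inverse $\RHom_\C(D\C,\C)$ represents $\nu^{-1}$. Concretely,
\[ \RHom_\C(D\C,\C)(-,?)\simeq\C(-,\nu^{-1}(?)) \]
as $\C$-bimodules, where $\nu^{-1}$ is realized as a dg functor because $\C$ is pretriangulated, as in the proof of \ref{-1}. Truncating morphismwise and using \ref{kaku} gives
\[ \RHom_\C(D\C,\C)^{\leq1}(-,?)\simeq\C(-,\nu^{-1}(?))^{\leq1}=\C^{\leq0}(-,\nu^{-1}(?)[1])[-1]. \]
Shifting $f$ by $[1]$ then yields
\[ \theta[2](-,?)\simeq\C^{\leq0}(-,\nu^{-1}(?)[1])=\C^{\leq0}(-,\tau^{-1}(?)), \]
since $\tau^{-1}=\nu^{-1}[1]$ when $\tau=\nu[-1]$. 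This also recovers \ref{DAR}(\ref{theta}).

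The main point to check is the bimodule identification of $\RHom_\C(D\C,\C)$ with $\nu^{-1}$ compatibly with both actions, so that truncation commutes with it. I would handle this by noting that the truncation of a bimodule of the form $\C(-,F(?))$, for a dg functor $F$, is $\C^{\leq0}(-,F^{\leq0}(?))$, exactly as in the proof of \ref{-1}. Since the target bimodule $\C^{\leq0}(-,\tau^{-1}(?))$ is representable on each side, it is also consistent with \ref{proj}, which serves as a sanity check.
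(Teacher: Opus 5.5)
Your proposal is correct and is essentially the paper's own proof. You apply \ref{Si^vee} to $M=D\C$ with $g=\mathrm{id}$, and then use \ref{kaku} to identify $\RHom_\C(D\C,\C)^{\leq1}$ with $\C^{\leq0}(-,\tau^{-1}(?))[-1]$, using $\tau^{-1}=\nu^{-1}[1]$. Your remarks on why the isomorphism is canonical and on realizing $\nu^{-1}$ by a dg functor spell out what the paper leaves implicit.
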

\begin{proof}
	Applying \ref{Si^vee} to $M=D\C$, we get an isomorphism $\theta[1]\xsimeq\RHom_\C(D\C,\C)^{\leq1}$, and hence $\theta[2]\xsimeq(\RHom_\C(D\C,\C)[1])^{\leq0}$ by \ref{kaku}. We obtain the result by noting that $\RHom_\C(D\C,\C)[1]$ is the bimodule giving the functor $\tau^{-1}$.
\end{proof}

Applying \ref{Si^vee} to $\C$ with a right Calabi-Yau structure, we obtain the following consequence.
\begin{Cor}\label{d-CY}
Suppose that $\C[d]\simeq D\C$ in $\rD(\C^e)$. Then we have $\theta[d+1]\simeq\C^{\leq-d+1}$ in $\rD((\C^{\leq0})^e)$.
\end{Cor}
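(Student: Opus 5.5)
The plan is to apply \ref{Si^vee} with $M=D\C$, exactly as in the preceding corollary, and then use the Calabi-Yau hypothesis to rewrite the target. Under the hypothesis $\C[d]\simeq D\C$, the bimodule $D\C$ is invertible, since $\C[d]$ is. So $\C$ has a Serre functor by \ref{DA}, namely $\nu=[d]$, and \ref{Si^vee} applies.

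First, I would take $M=D\C$ and $g=\mathrm{id}_{D\C}$. The corresponding morphism $f\colon\theta[1]\to\RHom_\C(D\C,\C)^{\leq1}$ is then an isomorphism by \ref{Si^vee}(\ref{nondeg}). Next, I compute the target. We have
\[ \RHom_\C(D\C,\C)\simeq\RHom_\C(\C[d],\C)\simeq\C[-d] \]
as $(\C,\C)$-bimodules, and hence as $(\C^{\leq0},\C^{\leq0})$-bimodules after restriction. For the truncation, directly from the definition of $X^{\leq i}$ one has $(X[-d])^{\leq1}=X^{\leq1-d}[-d]$. This gives
\[ \RHom_\C(D\C,\C)^{\leq1}\simeq(\C[-d])^{\leq1}=\C^{\leq1-d}[-d]. \]
Combining, $\theta[1]\simeq\C^{\leq-d+1}[-d]$, and shifting by $d$ gives $\theta[d+1]\simeq\C^{\leq-d+1}$ in $\rD((\C^{\leq0})^e)$.

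The only point needing care is that the truncation $(-)^{\leq1}$ is taken on an honest dg bimodule model. The isomorphism $\RHom_\C(D\C,\C)\simeq\C[-d]$ in $\rD(\C^e)$ must therefore be compatible with truncation. It is: truncation $(-)^{\leq n}$ is a functor on $\rD((\C^{\leq0})^e)$ preserving quasi-isomorphisms, since it is computed objectwise on complexes and the truncation of complexes preserves quasi-isomorphisms. Restriction from $\C^e$ to $(\C^{\leq0})^e$ also commutes with it. I also need to check the sign convention $(X[-d])^{\leq 1}=X^{\leq 1-d}[-d]$, which is immediate from $(X[-d])^i=X^{i-d}$, consistent with \ref{kaku}. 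Alternatively, one could cite the preceding corollary: $\theta[2]\simeq\C^{\leq0}(-,\tau^{-1}?)$ with $\tau^{-1}=[1-d]$, and then \ref{kaku} gives $\C^{\leq0}(-,?[1-d])=\C^{\leq1-d}[1-d]$. Shifting by $d-1$ again yields $\theta[d+1]\simeq\C^{\leq-d+1}$. This serves as a consistency check on the shifts, which I expect to be the only place an error could creep in.
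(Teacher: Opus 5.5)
Your proposal is correct and is essentially the paper's argument. The paper applies \ref{Si^vee} directly with $M=\C[d]$, using the Calabi--Yau isomorphism as $g$, to get $\theta[1]\simeq(\C[-d])^{\leq1}=\C^{\leq-d+1}[-d]$. That is your computation with $M=D\C$ and $g=\mathrm{id}$, transported along $D\C\simeq\C[d]$, and your shift bookkeeping and consistency check are right.
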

\begin{proof}
	Applying \ref{Si^vee} to $M=\C[d]$, we get an isomrphism $\theta[1]\xsimeq(\C[-d])^{\leq1}$. We have $(\C[-d])^{\leq1}=\C^{\leq-d+1}[-d]$ by \ref{kaku}, which gives the result.
\end{proof}

We say that a dg category $\A$ has {\it singular left $n$-Calabi-Yau structure} if ($\A$ is not necessarily smooth and) there is an isomorphism $\RHom_{\A^e}(\A,\A^e)[n]\simeq\A$ in $\rD(\A^e)$.
\begin{Cor}
Suppose that $\C$ is right $1$-Calabi-Yau, that is, $\C\simeq D\C[-1]$ in $\rD(\C^e)$. Then $\C^{\leq0}$ has a singular left $2$-Calabi-Yau structure.
\end{Cor}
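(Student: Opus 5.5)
The plan is to specialize Corollary~\ref{d-CY} to $d=1$. The hypothesis $\C\simeq D\C[-1]$ in $\rD(\C^e)$ is the same as $\C[1]\simeq D\C$, which is the assumption of \ref{d-CY} with $d=1$. That corollary then gives an isomorphism
\[ \theta[2]\simeq\C^{\leq -1+1}=\C^{\leq0} \]
in $\rD((\C^{\leq0})^e)$. Here $\theta=\RHom_{(\C^{\leq0})^e}(\C^{\leq0},(\C^{\leq0})^e)$, so this isomorphism is exactly the defining condition for a singular left $2$-Calabi-Yau structure on $\C^{\leq0}$ with $n=2$.

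Before invoking \ref{d-CY}, I will check that its standing hypotheses hold, since they are inherited from \ref{Si^vee}. There $\C$ must be pretriangulated with a Serre functor, and I take $\C$ pretriangulated throughout, as in this subsection. The Serre functor exists because $D\C\simeq\C[1]$ is an invertible bimodule, so Lemma~\ref{DA} shows that $\per\C$, and hence $H^0\C$, has a Serre functor, namely $\nu=[1]$. In particular $\tau=\nu[-1]=\mathrm{id}$.

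As a cross-check, I will also derive the statement from the preceding corollary, which gives $\theta[2](-,?)\simeq\C^{\leq0}(-,\tau^{-1}(?))$. Since $\tau^{-1}\simeq\mathrm{id}$ here, this directly yields $\theta[2]\simeq\C^{\leq0}$.

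The only delicate point is that the identification $\tau\simeq\mathrm{id}$ should be made at the level of bimodules, not merely as functors on $H^0\C$. This holds because $\RHom_\C(D\C,\C)[1]\simeq\RHom_\C(\C[1],\C)[1]\simeq\C$ in $\rD(\C^e)$. Since the route through \ref{d-CY} avoids this issue entirely, I will present that route as the proof.
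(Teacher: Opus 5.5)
Your proposal is correct and is the paper's argument: the paper proves this corollary in one line by specializing Corollary~\ref{d-CY} to $d=1$, which gives $\theta[2]\simeq\C^{\leq0}$ in $\rD((\C^{\leq0})^e)$. You also check that $D\C\simeq\C[1]$ is invertible, so that $H^0\C$ has the Serre functor $[1]$ needed for Theorem~\ref{Si^vee}; the paper leaves this hypothesis implicit, and spelling it out is worthwhile.
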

\begin{proof}
	Immediate by \ref{d-CY}.
\end{proof}

\section{Connective resolutions}\label{conn-resol}
\subsection{Definition and basic examples}
We introduce the notion of {\it connective resolutions} motivated by our localization theorem \ref{loc} and by the theory of non-commutative resolutions.

\begin{Def}\label{def}
Let $\C$ be a dg category. We say that a subcategory $\M\subset\per_\dg\!\C$ gives a {\it connective resolution} if it satisfies the following conditions.
\begin{enumerate}
\item The truncation $\M^{\leq0}$ has finite right global dimension, see \ref{gldim}.
\item The composite $\M^{\leq0}\to\M\subset\per_\dg\!\C$ is a localization.
\end{enumerate}
We refer to the truncation $\M^{\leq0}$ as a connective resolution of $\C$.
If $\M^{\leq0}$ is smooth, then we call it a {\it smooth connective resolution}.
If $H^0\M\subset\per\C$ has an additive generator $M$, we say $M$ {\it gives} a connective resolution which is $\REnd_\C(M)^{\leq0}$.
\end{Def}

Replacing $\C$ by a Morita equivalent dg category does not affect the definition. In what follows, we therefore often start from a pretriangulated (and sometimes also idempotent complete) dg category $\C$. 


\begin{Lem}
Let $\C$ be a pretriangulatd dg category. Suppose that $\M\subset\C$ gives a connective resolution. Then $H^0\M$ generates the triangulated category $\per\C$ as a thick subcategory. In particular, if $M\in\per\C$ gives a connective resolution, then $\thick M=\per\C$.
\end{Lem}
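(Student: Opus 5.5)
The idea is that a localization of derived categories is essentially surjective up to thick closure on compact objects, and that the image of the representables of $\M^{\leq0}$ is exactly $H^0\M$. Write $F=-\lotimes_{\M^{\leq0}}\C\colon\rD(\M^{\leq0})\to\rD(\C)$ for the induction along the composite $\M^{\leq0}\to\M\subset\C$. On representables,
\[ F(\M^{\leq0}(-,M))=\C(-,M), \qquad M\in\M. \]
Since $F$ is exact and commutes with direct summands, it sends $\per\M^{\leq0}=\thick\{\M^{\leq0}(-,M)\mid M\in\M\}$ into $\thick_{\per\C}H^0\M$. So it suffices to show that every object of $\per\C$ lies in the essential image of $F|_{\per\M^{\leq0}}$ up to direct summands.

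For this I would use condition (2) of \ref{def}: $F$ is a localization, i.e.\ it induces $\rD(\M^{\leq0})/\N\xsimeq\rD(\C)$, and its right adjoint (restriction) is fully faithful. Consequently $F$ is essentially surjective onto $\rD(\C)$, and $\rD(\C)$ is the localizing subcategory generated by $F(\M^{\leq0}(-,M))=\C(-,M)$, i.e.\ $\rD(\C)=\Loc H^0\M$. Both $\rD(\M^{\leq0})$ and $\rD(\C)$ are compactly generated, and $F$ preserves compact objects, sending the compact generators $\M^{\leq0}(-,M)$ to the compact objects $\C(-,M)$. Hence the set $\{\C(-,M)\mid M\in\M\}$ is a set of compact objects generating $\rD(\C)$ as a localizing subcategory.

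By Neeman's theorem on compactly generated triangulated categories, $\rD(\C)^c=\per\C$ then equals $\thick\{\C(-,M)\mid M\in\M\}=\thick H^0\M$. For the final assertion, if $M$ is an additive generator of $H^0\M$, then $\thick H^0\M=\thick M$, so $\thick M=\per\C$.

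The main point to be careful about is the step $\rD(\C)=\Loc(\text{image of generators})$. It follows because the restriction functor is fully faithful, so every $Y\in\rD(\C)$ satisfies $Y\simeq F(\mathrm{res}\,Y)$, with $\mathrm{res}\,Y\in\Loc\{\M^{\leq0}(-,M)\}$. Then $F$, which preserves coproducts and triangles, maps this localizing subcategory into $\Loc\{\C(-,M)\}$. Alternatively, one can check directly that if $Y$ is right orthogonal to all $\C(-,M)[i]$, then $\mathrm{res}\,Y=0$ by adjunction, hence $Y=0$ by full faithfulness.
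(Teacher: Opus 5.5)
Your proof is correct and takes the same route as the paper. The paper simply asserts that the localization sends the generating representables $\M^{\leq0}(-,M)$ to generators $\C(-,M)$ of $\per\C$. Your argument supplies the justification the paper leaves implicit: the induction is essentially surjective because restriction is fully faithful, so $\rD(\C)=\Loc\{\C(-,M)\}$, and Neeman's theorem on compact generators then gives $\per\C=\thick H^0\M$.
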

\begin{proof}
	Since the functor $\M^{\leq0}\to\C$ is a localization, the image of a generating subcategory $\{\M^{\leq0}(-,M)\mid M\in\M^{\leq0}\}\subset\per\M^{\leq0}$ generates $\per\C$. Since the image of $\M^{\leq0}(-,M)$ under the induction is just $\C(-,M)$, we see that $\M$ generates $\per\C$.
\end{proof}

We next note the left-right symmetry of connective resolutions under coherence assumptions.
\begin{Lem}
Let $\C$ be a pretriangulated dg category. Let $\M\subset\C$ be a subcategory such that $H^0\M\subset H^0\C$ is functorially finite. Then $\M$ gives a connective resolution of $\C$ if and only if $\M^\op$ gives a connective resolution of $\C^\op$.
\end{Lem}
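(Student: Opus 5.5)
The statement is symmetric, so it suffices to prove one direction: if $\M$ gives a connective resolution of $\C$, then $\M^\op$ gives one of $\C^\op$. Note that $(\M^\op)^{\leq0}=(\M^{\leq0})^\op$. There are two conditions to transfer: finiteness of global dimension, and the localization property.

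\emph{Global dimension.} I would reduce to the left–right symmetry $\gldim\A=\gldim\A^\op$ from \cite{To}, applied to $\A=\M^{\leq0}$. This needs the coherence hypotheses: $H^0\M$ coherent on both sides, and each $H^i\M^{\leq0}$ finitely presented on both sides.
\begin{itemize}
\item \emph{Coherence of $H^0\M$.} Since $\C$ is pretriangulated, every morphism $f\colon B\to C$ in $H^0\M$ has a cocone $A\in H^0\C$. Functorial finiteness lets us choose a right $H^0\M$-approximation $M_A\to A$. The composite gives a weak kernel of $f$ inside $H^0\M$. So $H^0\M$ has weak kernels and is right coherent; dually, weak cokernels give left coherence.
\item \emph{Finite presentation of $H^i\M^{\leq0}$.} For $i<0$, $H^i\M^{\leq0}(-,N)=H^0\C(-,N[i])|_{H^0\M}$. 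This is a restriction of a representable functor to a contravariantly finite subcategory, and coherence then gives a finite presentation. The left side is the same argument using covariant finiteness.
\end{itemize}
Under these hypotheses, $\gldim(\M^{\leq0})^\op=\gldim\M^{\leq0}<\infty$.

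\emph{Localization.} The morphism $\M^{\leq0}\to\M$ is a localization precisely when $\M\lotimes_{\M^{\leq0}}\M\to\M$ is an isomorphism in $\rD(\M^e)$, since this is equivalent to full faithfulness of restriction. The same holds for $\per\C$ in place of $\M$ via Morita invariance, because $\per\M=\per\C$ by the preceding lemma. This bimodule condition is self-dual under passing to opposites. Hence $(\M^{\leq0})^\op\to\M^\op$ is a localization, and so $\M^\op$ gives a connective resolution of $\per_\dg\C^\op$, which is Morita equivalent to $\C^\op$.

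The main obstacle is verifying the coherence and finite-presentation hypotheses needed for the \cite{To} symmetry. If that symmetry demands more, for instance finite presentation of all $H^i$ over both sides uniformly, I would fall back on a direct argument instead. Finite global dimension of $\M^{\leq0}$ gives, for each $X\in\mod H^0\M^\op$, a projective resolution. Applying the duality $\RHom_{\M^{\leq0}}(-,\M^{\leq0})$ to finitely presented modules, which are perfect by Lemma~\ref{pds}, exchanges left and right perfect modules. This bounds $\pd X$ by a duality/Ext-vanishing argument, as in the usual proof that weak and global dimensions agree for coherent rings.
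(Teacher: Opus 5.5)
Your proposal is correct and is essentially the paper's proof. Functorial finiteness gives coherence of $H^0\M$ and finite presentation of each $H^i\M$ on both sides, via the paper's Lemma \ref{cohe} (where the presentation comes directly from two successive approximations rather than from coherence), so \cite{To} gives left-right symmetry of finite global dimension. The localization condition is symmetric because it amounts to $\M\lotimes_{\M^{\leq0}}\M\xsimeq\M$ in $\rD(\M^e)$, so your fallback duality argument is unnecessary.
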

\begin{proof}
	Since $H^0\M\subset H^0\C$ is functorially finite, the additive category $H^0\M$ is coherent and each cohomology $H^i\M$ is finitely presented on each side by \ref{cohe} below. It follows from \cite{To} that finiteness of left and right global dimensions of $\M$ is equivalent. Since being a localization is left-right symmetric, the assertion follows.
\end{proof}

We next consider smooth connective resolutions. We refer to \cite[2.2.3]{BV} for the notion of strong generators.
\begin{Lem}
Let $\C$ be a pretriangulated dg category. Suppose that $\M\subset\C$ gives a smooth connective resolution.
\begin{enumerate}
\item The additive category $H^0\M$ has an additive generator.
\item The dg category $\C$ is smooth and the triangulated category $\per\C$ has a strong generator.
\end{enumerate}
\end{Lem}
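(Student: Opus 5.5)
The plan is to derive both assertions from smoothness of $\A:=\M^{\leq0}$ together with the localization $\A\to\M\subset\C$ and the known structure of the kernel $\thick H^0\M$-type objects.

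For (1): since $\A$ is smooth and connective, $\A\in\per\A^e$. In particular $\A$ lies in a finite extension $\A^e\ast\A^e[1]\ast\cdots\ast\A^e[n]$ up to summands, and only finitely many representables $\A(-,M_i)\otimes\A(N_i,-)$ appear. I will then apply $S\lotimes_\A-$ for a simple (or arbitrary) $H^0\A$-module $S$. This shows that every $H^0\A$-module lies in $\thick$ of the finitely many representables $\A(-,N_i)$. Equivalently, I will use the following observation. Let $N=\bigoplus_i N_i$. For each $M\in\M$, the module $H^0\A(-,M)$, and hence $\A(-,M)$ itself, is built from $\add\A(-,N)$ with copies of $H^0$ of those summands. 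The projective cover argument then gives more. Every object $\A(-,M)$ satisfies $\A(-,M)=\A(-,M)\lotimes_\A\A$, which lies in the thick closure of $\A(-,N_i)$ placed in degrees $0,\dots,n$. The $H^0$-part of this is controlled by the degree $0$ piece in $\add\A(-,N)$, using the silting $t$-structure. Applying $H^0$, every representable $H^0\M(-,M)$ is a summand of one in $\add H^0\M(-,N)$. So $N$ (possibly after closing under summands in the idempotent completion) is an additive generator of $H^0\M$. The subtle point is extracting the "degree zero" part. For this I intend to use that for objects in $\P\ast\P[1]\ast\cdots\ast\P[n]$ (silting filtration), a projective object $\A(-,M)$ with $\sup=0$ and $\pd=0$ must split off the first term. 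This is exactly the argument of \ref{pds} and \cite[2.14]{Mi21}.

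For (2): I will use the induction functor. Tensoring the finite resolution of $\A$ in $\per\A^e$ with $\C\lotimes_\A-\lotimes_\A\C$ gives
\[ \C\lotimes_\A\A\lotimes_\A\C\simeq\C\lotimes_\C\C=\C, \]
because $\A\to\C$ is a localization, so $\C\lotimes_\A\C\simeq\C$ as in the proof of \ref{loc}. Each $\A^e$-representable $\A(-,M)\otimes\A(N,-)$ goes to $\C(-,M)\otimes\C(N,-)$, which is perfect over $\C^e$. Hence $\C\in\per\C^e$, that is, $\C$ is smooth. For the strong generator, the plan is as follows. Given $X\in\per\C$, write $X=Y\lotimes_\A\C$ with $Y\in\per\A$ using the localization. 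Then $Y=Y\lotimes_\A\A$ lies in $\langle\A(-,N)\rangle_{n+1}$ up to shifts and summands, by tensoring the bimodule filtration with $Y$. Applying $-\lotimes_\A\C$, $X\in\langle\C(-,N)\rangle_{n+1}$. Thus $N$ is a strong generator with uniform bound $n+1$, independent of $X$.

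The main obstacle is (1): showing additivity-level generation, not merely thick generation, i.e. cleanly turning the bimodule filtration into a statement about $\add H^0\M$ through the degree-zero layer of the silting filtration.
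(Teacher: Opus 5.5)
Your proposal is correct in substance. For (2) it is the paper's argument written out. The paper only says that smoothness passes along the localization $\M^{\leq0}\to\C$ and that smooth dg categories have strongly generated perfect derived categories. Your computation $\C\lotimes_\A\A\lotimes_\A\C\simeq\C\lotimes_\A\C\simeq\C$ is exactly the proof of the first of these claims.

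For (1) your route differs. Write $\A=\M^{\leq0}$. The paper extracts from smoothness only that $\per\A$ has a classical generator, so it is generated by finitely many representables $\A(-,M)$. Connectivity then finishes the argument through silting rigidity: $\add\A(-,M)\subset\add\{\A(-,A)\mid A\in\A\}$ are both silting subcategories of $\per\A$, hence equal by \cite[2.18]{AI}, the same fact used in \ref{each}. You instead read off the degree-zero layer of the diagonal filtration and split off a projective by hand. In effect this reproves that rigidity statement in this special case. It is more self-contained, but it needs the bookkeeping below, which the citation avoids.

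Two steps are misstated and need repair.

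\emph{The filtration step in (1).} Tensoring a diagonal piece gives $\A(-,A)\lotimes_\A\bigl(\A(-,M_i)\otimes\A(N_i,-)\bigr)\simeq\A(N_i,A)\otimes_k\A(-,M_i)$. With no properness assumption, $\A(N_i,A)$ is an arbitrary connective complex. Over a field this is therefore an arbitrary direct sum of copies of $\A(-,M_i)[j]$ for all $j\geq0$. So $\A(-,A)$ is not in the thick closure of the $\A(-,M_i)$ ``in degrees $0,\dots,n$'', and the first layer is not in $\add$. The same objection applies to your remark about $S\lotimes_\A-$.

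What does hold is the following. All pieces with $j\geq1$, and all later layers, lie in $T^{\leq-1}$. Moreover $\Hom(\Add\A(-,M),U[1])=0$ for $U\in T^{\leq0}$, so $H^0$-surjections lift through extensions. This gives a map $P\to\A(-,A)$ with $P\in\Add\A(-,M)$, $M=\bigoplus_iM_i$, that is surjective on $H^0$. Evaluating at $A$, the identity of $\A(-,A)$ factors through $P$. Compactness of $\A(-,A)$ then lets you shrink $P$ to a finite sum, so $\A(-,A)\in\add\A(-,M)$. No projective covers are needed, and in general they need not exist.

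\emph{The lifting step in (2).} You write an arbitrary $X\in\per\C$ as a summand of $Y\lotimes_\A\C$ with $Y\in\per\A$. The localization property alone does not justify this. It is Neeman--Thomason density, which needs the kernel of $-\lotimes_\A\C$ to be compactly generated. That is known under the hypotheses of \ref{Kernel}, but not for an arbitrary connective resolution.

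You do not need it. Either use $X\simeq X|_\A\lotimes_\A\C$ with the restriction $X|_\A$, which need not be perfect, or run the diagonal argument directly for the smooth $\C$ you have just obtained. In both cases $X$ lies in $(n+1)$-fold extensions of arbitrary coproducts of shifts of $\C(-,M)$. Since $X$ is compact, \cite{BV} then gives $X\in\langle\C(-,M)\rangle_{n+1}$.
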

\begin{proof}
	It is well-known to experts that for each smooth dg category $\A$, its derived category $\per\A$ has a strong generator, and in particular it is Morita equivalent to a smooth dg algebra.
	Since the truncation $\M^{\leq0}$ is also connective, it follows that $H^0\M$ has an additive generator. Also, smoothness is preserved under localization, so smoothness of $\M^{\leq0}$ implies that of $\C$.
\end{proof}


Let us now turn to examples which motivated our concept. Recall that a {\it non-commutative resolution} of a Noetherian ring $R$ is $M\in\mod R$ such that $\End_R(M)$ has finite global dimension.
\begin{Prop}\label{R}
Let $R$ be a Noetherian ring. 
\begin{enumerate}
\item The category $\mod R$, regarded as a full dg subcategory of $\D^b_\dg(\mod R)$, gives a connective resolution which has global dimension $2$.
\item Moreover, each non-commuatative resolution $M\in\mod R$ such that $\proj R\subset\add M$ gives a connective resolution of $\D^b_\dg(\mod R)$.
\end{enumerate}
\end{Prop}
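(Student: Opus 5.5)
Let $\C=\D^b_\dg(\mod R)$ and $\M\subset\C$ the full dg subcategory of modules, or $\M=\add_\dg M$ for a non-commutative resolution $M$. In both cases I verify the two conditions of \ref{def}, beginning with the truncation. Since $\Ext^i_R(X,Y)=0$ for $i<0$ and $X,Y\in\mod R$, the complex $\C(X,Y)$ has cohomology in nonnegative degrees only, so $\C(X,Y)^{\leq0}\to\Hom_R(X,Y)$ is a quasi-isomorphism. Hence $\M^{\leq0}$ is quasi-equivalent to the ordinary additive category $\mod R$ (respectively $\add M$, Morita equivalent to $\End_R(M)$), concentrated in degree $0$.

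Finite global dimension then follows from the classical results. For $\mod R$, Auslander's argument \ref{Fun} works verbatim for a Noetherian ring, because $\mod R$ is abelian and kernels exist, so every finitely presented functor has a projective resolution of length $\leq2$; this gives $\gldim\leq2$. Equality holds as soon as $R\neq0$: a cokernel functor coming from a non-split monomorphism has projective dimension exactly $2$, but I will only need the bound. For $M$, $\gldim\End_R(M)<\infty$ is the hypothesis. In both cases $H^0\M$ is coherent with $H^i=0$ for $i\neq0$, so Definition \ref{gldim} agrees with the ordinary global dimension by \ref{pds}.

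The main point is the localization property, i.e.\ that the restriction $\rD(\C)\to\rD(\M^{\leq0})$ is fully faithful, or equivalently that $\C\lotimes_{\M^{\leq0}}\C\to\C$ is an isomorphism when $\C$ is viewed over $\M$ (note $\per\M=\per\C$ since $\proj R\subset\M$ generates). I plan to deduce this from the resolution criterion \ref{intro-conn}, whose proof appears later in the paper. In the case of $\mod R$, every object of $\Db(\mod R)$ need not have a finite right $\mod R$-resolution of bounded length, so I would argue directly instead. The reduction goes as follows: it suffices to show that for each $X\in\Db(\mod R)$ the $\M^{\leq0}$-module $\C(-,X)|_{\M}$ satisfies $\C(-,X)|_\M\lotimes_{\M^{\leq0}}\M\simeq\C(-,X)|_\M$. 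By dévissage along the standard $t$-structure it is enough to treat $X=N[i]$ with $N\in\mod R$. For $i\leq0$ the module $\C(-,N[i])$ is $\M^{\leq0}(-,N)[i]\otimes$-induced up to the positive part, and the positive-degree parts $\Ext^{>0}(-,N)$ are killed by induction because they are uniformly bounded modules lying in $\thick\mod H^0\M$. This mirrors \ref{ubb}, and the key input is that the finitely presented functors $\Ext^j_R(-,N)|_{\M}$ are finitely presented $H^0\M$-modules of finite projective dimension, hence perfect, and that $\Ext^j_R(-,N)\lotimes\M=0$ because $\Ext^j$ is effaceable (a defect): it vanishes on $\proj R$.

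I expect this last vanishing to be the main obstacle: showing that perfect $\M^{\leq0}$-modules concentrated in one degree, specifically the effaceable ones, map to zero in $\per\C$, while the kernel is exactly $\thick(\mod H^0\M)$. I would first try the following. A module $F\in\mod H^0\M$ has a resolution $0\to P_n\to\cdots\to P_0\to F$ by representables $\Hom(-,M_i)$. Its induction is the totalization of $M_n\to\cdots\to M_0$ in $\Db(\mod R)$, and this is acyclic precisely when the complex evaluated at $R$ is exact, i.e.\ when $F(R)=0$. Combined with the description of $\per\C$ as a quotient, which uses that $R$ generates, this identifies the kernel and yields the localization.
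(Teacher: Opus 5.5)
Your identification of the truncations ($\M^{\leq0}$ is quasi-equivalent to $\mod R$, resp.\ $\add M$, concentrated in degree $0$) and the bound $\gldim\M^{\leq0}\leq2$ via Auslander's argument agree with the paper. Equality can fail (for semisimple $R$ every monomorphism splits), but only the bound matters.

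The localization step is where the proposal breaks down.

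First, the criterion \ref{intro-conn} cannot be used in \emph{either} case, and not merely because of the length bound. Since $\Hom_{\Db(\mod R)}(M',R[-1])=0$ for every module $M'$, any right resolution of $R[-1]$ by modules (or by $\add M$) has $R[-n-1]$ as a direct summand of $X_n$ and never terminates. So part (2) is not covered by your plan at all.

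Second, your direct argument for (1) rests on $\Ext^j_R(-,N)|_{\mod R}$ being finitely presented, hence perfect. This is false for general Noetherian $R$. For $R=\Z$, the functor $\Ext^1_\Z(-,\Z)$ on $\mod\Z$ is not even finitely generated: any $\eta\in\Ext^1_\Z(A,\Z)$ is killed by the exponent $e$ of the torsion part of $A$, so pullbacks of $\eta$ cannot generate $\Ext^1_\Z(\Z/p,\Z)$ for a prime $p\nmid e$. For such $F$, effaceability only gives $(F\lotimes_{\M^{\leq0}}\M)(R)=F(R)=0$. Evaluation at $R$ is not conservative on $\rD(\M)$ unless $\Db(\mod R)=\per R$. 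You would need an extra input, namely that every effaceable functor is a filtered colimit of finitely presented effaceable ones (a big version of Auslander's formula). You would also need a homotopy colimit step, since $\C(-,N)^{>0}$ is usually unbounded (e.g.\ $R=k[x]/(x^2)$, $N=k$).

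Third, the kernel is not $\thick(\mod H^0\M)$ here. Because $\M^{\leq0}$ is concentrated in degree $0$, $\mod H^0\M$ contains the representables, so $\thick(\mod H^0\M)=\per\M^{\leq0}$. The kernel is generated by the finitely presented functors vanishing at $R$, as your last paragraph correctly computes. But identifying the kernel does not by itself show that $\per\M^{\leq0}/\ker\to\Db(\mod R)$ is fully faithful.

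That missing fact is the paper's entire proof. In (1), $\per\M^{\leq0}\simeq\Kb(\mod R)$ and induction is the canonical functor $\Kb(\mod R)\to\Db(\mod R)$, which is a Verdier quotient by construction of $\Db$. In (2), $\proj R\subset\add M$ gives the well-known Verdier quotient $\per\End_R(M)=\Kb(\add M)\to\Db(\mod R)$ (e.g.\ via the idempotent of the summand $R$). Your closing appeal to ``the description of $\per\C$ as a quotient'' is this fact, and once you invoke it the $\Ext$ analysis is unnecessary in (1).

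Note also that your $\Ext$-vanishing argument does work in (2) when $\Ga=\End_R(M)$ is Noetherian (e.g.\ $R$ commutative). For $X\in\add M$, each $\Ext^j_R(M,X)$ is a finitely generated $\Ga$-module, hence perfect by finite global dimension, and it vanishes at the summand $R$. Your totalization argument then kills it, and a homotopy colimit over truncations gives $\M(-,X)^{>0}\lotimes_{\M^{\leq0}}\M=0$, which is exactly the localization criterion.
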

\begin{proof}
	(1)  Let $\M=\mod R\subset\D^b_\dg(\mod R)$. Since $\M^{\leq0}(M,N)=\RHom_R(M,N)^{\leq0}=\Hom_R(M,N)$ for each $M,N\in\mod R$, the truncation $\M^{\leq0}$ is nothing but the module category. It is well-known that such an abelian category has global dimension $2$ as a additive (or dg) category \cite{Au66}. 
	To see that $\M^{\leq0}\to\D^b_\dg(\mod R)$ is a localization, note that the induced functor $\per\M^{\leq0}\to\Db(\mod R)$ of triangulated categories is nothing but the canonical quotient functor $\Kb(\mod R)\to\Db(\mod R)$, which is certainly a Verdier quotient.
	
	(2)  Similarly as in (1), if $M\in\mod R$, then the truncation $(\add_\dg\!M)^{\leq0}$ is nothing but the additive category $\add M$, which has finite global dimension by assumption. Also, it is well-known that $\proj R\subset\add M$ implies that $\per\End_R(M)\to\Db(\mod R)$ is a Verdier quotient. These show that $M$ gives a connective resolution.
\end{proof}
	
In view of \ref{R}, the following consequence of \ref{loc} shows that the truncation $\C^{\leq0}$ should be regarded as the `Auslander algebra', or the derived category $\per\C^{\leq0}$ should be regarded as the `functor category'.
\begin{Prop}\label{C}
Let $\C$ be a pretriangulated dg category. Then $\C^{\leq0}$ is a connective resolution of $\C$ of global dimension $2$. Therefore, $\C$ itself gives a connective resolution of $\C$.
\end{Prop}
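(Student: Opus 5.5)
We check the two conditions of Definition \ref{def} for the subcategory $\M=\C\subset\per_\dg\!\C$, so that the relevant truncation is $\M^{\leq0}=\C^{\leq0}$. Replacing $\C$ by a Morita equivalent dg category does not affect the definition, so we may and do assume $\C$ is pretriangulated. In particular it has shifts, which is what \ref{loc} needs.

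\emph{Condition (2).} The composite $\C^{\leq0}\to\C\subset\per_\dg\!\C$ is the canonical truncation morphism followed by the Yoneda embedding. The Yoneda embedding is a Morita equivalence, so the composite is a localization as soon as the truncation morphism is one. That the truncation morphism is a localization is exactly \ref{loc}(1).

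\emph{Condition (1).} We need $\gldim\C^{\leq0}$ finite, in the sense of \ref{gldim}; this is \ref{DFun}, which gives $\gldim\C^{\leq0}\leq2$. The one point to watch is that \ref{DFun} reduces to the idempotent complete case, whereas our $\C$ need not be idempotent complete. So for $X\in\mod H^0\C$ I would argue directly. Choose a presentation $H^0\C(-,B)\to H^0\C(-,C)\to X\to0$ and complete $B\to C$ to a triangle $A\to B\to C$ inside $H^0\C$; this is possible because $\C$ is pretriangulated. The resulting $4$-angle $\C^{\leq0}(-,A)\to\C^{\leq0}(-,B)\to\C^{\leq0}(-,C)\to X$ shows $X\in\add\C^{\leq0}\ast\add\C^{\leq0}[1]\ast\add\C^{\leq0}[2]$, hence $\pd X\leq2$ by \ref{PD}. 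No idempotent completion is needed for this.

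\emph{Sharpness.} The statement says global dimension $2$, which I read as ``at most $2$'', matching \ref{R}. If equality is really intended, I would test a non-split triangle: take $X$ to be the image of $H^0\C(-,B)\to H^0\C(-,C)$ for a non-split $B\to C$. I expect $\Ext^2(X,-)$ to be nonzero in that case, but I would only claim the upper bound unless $\C$ is nontrivial. The main obstacle is none beyond citing \ref{loc} and \ref{DFun} correctly; the only care needed is the idempotent completeness issue handled above. The final sentence of the statement, that $\C$ gives a connective resolution of itself, is then immediate from Definition \ref{def}.
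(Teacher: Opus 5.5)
Your proposal is correct and is essentially the paper's own proof. Global dimension $\leq2$ comes from \ref{DFun}, and your direct rerun of that argument without passing to an idempotent completion is fine, since pretriangulatedness already provides the cocone $A\to B\to C$; the localization property is \ref{loc}(1) composed with the Morita equivalence $\C\to\per_\dg\!\C$.

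The paper likewise only proves the bound $\leq2$, but your hedging on sharpness is unnecessary. For any $0\neq C\in H^0\C$, the triangle $\C^{\leq0}(-,C[-1])[1]\to\C^{\leq0}(-,C)\to H^0\C(-,C)$ from the proof of \ref{per} gives $\Ext^2_{\C^{\leq0}}(H^0\C(-,C),H^0\C(-,C[-1]))\cong\End_{H^0\C}(C[-1])\neq0$. So equality holds whenever $\C\neq0$, with no non-split triangle needed.
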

\begin{proof}
	By \ref{DFun}, the dg category $\C^{\leq0}$ has global dimension $\leq2$. The canonical map $\C^{\leq0}\to\C$ is a localization by \ref{loc}.
\end{proof}

\begin{Rem}
This shows that for a pretriangulated dg category $\C$, the canonical functor $\C^{\leq0}\to\C$ is a universal connective resolution. Indeed, if a functor $\M^{\leq0}\to\M\subset\C$ is a connective resolution, it factors uniquely through $\C^{\leq0}$ since $\M^{\leq0}$ is connective.
\end{Rem}


Our next motivating example is given by cluster tilting objects in cluster categories. Let $\Pi$ be a connective $(d+1)$-Calabi-Yau dg algebra such that $H^0\Pi$ is finite dimensional with $d\geq2$, and let $\rC=\per\Pi/\Db(\Pi)$ be the cluster category. Then $\Pi\in\rC(\Pi)$ is a $d$-cluster tilting object \cite{Am09}, see also \cite{IYa1}.
\begin{Prop}\label{ct}
Let $\C=\per_\dg\!\Pi/\D^b_\dg(\Pi)$ be the dg cluster category of $\Pi$ as above. Then $\Pi\in\C$ gives a smooth connective resolution.
\end{Prop}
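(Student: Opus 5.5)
We need to check the two conditions of Definition \ref{def} for the object $\Pi\in\C$: that $\REnd_\C(\Pi)^{\leq0}$ is smooth (hence of finite global dimension by \ref{smooth}), and that $\REnd_\C(\Pi)^{\leq0}\to\C$ is a localization. The whole argument rests on a single identification,
\[ \REnd_\C(\Pi)^{\leq0}\simeq\Pi, \]
given by the canonical map $\Pi\to\REnd_\C(\Pi)$ induced by the quotient functor $\per_\dg\!\Pi\to\C$. Once this holds, both conditions come from known facts about $\Pi$.

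\textbf{Step 1: the identification.} Compute $\REnd_\C(\Pi)$ with the standard triangle for the Verdier quotient,
\[ \Pi\lotimes_{\N}\Pi\to\Pi\to\REnd_\C(\Pi),\qquad \N=\D^b_\dg(\Pi). \]
By \cite{HaK} (or Amiot's computation \cite{Am09}), for a $(d+1)$-Calabi-Yau connective $\Pi$ the first term is $D\RHom_\Pi(\Pi,\Pi[d+1])$-type. Concretely, $\Pi\lotimes_\N\Pi\simeq D\Pi[-d-1]$, which sits in degrees $\geq d+1\geq3$. Using the relative Serre duality $D\RHom_\Pi(X,Y)\simeq\RHom_\Pi(Y,X[d+1])$ for $X\in\Db(\Pi)$, this is the analogue of \ref{LRess} in the Calabi-Yau situation.

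The triangle then shows that $\Pi\to\REnd_\C(\Pi)$ is a quasi-isomorphism in degrees $\leq d-1$, in particular in degrees $\leq0$. Since $\Pi$ is connective, truncation gives $\Pi\simeq\Pi^{\leq0}\xsimeq\REnd_\C(\Pi)^{\leq0}$.

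\textbf{Step 2: finite global dimension.} A $(d+1)$-Calabi-Yau dg algebra is in particular smooth (this is part of the Calabi-Yau hypothesis). Hence $\REnd_\C(\Pi)^{\leq0}\simeq\Pi$ is a smooth connective resolution, and \ref{smooth} gives finite global dimension.

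\textbf{Step 3: localization.} Under Step 1, the composite $\REnd_\C(\Pi)^{\leq0}\to\C$ is identified with the quotient functor $\Pi\to\per_\dg\!\Pi/\D^b_\dg(\Pi)$. A dg quotient is a localization, with kernel $\Loc\Db(\Pi)$: the restriction functor $\rD(\C)\to\rD(\Pi)$ is fully faithful by Drinfeld/Keller. To be careful, I must check that this identification respects the maps, namely that the diagram $\Pi\to\REnd_\C(\Pi)^{\leq0}\to\REnd_\C(\Pi)$ commutes with the quotient functor. This holds by construction, since the map of Step 1 is induced by the quotient functor itself.

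The main obstacle is Step 1, specifically the degree bound on $\Pi\lotimes_\N\Pi$. I would first try to deduce it from Amiot's result \cite{Am09}, which states that $\Hom_\C(\Pi,\Pi[i])\simeq H^i\Pi$ for $i\leq0$ because $\per\Pi\supset$ the fundamental domain $\mathcal{F}$. An equivalent formulation is that $\Pi$ is $\Hom$-orthogonal to $\Db(\Pi)[<d]$ in the appropriate sense. If I need it at the dg level rather than only on cohomology, I will use the Calabi-Yau duality to compute $\RHom_\N(\Pi,\Pi)$ directly, in the same way as the proof of \ref{DAR}.
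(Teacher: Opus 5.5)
Your proposal is correct and has the same structure as the paper's proof. Both identify $\REnd_\C(\Pi)^{\leq0}$ with $\Pi$ via the map induced by the quotient functor, use smoothness of $\Pi$, and then observe that $\REnd_\C(\Pi)^{\leq0}\to\C$ is Morita equivalent to the dg quotient $\Pi\to\C$, hence a localization. The only difference is Step 1: the paper cites \cite[5.9]{IYa1} for the quasi-isomorphism in degrees $\leq0$, whereas you derive it, in fact in degrees $\leq d-1$, from the triangle $\Pi\lotimes_\N\Pi\to\Pi\to\REnd_\C(\Pi)$ and the isomorphism $\Pi\lotimes_\N\Pi\simeq D\Pi[-d-1]$, which comes from relative Calabi--Yau duality and left essentiality of $\D^b_\dg(\Pi)\subset\per_\dg\!\Pi$, just as in the proof of \ref{DAR}.
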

In fact, we will show more generally in \ref{CT} that each cluster tilting subcategory in a triangulated category gives a connective resolution of its enhancement. We however present this statement here for motivation.
\begin{proof}
	By \cite[5.9]{IYa1}, the map $\Pi=\RHom_\Pi(\Pi,\Pi)\to\RHom_\C(\Pi,\Pi)$ induced by the quotient functor $\per_\dg\!\Pi\to\C$ is a quasi-isomorphism in degree $\leq0$. It follows that the truncation $\REnd_\C(\Pi)^{\leq0}\simeq\Pi$ is smooth and that $\REnd_\C(\Pi)^{\leq0}\to\REnd_\C(M)$ is Morita equivalent to $\Pi\to\C$, which is a localization.
\end{proof}

We also note another class of easy examples.
\begin{Prop}\label{silt}
Let $\C$ be a smooth dg category. Then each silting object $M\in\per\C$ gives a smooth connective resolution.
\end{Prop}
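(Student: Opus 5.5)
We have to show two things for a silting object $M\in\per\C$ with $\C$ smooth: the truncation $\REnd_\C(M)^{\leq0}$ is smooth (hence of finite global dimension by \ref{smooth}), and the composite $\REnd_\C(M)^{\leq0}\to\REnd_\C(M)\to\C$ is a localization.

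Put $A=\REnd_\C(M)$. Since $M$ is silting, it generates $\per\C$, so $A$ is Morita equivalent to $\C$ via $-\lotimes_A M$. Silting also gives $H^iA=\Hom_{\per\C}(M,M[i])=0$ for $i>0$, so $A$ is connective. Consequently the inclusion $A^{\leq0}\to A$ is a quasi-isomorphism: its cone $A^{>0}$ has vanishing cohomology.

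Both claims now follow by transporting properties along quasi-isomorphisms and Morita equivalences.
\begin{enumerate}
\item \emph{Localization.} The induction along a quasi-isomorphism is an equivalence $\rD(A^{\leq0})\xsimeq\rD(A)$. Composing with the Morita equivalence $\rD(A)\simeq\rD(\C)$ shows that the composite $A^{\leq0}\to\per_\dg\C$ induces an equivalence of derived categories. This is a localization with trivial kernel $\N=0$; equivalently, restriction is fully faithful.
\item \emph{Smoothness.} Smoothness is invariant under Morita equivalence. One way to see this: $\per A^e\simeq\per\C^e$ via $M$, sending the diagonal bimodule to the diagonal bimodule. Hence $A$ is smooth because $\C$ is, and so is $A^{\leq0}\simeq A$.
\end{enumerate}
By \ref{smooth}, a smooth connective dg algebra has finite global dimension. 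So $A^{\leq0}$ satisfies both conditions of \ref{def}, and is moreover smooth, i.e.\ $M$ gives a smooth connective resolution.

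The only point needing care is the Morita invariance of smoothness, together with identifying the diagonal bimodules under the equivalence $\per A^e\simeq\per\C^e$. This is standard: the equivalence is induced by $M\otimes M^\vee$-type bimodules, and I would cite it rather than reprove it. Everything else is formal.
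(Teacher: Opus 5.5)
Your proposal is correct and follows the same route as the paper. Silting makes $\REnd_\C(M)$ connective and Morita equivalent to $\C$, so $\REnd_\C(M)^{\leq0}\to\C$ is a Morita functor, hence a localization; smoothness then passes along this Morita equivalence, and \ref{smooth} gives finite global dimension. You simply spell out the steps the paper leaves implicit, namely the connectivity of $\REnd_\C(M)$ and the Morita invariance of smoothness.
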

\begin{proof}
	Indeed, the canonical map $\REnd_\C(M)^{\leq0}\to\C$ is a Morita functor if $M\in\per\C$ is a silting object. The dg algebra $\REnd_\C(M)^{\leq0}$ has finite global dimension by \ref{smooth}.
\end{proof}

Further results shall be found later. 
The aim of this section is to give a sufficient condition for an object to give a connective resolution. In fact, we show that this can be checked at the level of triangulated categories. 
As a consequence, we prove in \ref{CT} that each $d$-cluster tilting subcategory gives a connective resolution, which is of global dimension at most $d+1$.

%
\subsection{Resolutions in triangulated categories and connective resolutions}
We first formulate resolutions in triangulated categories and relate them to connective resolutions in enhancements. 
\begin{Def}\label{LRresol}
Let $\T$ be a triangulated category, and let $\M\subset\T$ be a full additive subcategory.
\begin{enumerate}
\item A {\it right $\M$-resolution} of $X\in\T$ is a sequence of triangles
\[ \xymatrix{ X_{i+1}\ar[r]&M_i\ar[r]^-{f_i}&X_i } \]
for $i\geq0$ with $X_0=X$ and $f_i$ is a right $\M$-approximation. A right $\M$-resolution of $X$ is {\it finite} if $X_n\in\M$ for some $n\geq0$. In this case, we say $X$ has a finite right $\M$-resolution.
\item A {\it left $\M$-resolution} of $X\in\T$ is a sequence of triangles
\[ \xymatrix{ X^{i}\ar[r]^-{g_i}&M^i\ar[r]&X^{i+1} } \]
for $i\geq0$ with $X^0=X$ and $g_i$ is a left $\M$-approximation. A left $\M$-resolution of $X$ is {\it finite} if $X^n\in\M$ for some $n\geq0$. In this case, we say that $X$ has a finite left $\M$-resolution.
\end{enumerate}
\end{Def}
If $X$ has a finite right $\M$-resolution, we will often denote by
\begin{equation}\label{angle}
	\xymatrix@R=1mm@C=3mm{
	&M_{n-1}\ar[dr]\ar[rr]&&&\cdots&\ar[rr]\ar[dr]&&M_1\ar[rr]\ar[dr]&&M_0\ar[dr]&\\
	M_n\ar[ur]& & X_{n-1}\ar[ur]& &&&X_2\ar[ur]&& X_1\ar[ur]&& X }
\end{equation}
the sequence of triangles, as an $(n+2)$-angle.
Similarly, if $X$ has a finite left $\M$-resolution, we denote the resolution as an $(n+2)$-angle
\begin{equation}\label{langle}
	\xymatrix@R=1mm@C=3mm{
		&M^{0}\ar[dr]\ar[rr]&&&\cdots&\ar[rr]\ar[dr]&&M^{n-2}\ar[rr]\ar[dr]&&M^{n-1}\ar[dr]&\\
		X\ar[ur]& & X^{1}\ar[ur]& &&&X^{n-2}\ar[ur]&& X^{n-1}\ar[ur]&& M^n. }
\end{equation}

%
%

We first note a consequence of the definition. Recall that an additive category $\A$ is {\it right coherent} if the category $\mod\A$ of finitely presented (right) modules is abelian. This is equivalent to that $\A$ has weak kernels. Dually, we say $\A$ is {\it left coherent} if $\mod\A^\op$ is abelian, or equivalently $\A$ has weak cokernels. 
\begin{Lem}\label{cohe}
Let $\T$ be a triangulated category and let $\M\subset\T$ be a full additive subcategory.
\begin{enumerate}
	\item Suppose that $\M$ is contravariantly finite in $\T$. Then $\M$ is right coherent and $\M(-,C[i])$ is finitely presnted for every $i\in\Z$ and $C\in\T$.
	\item Suppose that $\M$ is covariantly finite in $\T$. Then $\M$ is left coherent and $\M(C[-i],-)$ is finitely presnted for every $i\in\Z$ and $C\in\T$.
\end{enumerate}
\end{Lem}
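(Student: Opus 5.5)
We prove (1); statement (2) is its dual and follows by passing to $\T^\op$, where covariantly finite becomes contravariantly finite and $\M(C[-i],-)$ becomes $\M^\op(-,C[-i])$. The only tool is that every $X\in\T$ has a right $\M$-approximation $M_X\to X$, together with the long exact sequences of $\Hom$ obtained from triangles in $\T$.

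First we show that $\M(-,C)$ is finitely presented for every $C\in\T$. Take a right $\M$-approximation $f_0\colon M_0\to C$ and complete it to a triangle $X_1\to M_0\to C\to X_1[1]$. Then take a right $\M$-approximation $f_1\colon M_1\to X_1$. For any $N\in\M$, the sequence
\[ \Hom(N,M_1)\to\Hom(N,M_0)\to\Hom(N,C)\to0 \]
is exact. Surjectivity at $\Hom(N,C)$ holds because $f_0$ is an approximation. At $\Hom(N,M_0)$, a map $g\colon N\to M_0$ with $f_0g=0$ factors through $X_1\to M_0$ by the long exact sequence of the triangle. The resulting map $N\to X_1$ then factors through $f_1$, again because $f_1$ is an approximation. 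Hence $\M(-,M_1)\to\M(-,M_0)\to\M(-,C)\to0$ is a finite presentation. Since $C\in\T$ is arbitrary, applying this to $C[i]$ gives that $\M(-,C[i])$ is finitely presented for all $i$.

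For right coherence we use the criterion recalled just before the lemma: it suffices to show that $\M$ has weak kernels. Given $g\colon M\to M'$ in $\M$, complete it to a triangle $Y\to M\xrightarrow{g}M'\to Y[1]$ and take a right $\M$-approximation $h\colon M''\to Y$. Let $k$ be the composite $M''\to Y\to M$. If $u\colon N\to M$ with $N\in\M$ satisfies $gu=0$, then $u$ factors through $Y\to M$. The resulting map $N\to Y$ factors through $h$, so $u$ factors through $k$. Thus $k$ is a weak kernel of $g$, and $\mod\M$ is abelian.

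There is no real obstacle here; the argument is a routine check of exactness. The only point needing care is to use the long exact $\Hom$-sequences in the correct variable, and to note that right coherence in the paper's convention refers to right modules, that is, contravariant functors on $\M$.
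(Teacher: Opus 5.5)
Your proof is correct and follows the paper's own argument. You get weak kernels by completing a morphism to a triangle and approximating the cocone, and you get a finite presentation of $\M(-,C[i])$ from two successive right $\M$-approximations; you just write out the exactness checks the paper leaves to the reader.
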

\begin{proof}
	We only prove (1). Since $\T$ has weak kernels, so does every contravariantly finite subcategory, and hence $\M$ is right coherent. Also, for each $C\in\T$ and $i\in\Z$, take triangles $X\to M_0\to C[i]$ and $Y\to M_1\to X$ such that the second maps are right $\M$-approximations. This yields an exact sequence $\M(-,M_1)\to\M(-,M_0)\to\M(-,C[i])\to0$, and hence $\M(-,C[i])$ is finitely presented.	
\end{proof}

We now relate this notion in triangulated categories to enhancements. The following observation demonstrates the usefulness of truncations.
\begin{Lem}\label{exact}
Let $\C$ be a dg category and let $\M\subset\per_\dg\!\C$ be a full subcategory. 
\begin{enumerate}
\item\label{exr} Suppose that $X\in\per\C$ has a finite right $H^0\M$-resolution, and let \eqref{angle} be triangles which resolve $X$. Then they yield an $(n+2)$-angle
\[	\xymatrix@R=1mm@C=2mm{
		&\M^{\leq0}(-,M_{n-1})\ar[dr]\ar[rr]&&&\cdots&\ar[rr]\ar[dr]&&\M^{\leq0}(-,M_0)\ar[dr]&\\
		\M^{\leq0}(-,M_n)\ar[ur]&& \M^{\leq0}(-,X_{n-1})\ar[ur]& &&&\M^{\leq0}(-,X_1)\ar[ur]&& \M^{\leq0}(-,X) } \]
	in $\rD(\M^{\leq0})$. In particular, we have $\M^{\leq0}(-,X)\in\per\M^{\leq0}$.
\item\label{exl} Suppose that $X\in\per\C$ has a finite left $H^0\M$-resolution, and let \eqref{langle} be triangles which resolve $X$. Then they yield an $(n+2)$-angle
\[	\xymatrix@R=1mm@C=2mm{
	&\M^{\leq0}(M^{n-1},-)\ar[dr]\ar[rr]&&&\cdots&\ar[rr]\ar[dr]&&\M^{\leq0}(M^0,-)\ar[dr]&\\
	\M^{\leq0}(M^n,-)\ar[ur]&&\M(X^{n-1},-)\ar[ur]& &&&\M^{\leq0}(X^1,-)\ar[ur]&& \M^{\leq0}(X,-) } \]
in $\rD(\M^{\leq0,\op})$. In particular, we have $\M^{\leq0}(X,-)\in\per\M^{\leq0,\op}$.
\end{enumerate}
\end{Lem}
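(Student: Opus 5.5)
The plan is to prove (\ref{exr}) and obtain (\ref{exl}) by passing to $\C^\op$. For each triangle $X_{i+1}\to M_i\xrightarrow{f_i} X_i$ in $\per\C$, I apply the functor $\M(-,?)$. This gives a triangle of $\M$-modules $\M(-,X_{i+1})\to\M(-,M_i)\to\M(-,X_i)$ in $\rD(\M)$, which restricts to $\rD(\M^{\leq0})$. Here $\M(-,Y)$ denotes the restriction of $\C(-,Y)$ to $\M$.

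Next I truncate. Truncation $(-)^{\leq0}$ is not exact in general. In the long exact cohomology sequence, however, the only failure is at degree $0$. The sequence
\[ 0\to H^0\M(-,X_{i+1})\to H^0\M(-,M_i)\to H^0\M(-,X_i)\to H^1\M(-,X_{i+1}) \]
is exact. The map $H^0\M(-,M_i)\to H^0\M(-,X_i)$ is surjective precisely because $f_i$ is a right $H^0\M$-approximation. So the connecting map $H^0\M(-,X_i)\to H^1\M(-,X_{i+1})$ vanishes.

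A standard lemma now applies: if $A\to B\to C\to A[1]$ is a triangle with $H^0B\to H^0C$ surjective, then $A^{\leq0}\to B^{\leq0}\to C^{\leq0}$ extends to a triangle. To prove it, I take the cocone $K$ of $B^{\leq0}\to C^{\leq0}$. This maps to $A$, and the long exact sequence together with the surjectivity gives $H^jK\cong H^jA$ for $j\leq0$ and $H^jK=0$ for $j>0$. Hence $K\simeq A^{\leq0}$. To make this canonical, I would work at the level of dg modules, using an explicit cone model, so that the maps come from $\rD(\M^{\leq0})$.

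This yields triangles
\[ \M^{\leq0}(-,X_{i+1})\to\M^{\leq0}(-,M_i)\to\M^{\leq0}(-,X_i) \]
in $\rD(\M^{\leq0})$, using $\M(-,Y)^{\leq0}=\M^{\leq0}(-,Y)$ for $Y\in\M$. Concatenating them for $0\leq i\leq n-1$ gives the $(n+2)$-angle. Since $X_n\in\M$, the modules $\M^{\leq0}(-,M_i)$ for $i<n$ and $\M^{\leq0}(-,X_n)$ are representable, hence perfect. Descending induction on $i$ along the triangles then gives $\M^{\leq0}(-,X_i)\in\per\M^{\leq0}$, in particular for $X=X_0$.

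The main obstacle is the truncation step. One must check that the cocone of the truncated map really identifies with the truncation of $\M(-,X_{i+1})$ compatibly with the given maps. Objectwise, the point is that truncation preserves triangles whenever the $H^0$-map onto the third term is surjective, and I expect the cohomology argument above to settle it. For (\ref{exl}), I apply the same argument in $\C^\op$ with left approximations. There $H^0$-surjectivity of $\M(M^i,-)\to\M(X^i,-)$ is exactly the left approximation property.
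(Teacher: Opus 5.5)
Your proposal is correct and follows the paper's proof: apply $\M(-,?)$ to each triangle, use the approximation property to get surjectivity of $H^0\M(-,M_i)\to H^0\M(-,X_i)$, conclude that the truncated sequence is still a triangle in $\rD(\M^{\leq0})$, and concatenate. Your cocone and five-lemma argument simply spells out what the paper compresses into ``has an acyclic total module''. One small slip: the leading $0\to$ in your displayed exact sequence is not justified, since the map $H^{-1}\M(-,X_i)\to H^0\M(-,X_{i+1})$ need not vanish; you never use that injectivity, only the vanishing of the connecting map into $H^1\M(-,X_{i+1})$.
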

\begin{proof}
	We only prove (1). If $X_{i+1}\to M_i\to X_i$ is a triangle in $\per\C$ with the second map a right $H^0\M$-approximation, then in the triangle $\M(-,X_{i+1})\to \M(-,M_i)\to\M(-,X_i)$, the second map is surjective in $H^0$. This shows that the induced sequence
	\[ \xymatrix{ \M^{\leq0}(-,X_{i+1})\ar[r]&\M^{\leq0}(-,M_i)\ar[r]&\M^{\leq0}(-,X_i) } \]
	has an acyclic total module, in other words, a triangle in $\rD(\M^{\leq0})$. Concatenating the approximations, we obtain the desired $(n+2)$-angle.
\end{proof}

The above technique leads to the finiteness of global dimension of left or right resolutions. 
\begin{Prop}\label{finite}
Let $\C$ be a dg category and let $\M\subset\per_\dg\!\C$ be a full subcategory.
\begin{enumerate}
\item Suppose that each object in $\per\C$ has a finite right $H^0\M$-resolution {of uniformly bounded length}. 
Then $\M^{\leq0}$ has finite right global dimension.
\item Suppose that each object in $\per\C$ has a fintie left $H^0\M$-resolution {of uniformly bounded length}. 
Then $\M^{\leq0}$ has finite left global dimension.
\end{enumerate}
\end{Prop}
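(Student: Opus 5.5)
We prove (1); (2) is dual, applying the same argument to $\C^\op$ and left resolutions. Fix $n$ bounding the lengths of all right $H^0\M$-resolutions in $\per\C$. Take $X\in\mod H^0\M$. We aim to show $\pd X\leq n+2$ by realizing $X$ via a short resolution by modules of the form $\M^{\leq0}(-,Y)$, $Y\in\per\C$, and then resolving each of these by \ref{exact}(\ref{exr}).

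\emph{Step 1: a presentation.} Choose a finite presentation $H^0\M(-,M')\xrightarrow{a}H^0\M(-,M)\to X\to0$ with $M,M'\in\M$. Lift $a$ to a morphism $M'\to M$ in $\per\C$, and complete it to a triangle $Y\to M'\to M$ in $\per\C$. Since $\M(-,?)$ takes triangles in $\per\C$ to triangles, the same argument as in \ref{DFun} gives a $4$-angle
\[ \M^{\leq0}(-,Y)\to\M^{\leq0}(-,M')\to\M^{\leq0}(-,M)\to X \]
in $\rD(\M^{\leq0})$. This requires one check: after truncation, the cone of $\M^{\leq0}(-,M')\to\M^{\leq0}(-,M)$ has cohomology $X$ in degree $0$ together with the truncated part coming from $Y$. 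This is the same computation as in \ref{exact}: the triangle $\M(-,Y)\to\M(-,M')\to\M(-,M)$ remains exact after truncation except at degree $0$, where the cokernel is exactly $X$.

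\emph{Step 2: resolving $Y$.} By hypothesis, $Y$ has a finite right $H^0\M$-resolution of length at most $n$. By \ref{exact}(\ref{exr}), the module $\M^{\leq0}(-,Y)$ then lies in $\add\M^{\leq0}\ast\add\M^{\leq0}[1]\ast\cdots\ast\add\M^{\leq0}[n]$, since the $(n+2)$-angle expresses it as an iterated extension of representables. Splicing this with the $4$-angle of Step 1 gives
\[ X\in\add\M^{\leq0}\ast\add\M^{\leq0}[1]\ast\add\M^{\leq0}[2]\ast\cdots\ast\add\M^{\leq0}[n+2]. \]
By the easy direction of \ref{pds}, this yields $\pd X\leq n+2$. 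Hence $\gldim\M^{\leq0}\leq n+2<\infty$. The splicing itself follows from repeatedly applying the observation from the proof of \ref{SOD}, namely $\pd L_3\leq\sup\{\pd L_2,\pd L_1+1\}$, to the constituent triangles.

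\emph{Main obstacle.} The delicate point is Step 1, namely producing the exact $4$-angle over $\M^{\leq0}$ when $M'\to M$ is not an $H^0\M$-approximation. We handle it by computing the total complex of $\M^{\leq0}(-,Y)\to\M^{\leq0}(-,M')\to\M^{\leq0}(-,M)$ degreewise. In degrees $<0$, the long exact sequence of $\M(-,Y)\to\M(-,M')\to\M(-,M)$ shows that $H^i\M(-,Y)$ surjects onto the kernel of $H^i\M(-,M')\to H^i\M(-,M)$. In degree $0$, the only leftover cohomology is $\operatorname{Cok}(H^0a)=X$. This is exactly the argument of \ref{DFun}, now carried out with $\M\subset\C$ in place of $\C$ itself.
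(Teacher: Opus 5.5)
Your argument is correct and is essentially the paper's proof. Both present $X$, complete the lifted map $M'\to M$ to a triangle $Y\to M'\to M$ in $\per\C$, note that the truncated three-term sequence $\M^{\leq0}(-,Y)\to\M^{\leq0}(-,M')\to\M^{\leq0}(-,M)$ has total $X$, and resolve $\M^{\leq0}(-,Y)$ via \ref{exact}.

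Your version is slightly sharper than the written proof. That proof only concludes $X\in\per\M^{\leq0}$ (and says ``the last term'' where it means the first term $\M^{\leq0}(-,N)$), whereas you derive the uniform bound $\pd X\leq n+2$, which is what finiteness of global dimension requires and what \ref{conn} later uses.
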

\begin{proof}
	We only prove (1). Take a presentation $H^0\M(-,M_1)\to H^0\M(-,M_0)\to X\to 0$ in $\mod H^0\M$. Then the first map may be presented by a morphism $M_1\to M_0$ in $H^0\M\subset\per\C$, and extend it to a triangle $N\to M_1\to M_0$ in $\per\C$. It yields a complex
	\[ \xymatrix{ \M^{\leq0}(-,N)\ar[r]&\M^{\leq0}(-,M_1)\ar[r]&\M^{\leq0}(-,M_0) }, \]
	whose total is $X$. Since the last term lies in $\per\M^{\leq0}$ by \ref{exact}, we deduce that $X\in\per\M^{\leq0}$.
\end{proof}

We next relate resolutions in triangulated categories to localizations in enhancements.
\begin{Prop}\label{LR}
Let $\C$ be a pretriangulated category, and let $\M\subset\C$ be a dg subcategory.
\begin{enumerate}
\item Suppose that each object in $H^0\C$ has a finite right $H^0\M$-resolution. Then $\C^{\leq0}\lotimes_{\M^{\leq0}}\C\xsimeq\C$ in $\rD(\C^{\leq0,\op}\otimes\C)$.
\item Suppose that each object in $H^0\C$ has a finite left $H^0\M$-resolution. Then $\C\lotimes_{\M^{\leq0}}\C^{\leq0}\xsimeq\C$ in $\rD(\C^\op\otimes\C^{\leq0})$.
\end{enumerate}
In either case, the truncation $\M^{\leq0}\to\M$ is a localization.
\end{Prop}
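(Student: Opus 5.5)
We prove (1); (2) is dual, obtained by passing to $\C^\op$ and left resolutions. The idea is to check the isomorphism $\C^{\leq0}\lotimes_{\M^{\leq0}}\C\to\C$ objectwise in the first variable, i.e.\ after evaluating at each $X\in\C$. Evaluated at $X$, the left side is $\M^{\leq0}(-,X)\lotimes_{\M^{\leq0}}\C$, where $\M^{\leq0}(-,X)=\C^{\leq0}(-,X)|_{\M}$ is the restriction. We must show that the canonical map
\[ \M^{\leq0}(-,X)\lotimes_{\M^{\leq0}}\C\to\C(-,X) \]
is an isomorphism in $\rD(\C)$, naturally in $X$. Here $\C(-,X)$ denotes the $\C$-module obtained by restricting $\C(-,X)$ along $\M\subset\C$ and inducing back, which is again $\C(-,X)$ on representables.

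First, take a finite right $H^0\M$-resolution of $X$ as in \eqref{angle}. By \ref{exact}(\ref{exr}), it yields an $(n+2)$-angle in $\rD(\M^{\leq0})$ built from the modules $\M^{\leq0}(-,M_i)$ and $\M^{\leq0}(-,X_i)$. The key point is that each triangle $\M^{\leq0}(-,X_{i+1})\to\M^{\leq0}(-,M_i)\to\M^{\leq0}(-,X_i)$ is an honest triangle in $\rD(\M^{\leq0})$, because $f_i$ is surjective on $H^0$.

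Next, apply the exact functor $-\lotimes_{\M^{\leq0}}\C$. On representables $\M^{\leq0}(-,M)$ with $M\in\M$ it gives $\C(-,M)$, so the canonical map is an isomorphism there. The canonical map is natural and compatible with the triangles. On the target side, the triangles $\C(-,X_{i+1})\to\C(-,M_i)\to\C(-,X_i)$ are triangles in $\per\C$ coming from the triangles in $H^0\C$. I would then argue by descending induction on $i$, starting from $X_n=M_n\in\M$. If the map is an isomorphism for $X_{i+1}$ and for $M_i$, the five lemma gives it for $X_i$, and hence for $X_0=X$.

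The main obstacle is making the maps of triangles genuinely compatible in $\rD(\C)$, rather than only having some isomorphism of third terms. The fix is to work at the level of dg modules. The truncated sequence $\M^{\leq0}(-,X_{i+1})\to\M^{\leq0}(-,M_i)\to\M^{\leq0}(-,X_i)$ is the degreewise truncation of the cocone sequence $\M(-,X_{i+1})\to\M(-,M_i)\to\M(-,X_i)$. The canonical map $\M^{\leq0}(-,Y)\lotimes_{\M^{\leq0}}\C\to\C(-,Y)$ is induced from the inclusion $\M^{\leq0}(-,Y)\subset\C(-,Y)$ restricted to $\M$, and this inclusion is natural in dg morphisms. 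So the comparison is a morphism of totalizations, and the iterated cone argument applies. For this it helps to realize each triangle in $H^0\C$ by a strict cone in the pretriangulated $\C$, so that everything is a strictly commuting diagram of dg modules.

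Finally, from (1), deduce that $\M^{\leq0}\to\M$ is a localization, i.e.\ that $\M\lotimes_{\M^{\leq0}}\M\to\M$ is an isomorphism. Restrict the isomorphism $\C^{\leq0}\lotimes_{\M^{\leq0}}\C\simeq\C$ to objects of $\M$ on both sides. On the left this gives $\M^{\leq0}\lotimes_{\M^{\leq0}}\C|_{\M}$, and we want $\M\lotimes_{\M^{\leq0}}\M$. The natural route is to combine (1) with \ref{loc}, which says $\C^{\leq0}\to\C$ is a localization. This gives
\[ \C\lotimes_{\M^{\leq0}}\C\simeq\C\lotimes_{\C^{\leq0}}(\C^{\leq0}\lotimes_{\M^{\leq0}}\C)\simeq\C\lotimes_{\C^{\leq0}}\C\simeq\C. \]
Restricting to $\M\times\M$ then yields $\M\lotimes_{\M^{\leq0}}\M\simeq\M$. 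Equivalently, the restriction $\rD(\M)\to\rD(\M^{\leq0})$ is fully faithful, since $\rD(\M)=\rD(\C)$ by generation.
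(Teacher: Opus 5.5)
Your argument is correct.

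\textbf{The isomorphisms in (1) and (2).} This part is the same as the paper's argument (Lemma \ref{lr}). You check the map objectwise, use the $(n+2)$-angle of \ref{exact} attached to a finite right resolution, apply $-\lotimes_{\M^{\leq0}}\C$, and compare with the corresponding triangles in $\per\C$. The paper simply identifies both sides with the total of $\C(-,M_n)\to\cdots\to\C(-,M_0)$. Your descending induction with strict cones makes the compatibility of the comparison maps explicit, which is a reasonable refinement.

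\textbf{The localization step.} Here your route differs. The paper uses a commutative square of restriction and coinduction functors between $\rD(\M)$, $\rD(\M^{\leq0})$, $\rD(\C)$ and $\rD(\C^{\leq0})$. Statement (1) is exactly what makes that square commute. The paper then transfers full faithfulness of $\rD(\C)\to\rD(\C^{\leq0})$ (from \ref{loc}) to $\rD(\M)\to\rD(\M^{\leq0})$.

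You instead compute
\[ \C\lotimes_{\M^{\leq0}}\C\simeq\C\lotimes_{\C^{\leq0}}(\C^{\leq0}\lotimes_{\M^{\leq0}}\C)\simeq\C\lotimes_{\C^{\leq0}}\C\simeq\C \]
and restrict to $\M\times\M$. Since the tensor is over $\M^{\leq0}$, this restriction is exactly $\M\lotimes_{\M^{\leq0}}\M\to\M$. That is the same criterion the paper uses in the proof of \ref{loc}(1). You should say explicitly that every isomorphism in the chain is the canonical multiplication map, because an abstract isomorphism would not suffice.

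Your route stays at the level of bimodules and is shorter. It also shows directly that the composite $\M^{\leq0}\to\C$ is a localization. The paper's functorial phrasing makes more visible the role of \ref{loc} as ``full faithfulness of restriction.''

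\textbf{A minor point.} Your aside about restricting $\C(-,X)$ to $\M$ and inducing back is unnecessary. The target is just $\C(-,X)$, and the comparison map is adjoint to $\M^{\leq0}(-,X)\to\C(-,X)|_{\M^{\leq0}}$.
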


This is a consequence of the following more precise version.
\begin{Lem}\label{lr}
Let $\C$ be a pretriangulated category, and let $\M\subset\C$ be a dg subcategory.
\begin{enumerate}
	\item Suppose that an object $C\in H^0\C$ has a finite right $H^0\M$-resolution. Then the canonical map $\C^{\leq0}(-,C)\lotimes_{\M^{\leq0}}\C\to\C(-,C)$ in $\rD(\C)$ is an isomorphism.
	\item Suppose that an object $C\in H^0\C$ has a finite left $H^0\M$-resolution. Then the canonical map $\C\lotimes_{\M^{\leq0}}\C^{\leq0}(C,-)\to\C(C,-)$ in $\rD(\C^\op)$ is an isomorphism.
\end{enumerate}
\end{Lem}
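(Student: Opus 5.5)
We argue by induction on the length $n$ of the finite right $H^0\M$-resolution of $C$; statement (2) is dual, so we only treat (1). Here $\C^{\leq0}(-,C)$ denotes the restriction to $\M^{\leq0}$, i.e.\ the right $\M^{\leq0}$-module $\M'\mapsto\C(\M',C)^{\leq0}$, and the map in question is the composite $\C^{\leq0}(-,C)\lotimes_{\M^{\leq0}}\C\to\C(-,C)\lotimes_{\M}\C\simeq\C(-,C)$ induced by multiplication.

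\emph{Base case $n=0$}, i.e.\ $C\in H^0\M$ (up to isomorphism in $H^0\C$, and closing $\M$ under such isomorphisms if needed). Then $\C^{\leq0}(-,C)|_{\M}=\M^{\leq0}(-,C)$ is representable. Hence $\M^{\leq0}(-,C)\lotimes_{\M^{\leq0}}\C=\C(-,C)$ by the Yoneda isomorphism, and the canonical map is the identity.

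\emph{Inductive step.} Take the first triangle $X_1\to M_0\xrightarrow{f_0}C$ of the resolution, where $f_0$ is a right $H^0\M$-approximation and $X_1$ has a finite right $H^0\M$-resolution of length $n-1$. As in the proof of \ref{exact}, $f_0$ being an approximation means that $\M(-,M_0)\to\M(-,C)$ is surjective on $H^0$. Therefore the truncated sequence
\[ \C^{\leq0}(-,X_1)|_\M\to\M^{\leq0}(-,M_0)\to\C^{\leq0}(-,C)|_\M \]
is a triangle in $\rD(\M^{\leq0})$. The point to check is that truncation at degree $0$ preserves the triangle exactly when $H^0$ of the last map is surjective: the long exact sequence then splits at degree $0$. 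Apply $-\lotimes_{\M^{\leq0}}\C$ to this triangle and compare with the triangle $\C(-,X_1)\to\C(-,M_0)\to\C(-,C)$ in $\rD(\C)$ via the canonical maps. These maps are natural, so they give a morphism of triangles. The first two components are isomorphisms, by the induction hypothesis for $X_1$ and by the base case for $M_0$. By the five lemma the third component is an isomorphism too.

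The main obstacle I expect is to make the comparison an honest morphism of triangles rather than merely a commutative square. I would handle this at the dg level. The sequence $0\to\C(-,X_1)^{\leq0}\to \cone$-type model $\to\dots$ can be realised by strictly functorial dg modules: use the cone of $f_0$ in the pretriangulated $\C$ and truncate degreewise. The canonical map $(-)^{\leq0}\to(-)$ is a morphism of dg modules compatible with these models, so applying $-\lotimes_{\M^{\leq0}}\C$ (a triangle functor) to the natural transformation gives a morphism of triangles. Finally, \ref{LR}(1) follows by applying the lemma to every $C$ and noting that $\C^{\leq0}\lotimes_{\M^{\leq0}}\C\to\C$ is then an isomorphism objectwise. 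From this one obtains the localization property by the standard criterion, namely that $\C\lotimes_{\M^{\leq0}}\C\simeq\C$, using \ref{loc} for $\C^{\leq0}\to\C$.
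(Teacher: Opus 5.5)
Your proposal is correct and is essentially the paper's argument: the paper concatenates the resolution triangles into an $(n+2)$-angle of truncated representables via \ref{exact}, applies $-\lotimes_{\M^{\leq0}}\C$, and identifies the total of $\C(-,M_n)\to\cdots\to\C(-,M_0)$ with $\C(-,C)$. Your induction on the length does the same thing one triangle at a time. Your dg-level check that the comparison maps form a genuine morphism of triangles, so that the five lemma applies, makes explicit a compatibility the paper leaves implicit.
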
	
\begin{proof}
	We only prove (1). If $C\in\C$ has a finite right $H^0\M$-resolution, then there exist $n\geq0$ and triangles
	\[ \xymatrix{ C_{i+1}\ar[r]&M_i\ar[r]^-{f_i}&C_i } \]
	for $0\leq i\leq n-1$ with $C_0=C$ and $C_n=:M_n\in\M$ such that $f_i$ is a right $H^0\M$-approximation.
	By \ref{exact}, such triangles yield an $(n+2)$-angle
	\[ \xymatrix{
		\M^{\leq0}(-,M_n)\ar[r]&\M^{\leq0}(-,M_{n-1})\ar[r]&\cdots\ar[r]&\M^{\leq0}(-,M_1)\ar[r]&\M^{\leq0}(-,M_0)\ar[d]\\
		&&&&\M^{\leq0}(-,C) } \]
	which yields a quasi-isomorphism from the total of the first row to $\C^{\leq0}(-,C)$.
	Applying $-\lotimes_{\M^{\leq0}}\C$, the above diagram becomes
	\[ \xymatrix{
		\C(-,M_n)\ar[r]&\C(-,M_{n-1})\ar[r]&\cdots\ar[r]&\C(-,M_1)\ar[r]&\C(-,M_0)\ar[d]\\
		&&&&\C^{\leq0}(-,C)\lotimes_{\M^{\leq0}}\C. } \]
	Since total of the first row is $\C(-,C)$, we obtain the desired isomorphism.
\end{proof}

\begin{proof}[Proof of \ref{LR}]
	The isomorphisms follow immediately from \ref{lr}. We show that $\M^{\leq0}\to\M$ is a localization.	
	In the case (1), the isomorphism $\C\lotimes_{\M^{\leq0}}\C^{\leq0}\xsimeq\C$ shows that we have a commutative diagram on the left below
	\[ \xymatrix{
		\rD(\M)\ar[r]^-\res\ar[d]^-\rsimeq_-{\RHom_\M(\C,-)}&\rD(\M^{\leq0})\ar@{^(->}[d]^-{\RHom_{\M^{\leq0}}(\C^{\leq0},-)}\\
		\rD(\C)\ar[r]^-\res&\rD(\C^{\leq0}), }
	\qquad
	\xymatrix{
		\rD(\M)\ar[r]^-\res\ar[d]^-\rsimeq_-{-\lotimes_{\M}\C}&\rD(\M^{\leq0})\ar@{^(->}[d]^-{-\lotimes_{\M^{\leq0}}\C^{\leq0}}\\
		\rD(\C)\ar[r]^-\res&\rD(\C^{\leq0}) } \]
	in which the lower horizontal functor is fully faithful by \ref{loc}. Similarly, in the case (2), we get the isomorphism $\C^{\leq0}\lotimes_{\M^{\leq0}}\C\xsimeq\C$, which shows that there is a commutative diagram on the right above.
	In either case, we see that the restriction functor $\rD(\M)\to\rD(\M^{\leq0})$ is fully faithful.
\end{proof}


Now we are ready to give the main result of this section which is a sufficient condition for a subcategory to give a connective resolution.
\begin{Thm}\label{conn}
Let $\C$ be a dg category and $\M\subset\per_\dg\!\C$ a full dg subcategory. Suppose that there is $n\geq0$ such that each object in $\per\C$ has a finite right $H^0\M$-resolution of length $\leq n$. Then $\M$ gives a connective resolution of right global dimension $\leq n+2$.
\end{Thm}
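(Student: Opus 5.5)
We have to check the two conditions of \ref{def}: that $\M^{\leq0}$ has right global dimension at most $n+2$, and that $\M^{\leq0}\to\M\subset\per_\dg\!\C$ is a localization. First we replace $\C$ by $\per_\dg\!\C$, which is pretriangulated and Morita equivalent to $\C$, so $\M\subset\C$ is a full dg subcategory of a pretriangulated dg category. The localization property is then the last assertion of \ref{LR}(1), since every object of $H^0\C$ has a finite right $H^0\M$-resolution. It remains to compose with $\M\subset\per_\dg\!\C$. This inclusion is Morita (hence a localization) because $H^0\M$ generates. Indeed, each $X$ has a finite resolution by objects of $\M$, so $X\in\thick H^0\M$; explicitly, the total of the resolution gives $\C(-,X)\in\thick\{\C(-,M)\}$, as in the proof of \ref{lr}. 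Thus $\M\to\C$ induces $\per\M\simeq\per\C$, and the composite is a localization.

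For the global dimension we refine \ref{finite}(1) to a quantitative bound. Let $X\in\mod H^0\M$ with a presentation $H^0\M(-,M_1)\xrightarrow{H^0(g)}H^0\M(-,M_0)\to X\to0$ induced by $g\colon M_1\to M_0$ in $H^0\M$. Complete $g$ to a triangle $N\to M_1\to M_0$ in $H^0\C$. Exactly as in \ref{DFun} and \ref{finite}, $X$ is the total of
\[ \M^{\leq0}(-,N)\to\M^{\leq0}(-,M_1)\to\M^{\leq0}(-,M_0). \]
The point is that $H^0$ of the triangle gives the long exact sequence, which kills the higher cohomology contributions of the truncations. So $X\in\{\M^{\leq0}(-,N)[2]\}\ast\add\M^{\leq0}[1]\ast\add\M^{\leq0}$, up to ordering conventions. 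By \ref{exact}(1), the finite right $H^0\M$-resolution of $N$ of length $\leq n$ gives an $(n+2)$-angle exhibiting
\[ \M^{\leq0}(-,N)\in\add\M^{\leq0}\ast\add\M^{\leq0}[1]\ast\cdots\ast\add\M^{\leq0}[n]. \]
Concatenating, we get $X\in\add\M^{\leq0}\ast\cdots\ast\add\M^{\leq0}[n+2]$, hence $\pd X\leq n+2$ by the easy direction of \ref{pds}, which is immediate from \ref{PD}.

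We expect the main obstacle to be the bookkeeping in this last step. One must check that the total of the three-term complex really is $X$, concentrated in degree $0$. This uses that $\M^{\leq0}(-,M_0)\to\M^{\leq0}(-,M_1)[1]$ and the connecting map behave on $H^{<0}$ as the long exact sequence of the triangle $N\to M_1\to M_0$ restricted to $\M$; the negative cohomologies cancel exactly because the sequence $\C(-,N)\to\C(-,M_1)\to\C(-,M_0)$ is a triangle. One must also check that the shifts from the $(n+2)$-angle of $N$ line up with the extra shift by $2$, so the total length is $n+2$ and not $n+3$. If the three-term total argument is delicate, we would instead argue via the $\RHom$ characterization in \ref{PD}. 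For $Y\in\Mod H^0\M$, the complex $\RHom_{\M^{\leq0}}(\M^{\leq0}(-,N),Y)$ is concentrated in degrees $[-n,0]$ by the angle, so $\RHom(X,Y)$ lies in degrees $[0,n+2]$.
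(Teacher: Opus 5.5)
Your proposal is correct and follows essentially the paper's route. The localization comes from \ref{LR}, after passing to the pretriangulated $\per_\dg\!\C$ and noting that $\M\subset\per_\dg\!\C$ is Morita, and the global dimension bound comes from the three-term complex of \ref{finite} combined with the $(n+2)$-angle of \ref{exact}; you make that bound explicit, whereas the paper only records perfectness. One small slip in your fallback: the angle places $\RHom_{\M^{\leq0}}(\M^{\leq0}(-,N),Y)$ in degrees $[0,n]$, not $[-n,0]$, and it is $[0,n]$ that gives $[0,n+2]$ after the shift by $2$.
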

\begin{proof}
	The truncation $\M^{\leq0}$ has finite global dimension by \ref{finite}, and the map $\M^{\leq0}\to\M$ is a localization by \ref{LR}. This shows that $\M$ gives a connective resolution of $\per_\dg\!\C$.
\end{proof}

Furthermore, we give a description of the kernel of the localization functor.
\begin{Thm}\label{Kernel}
Under the setting of \ref{conn}, the kernel of the induction functor $-\lotimes_{\M^{\leq0}}\M\colon\per\M^{\leq0}\to\per\M=\per\C$ is generated as a thick subcategory by $\mod H^0\M$. Therefore, there is an equivalence
\[ \per\M^{\leq0}/\thick(\mod H^0\M)\xsimeq\per\C. \]
\end{Thm}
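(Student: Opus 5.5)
We prove the two inclusions $\thick(\mod H^0\M)\subset\Ker(-\lotimes_{\M^{\leq0}}\M)$ and the reverse one. The equivalence $\per\M^{\leq0}/\thick(\mod H^0\M)\xsimeq\per\C$ then follows. Indeed, by \ref{conn} the functor $\M^{\leq0}\to\M$ is a localization. Since it is induced by a dg functor between small categories, the compact part $\per\M^{\leq0}$ maps onto $\per\M$ up to direct summands, and $\per\M=\per\C$. The kernel of this restricted quotient is the intersection of the localizing kernel with $\per\M^{\leq0}$. (Thomason–Neeman: the kernel is compactly generated once we exhibit compact generators, which is what we do.)

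\emph{Inclusion $\subset$.} First, every $X\in\mod H^0\M$ lies in $\per\M^{\leq0}$; this is the content of the proof of \ref{finite}. Second, $X\lotimes_{\M^{\leq0}}\M=0$. To see this, take the presentation from the proof of \ref{finite}: a triangle $N\to M_1\to M_0$ in $\per\C$ such that $X$ is the total of
\[ \M^{\leq0}(-,N)\to\M^{\leq0}(-,M_1)\to\M^{\leq0}(-,M_0). \]
The last two terms induce to $\M(-,M_1)$ and $\M(-,M_0)$. For the first term, apply \ref{lr} with $\M$ in place of $\C$; more precisely, use the $(n+2)$-angle of \ref{exact} together with the fact that $\M^{\leq0}(-,M_i)\lotimes_{\M^{\leq0}}\M=\M(-,M_i)$, exactly as in the proof of \ref{lr}. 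This gives $\M^{\leq0}(-,N)\lotimes_{\M^{\leq0}}\M\simeq\M(-,N)$. So the induced complex is the total of $\M(-,N)\to\M(-,M_1)\to\M(-,M_0)$, which comes from a triangle and is therefore acyclic. Alternatively, $X$ is uniformly bounded, and one could try to imitate \ref{ubb}; but \ref{ubb} used shifts, which $\M$ lacks, so the direct computation above is the route.

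\emph{Inclusion $\supset$.} This is the main obstacle. Let $Y\in\per\M^{\leq0}$ with $Y\lotimes_{\M^{\leq0}}\M=0$. The plan is to show that $Y$ has bounded cohomology lying in $\mod H^0\M$, and then to filter it by its cohomology using the standard $t$-structure. Since $\per\M^{\leq0}$ is generated by representables in nonpositive degrees, $Y$ is bounded above. The delicate point is bounded below. For this, use the isomorphism $\RHom_{\M^{\leq0}}(\M^{\leq0}(-,M),Y)=YM$. Compare it with $\RHom_{\M}(\M(-,M),Y\lotimes\M)=0$ via the map $Y\to Y\lotimes_{\M^{\leq0}}\M$ from the triangle $\M^{\leq0}\to\M\to\M^{>0}$. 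This gives $Y\simeq Y\lotimes_{\M^{\leq0}}\M^{>0}[-1]$.

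Since $Y$ is perfect, it is a finite extension of shifts $\M^{\leq0}(-,M)[j]$ with $j$ in a bounded range. Hence $Y\lotimes\M^{>0}[-1]$ is built from the modules $\M^{>0}(-,M)[j-1]$, which are uniformly bounded below. Therefore $Y$ is uniformly bounded.

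Each $H^iY$ is finitely presented over $H^0\M$. This is argued by descending induction on the degree, using coherence from \ref{cohe} and the fact that the top cohomology of a perfect connective module is finitely presented. Truncating $\tau^{\geq i}$ at each step stays within $\per\M^{\leq0}$, because finitely presented modules are perfect by \ref{finite}. We conclude $Y\in\thick(\mod H^0\M)$.
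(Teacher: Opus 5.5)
Your identification of the compact part of the kernel is correct. The inclusion $\thick(\mod H^0\M)\subset\Ker$ is the paper's Lemma \ref{HsubsetK} in slightly different packaging: you handle $\M^{\leq0}(-,N)$ via \ref{exact} and \ref{lr} instead of splicing everything into one $(n+2)$-angle. You are also right that \ref{ubb} is unavailable, since $\M$ has no shifts.

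For the reverse inclusion you take a genuinely different route. The paper uses $X\simeq X\lotimes_{\M^{\leq0}}\M^{>0}[-1]$ to place $X$ in $\Loc(\mod H^0\M)$, and then quotes Neeman's theorem that the compact objects there form $\thick(\mod H^0\M)$. You use the same isomorphism only to get uniform boundedness. You then peel off cohomology with the standard $t$-structure, using two facts: the top cohomology of a perfect module over the connective $\M^{\leq0}$ is finitely presented, and $\mod H^0\M\subset\per\M^{\leq0}$ by \ref{finite}. This avoids Neeman's theorem. It also identifies the kernel on $\per\M^{\leq0}$ as exactly the uniformly bounded perfect modules, which is the analogue of \ref{Res}.

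The gap is in the ``therefore''. You know two things: $\rD(\M^{\leq0})\to\rD(\M)$ is a localization with kernel $\N$, and $\N\cap\per\M^{\leq0}=\thick(\mod H^0\M)$. These do not imply that $\per\M^{\leq0}\to\per\C$ is a Verdier quotient by $\thick(\mod H^0\M)$. For that you need $\N$ to be generated by compact objects. A smashing kernel can be nonzero and still contain no nonzero compact objects, as in Keller's counterexample to the generalized smashing conjecture for a homological ring epimorphism $A\to A/I$; there $\per A\to\per A/I$ is not fully faithful. Your parenthetical says you exhibit compact generators of the kernel, but your argument only ever treats perfect $Y$, so it does not.

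The missing step is the paper's. For \emph{every} $Y\in\N$ one has $Y\simeq Y\lotimes_{\M^{\leq0}}\M^{>0}[-1]$. Moreover $\M^{>0}(-,M)=\hocolim_p\M^{[1,p]}(-,M)$ lies in $\Loc(\mod H^0\M)$, because each $H^i\M(-,M)=H^0\M(-,M[i])$ is finitely presented by \ref{cohe}. Hence $\N=\Loc(\mod H^0\M)$ is compactly generated by objects of $\per\M^{\leq0}$. Neeman's localization theorem then makes $\per\M^{\leq0}/\thick(\mod H^0\M)\to\per\C$ fully faithful, and essential surjectivity follows from \ref{lr} and \ref{exact}. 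With this step in place, Neeman's theorem also yields your kernel description directly, which is why the paper proceeds that way.
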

\begin{proof}
	This is a consequence of \ref{HsubsetK} and \ref{KsubsetH} below.
\end{proof}

\begin{Lem}\label{HsubsetK}
Suppose that every object in $\per\C$ has a finite right $H^0\M$-resolution. Then for each $X\in\mod H^0\M$, we have $X\lotimes_{\M^{\leq0}}\M=0$.
\end{Lem}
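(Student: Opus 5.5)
Since $-\lotimes_{\M^{\leq0}}\M$ is exact and commutes with coproducts, it suffices to treat a finitely presented $X$ and show that its image vanishes. The plan imitates the proof of \ref{finite}, but computes the image under the induction functor instead of checking perfectness.

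First, choose a presentation $H^0\M(-,M_1)\xrightarrow{H^0 u} H^0\M(-,M_0)\to X\to0$. Represent the first map by a morphism $u\colon M_1\to M_0$ in $H^0\M\subset\per\C$, and complete it to a triangle $N\to M_1\to M_0$ in $\per\C$. As in \ref{finite}, the sequence
\[ \M^{\leq0}(-,N)\to\M^{\leq0}(-,M_1)\to\M^{\leq0}(-,M_0) \]
is a complex of dg $\M^{\leq0}$-modules, and its total is quasi-isomorphic to $X$. The reason is as follows. The cocone of $\M(-,M_1)\to\M(-,M_0)$ is $\M(-,N)$. Truncating at degree $0$ gives a triangle up to the failure of surjectivity in $H^0$, and that failure is measured exactly by $\operatorname{coker} H^0u=X$. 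So there is a triangle
\[ \M^{\leq0}(-,N)\to\Tot\bigl(\M^{\leq0}(-,M_1)\to\M^{\leq0}(-,M_0)\bigr)\to \cdots \]
which identifies $X$ with the total of the three-term complex. I would verify this degreewise on cohomology, using the long exact sequence of $\M(-,N)\to\M(-,M_1)\to\M(-,M_0)$ in degrees $\leq0$.

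Next, apply $-\lotimes_{\M^{\leq0}}\M$. The representables $\M^{\leq0}(-,M_i)$ go to $\M(-,M_i)$. For $N$, which need not lie in $\M$, I use the hypothesis: $N$ has a finite right $H^0\M$-resolution. By \ref{exact}, $\M^{\leq0}(-,N)$ is then the total of the $(n+2)$-angle built from representables $\M^{\leq0}(-,N_j)$. Inducing this (as in the proof of \ref{lr}) gives the total of the corresponding complex of $\M(-,N_j)$, which is $\M(-,N)$, because the resolving triangles are genuine triangles in $\per\C$. Hence $\M^{\leq0}(-,N)\lotimes_{\M^{\leq0}}\M\simeq\M(-,N)$ canonically, compatibly with the maps. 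Therefore $X\lotimes_{\M^{\leq0}}\M$ is the total of
\[ \M(-,N)\to\M(-,M_1)\to\M(-,M_0), \]
which comes from the triangle $N\to M_1\to M_0$ and is therefore acyclic. So $X\lotimes_{\M^{\leq0}}\M=0$.

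The main obstacle is the first step: making precise that the three-term complex of truncated representables really has total $X$, with the homotopy data (the nullhomotopy of the composite) chosen compatibly. The same data must also be used in the induced complex, so that its total is the cone of the original triangle. I would handle this by working with the explicit cocone. One takes $\M^{\leq0}(-,\cone(u)[-1])$ and compares it with the truncation of $\cone(\M(-,M_1)\to\M(-,M_0))[-1]$. The key point is that truncation $(-)^{\leq0}$ of a cone differs from the cone of truncations only by $H^0$ of the cokernel, which is $X$ placed in degree $0$.
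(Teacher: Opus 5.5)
Your proposal is correct and follows essentially the same route as the paper: present $X$, complete $M_1\to M_0$ to a triangle $N\to M_1\to M_0$ in $\per\C$, and use the finite right $H^0\M$-resolution of $N$ (via \ref{exact}) to control $\M^{\leq0}(-,N)$. Inducing then yields the total of $\M(-,N)\to\M(-,M_1)\to\M(-,M_0)$, which is acyclic.

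The only difference is bookkeeping. The paper splices the resolution of $N$ with the three-term complex into a single $(n+2)$-angle of representables resolving $X$ before inducing, while you keep $\M^{\leq0}(-,N)$ as one block and invoke the argument of \ref{lr}. Also, your opening reduction to finitely presented $X$ is redundant, since $X\in\mod H^0\M$ is already finitely presented.
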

\begin{proof}
	Let $X\in\mod H^0\M$ and consider the presentation $H^0\M(-,M_1)\to H^0\M(-,M_0)\to X\to 0$ in $\mod H^0\M$. Regard a morphism $M_1\to M_0$ in $H^0\M$ as a one in $\per\M=\per\C$ and extend it to a triangle $C\to M_1\to M_0$ in $\per\C$. Since each object in $\per\C$ has a finite right $H^0\M$-resolution, we may take by \ref{exact} a sequence of triangles which gives an $n$-angle $M_n\to M_{n-1}\to\cdots\to M_3\to M_2\to C$ such that the induced sequence $\M^{\leq0}(-,M_n)\to\cdots\M^{\leq0}(-,M_2)$ has acyclic total module. This gives an $(n+2)$-angle
	\[ \xymatrix{
		\M^{\leq0}(-,M_{n})\ar[r]&\M^{\leq0}(-,M_{n-1})\ar[r]&\cdots\ar[r]&\M^{\leq0}(-,M_1)\ar[r]&\M^{\leq0}(-,M_0)\ar[d]\\
		&&&&X } \]
	in $\per\M^{\leq0}$ which is a resolution of $X$.
	Now, applying $-\lotimes_{\M^{\leq0}}\M$ computes $X\lotimes_{\M^{\leq0}}\M$ as the total of
	\[ \xymatrix{
		\M(-,M_{n})\ar[r]&\M(-,M_{n-1})\ar[r]&\cdots\ar[r]&\M(-,M_1)\ar[r]&\M(-,M_0) } \]
	which is acylic.
\end{proof}

\begin{Lem}\label{KsubsetH}
Suppose that every object in $\per\C$ has a finite right $H^0\M$-resolution. If $X\in\per\M^{\leq0}$ satisfies $X\lotimes_{\M^{\leq0}}\M=0$, then $X\in\thick(\mod H^0\M)$.
\end{Lem}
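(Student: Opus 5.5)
Let $\K$ denote the kernel of $-\lotimes_{\M^{\leq0}}\M\colon\per\M^{\leq0}\to\per\C$, and $\mathscr{H}=\thick(\mod H^0\M)$, which lies in $\K$ by \ref{HsubsetK}. The plan is to use the silting subcategory $\P=\add\M^{\leq0}$ of $\per\M^{\leq0}$ and argue by induction on the ``width'' of $X$. Since $X\in\per\M^{\leq0}$, we have $X\in\P[a]\ast\P[a+1]\ast\cdots\ast\P[b]$ for some $a\leq b$. Equivalently, $X$ is built by finitely many cones from shifted representables.

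The first and key step is to show that every $X\in\K$ has bounded cohomology. I would compare with \ref{Res} in the universal case. The induction functor sends $\M^{\leq0}(-,M)$ to $\M(-,M)$, so $X\lotimes_{\M^{\leq0}}\M$ is the ``untruncated'' total complex. Concretely, write $X$ as an iterated cone of $P_i[-i]$ with $P_i\in\add\M^{\leq0}$ for $i$ in a finite range. Then $X\lotimes\M$ is the corresponding iterated cone of $\M(-,M_i)[-i]$, and the cone of $X\to X\lotimes_{\M^{\leq0}}\M$ is built from $\M^{>0}(-,M_i)[-i]$. These are concentrated in degrees $>-b$, uniformly. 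Hence, if $X\lotimes\M=0$, then $X$ is uniformly bounded below; it is bounded above since it is perfect over a connective category. So $X$ has cohomology in a finite range $[c,d]$.

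The second step is to show that each cohomology $H^iX$ (an $H^0\M$-module) lies in $\mod H^0\M$. Here I use \ref{cohe}. Because every object of $\per\C$ has a finite right $H^0\M$-resolution, $H^0\M$ is contravariantly finite, hence right coherent, and each $H^j\M(-,M)=H^0\C(-,M[j])|_{\M}$ is finitely presented. Since $\mod H^0\M$ is abelian and contains all $H^j$ of representables, an induction on the number of cones shows that all $H^iX$ for $X\in\per\M^{\leq0}$ are finitely presented: the long exact sequences involve only finitely presented modules, and kernels and cokernels stay in $\mod H^0\M$.

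The third step is to conclude using the standard $t$-structure truncation triangles. For $X$ with cohomology in $[c,d]$, the triangles $\sigma^{\leq d-1}X\to X\to H^dX[-d]$ and their iterates express $X$ as a finite extension of the $H^iX[-i]$. Each of these lies in $\mod H^0\M[-i]\subset\mathscr{H}$, so $X\in\mathscr{H}$. The truncations need not a priori be perfect, but we only need the extension statement inside $\rD(\M^{\leq0})$. The thick closure $\mathscr{H}$ is taken there, and each $H^iX\in\mod H^0\M$ is perfect: \ref{HsubsetK} and its proof resolve such modules by an $(n+2)$-angle of representables. Hence the filtration stays inside $\mathscr{H}$.

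The main obstacle I expect is the uniform boundedness in the first step. The bound needs care because $\M^{>0}(-,M)$ is uniformly bounded below only relative to the shifts appearing in the finite cone presentation. I would first make this precise using the silting filtration $X\in\P[a]\ast\cdots\ast\P[b]$, with $\M^{>0}$ concentrated in degrees $\geq1$.
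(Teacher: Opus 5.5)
Your argument is correct, but it follows a different route from the paper's. Both proofs start from the same triangle: $X\lotimes_{\M^{\leq0}}\M=0$ forces $X\simeq X\lotimes_{\M^{\leq0}}\M^{>0}[-1]$.

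\textbf{The paper's route.} It writes $\M^{>0}(-,M)$ as the homotopy colimit of the $\M^{[1,p]}(-,M)$. These are finite extensions of the finitely presented modules $H^i\M(-,M)$, so $X\in\Loc(\mod H^0\M)$. Since $X$ and $\mod H^0\M$ are compact (by \ref{finite}), Neeman's theorem then gives $X\in\thick(\mod H^0\M)$.

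\textbf{Your route.} You use the same triangle only to get a uniform lower bound on the cohomology of $X$, via its finite silting filtration. You pair this with the upper bound that comes from connectivity. Then you use right coherence (\ref{cohe}) to show that every cohomology module of a perfect $\M^{\leq0}$-module lies in $\mod H^0\M$. Finally you do a finite d\'evissage along the standard $t$-structure.

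\textbf{What each buys.} Your approach avoids the infinite homotopy colimit and Neeman's localization theorem. It also gives a sharper description of the kernel, in parallel with \ref{Res}: it consists exactly of the perfect modules with uniformly bounded cohomology, and each is a finite extension of its finitely presented cohomology modules. The cost is the extra lemma on cohomology of perfect modules, which uses coherence more heavily (closure of $\mod H^0\M$ under kernels). The paper only needs that each $H^i\M(-,M)$ is finitely presented.

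\textbf{One point to make explicit.} In that lemma, the long exact sequence places $H^iC$ in a short exact sequence between a cokernel and a kernel. So you should state that $\mod H^0\M$ is closed under extensions and under direct summands in $\Mod H^0\M$, not only under kernels and cokernels.
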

\begin{proof}
	Since $\mod H^0\M\subset\per\M^{\leq0}$ by \ref{finite}, it is enough by \cite{Ne92} to show that $X\in\Loc(\mod H^0\M)$ in $\rD(\M^{\leq0})$. Consider the triangle $\M^{\leq0}\to\M\to\M^{>0}$. Applying $X\lotimes_{\M^{\leq0}}-$, we wee by assumption that $X\simeq X\lotimes_{\M^{\leq0}}M^{>0}[-1]$.
	Now by $\M^{>0}(-,M)=\hocolim_{p\geq0}\M^{[1,p]}(-,M)$ and since each $H^i\M(-,M)=H^0\M(-,M[i])$ is finitely presented by \ref{cohe}, we see that $\M^{>0}\in\Loc H^0\M$. It follows that $X\simeq X\lotimes_{\M^{\leq0}}\M^{>0}[-1]\in\Loc(\mod H^0\M)$, as desired.
\end{proof}

Combining \ref{conn} and \ref{Kernel}, we deduce the following consequence for objects giving connective resolutions. For a connective dg algebra $A$ of finite global dimension such that $H^0A$ is right coherent and each $H^iA$ is finitely presented, its {\it cosingularity category} is the Verdier quotient $\per A/\thick(\mod H^0A)=:\cosg A$.
\begin{Cor}\label{object}
Let $\C$ be a dg category. Suppose that there exist $n\geq0$ and $M\in\per\C$ such that each object in $\per\C$ has a right $(\add M)$-resolution of length $\leq n$.
\begin{enumerate}
\item The object $M$ gives a connective resolution.
\item There is a derived Morita equivalence $\cosg_\dg\!\REnd_\C(M)^{\leq0}\xsimeq\C$.
\end{enumerate}
\end{Cor}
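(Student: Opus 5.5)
The plan is to specialize \ref{conn} and \ref{Kernel} to the subcategory $\M=\add_\dg\!M\subset\per_\dg\!\C$, meaning the full dg subcategory whose objects are those of $\per_\dg\!\C$ lying in $\add M$ in $\per\C$. Then $H^0\M=\add M$. By hypothesis every object of $\per\C$ has a right $(\add M)$-resolution of length $\leq n$, so the assumption of \ref{conn} holds verbatim. Hence $\M$ gives a connective resolution, of global dimension $\leq n+2$. Since $H^0\M$ has the additive generator $M$, Definition \ref{def} says exactly that $M$ gives a connective resolution, namely $\REnd_\C(M)^{\leq0}$. This proves (1).

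For (2), I first note that the truncation $\M^{\leq0}$ is Morita equivalent to the dg algebra $A:=\REnd_\C(M)^{\leq0}$. Every object of $\M^{\leq0}$ is a direct summand of a finite sum of copies of $M$. This holds in $H^0\M^{\leq0}=H^0\M=\add M$, so $\per\M^{\leq0}=\thick\M^{\leq0}(-,M)$. The equivalence $\per\M^{\leq0}\simeq\per A$ then identifies $\mod H^0\M$ with $\mod H^0A$, using $H^0A=\End_{\per\C}(M)$ and the projectivization $\mod(\add M)\simeq\mod\End(M)$. \ref{Kernel} gives $\per\M^{\leq0}/\thick(\mod H^0\M)\simeq\per\C$, induced by $-\lotimes_{\M^{\leq0}}\M$. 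Transporting along the Morita equivalence, we get $\per A/\thick(\mod H^0A)\simeq\per\C$.

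The main point needing care is upgrading this to a statement about dg enhancements, i.e.\ a derived Morita equivalence $\cosg_\dg\! A\simeq\C$. The plan is to use the dg quotient of $\per_\dg\!A$ by $\thick_\dg(\mod H^0A)$. The induction functor along $A\to\REnd_\C(M)$ followed by $\per_\dg\!\REnd_\C(M)\simeq\per_\dg\!\C$ is a dg functor that kills $\mod H^0A$ by \ref{HsubsetK}. It therefore factors through the dg quotient, by the universal property of Drinfeld/Keller quotients. On $H^0$ the induced functor is the triangle equivalence above, so it is a quasi-equivalence up to idempotent completion. Here $\per\C$ is idempotent complete, and $\thick M=\per\C$ by the lemma following \ref{def}. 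Hence it is a Morita equivalence.

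Finally, I must check that $\cosg A$ is defined, i.e.\ that the standing hypotheses on $A$ hold. $A$ has finite global dimension by (1). For coherence: by hypothesis every object of $\per\C$, in particular every $C\in\per\C$, has a right $\add M$-approximation, so $\add M$ is contravariantly finite in $\per\C$. Then \ref{cohe} shows that $H^0A$ is right coherent and that each $H^iA=\Hom(M,M[i])$ is finitely presented. I expect no real obstacle beyond this bookkeeping; the dg-level factorization through the quotient is the only step that goes beyond quoting \ref{conn} and \ref{Kernel}.
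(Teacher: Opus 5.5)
Your proposal is correct and follows essentially the same route as the paper. Part (1) is \ref{conn} applied to $\add_\dg\!M$. Part (2) is \ref{Kernel}, with \ref{cohe} supplying right coherence of $H^0$ and finite presentation of the $H^i$ of $\REnd_\C(M)^{\leq0}$, which makes $\cosg$ defined. The paper leaves implicit two steps you spell out: the Morita equivalence $\M^{\leq0}\sim\REnd_\C(M)^{\leq0}$, and the lift to dg level through the Drinfeld quotient.
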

\begin{proof}
	The assertion (1) is \ref{conn}. Since $\REnd_\C(M)^{\leq0}$ has right coherent $0$-th cohomology and each of its cohomology is finitely presented by \ref{cohe}, the assertion (2) follows from \ref{Kernel}.
\end{proof}

\subsection{Consequences}
We collect further consequences of the main result of this section.
First, under a finiteness and condition we may drop the uniform bound of the resolution. Also under the separability condition, one has smoothness of the truncation.
\begin{Thm}\label{smconn}
Let $\C$ be a $k$-linear, $\Hom$-finite dg category and let $M\in\per\C$. If every object in $\per\C$ has a finite right $(\add M)$-resolution, then $M$ gives a connective resolution of $M$.
If moreover $E=\End_{\rD(\C)}(M)$ satisfies that $E/J_E$ is separable over $k$, then $M$ gives a smooth connective resolution of $\C$.
\end{Thm}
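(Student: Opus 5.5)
Throughout, write $A=\REnd_\C(M)^{\leq0}$ and $\M=\add_\dg\!M$.

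\textbf{Localization.} By \ref{LR}(1), applied to $\M\subset\per_\dg\!\C$, the existence of finite right $(\add M)$-resolutions for every object already makes $\M^{\leq0}\to\M\subset\per_\dg\!\C$ a localization. No uniform bound is needed here, since \ref{lr} works object by object.

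\textbf{Global dimension.} This is the only place where \ref{conn} used a uniform bound, and I would replace that bound by a Hom-finiteness argument. Since $\C$ is Hom-finite, $E=H^0A=\End_{\rD(\C)}(M)$ is finite dimensional and each $H^iA$ is finite dimensional, so $A$ is a connective dg algebra with finite-dimensional cohomology. Every $X\in\mod E$ has a finite filtration by simple modules, and triangles only raise projective dimension additively, so it suffices to show $\pd S<\infty$ for each simple $E$-module $S$.

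For each such $S$, I follow the proof of \ref{finite}. Write $S$ as the cokernel of $H^0\M(-,M_1)\to H^0\M(-,M_0)$, complete the map to a triangle $N\to M_1\to M_0$ in $\per\C$, and use the finite $(\add M)$-resolution of $N$ together with \ref{exact}. This produces a finite complex of objects of $\add A$ with total $S$, so $S\in\per A$. There are finitely many simples, so their projective dimensions have a common finite bound $d$, and hence every $X\in\mod E$ has $\pd X\leq d+\ell(X)$ for its length $\ell(X)$.

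The obstacle is that this bound depends on $\ell(X)$, while \ref{gldim} requires a uniform bound. To get one, I would pass to $\RHom_A(S_1,S_2)\in\per k$ and use the dévissage behind \ref{pd}. Combined with \ref{pds}, this should show that $\pd X\leq\sup_S\pd S$ for every finitely presented $X$. Indeed, \ref{PD} characterizes $\pd X$ as the range where $\RHom_A(X,-)$ is nonzero on $\Mod E$. For finite-dimensional targets this range is controlled by the simples via filtrations. For arbitrary $E$-modules I would use that $X$ is perfect, so $\RHom_A(X,-)$ commutes with filtered colimits, and every module is a filtered colimit of finite-length ones, $E$ being finite dimensional. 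Hence $\gldim A\leq\max_S\pd S<\infty$, and the first claim follows.

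\textbf{Smoothness.} If $E/J_E$ is separable over $k$, then $A$ satisfies the hypotheses of \ref{converse}: it is connective, each cohomology is finite dimensional, and $H^0A/\rad H^0A$ is separable. Since $A$ has finite global dimension by the previous step, \ref{converse} gives that $A$ is smooth. So $M$ gives a smooth connective resolution of $\C$.
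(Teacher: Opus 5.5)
Your proposal is correct and follows the paper's proof. The localization comes from \ref{LR}, the finitely many simple $E$-modules have finite projective dimension by the argument of \ref{finite} and \ref{exact}, $\gldim$ reduces to simples, and smoothness follows from \ref{converse}. The only difference is that you justify the reduction to simples, which the paper merely asserts, and your \emph{obstacle} there is not real: a short exact sequence in $\Mod E$ is a triangle with all three terms in the heart, so no shift appears. Hence the class of $X$ in the heart with $\RHom_A(X,\Mod E)\subset\rD^{[0,d]}(k)$ is closed under extensions, a composition series gives $\pd X\le d$ directly, and the filtered-colimit detour is unnecessary.
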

\begin{proof}
	By the finiteness assumption, the global dimension of $\REnd_\C(M)^{\leq0}$ is the supremum of the projective dimensions of simple $H^0\REnd_\C(M)^{\leq0}$-modules. Since there are only finitely many such modules and every simple $\REnd_\C(M)^{\leq0}$-module has finite projective dimension by \ref{exact}, we see that $\REnd_\C(M)^{\leq0}$ has finite global dimension. Therefore $M$ gives a connective resolution as in \ref{conn}.
	Under the separability assumption, the smoothness follows from \ref{converse}.
\end{proof}

We may also apply \ref{conn} to cluster tilting subcategories.
\begin{Cor}\label{CT}
Let $\C$ be a dg category. If $\M\subset\per_\dg\!\C$ is a $d$-cluster tilting subcategory for $d\geq1$, then it gives a connective resolution of $\C$ of global dimension at most $d+1$.
\end{Cor}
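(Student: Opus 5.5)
The plan is to reduce to Theorem~\ref{conn}: it suffices to show that every object of $\per\C$ has a finite right $H^0\M$-resolution of length at most $d-1$. Then Theorem~\ref{conn} gives a connective resolution of right global dimension $\leq (d-1)+2=d+1$.

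Recall that $H^0\M\subset\T:=\per\C$ being $d$-cluster tilting means the following. First, $H^0\M$ is functorially finite. Second,
\[ H^0\M=\{X\in\T\mid \Hom_\T(H^0\M,X[i])=0 \text{ for } 0<i<d\}, \]
and dually with $\Hom_\T(X,H^0\M[i])$. The standard fact (Iyama--Yoshino) is that every $X\in\T$ admits triangles $X_{i+1}\to M_i\xrightarrow{f_i}X_i$ for $0\leq i\leq d-2$, with $X_0=X$, each $f_i$ a right $H^0\M$-approximation, and $X_{d-1}\in H^0\M$.

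To prove this I would argue as follows.
\begin{itemize}
\item Choose right approximations $f_i$ successively, using contravariant finiteness.
\item Apply $\Hom_\T(M,-)$ for $M\in H^0\M$ to the triangle. Since $f_i$ is an approximation and $\Hom_\T(M,M_i[j])=0$ for $0<j<d$, one gets $\Hom_\T(M,X_{i+1}[1])=0$, together with isomorphisms $\Hom_\T(M,X_{i+1}[j+1])\simeq\Hom_\T(M,X_i[j])$ for $1\leq j\leq d-2$.
\item By induction, $\Hom_\T(M,X_{i}[j])=0$ for $1\leq j\leq i$.
\item Hence $X_{d-1}$ satisfies $\Hom_\T(H^0\M,X_{d-1}[j])=0$ for $0<j<d$, so $X_{d-1}\in H^0\M$.
\end{itemize}
This gives resolutions of uniform length $d-1$ (for $d=1$, $H^0\M=\T$ and the length is $0$).

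Finally, since the statement concerns $\M\subset\per_\dg\!\C$, Theorem~\ref{conn} applies directly. It yields both finite right global dimension of $\M^{\leq0}$, via Proposition~\ref{finite}, and the localization property, via Proposition~\ref{LR}.

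The only point needing care is the index bookkeeping in the inductive Hom-vanishing, so that the length comes out as $d-1$ rather than $d$. This is what gives the bound $d+1$ instead of $d+2$. I expect this step to be the main obstacle, but it is routine.
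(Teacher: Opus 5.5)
Your proposal is correct and follows essentially the paper's route. The paper also deduces the connective resolution from Theorem~\ref{conn}, using the length-$(d-1)$ right $H^0\M$-resolutions supplied by $d$-cluster tilting; you verify that these resolutions exist through the Hom-vanishing induction instead of quoting it. The one difference is that the paper writes out the bound $d+1$ explicitly, by resolving the cocone $C$ of a presentation map $M_1\to M_0$ with $d$ terms $\M^{\leq0}(-,M_i)$, which is exactly the argument behind the bound $n+2$ in Theorem~\ref{conn} that you invoke.
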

\begin{proof}
	Since every object in $\per\C$ has a finite left and right $H^0\M$-resolution by a $d$-cluster tilting subcategory $H^0\M\subset\per\C$, it gives a connective resolution by \ref{conn}.
	We show that $\M^{\leq0}$ has global dimension $\leq d+1$.
	Let $X\in\mod H^0\M$, and consider a presentation $H^0\M(-,M_1)\to H^0\M(-,M_0)\to X\to 0$. We may represent the morphism $H^0\M(-,M_1)\to H^0\M(-,M_0)$ by a $0$-cocycle $M_1\to M_0$ in $\M$, and extend it to a triangle $C\to M_1\to M_0$ in $\per\C$. This gives a $4$-angle
	\[ \xymatrix{ 0\ar[r]&\M^{\leq0}(-,C)\ar[r]&\M^{\leq0}(-,M_1)\ar[r]&\M^{\leq0}(-,M_0)\ar[r]& X\ar[r]& 0}. \]
	Now, since $\M\subset\per\C$ is a $d$-cluster tilting subcategory, there is a $(d-1)$-angle
	\[ \xymatrix@R=1mm@C=2mm{
		&M_{d}\ar[dr]\ar[rr]&&&\cdots&\ar[rr]\ar[dr]&&M_2\ar[dr]&\\
		M_{d+1}\ar[ur]& & C_{d-2}\ar[ur]& &&& C_1\ar[ur]&& C } \]
	in which each map $M_i\to C_{i-2}$ is a right $\M$-approximation.
	This gives a $(d-1)$-angle
	\[	\xymatrix@R=1mm@C=2mm{
		&\M^{\leq0}(-,M_{d})\ar[dr]\ar[rr]&&&\cdots&\ar[rr]\ar[dr]&&\M^{\leq0}(-,M_2)\ar[dr]&\\
		\M^{\leq0}(-,M_{d+1})\ar[ur]&& \M^{\leq0}(-,C_{d-1})\ar[ur]& &&&\M^{\leq0}(-,C_1)\ar[ur]&& \M^{\leq0}(-,C) } \]
	which shows $\M^{\leq0}(-,C)$ has projective dimension $\leq d-1$, and hence $X$ has projective dimension $\leq d+1$.
\end{proof}

\begin{Cor}
Let $k$ be a perfect field and let $\C$ be a $k$-linear, $\Hom$-finite dg category. Then each $d$-cluster tilting object $M\in\per\C$ gives a smooth connective resolution of $\C$.
\end{Cor}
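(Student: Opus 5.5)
The plan is to reduce the statement to \ref{smconn}, applied to $M$ itself. By \ref{CT}, the $d$-cluster tilting object $M$ gives a connective resolution. What remains is smoothness of $\REnd_\C(M)^{\leq0}$.

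To get this from \ref{smconn}, I first check its hypotheses. Since $M$ is $d$-cluster tilting in $\per\C$, every object $X\in\per\C$ has a finite right $(\add M)$-resolution of length $\leq d-1$. One takes successive right $\add M$-approximations $X_{i+1}\to M_i\to X_i$; the standard argument of Iyama--Yoshino shows that $X_{d-1}\in\add M$, using $\Hom(M,M[i])=0$ for $0<i<d$ and the $d$-cluster tilting condition. The approximations exist because $\add M$ is functorially finite, which is automatic for an additive generator $M$ in a $\Hom$-finite category. The $k$-linear, $\Hom$-finite hypotheses are given. So the first part of \ref{smconn} already yields that $M$ gives a connective resolution, consistent with \ref{CT}.

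For smoothness, I will use the separability clause of \ref{smconn}. Put $E=\End_{\rD(\C)}(M)=H^0\REnd_\C(M)^{\leq0}$. It is a finite dimensional $k$-algebra by $\Hom$-finiteness, so $E/J_E$ is a finite product of matrix algebras over finite dimensional division algebras over $k$. Over a perfect field $k$, every finite dimensional semisimple algebra is separable: the centers of these division algebras are finite extensions of $k$, hence separable field extensions, and central simple algebras are separable over their centers. Thus $E/J_E$ is separable over $k$, and \ref{smconn} (via \ref{converse}) gives that $\REnd_\C(M)^{\leq0}$ is smooth.

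I also need the cohomology hypothesis of \ref{converse}. Each $H^i\REnd_\C(M)^{\leq0}=\Hom(M,M[i])$ for $i\leq0$ is finite dimensional, again by $\Hom$-finiteness.

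The main obstacle I expect is justifying the finite $\add M$-resolution; the separability of $E/J_E$ is a standard fact about perfect fields. If the resolution step causes trouble, I would instead cite the bounded global dimension $\leq d+1$ from \ref{CT} and then apply \ref{converse} directly.
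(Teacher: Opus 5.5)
Your argument is correct and is essentially the paper's: the corollary is stated without proof as the immediate combination of \ref{CT} with the separability clause of \ref{smconn} (that is, \ref{converse}), using that $E/J_E$ is separable over a perfect field, just as in the proof of \ref{fin}. The only slip is a wording one: $M$ is not an additive generator of $\per\C$, and the correct reason $\add M$ is functorially finite is that each $\Hom(M,X)$ and $\Hom(X,M)$ is finite dimensional.
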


An additive category is {\it finite} if it has an additive generator.
\begin{Cor}\label{fin}
Let $\T$ be a finite algebraic triangulated category enhanced by a dg category $\C$. Then an additive generator $M$ of $\T$ gives a connective resolution $A=\REnd_\C(M)^{\leq0}$ of $\C$ of global dimension $\leq2$. Moreover, the inverse dualizing bimodule of $A$ is invertible. If $k$ is perfect, then $A$ is smooth.
\end{Cor}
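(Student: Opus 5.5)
The plan is to treat the three assertions of \ref{fin} separately, each reduced to a result already established.

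\emph{Connective resolution of global dimension $\leq2$.} Since $M$ is an additive generator of $\T=\per\C$, every $X\in\T$ lies in $\add M$. Hence $X$ has a right $(\add M)$-resolution of length $0$: take $X_0=X\in\add M$ with the identity as the approximation. Corollary \ref{object} (equivalently \ref{conn} with $n=0$) then shows that $M$ gives a connective resolution $A=\REnd_\C(M)^{\leq0}$. The bound $n+2=2$ from \ref{conn} gives $\gldim A\leq2$. Alternatively, since $\add_\dg M$ is Morita equivalent to the idempotent completion of the pretriangulated $\C$, $A$ is Morita equivalent to $\C^{\leq0}$, and we can apply \ref{C} and \ref{DFun}.

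\emph{Invertibility of $\theta_A$.} Through this Morita equivalence $\per A\simeq\per\C^{\leq0}$ (both are generated by the same additive category $\add M=\T$), the inverse dualizing bimodule of $A$ corresponds to $\theta$ of $\C^{\leq0}$. So it suffices to invoke \ref{invertible} or \ref{DAR}(\ref{theta}), which require a Serre functor on $\T$. This is the step I expect to be the main obstacle. I would argue that a finite algebraic triangulated category (understood $k$-linear, $\Hom$-finite, Krull--Schmidt, as implicit in the finiteness) has a Serre functor. Here $E=\End_\T(M)$ is finite-dimensional. The functor category $\mod\T\simeq\mod E$ is Frobenius, because $\T$ is triangulated; it is this Frobenius property that was used in the proof of \ref{Serre-dual}. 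The module $DH^0\C(C,-)$ is then a finitely presented injective, hence projective, hence representable. The same holds on the other side, so $\T$ has a Serre functor. If this fails without further hypotheses, I would fall back on Lemma \ref{Serre-dual}: check that $DH^0\C\in\thick H^0\C$ using that $\mod E$ is Frobenius.

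\emph{Smoothness over perfect $k$.} The algebra $A$ is connective with finite-dimensional cohomology, since $\T$ is $\Hom$-finite. It has finite global dimension, and $H^0A/\rad H^0A=E/J_E$ is separable because $k$ is perfect. Proposition \ref{converse} (as in \ref{smconn}) then gives that $A$ is smooth.
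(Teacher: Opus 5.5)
Your proposal is correct and follows the paper's route. The paper derives the first claim from \ref{CT} with $d=1$, which for $\add M=\T$ amounts to \ref{conn} with $n=0$; it gets invertibility from \ref{invertible} via the existence of a Serre functor on a finite triangulated category; and it gets smoothness from \ref{smconn}, i.e.\ from \ref{converse}. Your added arguments fill in steps the paper leaves implicit: the Serre functor via the Frobenius category $\mod\T\simeq\mod E$, where $D\Hom_\T(C,-)$ is a finitely presented injective and hence representable, and the Morita reduction from $A$ to $\C^{\leq0}$.
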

\begin{proof}
	That $M$ gives a connective resolution of global dimension $\leq2$ is a special case of \ref{CT} for $d=1$. Since a finite triangulated category has a Serre functor, the invertibility of the inverse dualizing bimodule follows from \ref{invertible}. Finally, the smoothness is \ref{smconn}.
\end{proof}


We also note the following consequence of being a left or right resolution.
\begin{Cor}
Let $\C$ be a dg category and $\M\subset\per_\dg\!\C$. Suppose that each object in $\per\C$ has a finite right $H^0\M$-resolution. Then the dg quotient $\C^{\leq0}/\M^{\leq0}$ lies in $\thick H^0\C$ as a right $\C^{\leq0}$-module. In particular, if $\C$ is $\Hom$-finite, then $\C^{\leq0}/\M^{\leq0}$ is proper.
\end{Cor}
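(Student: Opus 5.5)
The plan is to compute the dg quotient one object at a time, show it is perfect, and show it is killed by induction to $\C$; Theorem \ref{loc} then places it in $\thick H^0\C$.

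Since the hypotheses and the conclusion only involve $\per\C$ and the truncations, we may replace $\C$ by $\per_\dg\!\C$ and assume that $\C$ is pretriangulated with $\M\subset\C$. The dg quotient is described as a bimodule by the standard triangle
\[ \xymatrix{ \C^{\leq0}\lotimes_{\M^{\leq0}}\C^{\leq0}\ar[r]&\C^{\leq0}\ar[r]&\C^{\leq0}/\M^{\leq0} } \]
in $\rD((\C^{\leq0})^e)$. We fix an object $C\in\C$ and study the right $\C^{\leq0}$-module $(\C^{\leq0}/\M^{\leq0})(-,C)$, which is the cone of $\C^{\leq0}(-,?)\lotimes_{\M^{\leq0}}\C^{\leq0}(?,C)\to\C^{\leq0}(-,C)$.

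First step: compute the tensor term from a finite right $H^0\M$-resolution of $C$, of some length $n$. By \ref{exact}, the restriction of $\C^{\leq0}(-,C)$ to $\M^{\leq0}$ is quasi-isomorphic to the total module of $\M^{\leq0}(-,M_n)\to\cdots\to\M^{\leq0}(-,M_0)$. Applying $\C^{\leq0}\lotimes_{\M^{\leq0}}-$ turns each representable $\M^{\leq0}(-,M_i)$ into $\C^{\leq0}(-,M_i)$. Hence $(\C^{\leq0}/\M^{\leq0})(-,C)$ is the cone of
\[ \Tot\bigl(\C^{\leq0}(-,M_n)\to\cdots\to\C^{\leq0}(-,M_0)\bigr)\longrightarrow\C^{\leq0}(-,C). \]
In particular it is perfect over $\C^{\leq0}$.

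Second step: apply $-\lotimes_{\C^{\leq0}}\C$ to this cone. The first term becomes the total of $\C(-,M_n)\to\cdots\to\C(-,M_0)$, and the second becomes $\C(-,C)$. This is exactly the computation in the proof of \ref{lr}, which shows the induced map is an isomorphism, because the resolution triangles are triangles in $\per\C$. So the cone lies in the kernel of the induction $\per\C^{\leq0}\to\per\C$. By Theorem \ref{loc}, this kernel is $\thick H^0\C$.

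Equivalently, one can argue via \ref{Res}: \ref{ubb} shows uniformly bounded perfect modules are killed by induction, and the localization of \ref{loc} gives the converse. Either way $(\C^{\leq0}/\M^{\leq0})(-,C)\in\thick H^0\C$ for every $C$, which is the first assertion.

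For properness, assume $\C$ is $\Hom$-finite. Then each $H^0\C(-,C')$ takes values in finite-dimensional spaces concentrated in degree $0$. Every object of $\thick H^0\C$ is built from finitely many of these by shifts, cones and summands, so evaluating it at any object gives a complex with finite-dimensional total cohomology. Applying this to $(\C^{\leq0}/\M^{\leq0})(C'',C)$ for each pair of objects shows $\C^{\leq0}/\M^{\leq0}$ is proper.

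The main point needing care is the identification in the first step: that the derived tensor over $\M^{\leq0}$ may be computed through the $(n+2)$-angle of \ref{exact}, and that this identification is compatible with the map to $\C^{\leq0}(-,C)$. I would handle this exactly as in \ref{lr}, working with the total module as an honest resolution by representables.
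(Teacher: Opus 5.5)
Your proposal is correct and is essentially the paper's proof: the defining triangle of the dg quotient, perfectness of $\C^{\leq0}\lotimes_{\M^{\leq0}}\C^{\leq0}$ via \ref{exact}, its first map becoming an isomorphism after $-\lotimes_{\C^{\leq0}}\C$ via \ref{lr}/\ref{LR}, and then \ref{loc}; your objectwise phrasing and explicit properness argument only spell out what the paper leaves implicit. One caveat: truncation is not Morita invariant, so the passage to $\C=\per_\dg\!\C$ is not justified by the reason you give, but it is the implicit standing assumption of the statement (needed for $\M\subset\C$, and for \ref{LR} and \ref{loc} to apply).
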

\begin{proof}
	Consider the triangle
	\[ \xymatrix{ \C^{\leq0}\lotimes_{\M^{\leq0}}\C^{\leq0}\ar[r]&\C^{\leq0}\ar[r]&\C^{\leq0}/\M^{\leq0} }. \]
	By \ref{exact}, the right $\M^{\leq0}$-module $\C^{\leq0}$ is perfect, so the leftmost term lies in $\per\C^{\leq0}$. It follows that $\C^{\leq0}/\M^{\leq0}\in\per\C^{\leq0}$.
	Applying $-\lotimes_{\C^{\leq0}}\C$ to the above triangle, the first map becomes $\C^{\leq0}\lotimes_{\M^{\leq0}}\C\to\C$, which is an isomorphism by \ref{LR}. We deduce that $(\C^{\leq0}/\M^{\leq0})\lotimes_{\C^{\leq0}}\C=0$, and hence $\C^{\leq0}/\M^{\leq0}\in\thick H^0\C$ by \ref{loc}.
\end{proof}

\section{Connective resolutions over connective dg algebras}\label{rep}
We demonstrate that there are sufficiently many connective resolutions over proper dg algebras. In fact, we give a generalization of \cite{Iy03} stating that any module can be extended to a connective resolution. This leads to a realization of the singularity category $\sg\La$ as the cosingularity category of a smooth connective dg algebra \cite{TV,KY18}.

Let $A$ be a connective dg $k$-algebra. We start by preparing some notions for such dg algebras and modules. The first one is for algebras. For a ring $R$ we denote by $J_R$ the Jacobson radical.
\begin{Def}
Let $A$ be a dg algebra.
\begin{enumerate}
	\item We say that $A$ is {\it connective} if $H^iA=0$ for all $i>0$, and {\it strictly connective} if $A^i=0$ for all $i>0$.
	\item For a strictly connective dg algebra $A$, we define its {\it radical} as the two-sided dg ideal $J_A:=(\bigoplus_{i<0}A^i)\oplus\pi^{-1}(J_{H^0A})$, where $\pi\colon A^0=Z^0A\to H^0A$ is the projection.
	\item For a connective dg algebra $A$, consider the canonical quasi-isomorphism $A^{\leq0}\to A$ from the truncation $A^{\leq0}$ which is a strictly connective dg algebra. We define the {\it radical $J_A$} of $A$ together with a morphism $J_A\to A$ as the one corresponding to $J_{A^{\leq0}}\to A^{\leq0}$ under the canonical equivalence $\rD((A^{\leq0})^e)\simeq\rD(A^e)$.
\end{enumerate}
\end{Def}

The next definition is for modules over connective dg algebras. For a complex $L$, we denote by $\sup L=\sup\{ i\in\Z\mid H^iL\neq0\}$. If $L$ is bounded above, we write $\sup L<\infty$.
\begin{Def}
Let $A$ be a connective dg algebra.
\begin{enumerate}
	\item An dg $A$-module is {\it semisimple} if it is concentrated in degree $0$ and is semisimple over $H^0A$.
	\item Let $M$ be a dg $A$-module such that $\sup M=n<\infty$. The {\it sup-radical} of $M$, denoted $\srad M$, is the dg submodule $(\bigoplus_{i<n}M^i)\oplus\pi^{-1}(\rad_{H^0A}\!H^nM)\subset M^{\leq n}$, where $\pi\colon Z^nM\to H^nM$ is the projection.
\end{enumerate}
\end{Def}
Note that the sup-radical is an operation at the level of ordinary rings; one just replaces the top cohomology $H^nM$ by its radical over $H^0A$.

\medskip
Now we turn to the statement of the main result of this section.
Let $\La$ be a connective dg $k$-algebra over a field $k$. We denote by
\[ \Db(\La)=\{ X\in\rD(\La) \mid \textoplus_{i\in\Z}H^iX\in\mod k \}, \]
the derived category of $\La$-modules of finite dimensional total cohomology. 
Following \cite{Iy03}, given $M\in\Db(\La)$, we inductively define $M_{n}\in\Db(\La)$ as follows. Let $M_0=M$ and define $M_{n+1}$ to be the sup-radical of $M_n$ as a module over $\REnd_\La(M_n)^{\leq0}$. Since $\La$ is connective, $M_{n+1}\subset M_n$ is an $(\REnd_\La(M)^{\leq0},\La)$-subbimodule. Also, since $M_{n+1}$ has strictly smaller total cohomology than $M_n$ (unless $M_n=0$), we have $M_m=0$ for sufficiently large $m$.

Our main result is the following generalization of \cite{Iy03}, which shows that the truncated derived endomorphism algebra is ``quasi-hereditary''.
\begin{Thm}\label{repdim}
Let $\La$ be a connective dg algebra. Put $N=\bigoplus_{i=0}^{m-1}M_i$. Then the perfect derived category of $\Ga:=\REnd_\La(N)^{\leq0}$ has a full exceptional collection. 
In particular, $\Ga$ has finite global dimension.
\end{Thm}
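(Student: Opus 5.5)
We follow Iyama's proof of finiteness of representation dimension \cite{Iy03}, replacing additive categories and Hom-spaces by truncated derived endomorphism algebras. We use $N=\bigoplus_{i=0}^{m-1}M_i$. Let $e_i\in\Ga$ be the idempotent corresponding to the summand $M_i$. We filter $\per\Ga$ by the idempotents $\varepsilon_j=e_0+\cdots+e_j$, or equivalently by the chain $\add M_{m-1}\subset\add(M_{m-2}\oplus M_{m-1})\subset\cdots$. We aim to show that this is a ``derived rejective chain''. At each step, the simple top $S_j$ of the projective $P_j=\Ga e_j$ modulo the ideal generated by the earlier summands should have endomorphism algebra a division-like (semisimple) algebra. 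Its $\RHom$ from the later ones should vanish in the appropriate direction. This produces an exceptional collection $\Delta_0,\dots,\Delta_{m-1}$ of ``standard modules'', up to splitting semisimple pieces into indecomposable summands.

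The key step is the following. By construction, $M_{j+1}=\srad_{\REnd(M_j)^{\leq0}}M_j\subset M_j$, and we have the triangle $M_{j+1}\to M_j\to \mathrm{top}(M_j)[-n_j]$. Here $\mathrm{top}(M_j)=H^{n_j}M_j/\rad$ is semisimple over $H^0\REnd_\La(M_j)^{\leq0}$, and $n_j=\sup M_j$. We claim that the map $M_{j+1}\to M_j$ is a right $\add M_{j+1}$-approximation in the truncated sense. More precisely, every morphism $M_{j+1}\to M_j$ of nonpositive degree that is radical factors through the inclusion, and the induced map on $\RHom_\La(M_{j+1},-)^{\leq0}$ is surjective in $H^0$. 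This is the derived analogue of Iyama's observation that $\rad(M_{j+1},M_j)$ factors through $\rad M_j$. It follows from the definition of the sup-radical, because any non-isomorphism into $M_j$ of degree $\le0$ from a summand of $N$ lands in $\srad M_j$ on top cohomology. Using Lemma~\ref{exact}-type arguments, the triangle then gives a triangle of right $\Ga$-modules
\[ \RHom_\La(N,M_{j+1})^{\leq0}\to\RHom_\La(N,M_j)^{\leq0}\to\Delta_j, \]
with $\Delta_j$ supported at vertex $j$ in the quotient $\Ga/\Ga\varepsilon_{j-1}\Ga$. Concretely, $\Delta_j$ should be $\Hom$ into the semisimple top, and it is concentrated in a single degree.

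Next we verify the exceptional conditions. First, $\RHom_\Ga(\Delta_j,\Delta_j)$ is semisimple in degree $0$, i.e.\ a product of division rings, after splitting $\mathrm{top}(M_j)$ into isotypic pieces. Second, $\RHom_\Ga(\Delta_i,\Delta_j)=0$ for $i<j$ in the correct order. Both should come from the triangle above. The modules $\Delta_i$ are generated by the summand $M_i$ modulo the earlier summands, so these computations reduce to $\Hom$ computations against the semisimple top. Generation follows by induction: each $P_j=\RHom(N,M_j)^{\leq0}$ lies in $\Delta_j\ast P_{j+1}$, and $P_{m-1}=\Delta_{m-1}$ because $M_m=0$. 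Hence $\thick\{\Delta_j\}$ contains all the projectives $P_j$, and so equals $\per\Ga$. Finite global dimension then follows, since $\per\Ga$ is generated by finitely many objects each of which has finite projective dimension. Here we use that every $\Delta_j$ is perfect, every simple $H^0\Ga$-module lies in $\thick\{\Delta_j\}$, and all cohomologies are finite-dimensional, so Lemma~\ref{pd} and a finite filtration argument apply.

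The main obstacle is the approximation claim above. In the derived setting the sup-radical only modifies the top cohomology $H^{n_j}$. We must check carefully that maps of negative degree, and maps from summands $M_i$ with $i<j$ and a different $\sup$, genuinely factor through $M_{j+1}$ after truncation. We also need the resulting cone $\Delta_j$ to be semisimple over the quotient algebra rather than merely bounded. I would first handle this degree by degree, using the $(\REnd(M_j)^{\leq0},\La)$-bimodule structure of $M_{j+1}\subset M_j$ together with connectivity of $\La$. If this fails, the fallback is to prove only the weaker statement that each $\Delta_j$ has finite projective dimension, which still yields finite global dimension.
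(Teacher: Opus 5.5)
Your overall strategy, a derived rejective chain whose strata give standard objects as cones of approximations, is the one the paper uses (\ref{coss} and \ref{qh}). But the step you flag as the main obstacle is not merely unproved: as stated, it is false.

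\textbf{The key claim fails.} Your justification, that ``any non-isomorphism into $M_j$ from a summand of $N$ lands in $\srad M_j$'', confuses non-isomorphisms with radical maps. Take $\La=kQ$ with $Q$ of type $A_2$, and $M=\La=P\oplus P'$ with $P'=\rad P$ simple projective. Then $\REnd_\La(M)^{\leq0}=\La$, $M_1=J_\La\cong P'$, $M_2=0$ and $N=P\oplus P'\oplus P'$. The inclusion $M_1\to M_0$ is $(\iota,0)\colon P'\to P\oplus P'$. The map $\End_\La(P')\to\Hom_\La(P',P\oplus P')$ misses $(0,\mathrm{id}_{P'})$, so $M_1\to M_0$ is not a right $\add M_1$-approximation, and the claimed $H^0$-surjectivity fails. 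Consequently your cone is $\Delta_0\cong S\oplus\Hom_\La(N,P')$, where $S$ is the simple top of $\Hom_\La(N,P)$. It contains $\Delta_1=\Hom_\La(N,P')$ as a direct summand, so $\Delta_0,\Delta_1$ are not semi-orthogonal in either order.

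\textbf{What the approximation must be.} It has to be chosen summandwise. For an indecomposable summand $X$ of $M_j$ that already lies in $\add(M_{j+1}\oplus\cdots\oplus M_{m-1})$, use $\mathrm{id}_X$; use $\srad X\hookrightarrow X$ only for the other summands. It must also be an approximation with respect to all of $\add(M_{j+1}\oplus\cdots\oplus M_{m-1})$, not just $\add M_{j+1}$.

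\textbf{The direction is reversed.} The standard object attached to $M_j$ vanishes on the later summands $M_{j+1},\dots,M_{m-1}$. It does not vanish on $M_0,\dots,M_{j-1}$, as support in $\Ga/\Ga\varepsilon_{j-1}\Ga$ would require. At earlier summands of larger $\sup$ it is neither zero nor concentrated in one degree. For example, with $\La=k$ and $M=k\oplus k[1]$, the object $\Delta_1=\RHom_k(N,k[1])^{\leq0}$ is $k[1]$ at the summand $k$ and $k$ at $k[1]$.

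\textbf{The missing argument} is exactly \ref{coss}. Normalize $\sup M_j=0$. For indecomposable $X\notin\add(M_{j+1}\oplus\cdots\oplus M_{m-1})$, let $Z\subset H^0X$ be the sum of the images $\Im H^0f$ over radical morphisms $f\colon M_j\to X$. Let $Y\to X$ be the homotopy pullback of $Z\hookrightarrow H^0X$ along $X\to H^0X$.
\begin{itemize}
\item The cone of $Y\to X$ is $H^0X/Z$ in degree $0$, and every $W\in\add(M_j\oplus\cdots\oplus M_{m-1})$ has $\sup W\leq0$.
\item Hence $\RHom_\La(W,Y)^{\leq0}\to\RHom_\La(W,X)^{\leq0}$ is an isomorphism in negative degrees and injective in degree $0$. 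Its image consists of the $f$ with $\Im H^0f\subset Z$.
\item $H^0M_{j+1}$ is covered by radical endomorphisms of $M_j$. Each of these lifts through $M_{j+1}\to M_j$, because that cone is concentrated in degree $0$.
\item So a map from a later summand to $X$ whose image is not in $Z$ would yield a split epimorphism onto $X$, which is impossible for such $X$. The image is therefore the radical on $\add(M_j\oplus\cdots)$ and everything on the later summands.
\end{itemize}
This gives right rejectivity and cosemisimplicity at once. Then \ref{qh} yields semi-orthogonal decompositions $\per\C_i^{\leq0}=\per\C_{i-1}^{\leq0}\perp\D_i$ for $\C_i=\add_\dg(M_{m-i}\oplus\cdots\oplus M_{m-1})$, with each $\D_i$ equivalent to $\per$ of a semisimple category. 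The exceptionality and vanishing conditions, which you only assert ``should come from the triangle above'', follow from this.

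\textbf{The global dimension step is empty as written.} Generation of $\per\Ga$ by finitely many perfect objects holds for every $\Ga$. Your fallback, that each $\Delta_j$ has finite projective dimension, is automatic because $\Delta_j$ is a cone of representables. The real content, that the simple $H^0\Ga$-modules lie in $\per\Ga$, has to come from the stratification.
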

Even for finite dimensional algebras, we get the following consequence.
\begin{Cor}\label{cpx}
Let $\La$ be a finite dimensional algebra and $M\in\Db(\mod\La)$. Then there exists a complex $N\in\Db(\mod\La)$ containing $M$ as a direct summand such that $\per\REnd_\La(N)^{\leq0}$ has a full exceptional collection.
\end{Cor}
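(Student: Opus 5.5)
Corollary \ref{cpx} follows by specializing Theorem \ref{repdim} to the connective dg algebra $\La$ concentrated in degree $0$. The substance of the argument is to check that the rejective-chain construction stays inside $\Db(\mod\La)$.

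First, since $\La$ is finite dimensional, the category $\Db(\La)$ defined above, consisting of dg $\La$-modules with finite dimensional total cohomology, is identified with $\Db(\mod\La)$ under the standard equivalence. Given $M\in\Db(\mod\La)$, we form the sequence $M_0=M$, $M_{n+1}=\srad M_n$, where the sup-radical is taken over $\REnd_\La(M_n)^{\leq0}$. As noted before the theorem, each $M_{n+1}\subset M_n$ is a subbimodule. Its total cohomology is strictly smaller, since the top cohomology $H^{\sup}M_n$ is replaced by its radical over $H^0\REnd_\La(M_n)^{\leq0}=\End_{\Db}(M_n)$. This radical is a proper submodule, because $H^{\sup}M_n$ is a nonzero finite dimensional module over the finite dimensional algebra $\End(M_n)$. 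Hence $M_m=0$ for some $m$, and each $M_i$ again has finite dimensional total cohomology, so $M_i\in\Db(\mod\La)$.

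Next, set $N=\bigoplus_{i=0}^{m-1}M_i\in\Db(\mod\La)$. It contains $M=M_0$ as a direct summand. Theorem \ref{repdim} then gives that $\per\REnd_\La(N)^{\leq0}$ has a full exceptional collection, which proves the corollary.

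The only step needing care is the first one: that the sup-radical of a complex of finitely generated modules is again such a complex up to quasi-isomorphism. This holds because $\srad M_n$ differs from the truncation $M_n^{\leq n}$ only in its top cohomology, which becomes a submodule of a finite dimensional module. All cohomologies therefore stay finite dimensional, and the object lies in $\Db(\La)=\Db(\mod\La)$.
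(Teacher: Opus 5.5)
Your proposal is correct and matches the paper, which states this corollary as an immediate specialization of Theorem \ref{repdim} to $\La$ viewed as a connective dg algebra concentrated in degree $0$. Your checks spell out what the paper leaves implicit: that $\Db(\La)\simeq\Db(\mod\La)$, that the sup-radical sequence stays in $\Db(\mod\La)$ and terminates, and that $M=M_0$ is a direct summand of $N$.
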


Applying \ref{repdim} to $M=\La$ gives the following result, which shows the abundance of connective resolutions. 
\begin{Cor}\label{NCR}
Let $\La$ be a proper connective dg algebra. Then there is $N\in\Db(\La)$ which gives a connective resolution of $\D^b_\dg(\La)$.
\end{Cor}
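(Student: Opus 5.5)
We prove \ref{NCR} by applying \ref{repdim} to $M=\La$ and then checking the localization condition of \ref{def}. Since $\La$ is proper and connective, $\La\in\Db(\La)$. Let $M_0=\La$, form the sup-radical chain $M_0\supset M_1\supset\cdots\supset M_m=0$, and set $N=\bigoplus_{i=0}^{m-1}M_i\in\Db(\La)$. By \ref{repdim}, $\Ga=\REnd_\La(N)^{\leq0}$ has finite global dimension, so condition (1) of \ref{def} holds for the subcategory $\add_\dg N\subset\D^b_\dg(\La)$. It remains to show that the composite $\Ga\to\REnd_\La(N)\to\D^b_\dg(\La)$ is a localization.

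We split this into two steps.

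First, $\REnd_\La(N)\to\D^b_\dg(\La)$ should be a Morita equivalence, i.e.\ $\thick N=\Db(\La)$. Since $\La$ is a summand of $N$, we have $\per\La\subset\thick N$. Every $X\in\Db(\La)$ has finite-dimensional total cohomology. So it suffices that the simple $H^0\La$-modules lie in $\thick N$: then, using the standard $t$-structure, every object of $\Db(\La)$ is a finite extension of shifted finite-length $H^0\La$-modules, hence lies in $\thick$ of the simples. To get the simples, I would use the top of each $M_i$, namely the semisimple $\REnd(M_i)^{\leq0}$-module $M_i/M_{i+1}$, which is the top of $H^{\sup}M_i$. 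I expect that $\thick N$ contains each quotient $M_i/M_{i+1}$ and, through the filtration, the relevant simple tops over $\La$. If this is not directly available, I would instead read it off from the exceptional collection in \ref{repdim}: the standard objects map under $-\lotimes_\Ga N$ to objects built from the layers $M_i/M_{i+1}$.

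Second, and this is the main point, the truncation $\Ga\to\REnd_\La(N)$ must be a localization, i.e.\ restriction $\rD(\REnd_\La(N))\to\rD(\Ga)$ is fully faithful. Equivalently, $\REnd_\La(N)\lotimes_\Ga\REnd_\La(N)\xsimeq\REnd_\La(N)$. My plan is to copy the argument of \ref{loc}. Apply $-\lotimes_\Ga\REnd_\La(N)$ to the triangle $\Ga\to\REnd_\La(N)\to\REnd_\La(N)^{>0}$. Then show that $\REnd_\La(N)^{>0}\lotimes_\Ga\REnd_\La(N)=0$. The bimodule $\REnd_\La(N)^{>0}$ is a finite extension of the modules $H^i\REnd_\La(N)$ for $i>0$, and each of these is bounded since $N\in\Db(\La)$ with $\La$ proper. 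Each is a finite-dimensional $H^0\Ga$-module, hence by \ref{repdim} a perfect $\Ga$-module. So it suffices to show that every finite-dimensional $H^0\Ga$-module $S$ is killed by $-\lotimes_\Ga N$, where $\REnd_\La(N)$ is identified with $N$ as a right module via Morita theory.

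For $S$ simple, corresponding to a summand $M_i$ of $N$, I would use the triangle $\Ga(-,M_{i+1})\to\Ga(-,M_i)\to S$. This should be the $\Ga$-level shadow of the inclusion $M_{i+1}\subset M_i$; after applying $-\lotimes_\Ga N$ it becomes $M_{i+1}\to M_i\to M_i/M_{i+1}$. This is where the argument gets delicate: $M_i/M_{i+1}$ is a semisimple top, not zero, so the naive resolution does not give $S\lotimes_\Ga N=0$. Hence the real obstacle is to pin down the right localizing kernel. I would first try to prove that $\ker(-\lotimes_\Ga N)=\thick(\mod H^0\Ga)$, mimicking \ref{Kernel}, by building resolutions of the kind used in \ref{HsubsetK} from approximations by $\add N$. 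If that fails, I would fall back on the exceptional collection: show that restriction along $\Ga\to\REnd_\La(N)$ is fully faithful by checking it on the generator $N$, computing $\RHom_\Ga(N,N)$ inductively along the chain $M_{i+1}\subset M_i$ using the semi-orthogonal decompositions that \ref{repdim} provides.
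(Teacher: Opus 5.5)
Your decomposition (first $\thick N=\Db(\La)$, then that $\Ga\to\REnd_\La(N)$ is a localization) is legitimate. However, neither step is carried out, and the reduction you propose for the main step is false.

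\textbf{The missing idea.} The paper's proof rests on the fact that $\La$ is a direct summand of $N$. Let $e\in\Ga$ be the corresponding idempotent. Since $\sup M_i\leq0$ for all $i$, one has $\Ga e=\RHom_\La(\La,N)^{\leq0}=N^{\leq0}\simeq N$. Hence $-\lotimes_\Ga N$ is just evaluation, $X\mapsto Xe=X(\La)$.

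\textbf{Why your reduction fails.} You say it suffices that every finite-dimensional $H^0\Ga$-module is killed by $-\lotimes_\Ga N$. This cannot hold. The simple $H^0\Ga$-modules at indecomposable summands of $\La$ have nonzero value at $\La$; this is what your nonzero $M_i/M_{i+1}$ is detecting. More bluntly, $\Ga$ is proper and connective, so $\Ga\in\thick(\mod H^0\Ga)$, and your claim would force $N=\Ga\lotimes_\Ga N=0$. For $\La=k[x]/(x^2)$ one gets $N=\La\oplus k$, $\Ga$ is the Auslander algebra, and the kernel is only $\thick S_k$. So your first fallback, imitating \ref{Kernel}, aims at a false statement; you also lack its hypothesis of finite right $\add N$-resolutions. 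Your second fallback, via the exceptional collection, is not an argument. Finally, $\REnd_\La(N)^{>0}$ is not a finite extension of its cohomologies in general: in the example, $\RHom_\La(k,k)=k[y]$ with $|y|=1$. So a homotopy colimit as in \ref{ubb} is needed.

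\textbf{Repairing Step 2.} You only need $\REnd_\La(N)^{>0}(-,Y)\lotimes_\Ga\REnd_\La(N)=0$ for each summand $Y$ of $N$. Writing this module as a homotopy colimit, it suffices that each $X=H^i\RHom_\La(-,Y)$ with $i>0$ is killed. Such $X$ is a finite-dimensional $H^0\Ga$-module because $\La$ is proper, hence perfect because $\gldim\Ga<\infty$. For perfect $X$, $X\lotimes_\Ga\REnd_\La(N)=0$ if and only if $X\lotimes_\Ga N=X(\La)=0$. Here $X(\La)=H^iY=0$ because $\sup Y\leq0$. The triangle $\Ga\to\REnd_\La(N)\to\REnd_\La(N)^{>0}$ then gives the localization exactly as in \ref{loc}(1).

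\textbf{Repairing Step 1.} The layers $M_i/M_{i+1}$ do lie in $\thick N$, but nothing you say produces the simple $H^0\La$-modules from them. Use the idempotent again. For a simple $S$, set $X=\RHom_\La(N,S)^{\leq0}=H^0\RHom_\La(N,S)$. This is finite-dimensional over $H^0\Ga$, hence perfect, and $X\lotimes_\Ga N=X(\La)=S$, so $S\in\thick N$.

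Repaired this way, your route is an explicit version of the paper's one-line argument. The paper simply asserts that $\add\La\subset\add N$ makes $-\lotimes_\Ga N\colon\per\Ga\to\Db(\La)$ a localization. This is the dg analogue of the idempotent quotient used in \ref{R}(2).
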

\begin{proof}
	Applying \ref{repdim} to $\La$, we get $N\in\Db(\La)$ containing $\La$ as a direct summand such that $\Ga:=\REnd_\La(N)^{\leq0}$ has finite global dimension. By $\add\La\subset\add N$, we see that the functor $-\lotimes_\Ga N\colon\per\Ga\to\Db(\La)$ is a localization, and therefore $N$ gives a connective resolution $\Ga$ of $\D^b_\dg(\La)$.
\end{proof}


Now we turn to the proof of \ref{repdim}.
We start by giving the definition of {\it rejective subcategories} and {\it rejective chains} \cite{Iy05b,Iy03} for derived categories in terms of truncation.

\begin{Def}\label{rej}
Let $\A$ be a full dg subcategory of a dg category $\B$.
\begin{enumerate}
\item\label{rrej} We say that $\A\subset\B$ is a {\it right rejective subcategory} if for each $B\in\B$ there is a morphism $A\to B$ in $Z^0\B$ with $A\in\A$ which induces a quasi-isomorphism on $\A$:
\[ \xymatrix{ \A(-,A)^{\leq0}\ar[r]& \B(-,B)^{\leq0} }. \]
\item We say that $\A\subset\B$ is {\it cosemisimple} if the dg quotient $\B^{\leq0}/\A^{\leq0}$ is a semisimple category (concentrated in degree $0$).
\item A {\it right rejective chain} of a dg category $\C$ is a sequence
\[ 0=\C_0\subset\C_1\subset\cdots\subset\C_{m-1}\subset\C_m=\C \]
of full dg subcategories such that $\C_{i-1}\subset\C_i$ is a right rejective cosemisimple subcategory for every $1\leq i\leq m$.
\end{enumerate}
The notions of {\it left rejective subcategories} and {\it left rejective chains} are defined dually. For example, a subcategory $\A\subset\B$ is left rejective if for each $B\in\B$ there is a morphism $B\to A$ in $Z^0\B$ with $A\in\A$ which induces a quasi-isomorphism $\A(A,-)^{\leq0}\xsimeq\A(B,-)^{\leq0}$.
\end{Def}


Recall that an object $E$ in a triangulated category $\T$ is {\it exceptional} if $\Hom_\T(E,E[i])=0$ for $i\neq0$ and $\End_\T(E)$ is a division algebra. A {\it full exceptional collection} in $\T$ is a sequence $(E_1,\ldots,E_n)$ of exceptional objects such that $\T=\thick(\bigoplus_{i=1}^nE_i)$ and $\Hom_\T(E_i,E_j[l])=0$ if $i>j$. For example, the standard modules of a quasi-hereditary algebra forms a full exceptional collection of its derived category. The following derived analogue of \cite[2.1(2)]{Iy03} shows that a rejective chain yields a ``quasi-hereditary'' algebra.
\begin{Prop}\label{qh}
Let $\C$ be a dg categeory such that $H^0\C$ has an additive generator.
Suppose that $\C$ has a right rejective chain $0=\C_0\subset\C_1\subset\cdots\subset\C_{m-1}\subset\C_m=\C$. Then $\per\C^{\leq0}$ has a full exceptional collection.
\end{Prop}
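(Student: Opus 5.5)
We argue by induction on the length $m$ of the rejective chain. We may assume $\C$ is idempotent complete in $H^0$ and take an additive generator, so $\per\C^{\leq0}$ is generated by $\add\{\C^{\leq0}(-,C)\}$. The aim is to produce a semi-orthogonal decomposition
\[ \per\C^{\leq0}=\langle \thick(\text{semisimple part}),\ \per\C_{m-1}^{\leq0}\rangle \]
in which the first piece is generated by exceptional objects and the second falls under the inductive hypothesis.

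The first step uses right rejectivity of $\C_{m-1}\subset\C$. For each $C\in\C$ there is $A\to C$ with $A\in\C_{m-1}$ inducing a quasi-isomorphism $\C^{\leq0}(-,A)|_{\C_{m-1}}\xsimeq\C^{\leq0}(-,C)|_{\C_{m-1}}$. Thus the restriction of $\C^{\leq0}(-,C)$ to $\C_{m-1}^{\leq0}$ is representable. Equivalently, the inclusion $\C_{m-1}^{\leq0}\subset\C^{\leq0}$ induces a fully faithful induction $\per\C_{m-1}^{\leq0}\to\per\C^{\leq0}$ whose right adjoint (restriction) preserves perfect objects. We therefore obtain a recollement-type sequence
\[ \per\C_{m-1}^{\leq0}\to\per\C^{\leq0}\to\per(\C^{\leq0}/\C_{m-1}^{\leq0}). \]
Concretely, the cone $L_C$ of $\C^{\leq0}(-,A)\to\C^{\leq0}(-,C)$ vanishes on $\C_{m-1}$, so $L_C$ lies in $(\per\C_{m-1}^{\leq0})^{\perp}$, and these cones generate that perpendicular category together with the image.

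The second step uses cosemisimplicity. The dg quotient $\B:=\C^{\leq0}/\C_{m-1}^{\leq0}$ is a semisimple category in degree $0$. Since $H^0\C$ has an additive generator, $\B$ is Morita equivalent to a finite product of division rings $D_1\times\cdots\times D_r$. Hence $\per\B$ has a full exceptional collection by simple objects $S_1,\dots,S_r$, which are mutually orthogonal. Their images under the fully faithful left adjoint $\per\B\to\per\C^{\leq0}$ (induced by the restriction $\rD(\B)\to\rD(\C^{\leq0})$, fully faithful for a dg quotient, with the $L_C$ perfect) give exceptional objects $E_1,\dots,E_r$ in $\per\C^{\leq0}$. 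These are orthogonal to the image of $\per\C_{m-1}^{\leq0}$ in the correct direction. By induction, $\per\C_{m-1}^{\leq0}$ has a full exceptional collection, since the truncated chain $\C_0\subset\cdots\subset\C_{m-1}$ is again a right rejective chain and $H^0\C_{m-1}$ still has an additive generator (a summand of that of $\C$). Concatenating gives the full exceptional collection of $\per\C^{\leq0}$, with the $E_j$ placed in the appropriate position to respect $\Hom(E_i,E_j[l])=0$ for $i>j$.

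The main obstacle is the second step: showing that the kernel piece is exactly $\per\B$ with $\B$ semisimple, i.e. that the cones $L_C$ are perfect over $\C^{\leq0}$ and identify with the modules over $\B$, and that no higher cohomology intervenes. I would first check this with Lemma \ref{exact}-style reasoning. The rejective approximation makes $\C^{\leq0}(-,A)\to\C^{\leq0}(-,C)\to L_C$ a triangle of perfect modules, and $L_C$ vanishes on $\C_{m-1}$. Then $L_C\simeq L_C\lotimes_{\C^{\leq0}}\B$ is a $\B$-module, and semisimplicity of $\B$ forces it to be a direct sum of the $S_j$ in degree $0$, with $\REnd(S_j)=D_j$ by full faithfulness of restriction along the dg quotient.
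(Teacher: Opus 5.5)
Your proposal is correct and follows essentially the same route as the paper. The paper also uses right rejectivity to see that $\C_i^{\leq0}$ is representable (hence perfect) as a right $\C_{i-1}^{\leq0}$-module, so that the dg quotient sequence $\C_{i-1}^{\leq0}\to\C_i^{\leq0}\to\C_i^{\leq0}/\C_{i-1}^{\leq0}$ gives $\per\C_i^{\leq0}=\per\C_{i-1}^{\leq0}\perp\D_i$ with $\D_i$ the perfect derived category of a semisimple category, refined to a full exceptional collection using the additive generator; your induction on $m$ and the explicit cones $L_C\simeq(\C^{\leq0}/\C_{m-1}^{\leq0})(-,C)$ (compare \ref{rejquot}) just unpack that iteration.
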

\begin{proof}
	Consider the sequence of truncations:
	\[ 0=\C_0^{\leq0}\subset\C_1^{\leq0}\subset\cdots\subset\C_{m-1}^{\leq0}\subset\C_m^{\leq0}=\C^{\leq0}. \]
	Since $\C_{i-1}\subset\C_i$ is right rejective, the module $\C_i^{\leq0}$ is representable, thus perfect, as a right $\C_{i-1}^{\leq0}$-module. It follows that the exact sequence $\C_{i-1}^{\leq0}\to\C_i^{\leq0}\to\C_i^{\leq0}/\C_{i-1}^{\leq0}$ of dg categories gives a semi-orthogonal decomposition $\per\C_i^{\leq0}=\per\C_{i-1}^{\leq0}\perp\D_i$ where $\D_i$ is equivalent to the derived category of a semisimple category.
	Therefore, we get a semi-orthogonal decomposition $\per\C^{\leq0}=\D_1\perp\D_2\perp\cdots\perp\D_n$. If $H^0\C$ has an additive generator, then each $\D_i$ is equivalent to the derived category of a semisimple algebra, thus the above decomposition can be refined to a full exceptional collection.
\end{proof}

\begin{Rem}\label{QH}
In the language of quasi-hereditary algebras, we can reformulate the above result in terms of standard modules.
For each $0\leq i\leq m$, let $M_i$ be an additive generator of $H^0\C_i$ and put $C_i=\C(M_i,M_i)$ For brevity set $C=C_m$. Let $e_i$ be the idempotent of $C_i$ or of $C_i^{\leq0}$ corresponding to $\C_i$. Define the ``standard module'' $\Del_i$ by the sequence
\[ \xymatrix{ e_iC^{\leq0}e_{i-1}\lotimes_{C_{i-1}^{\leq0}}e_{i-1}C^{\leq0}\ar[r]&e_iC^{\leq0}\ar[r]&\Del_i }. \]
It is then easy to see that $\D_i=\thick\Del_i$, and hence $\per C^{\leq0}=\thick\Del_1\perp\cdots\perp\thick\Del_m$.
\end{Rem}

We note the computation of morphisms of the dg quotients by rejective subcategories.
\begin{Lem}\label{rejquot}
Let $\A\subset\B$ be an inclusion of dg categories, and let $\C=\B^{\leq0}/\A^{\leq0}$ be the dg quotient.
\begin{enumerate}
	\item Suppose that $\A\subset\B$ is a right rejective subcategory. For each $B\in\B$, pick a morphism $A\to B$ with $A\in\A$ satisfying the condition in \ref{rej}. Then there is a triangle in $\rD(\B^{\leq0})$:
	\[ \xymatrix{ \B^{\leq0}(-,A)\ar[r]&\B^{\leq0}(-,B)\ar[r]&\C(-,B) }. \]
	\item Suppose that $\A\subset\B$ is a left rejective subcategory. For each $B\in\B$, pick a morphism $B\to A$ with $A\in\A$ satisfying the conditions in \ref{rej}. Then for every $B^\prime\in\B$, there is a triangle in $\rD(\B^{\leq0,\op})$:
	\[ \xymatrix{ \B^{\leq0}(A,-)\ar[r]&\B^{\leq0}(B,-)\ar[r]&\C(B,-) }. \]
\end{enumerate}
\end{Lem}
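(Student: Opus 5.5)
Part (1) is the substantive one; part (2) follows by passing to opposite categories, since left rejectivity of $\A\subset\B$ is right rejectivity of $\A^\op\subset\B^\op$, and $(\B^{\leq0}/\A^{\leq0})^\op=(\B^\op)^{\leq0}/(\A^\op)^{\leq0}$. So I only describe (1).

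The plan is to use the standard description of morphisms in a dg quotient. For the quotient $\C=\B^{\leq0}/\A^{\leq0}$ there is, as in the proof of \ref{gldimA/P}, a triangle of $(\B^{\leq0},\B^{\leq0})$-bimodules
\[ \B^{\leq0}\lotimes_{\A^{\leq0}}\B^{\leq0}\to\B^{\leq0}\to\C. \]
Evaluating at the second variable $B$ gives a triangle in $\rD(\B^{\leq0})$
\[ \B^{\leq0}(-,?)\lotimes_{\A^{\leq0}}\B^{\leq0}(?,B)\to\B^{\leq0}(-,B)\to\C(-,B). \]
It therefore suffices to identify the first term with $\B^{\leq0}(-,A)$, compatibly with the map to $\B^{\leq0}(-,B)$.

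The identification goes as follows. The chosen $0$-cocycle $A\to B$ gives a morphism of right $\A^{\leq0}$-modules $\A^{\leq0}(-,A)=\B^{\leq0}(-,A)|_\A\to\B^{\leq0}(-,B)|_\A$. By the definition of right rejectivity (\ref{rej}) this is a quasi-isomorphism, so $\B^{\leq0}(?,B)|_{\A^{\leq0}}\simeq\A^{\leq0}(?,A)$ in $\rD(\A^{\leq0})$. Tensoring with $\B^{\leq0}(-,?)$ over $\A^{\leq0}$ then gives
\[ \B^{\leq0}(-,?)\lotimes_{\A^{\leq0}}\A^{\leq0}(?,A)\simeq\B^{\leq0}(-,A), \]
by the Yoneda isomorphism for representables.

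The step needing care is that this composite agrees with the multiplication map to $\B^{\leq0}(-,B)$. Under the identification, the multiplication map becomes composition with $A\to B$, that is, the map $\B^{\leq0}(-,A)\to\B^{\leq0}(-,B)$. I would verify this by naturality of the multiplication map in the $\A^{\leq0}$-module variable, applied to the morphism $\A^{\leq0}(-,A)\to\B^{\leq0}(-,B)|_\A$. The cone is then $\C(-,B)$, which gives the stated triangle.
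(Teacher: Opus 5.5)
Your proposal is correct and follows the paper's own proof. The paper likewise uses rejectivity to identify $\B^{\leq0}(-,A)$ with $\B^{\leq0}(-,B)\lotimes_{\A^{\leq0}}\B^{\leq0}$, then evaluates the bimodule triangle $\B^{\leq0}\lotimes_{\A^{\leq0}}\B^{\leq0}\to\B^{\leq0}\to\C$ at $B$. Your check via the induction--restriction counit that the first map is composition with $A\to B$ spells out a step the paper leaves implicit, and part (2) follows by duality in both.
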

\begin{proof}
	We only prove (1). Since $\A\subset\B$ is right rejective, we have an isomorphism $\A(-,A)^{\leq0}\xsimeq \B(-,B)^{\leq0}$ in $\rD(\A^{\leq0})$, which is equivalent to $\B^{\leq0}(-,A)\xsimeq\B^{\leq0}(-,B)\lotimes_{\A^{\leq0}}\B^{\leq0}$ in $\rD(\B^{\leq0})$. Since the morphism complex of $\C$ fits into a triangle $\B^{\leq0}\lotimes_{\A^{\leq0}}\B^{\leq0}\to\B^{\leq0}\to\C$, the assertion follows by substituting $B$ to the left.
\end{proof}

We now return to the setting of \ref{repdim}. 
For an object $M\in\rD(A)$, we denote by $\add_\dg\!M$ the full dg subcategory of $\D_\dg(A)$, thus it has the same objects as $\add M$, and morphism complexes are given by $\RHom_A(M,M)$.
The following derived version of \cite{Iy03} is crucial.
\begin{Prop}\label{coss}
Let $A$ be a connective dg algebra, and let $M\in\Db(A)$. Put $B=\REnd_A(M)^{\leq0}$ and $N=\srad_{B^\op}M$. 
Then, $\add_\dg\!N\subset\add_\dg(M\oplus N)$ is a cosemisimple right rejective subcategory.
\end{Prop}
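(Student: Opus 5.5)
The plan is to verify the two defining conditions of \ref{rej} directly: right rejectivity first, then cosemisimplicity, with the latter computed from \ref{rejquot}(1).

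Throughout, write $n=\sup M$ and $\iota\colon N\to M$ for the inclusion. By definition of the sup-radical, $H^iN=H^iM$ for $i<n$ and $H^nN=\rad_{H^0B}H^nM=J_{H^0B}H^nM$. Hence the cone of $\iota$ is $T[-n]$, where $T:=H^nM/J_{H^0B}H^nM$ is a semisimple $H^0B$-module (and an $H^0A$-module).

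\textbf{Right rejectivity.} We must produce, for each object $X$ of $\add(M\oplus N)$, a morphism $N'\to X$ with $N'\in\add N$ inducing a quasi-isomorphism $\RHom_A(N,N')^{\leq0}\to\RHom_A(N,X)^{\leq0}$.
\begin{itemize}
\item By additivity it suffices to treat $X=N$, where we take the identity, and $X=M$, where we take $\iota$.
\item For $X=M$, the triangle $N\to M\to T[-n]$ together with the long exact sequence in $\Hom_{\rD(A)}(N,-[j])$ reduces the quasi-isomorphism on truncations to two statements: (a) $\Hom_{\rD(A)}(N,T[j-n])=0$ for $j<0$; (b) every morphism $N\to M$ factors through $\iota$, equivalently the composite $N\to M\to T[-n]$ is zero for every $N\to M$.
\item Statement (a) is a $t$-structure argument. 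Since $A$ is connective, the standard $t$-structure on $\rD(A)$ applies, $N\in\rD^{\leq n}$, and $T[j-n]$ is concentrated in degree $n-j>n$.
\item For (b), the same truncation argument identifies $\Hom(N,T[-n])$ with a Hom-space on top cohomology. So it suffices to show that any $h\colon N\to M$ maps $H^nN=J_{H^0B}H^nM$ into $J_{H^0B}H^nM$.
\end{itemize}

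\textbf{Main obstacle: statement (b).} This is the step I expect to be hardest, and it is where the derived version of Iyama's argument enters. I will write an element of $J_{H^0B}H^nM$ as a sum of images $H^n(r)(x)$ with $r\colon M\to M$ a radical endomorphism in $H^0B=\End_{\rD(A)}(M)$. Such an $r$ factors through $\iota$ on top cohomology.
\begin{itemize}
\item The key claim is that $h\circ r$ lies again in $J_{H^0B}$.
\item Suppose not. Then $h\circ r$ is non-radical, so some split idempotent summand $M'$ of $M$ would be a direct summand of $N$ via $M'\to N\to M'$.
\item Taking $H^n$, the top $H^nM'\neq0$ would then pass through $J_{H^0B}H^nM$ and return isomorphically. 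This contradicts Nakayama applied to the finitely generated $H^0B$-module $H^nM$; here we use $M\in\Db(A)$, so $H^nM$ is finite dimensional.
\item Hence $H^n(h)(H^nN)=\sum H^n(h\circ r)(H^nM)\subset J_{H^0B}H^nM$, which gives (b).
\end{itemize}
Care is needed with the lift of $r$ to an actual map $M\to N$. Here I would use that $N$ is a sub-bimodule for $\REnd_A(M)^{\leq0}$, so $J_B$ acting on $M$ lands in $N$.

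\textbf{Cosemisimplicity.} By \ref{rejquot}(1), applied with the morphisms above, the dg quotient $\C=(\add_\dg(M\oplus N))^{\leq0}/(\add_\dg N)^{\leq0}$ has the following morphism complexes.
\begin{itemize}
\item $\C(-,N)=0$, and $\C(Y,M)=\cone\bigl(\RHom_A(Y,N)^{\leq0}\to\RHom_A(Y,M)^{\leq0}\bigr)$.
\item For $Y=N$ this cone vanishes by rejectivity.
\item For $Y=M$, the same $t$-structure vanishing as in (a), applied to $\Hom(M,T[j-n])$, shows the cone is concentrated in degree $0$. There it equals $\End(M)$ modulo the maps factoring through $N$, i.e. modulo those inducing maps into the radical on top.
\item By the Nakayama argument above, this ideal is exactly $J_{H^0B}$, so the quotient is $H^0B/J_{H^0B}$, which is semisimple.
\end{itemize}
Thus $\C$ is a semisimple category concentrated in degree $0$, as required.
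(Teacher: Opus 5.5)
There is a genuine gap, and it sits in the choice of approximation for $X=M$. Your reduction of rejectivity at $M$ to (a) and (b), and your $t$-structure proof of (a), are fine. But statement (b) is false in general: $\iota\colon N\to M$ need not be a right $\add N$-approximation. Your key claim $h\circ r\in J_{H^0B}$ is false as well.

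Take $A=k(1\to 2)$ and $M=S_2\oplus P_1$, with $S_2$ simple projective and $P_1$ projective-injective with socle $S_2$. Then $H^0B=\End_A(M)$, and its radical is spanned by $S_2\hookrightarrow P_1$. So $N=J_{H^0B}M=\mathrm{soc}\,P_1\cong S_2$, sitting inside the summand $P_1$. The isomorphism $h$ from $N$ onto the other summand $S_2$ of $M$ does not factor through $\iota$. Equivalently, $\Hom_A(N,N)=k\to\Hom_A(N,M)=k^2$ is not surjective, so $\iota$ does not induce a quasi-isomorphism over $(\add_\dg N)^{\leq0}$.

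For $r$ the radical map $S_2\hookrightarrow P_1$ (extended by zero), $h\circ r'$ is the idempotent onto $S_2$. Your Nakayama contradiction does not materialize. A summand $M'$ of $M$ can perfectly well be isomorphic to a summand of $N$, with $H^nM'$ passing isomorphically through $J_{H^0B}H^nM$. Nakayama only rules out $J_{H^0B}H^nM=H^nM$. The same conflation of ``factors through $\iota$'' with ``factors through $\add N$'' breaks your cosemisimplicity step. The idempotent $e_{S_2}$ factors through $\add N$ but is not radical, so $[\add N](M,M)\neq J_{H^0B}$.

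The proposition itself is true. The repair, which is the paper's proof, is to drop the reduction to $X=M$ and work with indecomposable $X\in H^0\C$, where $\C=\add_\dg(M\oplus N)$.

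\begin{enumerate}
\item If $X\in\add N$, use $\mathrm{id}_X$.
\item Otherwise $X$ is a summand of $M$. Take $Y\to X$ to be the homotopy pullback replacing the top cohomology $H^nX$ by $Z_X=\sum_{f\in J(M,X)}\Im H^nf$. Then $Y$ is a summand of $N$ and the cone is concentrated in degree $n$. Your argument (a) gives isomorphisms in negative degrees and injectivity in degree $0$ of the truncated Hom complexes.
\end{enumerate}

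It remains to show that the image $F$ of $H^0\C(-,Y)\to H^0\C(-,X)$ satisfies
\[
F\subseteq[\add N](-,X)\subseteq J(-,X)\subseteq F .
\]
The middle inclusion uses $X\notin\add N$. For the last inclusion evaluated at $N$, your lifting idea is exactly right once that hypothesis is available. Let $g\colon N\to X$, and let $r=\iota r'$ be radical. Then $g\circ r'\colon M\to X$ is radical. Otherwise it would be a split epimorphism onto the indecomposable $X$, making $X$ a summand of $N$.

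These inclusions give right rejectivity. Via \ref{rejquot}(1), they also show the dg quotient is $H^0\C/[\add N]$. This is semisimple, because on each $X\notin\add N$ the ideal is $J(-,X)$.
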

\begin{proof}
	We may assume $\sup M=0$. We denote $\C=\add_\dg(M\oplus N)$ and $\C^\prime=\add_\dg\!N$. We show that for each $X\in\C$ which is indecomposable in $H^0\C$, there is a morphism $Y\to X$ with $Y\in\C^\prime$ such that the induced map
	\begin{equation}\label{app}
	\xymatrix{ \C^{\leq0}(-,Y)\ar[r]&\C^{\leq0}(-,X) }
	\end{equation}
	satisfies the following.
	\begin{enumerate}
	\item\label{2} The map \eqref{app} induces an isomorphism in degree $<0$, and induces an isomorphism $H^0\C(-,Y)\xsimeq [H^0\C^\prime](-,X)$ in degree $0$.
	\item\label{3} If $X\not\in\C^\prime$, the map \eqref{app} induces an isomorphism $H^0\C(-,Y)\xsimeq J_{H^0\C}(-,X)$ in degree $0$.
	\end{enumerate}
	Then, the assertion (\ref{2}) shows that \eqref{app} is a quasi-isomorphism on $\C^\prime$, thus $\C^\prime\subset\C$ is a right rejective subcategory. The assertion (\ref{2}) also shows that the dg quotient $\C^{\leq0}/(\C^\prime)^{\leq0}$ coincides with $H^0\C/[H^0\C^\prime]$ by \ref{rejquot}, which is semisimple by (\ref{3}).
	
	Let $X\in\C$ which is indecomposable in $H^0\C$. If $X\in\C^\prime$, then the identity map on $X$ satisfies the above conditions.
	If $X\not\in H^0\C^\prime$, define a morphism $Y\to X$ in $\rD(A)$ as follows. Let $Z=\sum_{f\in J_{H^0\C}(M,X)}\Im H^0f\subset H^0X$, where $\Im$ means the image of the morphism $H^0f\colon H^0M\to H^0X$ viewed as a morphism of $H^0A$-modules. Then let $Y$ be the homotopy pull-back in $\rD(A)$:
	\[ \xymatrix{
		X^{\leq-1}\ar[r]\ar@{=}[d]&Y\ar[d]\ar[r]&Z\ar[d]\\
		X^{\leq-1}\ar[r]&X\ar[r]&H^0X. } \]
	Since $X\in\add M$, we have $Y\in\add N$.  
	Since the mapping cone of $Y\to X$ is concentrated in degree $0$, we see that the induced map $\C(-,Y)\to\C(-,X)$ yields isomorphisms in degree $<0$, and an injection in degree $0$. It remains to consider the image $F$ of $H^0\C(-,Y)\hookrightarrow H^0\C(-,X)$. There are inclusions $F\subset [H^0\C^\prime](-,X)\subset J_{H^0\C}(-,X)\subset F$. Indeed, we have the first one by $Y\in\C^\prime$, the second one by $X\not\in\C^\prime$, and the thrid one by construction. This proves both (\ref{2}) and (\ref{3}).
\end{proof}

Now we are ready to prove the main result of this section.
\begin{proof}[Proof of \ref{repdim}]
By construction, we have a sequence $0=M_m\to M_{m-1}\to\cdots\to M_1\to M_0=M$. For each $0\leq i\leq m$ put $N_i=\bigoplus_{j=1}^iM_{m-j}$ so that we have $0=N_0\subset N_1\subset N_2\subset\cdots\subset N_{m-1}\subset N_m=N$, where each inclusion is a direct summand.
By \ref{coss}, we get a right rejective chain
\[ 0=\add_\dg\!N_0\subset\add_\dg\!N_1\subset\cdots\subset\add_\dg\!N_{m-1}\subset\add_\dg\!N_m=\add_\dg\!N, \]
thus by \ref{qh}, the algebra $\Ga=\REnd_\La(N)^{\leq0}$ has a full exceptional collection.
\end{proof}

As another application of \ref{repdim}, we obtain the following result which allows us to realize the (dg) singularity category as a (dg) cosingularity category. In particular, we deduce that every dg singularity category of a proper connective dg algebra has a connective resolution.
\begin{Cor}\label{sg-cosg}
Let $\La$ be a proper connective dg algebra such that $\La/J_\La$ is separable over $k$. Then there exists a smooth connective dg algebra $B$ such that $H^0B$ is finite dimensional and $\cosg_\dg\!B\simeq\sg_\dg\!\La$.
Therefore, $B$ is a connective resolution of the dg singularity category.
\end{Cor}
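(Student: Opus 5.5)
Apply \ref{NCR} to $\La$: it gives $N\in\Db(\La)$ with $\La$ as a direct summand such that $\Ga:=\REnd_\La(N)^{\leq0}$ has finite global dimension. By \ref{repdim}, $\per\Ga$ has a full exceptional collection. We take $B:=\Ga$ and compare the two Verdier quotients.

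\emph{Properties of $B$.} Since $\La$ is proper and $N\in\Db(\La)$, the complex $\REnd_\La(N)$ has finite dimensional total cohomology. Hence $B$ is proper and connective, and $H^0B=\End_{\rD(\La)}(N)$ is finite dimensional. Smoothness comes from \ref{converse}. For this we need that $H^0B/\rad H^0B$ is separable over $k$. Its simple factors are the endomorphism division rings of the indecomposable summands of $N$. By the construction in \ref{coss}, these correspond to the simple tops (over $H^0\La$, resp. over the endomorphism rings) used to form the sup-radicals. Here I would use the hypothesis that $\La/J_\La$ is separable, together with the standard fact that endomorphism rings of indecomposables over a separable top have separable tops. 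Alternatively, the full exceptional collection shows that $\per B$ is glued from $\per D_i$ for division algebras $D_i=\End(E_i)$. Each $D_i$ is a subquotient of the tops of $\La$ and is therefore separable, so $B$ is smooth because gluings of smooth categories along proper bimodules are smooth.

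\emph{The equivalence.} The functor $-\lotimes_B N\colon\per B\to\Db(\La)$ is a localization (\ref{NCR}), and it sends $B$ to $N$. The key step is to identify the kernel of the composite
\[ \per B\to\Db(\La)\to\sg\La=\Db(\La)/\per\La \]
with $\thick(\mod H^0B)$ and with the image of $\per\La$. I would argue through $\add\La\subset\add N$. Write $e$ for the idempotent of $B$ corresponding to $\La$, so that $eBe=\REnd_\La(\La)^{\leq0}=\La$ because $\La$ is connective. Then $\per\La$ is the image of $\thick(eB)$. Since $-\lotimes_B N$ restricted to $\thick(eB)$ is fully faithful, the kernel $\K$ of $\per B\to\Db(\La)$ meets $\thick(eB)$ trivially. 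Therefore
\[ \sg\La\simeq\per B/\thick(\K\cup eB). \]
It remains to show $\thick(\K\cup eB)=\thick(\mod H^0B)$. For $\supseteq$: each simple $H^0B$-module $S$ not killed by $e$ is a quotient of some $eB$ up to radical pieces, and via the sup-radical construction one shows $S\lotimes_BN$ is perfect over $\La$. For $\subseteq$: $eB\lotimes_BN=\La$, and $\La$ has finite dimensional cohomology, so the "bounded" part must contain it. The cleaner route to $\subseteq$ is through $\D^b$: since $B$ is proper and smooth, $\thick(\mod H^0B)=\Db(B)$. Now $eB$ lies in $\Db(B)$ because $B$ is proper, and $\K\subset\Db(B)$ because objects of $\K$ are uniformly bounded. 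For $\supseteq$, compare the equality $\Db(B)/(\K\cup eB)$ with Koszul duality.

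The main obstacle is the equality $\thick(\K\cup eB)=\Db(B)$, that is, showing that every simple $H^0B$-module lies in $\thick(\K, eB)$. I would first try induction along the exceptional collection from \ref{qh}. The standard objects $\Del_i$ of \ref{QH} map under $-\lotimes_BN$ to the cones of the rejective approximations $Y\to X$. These cones are concentrated in a single degree and are semisimple tops, so each simple is built from $\Del_i$'s and from objects mapping into $\per\La$. Once this holds, Morita invariance yields $\cosg_\dg B\simeq\sg_\dg\La$ at the dg level by the universal property of dg quotients.
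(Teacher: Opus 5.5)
The choice $B:=\Ga$ is fatal. You observe yourself that $\Ga=\REnd_\La(N)^{\leq0}$ is proper and has finite global dimension. But then
\[ \per\Ga\subset\Db(\Ga)=\thick(\mod H^0\Ga)\subset\per\Ga, \]
so $\cosg\Ga=\per\Ga/\thick(\mod H^0\Ga)=0$. Your reduction $\sg\La\simeq\per\Ga/\thick(\K\cup e\Ga)$ is correct. It therefore shows that the equality you isolate as the main obstacle, $\thick(\K\cup e\Ga)=\thick(\mod H^0\Ga)$, is equivalent to $\sg\La=0$. That fails in general, e.g.\ for $\La=k[x]/(x^2)$. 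So the simple $H^0\Ga$-modules do not all lie in $\thick(\K\cup e\Ga)$, and no induction along the exceptional collection of \ref{qh} can prove otherwise.

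The missing idea is to kill $e$ rather than keep it. The paper takes $B$ to be the dg quotient of $\Ga$ by the idempotent $e$, so that $\per B\simeq\per\Ga/\thick(e\Ga)$ up to summands. The argument then runs as follows.
\begin{enumerate}
\item The kernel $\K$ of $\per\Ga\to\Db(\La)$ consists of the perfect $\Ga$-modules $X$ with $Xe=0$, that is, the restrictions of $B$-modules. Since $\per\Ga=\Db(\Ga)$, this identifies $\K$ with $\Db(B)=\thick(\mod H^0B)$.
\item Since $\Hom(e\Ga,X[n])=H^n(Xe)=0$ for $X\in\K$, the subcategory $\K$ lies in $(e\Ga)^{\perp}$ and embeds fully faithfully into $\per B$.
\item Quotienting $\per\Ga$ by $\K$ and by $e\Ga$ in either order gives
\[ \cosg B=\per B/\Db(B)\simeq\per\Ga/\thick(\K\cup e\Ga)\simeq\Db(\La)/\per\La. \]
This is the $3\times3$ diagram in the paper's proof.
\end{enumerate}
This $B$ is connective with $H^0B=H^0\Ga/(e)$ finite dimensional by \ref{gldimA/P}, and it is smooth as a localization of a smooth $\Ga$. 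It is typically not proper, which is exactly what allows $\cosg B\neq0$.

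Separately, your separability step is unsupported as stated, because the general fact you cite is false. Over an imperfect field of characteristic $p$, take $a\notin k^p$. The Kronecker algebra has top $k\times k$. Yet the representation $k[x]/(x^p-a)\rightrightarrows k[x]/(x^p-a)$, given by the identity and multiplication by $x$, has endomorphism ring the purely inseparable field $k(a^{1/p})$. Separability of $H^0\Ga/\rad H^0\Ga$ thus needs an argument specific to the summands produced by the sup-radical construction. The paper is equally terse on this point.
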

\begin{proof}
Pick $M\in\Db(\La)$ which contains $\La$ as a direct summand and gives a connective resolution of $\D^b_\dg(\La)$, and put $\Ga=\REnd_\La(M)^{\leq0}$. 
Letting $B$ be the dg quotient of $\Ga$ by $e$ and $\K$ the kernel of $\per_\dg\!\Ga\xrightarrow{(-)e}\D^b_\dg(\La)$, we have a commutative diagram below in which the rows and columns are exact sequences of dg categories, see for example \cite{TV,KY18}.
\[ \xymatrix{
	&\K\ar[d]\ar@{-}[r]^-\simeq&\D^b_\dg(B)\ar[d]\\
	\per_\dg\!\La\ar[r]\ar@{=}[d]&\per_\dg\!\Ga\ar[r]\ar[d]&\per_\dg\!B\ar[d]\\
	\per_\dg\!\La\ar[r]&\D^b_\dg(\La)\ar[r]&\bullet } \]
This shows $\sg_\dg\!\La\simeq\cosg_\dg\!B$ at the lower right corner.
\end{proof}

\begin{Ex}
Let $\La=k[x]/(x^2)$ with $\deg x=0$, and let 
\[ M=\left( \xymatrix{ \La\ar[r]^-x&\La\ar[r]^-x&\cdots\ar[r]^-x&\La\ar[r]^-x&\La } \right) \]
be an $(n+1)$-term complex with the top term in degree $0$.
To compute the radical of $M$ over $\REnd_\La(M)^{\leq0}$, it is enough to consider $H^0M=k$ as a module over $H^0\REnd_\La(M)^{\leq0}=\End_{\rD(\La)}(M)=k[x]/(x^2)$. From this we see that the (large) radical of $M$ over the truncated derived endomorphism algebra is $k[n]=:S$.
One can show that the dg algebra $\REnd_\La(S\oplus M)^{\leq0}$ is presented by the following dg quiver, where $0\leq i\leq n$ thus there are $(n+1)$ arrows and loops.
\[ \xymatrix{ S\ar@<3pt>[r]^-a\ar@(ul,dl)@3[]_-{h_i}&M\ar@<3pt>@3[l]^-{b_i} } \qquad |a|=0,\, |b_i|=-i,\, |h_i|=-i-1,\, da=0,\, db_i=0,\, dh_i=b_ia. \]
Being a dg path algebra of a finite quiver, this is indeed smooth.
\end{Ex}

\section{Examples}\label{Examples}
We present some explicit descriptions of the truncation.
\subsection{Dynkin quivers}
Let $Q$ be a Dynkin quiver. It is well-known \cite{Hap} that the Auslaner-Reiten quiver of the derived category $\Db(\mod kQ)$ is $\Z Q$, and the derived category is equivalent to the mesh (or preprojective) category $k(\Z Q)$ as a $k$-linear category.
We introduce the dg version of the mesh category and prove that it is equivalent to the truncation $\D^b_\dg(\mod kQ)^{\leq0}$.

\begin{Def}\label{dpp}
Given a quiver $Q$, define the dg quiver as follows.
\begin{itemize}
\item vertices: pairs $(i,n)$ with $i\in Q_0$ and $n\in\Z$,
\item three kinds of arrows:
\begin{itemize}
\item $a=(a,n)\colon(i,n)\to (j,n)$ for each $a\colon i\to j$ in $Q$,
\item $a^\ast=(a^\ast,n)\colon (j,n)\to (i,n+1)$ for each $a\colon i\to j$ in $Q$,
\item $b_i=(b_i,n)\colon (i,n)\to (i,n+1)$ for each $i\in Q_0$,
\end{itemize}
\item grading: $\deg(a,n)=\deg(a^\ast,n)=0$ and $\deg(b_i,n)=-1$,
\item differential: $d(b_i,n)=\sum_{t(a)=i}(a,n+1)(a^\ast,n)-\sum_{s(a)=i}(a^\ast,n) (a,n)$, where the sum runs over the arrows of $Q$ whose target (resp. source) is $i$.
\end{itemize}
We call the dg path category associated with this dg quiver the {\it derived preprojective category}.
\end{Def}

For example, the derived preprojective category of type $A_4$ is presented as follows, where the diagonal arrows are $a$ and $a^\ast$, and the horizontal arrows are $b$ of degree $-1$.
\[ \xymatrix@!R=1mm@!C=3mm{
	\ar[r]&\circ\ar[rr]^-{-1}\ar[dr]&&\circ\ar[rr]^-{-1}\ar[dr]&&\circ\ar[rr]^-{-1}\ar[dr]&&\circ\ar[rr]^-{-1}\ar[dr]&&\circ\ar[rr]^-{-1}\ar[dr]&&\circ\\
	\circ\ar[ur]\ar[rr]^-{-1}\ar[dr]&&\circ\ar[ur]\ar[rr]^-{-1}\ar[dr]&&\circ\ar[ur]\ar[rr]^-{-1}\ar[dr]&&\circ\ar[ur]\ar[rr]^-{-1}\ar[dr]&&\circ\ar[ur]\ar[rr]^-{-1}\ar[dr]&&\circ\ar[ur]\ar[dr]\ar[r]&\\
	\ar[r]&\circ\ar[rr]^-{-1}\ar[dr]\ar[ur]&&\circ\ar[rr]^-{-1}\ar[dr]\ar[ur]&&\circ\ar[rr]^-{-1}\ar[dr]\ar[ur]&&\circ\ar[rr]^-{-1}\ar[dr]\ar[ur]&&\circ\ar[rr]^-{-1}\ar[dr]\ar[ur]&&\circ\\
	\circ\ar[ur]\ar[rr]^-{-1}&&\circ\ar[ur]\ar[rr]^-{-1}&&\circ\ar[ur]\ar[rr]^-{-1}&&\circ\ar[ur]\ar[rr]^-{-1}&&\circ\ar[ur]\ar[rr]^-{-1}&&\circ\ar[ur]\ar[r]&  } \]

Recall that the {\it $d$-Calabi-Yau completion} \cite{Ke11} of a smooth dg category $\A$ is the derived tensor category
\[ \Pi_d(\A)=\rT^\rL_\A(\RHom_{\A^e}(\A,\A^e)[d-1]) \]
which is endowed with an additional tensor grading, called the Adams grading.
%

For an Adams $\Z$-graded dg category $\A$, we denote by $\A\#\Z$ the {\it smash product}, that is, a dg category with
\begin{itemize}
	\item objects: pairs $(A,n)$ with $A\in\A$ and $n\in\Z$,
	\item morphisms: $(\A\#\Z)((A,n),(B,m))=\A(A,B)_{m-n}$, the Adams degree $m-n$ part of $\A(A,B)$.
\end{itemize}

These definitions leads to the following observation.
\begin{Lem}\label{covering}
The derived preprojective category of $Q$ is quasi-equivalent to the smash product $\Pi_2(kQ)\#\Z$ of the $2$-Calabi-Yau completion $\Pi_2(kQ)$ of the category of indecomposable projective $kQ$-modules.
\end{Lem}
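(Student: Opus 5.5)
The plan is to compute the Calabi--Yau completion explicitly and then take the smash product. Let $\A$ be the category of indecomposable projective $kQ$-modules, i.e.\ the path category $kQ$ of the Dynkin quiver with vertices $Q_0$. It is smooth and hereditary, so the standard bimodule resolution
\[ 0\to\textoplus_{a\colon i\to j}\A(-,j)\otimes\A(i,-)\to\textoplus_{i\in Q_0}\A(-,i)\otimes\A(i,-)\to\A\to0 \]
gives an explicit cofibrant model of $\A$ as a bimodule. Dualizing with $\RHom_{\A^e}(-,\A^e)$ then computes $\theta_\A=\RHom_{\A^e}(\A,\A^e)$ as a two-term complex of free bimodules: a term with generators $a^\ast$ (one for each arrow $a$, reversed) in degree $0$, and a term with generators $e_i^\vee$ in degree $1$. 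The differential sends $e_i^\vee$ to the signed sum $\sum_{t(a)=i}a\,a^\ast-\sum_{s(a)=i}a^\ast a$, written with the bimodule left and right actions. This is the standard computation of Keller and of Ginzburg.

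Next we shift by $[d-1]=[1]$ and form $\Pi_2(\A)=\rT^\rL_\A(\theta_\A[1])$. Since $\theta_\A[1]$ is a cofibrant bimodule generated freely by arrows, the derived tensor category is the dg path category of the quiver with
\begin{itemize}
\item the original arrows $a$ in degree $0$ and Adams degree $0$;
\item the reversed arrows $a^\ast$ in degree $0$ and Adams degree $1$;
\item loops $b_i$ at each vertex $i$, coming from $e_i^\vee[1]$, in degree $-1$ and Adams degree $1$.
\end{itemize}
The differential is $d b_i=\sum_{t(a)=i}a a^\ast-\sum_{s(a)=i}a^\ast a$. This is the derived (Ginzburg-type) preprojective dg category, and only the Adams degree bookkeeping needs care here. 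Everything coming from $\theta_\A$ has Adams degree $1$, and the differential preserves Adams degree.

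Finally, we unwind $\Pi_2(\A)\#\Z$. Its objects are pairs $(i,n)$, and its morphisms are the Adams-degree-$(m-n)$ components. A dg path category with Adams grading concentrated on arrows has as smash product the dg path category of the covering quiver. In that covering quiver:
\begin{itemize}
\item each Adams-degree-$0$ arrow $a\colon i\to j$ lifts to $(i,n)\to(j,n)$;
\item each Adams-degree-$1$ arrow lifts to $(j,n)\to(i,n+1)$ for $a^\ast$ and to $(i,n)\to(i,n+1)$ for $b_i$.
\end{itemize}
The lifted differential of $(b_i,n)$ is $\sum(a,n+1)(a^\ast,n)-\sum(a^\ast,n)(a,n)$, exactly as in \ref{dpp}. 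Thus we even obtain an isomorphism of dg categories with a quasi-equivalent model of $\Pi_2(kQ)\#\Z$.

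The main obstacle is being careful that the derived tensor category is modeled by the free dg path category. This requires $\theta_\A[1]$ to be replaced by a cofibrant bimodule, which the two-term free resolution provides. It also requires the sign and orientation conventions in the dualized differential to match \ref{dpp}, up to an automorphism rescaling arrows by $-1$, which is a quasi-equivalence.
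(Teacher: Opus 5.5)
Your route is essentially the paper's. The paper simply quotes the standard description of $\Pi_2(kQ)$ as the dg path category obtained by adding reversed arrows in degree $0$ and loops in degree $-1$, and then says the smash product unwinds to Definition \ref{dpp}. You derive that description from the two-term bimodule resolution, which is fine and more self-contained.

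There is one concrete slip to fix: the degrees of the generators of $\theta_\A$ are swapped.
\begin{itemize}
\item Write the resolution as $P^{-1}\to P^0\to\A$, with arrow generators in $P^{-1}$ and vertex generators in $P^0$.
\item Then $\cHom_{\A^e}(P,\A^e)$ has $\Hom(P^0,\A^e)$, spanned by the $e_i^\vee$, in degree $0$, and $\Hom(P^{-1},\A^e)$, spanned by the reversed arrows $a^\ast$, in degree $1$.
\item The differential has degree $+1$. So it is $e_i^\vee\mapsto\sum_{t(a)=i}a\,a^\ast-\sum_{s(a)=i}a^\ast a$; your formula for $d$ is right, only the degrees are wrong.
\end{itemize}

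As you wrote it, the differential would lower degree. Also, shifting your $\theta_\A$ by $[1]$ would put $a^\ast$ in degree $-1$ and $e_i^\vee$ in degree $0$, which contradicts the quiver you write down next.

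With the correct placement, $\theta_\A[1]$ is the semi-free bimodule with $a^\ast$ in degree $0$ and $b_i:=e_i^\vee$ in degree $-1$, and $d b_i$ is the preprojective relation. The rest of your argument then goes through unchanged and recovers Definition \ref{dpp} exactly:
\begin{itemize}
\item cofibrancy, so that the derived tensor category is the dg path category;
\item Adams degree $1$ on all of $\theta_\A[1]$;
\item the covering-quiver description of the smash product.
\end{itemize}
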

\begin{proof}
	Since the $2$-Calabi-Yau completion $\Pi_2(kQ)=\rT_{kQ}^\rL(\RHom_{kQ}(D(kQ),kQ)[1])$ of $kQ$ is described by adding to $Q$ the opposite arrows of degree $0$ and the loops of degree $-1$, the assertion follows immediately from the definitions.
\end{proof}

Our result is the following description of the truncation as this derived preprojective category.
\begin{Thm}\label{mesh}
Let $Q$ be a Dynkin quiver. Then the derived preprojective category of $Q$ is quasi-equivalent to the full subcategory of the truncation $\D^b_\dg(\mod kQ)^{\leq0}$ formed by indecomposable objects.
\end{Thm}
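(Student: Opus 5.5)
Following the hint in the introduction that $\Db(\mod kQ)$ is a ``graded cluster category'', I would first rewrite the derived preprojective category as $\Pi_2(kQ)\#\Z$ by \ref{covering}. Then I would identify both sides as truncations of the Adams-graded derived category of $\Pi_2(kQ)$.

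Write $\Pi=\Pi_2(kQ)$. Since $Q$ is Dynkin, $\Pi$ is connective with $H^0\Pi$ the finite dimensional preprojective algebra, and it is proper. By the graded version of the results behind \ref{ct} (Amiot, Iyama--Yang), the Adams-graded cosingularity category $\per^\Z\Pi/\Db^\Z(\Pi)$ is equivalent to $\Db(\mod kQ)$. Under this equivalence the grading shift $(1)$ corresponds to $\tau^{-1}$, and the object $\Pi(n)e_i$ goes to the indecomposable $\tau^{-n}P_i$. Since $Q$ is Dynkin, these exhaust the indecomposables of $\Db(\mod kQ)$ up to shift; the index set should really be $\Z Q$ modulo the identification $\tau^{-h}\simeq[-2]$. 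I would check this bookkeeping against the vertices $(i,n)$ of \ref{dpp} and note that the vertices correspond bijectively to the indecomposables of $\Z Q$. The key input, as in the proof of \ref{ct} (citing \cite[5.9]{IYa1} in graded form), is that the quotient map
\[ \Pi(-,?)\to \RHom_{\per^\Z\Pi/\Db^\Z(\Pi)}(-,?) \]
is a quasi-isomorphism in degrees $\leq0$ on the objects $\Pi(n)e_i$. Taking Adams degree components gives
\[ (\Pi\#\Z)((i,n),(j,m))\simeq\D^b_\dg(\mod kQ)(\tau^{-n}P_i,\tau^{-m}P_j)^{\leq0}. \]

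The proof then proceeds in three steps. First, I would set up the dg functor from $\Pi\#\Z$ to the truncation, using an enhancement of the quotient; $\Pi\#\Z$ is connective, so the functor factors through $(-)^{\leq0}$. Second, I would prove fully faithfulness on cohomology by the truncated quasi-isomorphism above. Third, I would check essential surjectivity onto the indecomposables as described.

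The main obstacle is the graded version of the quasi-isomorphism in degrees $\leq0$. It requires knowing that $\Pi$, as an Adams-graded module, has no nonzero morphisms into $\Db^\Z(\Pi)$ interfering in degrees $\leq0$. Concretely, I would show $\RHom_\Pi(\Pi,\Db^\Z)$ contributes only in positive degrees, using the $2$-Calabi-Yau property of $\Pi$ (relative Serre duality $D\RHom(X,Y)\simeq\RHom(Y,X[2])$ for $X\in\Db$) together with connectivity. As a fallback, I would compute both sides directly. The left side has mesh relations in $H^0$ and $H^{-1}$ given by the kernel of the mesh map. On the right, $\Hom(X,Y[-1])$ for indecomposables $X,Y$ in $\Z Q$ can be read off via Happel's mesh description \cite{Hap}. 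Matching these, together with vanishing of lower cohomology on both sides for Dynkin $Q$, would give the result.
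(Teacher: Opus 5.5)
The step that carries all the content is not established. That step is the quasi-isomorphism in degrees $\leq0$ between Adams pieces of $\Pi=\Pi_2(kQ)$ and morphism complexes of $\D^b_\dg(\mod kQ)$. The results you invoke (Amiot, \cite[5.9]{IYa1}) are ungraded statements about $\per\Pi/\pvd\Pi$, and \ref{ct} uses them only for $d\geq2$. They give neither $\per^\Z\Pi/\pvd^\Z\Pi\simeq\Db(\mod kQ)$ nor the graded truncation statement. Your situation is the $d=1$ case, where the relevant duality is not plain $2$-Calabi--Yau but twisted by the Adams grading ($\Pi^\vee[2]\simeq\Pi(1)$). In the paper, that graded equivalence is \emph{derived from} \ref{mesh} and \ref{loc} (see the remark following \ref{mesh}), not used as an input. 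Even granting it from elsewhere, writing down the dg functor $\Pi\#\Z\to\D^b_\dg(\mod kQ)$, $(i,n)\mapsto\tau^{-n}P_i$, already requires identifying $\RHom_{kQ}(\tau^{-n}kQ,\tau^{-m}kQ)$ with the Adams degree $m-n$ part of $\Pi$, which is essentially the theorem. Your sketch for the ``main obstacle'' also fails as written, since $\RHom_\Pi(\Pi,N)=N$ for $N\in\pvd^\Z\Pi$ can sit in any degree. The fallback rests on a false premise. The morphism complexes have cohomology in arbitrarily negative degrees: since $\tau^{-h}\simeq[2]$, $\RHom(P_i,\tau^{-h}P_i)$ lives in degree $-2$. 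For $Q=A_1$ one gets $\Pi_2(k)=k[b]$ with $|b|=-1$, which also shows $\Pi$ is not proper. In any case, matching cohomology groups does not produce a quasi-equivalence. Finally, the indecomposables are in bijection with all vertices of $\Z Q$; $\tau^{-h}\simeq[2]$ is an automorphism of $\Z Q$, not an identification of vertices.

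The missing idea is that the identification can be computed directly, with no cluster-category input. Since $\tau^{-1}=-\lotimes_{kQ}\theta_1$ with $\theta_1=\RHom_{kQ}(D(kQ),kQ)[1]$, one has $\RHom_{kQ}(\tau^{-n}kQ,\tau^{-m}kQ)\simeq\tau^{-(m-n)}kQ$, which is $\theta_1^{\lotimes(m-n)}$ for $m\geq n$. Dynkin Auslander--Reiten theory gives the degree control. For $j\geq0$, $\tau^{-j}kQ$ lies in $\bigcup_{l\geq0}(\mod kQ)[l]$, so in degrees $\leq0$. For $j<0$ it lies in degrees $\geq1$, because $\tau P_i=I_i[-1]$. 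The objects $\tau^{-n}kQ$ exhaust the indecomposables up to summands. On them the truncation has morphism complexes $\theta_1^{\lotimes(m-n)}$ for $m\geq n$ and $0$ otherwise, with composition given by tensor multiplication. This is $\rT_{kQ}(\theta_1)\#\Z=\Pi_2(kQ)\#\Z$ (using \ref{vee}), and \ref{covering} finishes. This is the paper's proof. Your ``key input'' drops out of it as a by-product, and the graded cosingularity description then follows from \ref{loc} rather than being needed beforehand.
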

\begin{proof}
	We denote by $\C=\D^b_\dg(\mod kQ)$. Up to summands, the truncation $\C^{\leq0}$ may be identified with the category whose objects are $n\in\Z$ corresponding to $\tau^{-n}kQ$, and morphisms are
	\[ \C^{\leq0}(n,m)=\RHom_{kQ}(\tau^{-n}kQ,\tau^{-m}kQ)^{\leq0}=\begin{cases}\theta_1^{\lotimes_{kQ}(m-n)}& (m\geq n) \\ 0& (m<n)\end{cases}, \]
	where $\theta_1=\RHom_{kQ}(D(kQ),kQ)[1]$. 
	It follows that $\C^{\leq0}$ is precisely the smash product $\Pi_2(kQ)\#\Z$ of the $2$-Calabi-Yau completion $\Pi_2(kQ)$. We therefore obtain the result by \ref{covering}.
\end{proof}

\begin{Cor}
There is a quasi-equivalence $\D^b_\dg(\mod kQ)^{\leq0}\simeq\Pi_2(\proj kQ)\#\Z$ which takes $\tau^{-1}$ to $(1)$.
\end{Cor}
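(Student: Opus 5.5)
The corollary should follow from the proof of \ref{mesh}, with the bookkeeping for the grading shift made explicit. Put $\C=\D^b_\dg(\mod kQ)$. Every indecomposable object of $\Db(\mod kQ)$ is isomorphic to a summand of $\tau^{-n}kQ$ for a unique $n\in\Z$. Hence, up to quasi-equivalence of additive idempotent completions, $\C^{\leq0}$ is Morita equivalent to its full subcategory on the objects $\tau^{-n}P$, with $n\in\Z$ and $P$ running over the indecomposable projective $kQ$-modules.

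\textbf{Identifying the morphism complexes.} First I will use that $\tau^{-1}$ on $\Db(\mod kQ)$ is given by the invertible bimodule $\theta_1=\RHom_{kQ}(D(kQ),kQ)[1]$. This gives
\[ \C(\tau^{-n}P,\tau^{-m}P')\simeq \RHom_{kQ}(P,P'\lotimes_{kQ}\theta_1^{\lotimes(m-n)}). \]
Since $kQ$ is hereditary and Dynkin, $\theta_1^{\lotimes r}$ is a module concentrated in degree $0$ for $r\geq0$, namely the preprojective component. For $r<0$ it is a sum of shifted modules with cohomology in degrees $\geq1$; for example $\theta_1^{-1}=D(kQ)[-1]$, and more generally $\tau^{r}P'$ lies in degrees $\geq1$ or is a module with no maps from projectives.

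Truncating at $\leq0$ therefore kills the $r<0$ part. For $r\geq0$ it returns the degree-$0$ Hom space $e_{P'}\theta_1^{\lotimes r}e_P$. This is exactly the Adams degree $r$ part of $\Pi_2(kQ)=\rT^\rL_{kQ}(\theta_1)$ up to quasi-isomorphism. The key point is that the tensor algebra of the two-term bimodule resolution of $\theta_1$ has cohomology concentrated in degree $0$ in each Adams degree, because $kQ$ is Dynkin.

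\textbf{Compatibility with composition.} Next I will check that the compositions agree. Composition in $\C^{\leq0}$ corresponds to the multiplication $\theta_1^{\lotimes r}\lotimes\theta_1^{\lotimes s}\to\theta_1^{\lotimes(r+s)}$ in $\Pi_2(kQ)$. At the dg level, I will realise both sides with a strictly functorial model: the dg functor $\tau^{-1}=-\otimes_{kQ}\mathbf p\theta_1$ on a cofibrant enhancement. With that choice the comparison is an honest dg functor $\Pi_2(kQ)\#\Z\to\C^{\leq0}$, sending $(P,n)\mapsto\tau^{-n}P$. The computation above shows it is quasi-fully faithful, and it is essentially surjective up to summands.

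\textbf{The shift.} By construction, the Adams shift $(1)\colon(P,n)\mapsto(P,n+1)$ on the smash product corresponds to $\tau^{-n}P\mapsto\tau^{-n-1}P$, which is $\tau^{-1}$. Since $\tau^{-1}$ is realised by the same bimodule on both sides, the quasi-equivalence intertwines the two.

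\textbf{Main obstacle.} The delicate point is making the quasi-equivalence genuinely dg and $\tau^{-1}$-equivariant, rather than equivariant only on cohomology. I would handle this by working throughout with the cofibrant bimodule model of $\theta_1$. This matches exactly the model used in \ref{covering}, where the dual arrows have degree $0$ and the loops degree $-1$.
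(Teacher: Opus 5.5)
Your overall route is the paper's: identify $\C^{\leq0}(\tau^{-n}kQ,\tau^{-m}kQ)$ with $\theta_1^{\lotimes(m-n)}$ for $m\geq n$ and with $0$ for $m<n$, recognize the smash product $\Pi_2(kQ)\#\Z$, and match $(1)$ with $\tau^{-1}$. However, the fact you single out as the key point is false, and as written your computation of the truncation gives the wrong answer.

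For Dynkin $Q$ the complexes $\theta_1^{\lotimes r}$, $r\geq0$, are \emph{not} concentrated in degree $0$. Neither is $\Pi_2(kQ)$ in a fixed Adams degree. Degree-$0$ concentration is what happens for non-Dynkin $Q$, where $\tau^{-r}kQ$ stays inside $\mod kQ$. Already for $Q=A_1$ one has $\tau^{-1}=[1]$, $\theta_1=k[1]$, and $\Pi_2(k)=k[b]$ with $|b|=-1$. Then $\C^{\leq0}(k,k[1])=\RHom_k(k,k[1])^{\leq0}=k[1]$, whose degree-$0$ part $\Hom(k,k[1])$ vanishes. In general $\tau^{-1}$ sends a projective-injective $I_j$ to $P_j[1]$. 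Since each $\mod kQ[s]$ has only finitely many indecomposables, $\tau^{-r}kQ$ acquires summands in $\mod kQ[s]$ for arbitrarily large $s$. So ``truncation returns the degree-$0$ Hom space'' would discard precisely the negative-degree morphisms, carried by the degree $-1$ arrows $b_i$. Those morphisms are what make $\C^{\leq0}$ the \emph{derived} preprojective category rather than the mesh category.

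What you actually need is concentration in degrees $\leq0$ for $r\geq0$, and in degrees $\geq1$ for $r<0$. For an indecomposable module $M$, $\tau^{-1}(M[s])$ lies in $\mod kQ[s]$, or in $\mod kQ[s+1]$ when $M$ is injective. Hence for $r\geq0$ every summand of $\tau^{-r}P'$ has the form $M[s]$ with $s\geq0$. Then $\RHom_{kQ}(P,M[s])=\Hom_{kQ}(P,M)[s]$ lives in degree $-s\leq0$, the inclusion of the truncation is a quasi-isomorphism, and the truncation changes nothing. Dually, for $r<0$ every summand of $\tau^{|r|}P'$ is $M[s]$ with $s\leq-1$, so the complex sits in degrees $\geq1$ and is killed. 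Your alternative ``or is a module with no maps from projectives'' is vacuous; drop it.

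With this, $\C^{\leq0}(\tau^{-n}P,\tau^{-m}P')\simeq e_{P'}\theta_1^{\lotimes(m-n)}e_P$ for $m\geq n$. By definition this is the Adams degree $m-n$ part of $\Pi_2(kQ)=\rT^\rL_{kQ}(\theta_1)$, and no collapse of cohomology is required. The rest of your plan is fine. Building an honest dg functor from the two-term cofibrant model $\mathbf{p}\theta_1$ in degrees $-1,0$ makes $\rT_{kQ}(\mathbf{p}\theta_1)$ strictly connective, so the functor automatically lands in $\C^{\leq0}$. This is more careful than the paper's one-line ``it follows''.
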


Note that this also gives a conceptual proof of standardness of the derived category. Recall that a triangulated category with Auslander-Reiten sequences is {\it standard} if it is equivalent (as $k$-linear categories) to the mesh category of its Auslander-Reiten quiver.
\begin{Cor}[\cite{Rie}]
The derived category $\Db(\mod kQ)$ of a Dynkin quiver $Q$ is standard.
\end{Cor}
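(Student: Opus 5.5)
The plan is to deduce Riedtmann's standardness result from the quasi-equivalence $\D^b_\dg(\mod kQ)^{\leq0}\simeq\Pi_2(\proj kQ)\#\Z$ by applying $H^0$ to both sides. Both sides are connective, so $H^0$ makes sense, and a quasi-equivalence induces a $k$-linear equivalence on $H^0$-categories.

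On the truncation side, $H^0(\C^{\leq0})=H^0\C$ because $H^0$ of $\C(A,B)^{\leq0}$ is $Z^0/B^0$. Hence the full subcategory of indecomposables in $H^0\C^{\leq0}$ is the category $\operatorname{ind}\Db(\mod kQ)$. Since $\Db(\mod kQ)$ is Krull--Schmidt, it is equivalent to the additive hull of this subcategory. So it suffices to identify $H^0$ of the derived preprojective category of \ref{dpp} with the mesh category $k(\Z Q)$.

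On the derived preprojective side, I compute $H^0$ directly from the dg quiver. The dg path category is concentrated in degrees $\leq0$: the degree $0$ part consists of paths in the arrows $a,a^\ast$, and the degree $-1$ part consists of paths containing exactly one $b_i$. Thus $H^0$ is the path category of the quiver with arrows $(a,n)$ and $(a^\ast,n)$, modulo the two-sided ideal generated by the images $d(b_i,n)$. These images are exactly the mesh relations at the vertex $(i,n+1)$, with $\tau$-translate $(i,n)$: for each mesh $(i,n)\to(j,\ast)\to(i,n+1)$, the relation is the signed sum of the two-step paths through the middle terms. The underlying quiver with vertices $(i,n)$ and arrows $a,a^\ast$ is precisely $\Z Q$, where $\tau(i,n+1)=(i,n)$. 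So $H^0$ equals the mesh category $k(\Z Q)$, up to the usual sign convention on relations, which can be absorbed by rescaling arrows.

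Combining the two identifications with \ref{mesh} gives $\operatorname{ind}\Db(\mod kQ)\simeq k(\Z Q)$ as $k$-linear categories. It remains to check that $\Z Q$ is indeed the Auslander--Reiten quiver, which is Happel's result \cite{Hap}, and that under the equivalence $\tau^{-1}$ corresponds to $(1)$; both are stated above. The main point to verify carefully is that the labeling in the proof of \ref{mesh}, where $n$ corresponds to $\tau^{-n}kQ$, matches vertex $(i,n)$ with the indecomposable $\tau^{-n}P_i$. It must also send the arrows $a$ and $a^\ast$ to irreducible maps, which is a matter of tracking the description of $\Pi_2(kQ)$ in \ref{covering}. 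No other part of the argument is an obstacle.
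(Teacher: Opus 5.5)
Your proposal is correct and is essentially the paper's proof. You take $H^0$ of the quasi-equivalence in \ref{mesh}, use $H^0(\C^{\leq0})=H^0\C$, and identify $H^0$ of the derived preprojective category (the path category modulo the ideal generated by the $d(b_i,n)$) with the mesh category $k(\Z Q)$. The sign issue is settled in the paper by ``since $Q$ is a tree'' and by you through rescaling arrows.

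The label-matching you flag at the end is not needed. Under the paper's definition of standardness, any $k$-linear equivalence $\operatorname{ind}\Db(\mod kQ)\simeq k(\Z Q)$, combined with Happel's identification of the Auslander--Reiten quiver with $\Z Q$, already suffices.
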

\begin{proof}
	Taking the $0$-th cohomology of the quasi-equivalence in \ref{mesh}, we see that the full subcategory of indecomposable objects of $\Db(\mod kQ)=H^0\D^b_\dg(\mod kQ)$ is equivalent to the $0$-th cohomology of the derived preprojective category. Since $Q$ is a tree, the latter is equivalent to the mesh category.
\end{proof}

\begin{Rem}
\begin{enumerate}
\item Under this equivalence $\C^{\leq0}\simeq\Pi_2(kQ)\#\Z$ for $\C=\D^b_\dg(\mod kQ)$, the exact sequence $\thick H^0\C\to\per\C^{\leq0}\to\per\C$ in \ref{loc} is isomorphic to $\pvd^\Z\!\Pi_2(kQ)\to\per^\Z\!\Pi_2(kQ)\to\D^b(\mod kQ)$
\item We can also confirm the computation of the inverse dualizing bimodule of $\D^b_\dg(\mod kQ)^{\leq0}$ given in \ref{DAR}(\ref{theta}) directly. Indeed, we have an isomorphism $\Pi^\vee[2]\simeq\Pi(1)$ in $\per^\Z\!\Pi^e$ for the derived preprojective algebra $\Pi=\Pi_2(kQ)$. Then translating to $\C^{\leq0}$ via \ref{mesh} gives the desired isomorphism $(\C^{\leq0})^\vee[2](-,?)\simeq\C^{\leq0}(-,\tau^{-1}(?))$ for $\C^{\leq0}$. 
\end{enumerate}
\end{Rem}

The above result \ref{mesh} leads to descriptions of various categories related to $\Db(\mod kQ)$. We present some of them.
\begin{Ex}\label{2term}
Let $\F:=kQ\ast kQ[1]\ast\cdots\ast kQ[n-1]\subset\D^b_\dg(\mod kQ)$ be the category of $n$-term complexes. Its truncation $\F^{\leq0}$ is presented by the following dg quiver for type $A_3$ and $n=2$.
\[ \xymatrix@!R=1mm@!C=3mm{
	&&3\ar[rr]^-{-1}\ar[dr]&&7\ar[dr]&&\\
	&2\ar[ur]\ar[rr]^-{-1}\ar[dr]&&5\ar[ur]\ar[rr]^-{-1}\ar[dr]&&8\ar[dr]&\\
	1\ar[rr]^-{-1}\ar[ur]&&4\ar[rr]^-{-1}\ar[ur]&&6\ar[rr]^-{-1}\ar[ur]&&9 } \]

Similarly, if $\La$ is the algebra presented by the quiver $\xymatrix{ 1\ar[r]^-x&2\ar[r]^-y&3}$ with relation $yx=0$, the category $\F=\La\ast\La[1]$ of $2$-term complexes consists of the following $8$ indecomposable objects.
\[ \xymatrix@!R=1mm@!C=3mm{
	\ar[r]&\circ\ar[rr]^-{-1}\ar[dr]&&2\ar[rr]^-{-1}\ar[dr]&&5\ar[rr]^-{-1}\ar[dr]&&\circ\ar[rr]^-{-1}\ar[dr]&&8\ar[rr]^-{-1}\ar[dr]&&\circ\\
	\circ\ar[ur]\ar[rr]^-{-1}\ar[dr]&&\circ\ar[ur]\ar[rr]^-{-1}\ar[dr]&&3\ar[ur]\ar[rr]^-{-1}\ar[dr]&&6\ar[ur]\ar[rr]^-{-1}\ar[dr]&&\circ\ar[ur]\ar[rr]^-{-1}\ar[dr]&&\circ\ar[ur]\ar[dr]\ar[r]&\\
	\ar[r]&1\ar[rr]^-{-1}\ar[ur]&&\circ\ar[rr]^-{-1}\ar[ur]&&4\ar[rr]^-{-1}\ar[ur]&&7\ar[rr]^-{-1}\ar[ur]&&\circ\ar[rr]^-{-1}\ar[ur]&&\circ } \]
One finds that the truncation $\F^{\leq0}$ is presented by the dg quiver below.
\[ \xymatrix@!R=1mm@!C=3mm{
	&2\ar[rr]^-{-1}\ar[dr]&&5\ar[dr]&&&\\
	1\ar[ur]\ar[rr]^-{-1}&&3\ar[ur]\ar[rr]^-{-1}\ar[dr]&&6\ar[rr]^-{-1}\ar[dr]&&8\\
	&&&4\ar[rr]^-{-1}\ar[ur]&&7\ar[ur]& } \]
\end{Ex}

We next note that truncations are compatible with orbit categories. Let $\C$ be a dg category, and let $F\colon\C\to\C$ be a dg functor inducing an equivalence on $\per\C$. The {\it dg orbit category} of $\C$ by $F$ is the dg category $\C/F$ with the same objects as $\C$ and with morphism complexes
\[ \C/F(A,B)=\colim\left( \xymatrix{ \disoplus_{i\geq0}\C(A,F^iB)\ar[r]&\disoplus_{i\geq0}\C(FA,F^iB)\ar[r]&\disoplus_{i\geq0}\C(F^2A,F^iB)\ar[r]&\cdots }\right),  \]
where the transition maps are induced by $F$. It follows immediately from the definition that we have the following.
\begin{Prop}\label{orbit}
There is a canonical quasi-equivalence $\C^{\leq0}/F\simeq(\C/F)^{\leq0}$.
\end{Prop}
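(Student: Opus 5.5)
The idea is to compare the two sides object by object, and the natural place to start is the morphism complexes. For objects $A,B$, the orbit category is
\[ (\C/F)(A,B)=\colim_{p}\textoplus_{i\geq0}\C(F^pA,F^iB). \]
Taking $(-)^{\leq0}$ commutes with direct sums, since $Z^0$ and the components in negative degrees are computed termwise. It also commutes with filtered colimits of complexes, because filtered colimits are exact on $k$-modules and so preserve the kernel $Z^0$. Hence
\[ (\C/F)^{\leq0}(A,B)=\colim_{p}\textoplus_{i\geq0}\C(F^pA,F^iB)^{\leq0}=\colim_{p}\textoplus_{i\geq0}\C^{\leq0}(F^pA,F^iB). \]
The right-hand side is by definition the morphism complex $(\C^{\leq0}/F)(A,B)$. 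Here the dg functor $F$ restricts to a dg functor $F^{\leq0}\colon\C^{\leq0}\to\C^{\leq0}$, because dg functors preserve cocycles and degrees. The transition maps $\C(F^pA,F^iB)\to\C(F^{p+1}A,F^{i+1}B)$ are induced by $F$ and therefore commute with truncation. Both categories have the same objects as $\C$, and under these identifications the compositions agree, as both are computed from the composition of $\C$ restricted to truncated pieces. This gives an isomorphism of dg categories $\C^{\leq0}/F\cong(\C/F)^{\leq0}$, and in particular a quasi-equivalence.

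One point needs checking: that $\C^{\leq0}/F$ makes sense, that is, that $F^{\leq0}$ induces an equivalence on $\per\C^{\leq0}$ whenever that is required by the definition of the orbit category. If only the formula above is used as the definition, this is unnecessary. Otherwise I would handle it as in \ref{-1}: when $\C$ has shifts and $F$ is an auto-quasi-equivalence with quasi-inverse $G$, then $F^{\leq0}$ and $G^{\leq0}$ are mutually quasi-inverse on $\C^{\leq0}$.

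The main obstacle is the word ``canonical''. If the dg orbit category is defined via a cofibrant or homotopy colimit model rather than the naive colimit, I would have to check that truncation commutes with that model. Since $(-)^{\leq0}$ preserves quasi-isomorphisms and filtered homotopy colimits of complexes are computed by ordinary filtered colimits, the naive computation above remains valid up to quasi-isomorphism, and this should settle it.
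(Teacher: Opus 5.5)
Your argument is correct and follows the paper's approach. The paper just says the statement follows immediately from the definition of the dg orbit category. Your verification makes this explicit: $(-)^{\leq0}$ commutes with direct sums and with the sequential colimit, is compatible with the transition maps induced by $F$, and is compatible with composition. This even gives an isomorphism of dg categories.
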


\begin{Ex}\label{cluster}
Let $Q$ be a Dynkin quiver, and consider the $n$-cluster category $\rC_n(kQ)=\Db(\mod kQ)/\tau^{-1}[n-1]$ and its canonical enhancement $\C$. Then the \ref{orbit} shows that there is a quasi-equivalence
\[ \C^{\leq0}\simeq \D^b_\dg(\mod kQ)^{\leq0}/\tau^{-1}[n-1]. \]
Consequently, by \ref{mesh}, the truncation $\C^{\leq0}$ is desribed as the orbit category of the derived mesh category by $\tau^{-1}[n-1]$. For example, the truncation of the $2$-cluster category of type $A_2$ is given by the following dg path algebra with $5$ vertices.
\[ \xymatrix@!R=1mm@!C=3mm{
	\ar[r]&5\ar[rr]^-{-1}\ar[dr]&&2\ar[rr]^-{-1}\ar[dr]&&4\ar[rr]^-{-1}\ar[dr]&&1\ar[rr]^-{-1}\ar[dr]&&3\ar[rr]^-{-1}\ar[dr]&&5\\
	4\ar[ur]\ar[rr]^-{-1}&&1\ar[ur]\ar[rr]^-{-1}&&3\ar[ur]\ar[rr]^-{-1}&&5\ar[ur]\ar[rr]^-{-1}&&2\ar[ur]\ar[rr]^-{-1}&&4\ar[ur]\ar[r]&  } \]
By \ref{fin}, this is a smooth connective resolution of the dg cluster category. Moreover, its inverse dualizing bimodule is invertible.
\end{Ex}

\subsection{Yoneda categories}
We next discuss the truncation of the {\it Yoneda category} \cite{Ha2}. Let $\La$ be a finite dimensional algebra, and consider the (enhancement of) the Yoneda category
\[ \Y=\add\{ A[i]\mid A\in\mod\La, i\in\Z\}\subset\D^b_\dg(\mod\La). \]
This is a dg category with shifts. 
\begin{Thm}\label{yoneda-ncr}
Let $\La$ be a finite dimensional algebra. Then $\Y\subset\D^b_\dg(\mod\La)$ gives a connective resolution of global dimension at most $3$.
\end{Thm}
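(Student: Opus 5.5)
We split the claim into the localization part, which should be formal, and the global dimension bound, which is the real content.

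\emph{Localization.} $\Y$ has shifts by construction. So Theorem~\ref{loc} shows that $\Y^{\leq0}\to\Y$ is a localization, i.e.\ the restriction $\rD(\Y)\to\rD(\Y^{\leq0})$ is fully faithful. Moreover $\Y$ contains $\mod\La$, which generates $\Db(\mod\La)$ as a thick subcategory. Hence the inclusion $\Y\subset\D^b_\dg(\mod\La)$ is a Morita equivalence, and restriction $\rD(\D^b_\dg(\mod\La))\to\rD(\Y)$ is an equivalence. Composing, restriction along $\Y^{\leq0}\to\Y\subset\D^b_\dg(\mod\La)$ is fully faithful, which is condition (2) of Definition~\ref{def}.

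\emph{Global dimension.} I would imitate the proofs of \ref{DFun} and \ref{CT}. Let $X\in\mod H^0\Y$ with presentation $H^0\Y(-,Y_1)\to H^0\Y(-,Y_0)\to X\to0$. Represent the first map by a $0$-cocycle $f\colon Y_1\to Y_0$ and complete it to a triangle $C\to Y_1\to Y_0$ in $\Db(\mod\La)$. As in \ref{DFun}, this gives a $4$-angle
\[ \Y^{\leq0}(-,C)\to\Y^{\leq0}(-,Y_1)\to\Y^{\leq0}(-,Y_0)\to X \]
in $\rD(\Y^{\leq0})$. Here $C$ need not lie in $\Y$, so $\Y^{\leq0}(-,C)$ need not be projective. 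It therefore suffices to show $\pd\Y^{\leq0}(-,C)\leq1$. By Lemma~\ref{exact} this follows once we have a triangle $Y''\to Y'\xrightarrow{g}C$ with $Y',Y''\in\Y$ and $g$ a right $H^0\Y$-approximation; the resulting resolution then has length at most $3$.

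\emph{The main obstacle: constructing $g$.} Write $Y_1=\bigoplus_iA_i[-i]$ and $Y_0=\bigoplus_iB_i[-i]$, with $A_i,B_i\in\mod\La$. The long exact cohomology sequence of the triangle shows that each $H^kC$ is an extension of a submodule $K^k:=\Ker(H^kY_1\to H^kY_0)$ by a quotient $Q^{k-1}:=\Cok(H^{k-1}Y_1\to H^{k-1}Y_0)$.

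My first attempt at $Y'$ is $\bigoplus_k\bigl(Y_1\text{-part}\oplus P^k[-k]\bigr)$, where the $P^k$ are chosen as follows. I would use the composite $Y_1\to C$ given by the rotated triangle (there is a map $Y_0[-1]\to C$, and $Y_1$ maps to $C$ through the cocone structure), plus, in each degree $k$, modules $P^k$ mapping to $C$ so that $H^k$ of the map is onto the relevant pieces.

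The key check is surjectivity of $\Hom(A[i],Y')\to\Hom(A[i],C)$ for every $A\in\mod\La$ and every $i$. The idea is that a morphism $A[i]\to C$ either factors through $Y_0[-1]\to C$, or lifts via the exact sequence $\Hom(A[i],Y_0[-1])\to\Hom(A[i],C)\to\Hom(A[i],Y_1)$ after adding suitable summands. The difficulty is that $\Hom(A[i],C)$ involves higher Ext's, not only cohomology. If the direct construction fails, I would instead produce $g$ by a finite iteration. Since $C$ has bounded amplitude, I would take a right $H^0\Y$-approximation, restricted to the finitely many relevant shifts, and check by a cohomology-length count that its cocone already lies in $\Y$.

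Given such $g$, Lemma~\ref{exact} yields $\pd\Y^{\leq0}(-,C)\leq1$, hence $\pd X\leq3$. Thus $\gldim\Y^{\leq0}\leq3$, and together with the localization part this shows that $\Y$ gives a connective resolution.
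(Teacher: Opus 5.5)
\textbf{There is a gap: your reduction is the paper's, but its key step is not proved.}

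Your structure matches the paper's proof. The localization part uses \ref{loc} together with the Morita equivalence $\Y\subset\D^b_\dg(\mod\La)$. The dimension bound uses the $4$-angle coming from a cocone $C\to Y_1\to Y_0$, plus a triangle $Y''\to Y'\xrightarrow{g}C$ with $Y',Y''\in\Y$ and $g$ a right $H^0\Y$-approximation. The existence of such a $g$ is the entire content of the bound; the paper imports it from \cite[4.2, 4.4]{Ha2}. Neither of your sketches establishes it, for two reasons.

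\emph{Finitely many shifts.} Your claim that only finitely many shifts are relevant ``since $C$ has bounded amplitude'' is false as stated. When $\gldim\La=\infty$, $\Hom_{\Db}(A[-n],C)$ can be nonzero for all $n\gg0$ through higher Ext's. For example, over $k[x]/(x^2)$ one has $\Hom(k[-n],k)=\Ext^n(k,k)\neq0$ for every $n$. Moreover, for representation-infinite $\La$ even a single shift involves infinitely many indecomposables. So you need a real argument that $H^0\Y$ is contravariantly finite at all.

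\emph{Membership of the cocone in $\Y$.} No ``cohomology-length count'' can show that the cocone lies in $\Y$. Any right approximation is surjective on cohomology, because $\La[-n]\in\Y$. This yields short exact sequences $0\to H^nY''\to H^nY'\to H^nC\to0$. But $Y''\in\Y$ means that $Y''$ is formal, and cohomology alone cannot detect that.

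\textbf{How to fill the gap.} Represent $C$, with cohomology in degrees $[a,b]$, by a complex $I$ of finitely generated injectives with $I^n=0$ for $n<a$.
\begin{enumerate}
\item Since $I$ is a bounded below complex of injectives, $\Hom_{\Db}(A[-n],I)$ is computed by chain maps up to homotopy. Such a chain map is just a map $A\to Z^nI$. Hence every morphism $A[-n]\to C$ factors through $\iota_n\colon Z^nI[-n]\to I$, and there are none for $n<a$.
\item For $n>b$, the map $\iota_n$ factors through the subcomplex $(I^b\to\cdots\to I^{n-1}\to Z^nI)$ of $I$. This subcomplex is quasi-isomorphic to $Z^bI[-b]$, so $\iota_n$ factors through $\iota_b$.
\item Therefore $g=(\iota_n)_{a\leq n\leq b}\colon\bigoplus_{n=a}^{b}Z^nI[-n]\to I$ is a right $H^0\Y$-approximation.
\item The map $g$ is a degreewise monomorphism. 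Its cokernel is $\bigoplus_{n=a}^{b-1}B^{n+1}I[-n]$ plus the complex $(I^b/Z^bI\to I^{b+1}\to\cdots)$, which is acyclic. Hence the cocone of $g$ lies in $\Y$.
\end{enumerate}
This works for every object of $\Db(\mod\La)$, not just cocones of maps in $\Y$. So you can even conclude directly from \ref{conn} with $n=1$.
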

\begin{proof}
	Since $\Y$ has shifts, the functor $\Y^{\leq0}\to\Y$ is a localization by \ref{loc}, and the inclusion $\Y\to\D^b_\dg(\mod\La)$ is a Morita functor. Therefore, the composition $\Y^{\leq0}\to\D^b_\dg(\mod\La)$ is a localization. To see $\gldim\Y^{\leq0}\leq3$, let $X\in\mod H^0\Y$ and let $H^0\Y(-,A_1)\to H^0\Y(-,A_0)\to X\to 0$ be a presentation. Represent the first morphism by a one $A_1\to A_0$ in $Z^0\Y$, and extend it to a triangle $L\to A_1\to A_0$ in $\Db(\mod\La)$. Now by \cite[4.2, 4.4]{Ha2}, the subcategory $H^0\Y$ is functorially finite, and there is a triangle $A_3\to A_2\to L$ with $A_2,A_3\in\Y$ such that the second map is a right $H^0\Y$-approximation. It follows that we have a sequence
	\[ \xymatrix@C=3mm@R=1mm{
		\Y^{\leq0}(-,A_3)\ar[rr]&&\Y^{\leq0}(-,A_2)\ar[rr]\ar[dr]&&\Y^{\leq0}(-,A_1)\ar[rr]&&\Y^{\leq0}(-,A_0) \\
		&& & \Y^{\leq0}(-,L)\ar[ur] & && } \]
	whose total is $X$, and therefore $\pd X\leq3$.
	Dually, each $H^0\Y^\op$-module has projective dimension at most $3$.
\end{proof}

We call the truncation $\Y^{\leq0}$ the {\it derived Yoneda category} of $\La$.
We refer to our forthcoming paper for more information. Here we just give some descriptions of derived Yoneda algebras. 
\begin{Ex}
The derived Yoneda algebra of the Dynkin quiver of type $A_2$ is presented by the following (identical) dg path algebras.
\[ \xymatrix@!R=1mm@!C=3mm{
	\ar[r]&3\ar[rr]^-{-1}\ar[dr]&&2\ar[rr]^-{-1}\ar[dr]&&1\ar[rr]^-{-1}\ar[dr]&&3\\
	2\ar[ur]\ar[rr]^-{-1}&&1\ar[ur]\ar[rr]^-{-1}&&3\ar[ur]\ar[rr]^-{-1}&&2\ar[ur]\ar[r]&  }
\qquad
\xymatrix@!R=1mm@!C=3mm{
	&2\ar@<2pt>[dr]^-x\ar@<2pt>[dl]^-{y}&  \\
	1\ar@<2pt>[ur]^-x\ar@<2pt>[rr]^-{y}&&3\ar@<2pt>[ll]^-x\ar@<2pt>[ul]^-{y} } 
\qquad
\xymatrix@R=1mm{
	|x|=0, |y|=-1,\\
	dy=x^2} 
\]

For linearly oriented type $A_3$, the dg quiver is as follows.
\[ \xymatrix@!R=1mm@!C=3mm{
	\ar[r]&6\ar[rr]^-{-1}\ar[dr]&&3\ar[rr]^-{-1}\ar[dr]&&1\ar[rr]^-{-1}\ar[dr]&&4\ar[rr]^-{-1}\ar[dr]&&6\ar[rr]^-{-1}\ar[dr]&&3\\
	5\ar[ur]\ar[rr]^-{-1}\ar[dr]&&2\ar[ur]\ar[rr]^-{-1}\ar[dr]&&5\ar[ur]\ar[rr]^-{-1}\ar[dr]&&2\ar[ur]\ar[rr]^-{-1}\ar[dr]&&5\ar[ur]\ar[rr]^-{-1}\ar[dr]&&2\ar[ur]\ar[dr]\ar[r]&\\
	\ar[r]&1\ar[rr]^-{-1}\ar[ur]&&4\ar[rr]^-{-1}\ar[ur]&&6\ar[rr]^-{-1}\ar[ur]&&3\ar[rr]^-{-1}\ar[ur]&&1\ar[rr]^-{-1}\ar[ur]&&4 } \]
\end{Ex}

%

\thebibliography{99}
\bibitem{AI} T. Aihara and O. Iyama, {Silting mutation in triangulated categories}, J. London Math. Soc. 85 (2012) no.3, 633-668.
\bibitem{Am09} C. Amiot, {Cluster categories for algebras of global dimension 2 and quivers with potentional}, Ann. Inst. Fourier, Grenoble 59, no.6 (2009) 2525-2590.
\bibitem{Au66} M. Auslander, {Coherent functors}, in: Proceedings Conference Categorical Algebra (La Jolla), Springer–Verlag, Berlin–Heidelberg–New York, 1966, 189-231.
\bibitem{Au71} M. Auslander, {Representation dimension of Artin algebras}, Queen Mary College mathematics notes, London, 1971.
\bibitem{Au78} M. Auslander, {Functors and morphisms determined by objects}, in: Representation Theory of Algebras, Lecture Notes in Pure and Applied Mathematics 37, Marcel Dekker, New York, 1978, 1-244.
\bibitem{AR74} M. Auslander and I. Reiten, {Stable equivalence of dualizing $R$-varieties}, Adv. Math. 12 (1974) 306-366.
\bibitem{BHMPT} F. Bauwens, N. Hanihara, R. Marczinzik, M. Plogmann, and J. Thomm, {Spherical modules and the Auslander--Gorenstein condition for Auslander--Yoneda algebras}, arXiv:2606.10901.
\bibitem{BO95} A. Bondal and D. Orlov, {Semiorthogonal decompositions for algebraic varieties}, Preprint MPI 95/15, arXiv:9506012.
\bibitem{BV} A. Bondal and M. Van den Bergh, {Generators and representability of functors in commutative and noncommutative geometry}, Mosc. Math. J. 3 (2003), no. 1, 1-36, 258.
\bibitem{BKR} T. Bridgeland, A. King, and M. Reid, {The McKay correspondence as an equivalence of derived categories}, J. Amer. Math. Soc. 14 (2001), no. 3, 535–554.
\bibitem{BMRRT} A. B. Buan, R. Marsh, M. Reineke, I. Reiten, and G. Todorov, {Tilting theory and cluster combinatorics}, Adv. Math. 204 (2006) 572-618.
\bibitem{Bu21} R. O. Buchweitz, {Maximal Cohen-Macaulay modules and Tate-cohomology over Gorenstein rings}, vol. 262 of Mathematical Surveys and Monographs, American Mathematical Society, Province, RI, (2021).
\bibitem{Chen26} X. Chen, {Exact dg categories I: Foundations}. Adv. Math. 489 (2026), Paper No. 110809, 85 pp.
\bibitem{CA1} S. Fomin and A. Zelevinsky, {Cluster algebras. I. Foundations}, J. Amer. Math. Soc. 15 (2002), no. 2, 497-529.
\bibitem{Ga} P. Gabriel, {Des catégories abéliennes}, Bull. Soc. Math. France, 90 (1962), pp. 323-448.
\bibitem{Ha2} N. Hanihara, {Yoneda algebras and their singularity categories}, Proc. Lond. Math. Soc. (3) 124 (2022), no. 6, 854–898.
\bibitem{Ha4} N. Hanihara, {Morita theorem for hereditary Calabi-Yau categories}, Adv. Math. 395 (2022) 108092.
\bibitem{Ha7} N. Hanihara, {Calabi-Yau completions for roots of dualizing dg bimodules}, arXiv:2412.18753.
\bibitem{HaI} N. Hanihara and O. Iyama, {Enhanced Auslander-Reiten duality and Morita theorem for singularity categories}, arXiv:2209.14090.
\bibitem{HaI2} N. Hanihara and O. Iyama, {Silting correspondences and Calabi-Yau dg algebras}, arXiv:2508.12836.
\bibitem{HaK} N. Hanihara and B. Keller, {Calabi-Yau structures on singularity categories}, in preparation.
\bibitem{Hap} D. Happel, {Triangulated categories in the representation theory of finite dimensional algebras}, London Mathematical Society Lecture Note Series 119, Cambridge University Press, Cambridge, 1988.
\bibitem{Iy03} O. Iyama, {Finiteness of representation dimension}, Proc. Amer. Math. Soc. 131 (2003), no. 4, 1011–1014.
\bibitem{Iy05b} O. Iyama, {$\tau$-categories II: Nakayama pairs and Rejective subcategories}, Algebr. Represent. Theory 8 (2005), no. 4, 449--477.
\bibitem{Iy07a} O. Iyama, {Higher-dimensional Auslander-Reiten theory on maximal orthogonal subcategories}, Adv. Math. 210 (2007) 22-50.
\bibitem{Iy07b} O. Iyama, {Auslander correspondence}, Adv. Math. 210 (2007) 51-82.
\bibitem{IYa1} O. Iyama and D. Yang, {Silting reduction and Calabi-Yau reduction of triangulated categories}, Trans. Amer. Math. Soc. 370 (2018) no.11,  7861-7898.
\bibitem{KY18} M. Kalck and D. Yang, {Relative singularity categories II: DG models}, arXiv:1803.08192.
\bibitem{KY20} M. Kalck and D. Yang, {Relative singularity categories III: Cluster resolutions}, Math. Z. 311 (2025), no. 2, Paper No. 39, 19 pp.
\bibitem{Ke08} B. Keller, {Calabi-Yau triangulated categories}, in: {Trends in representation theory of algebras and related topics}, EMS series of congress reports, European Mathematical Society, Z\"{u}rich, 2008.
\bibitem{Ke11} B. Keller, {Deformed Calabi-Yau completions}, with an appendix by M. Van den Bergh, J. Reine Angew. Math. 654 (2011) 125-180.
\bibitem{KeL} B. Keller and J. Liu, {On Amiot's conjecture}, arXiv:2311.06538.
\bibitem{KV} B. Keller and D. Vossieck, {Sous les cat\'egories d\'eriv\'ees}, C. R. Acad. Sci. Paris Sér. I Math. 305 (6) (1987) 225-228.
\bibitem{Ku08} A. Kuznetsov, {Lefschetz decompositions and categorical resolutions of singularities}, Selecta Math. (N.S.) 13 (2008), no. 4, 661--696.
\bibitem{Mi21} H. Minamoto, {Resolutions and homological dimensions of DG-modules}, Israel J. Math. 245 (2021), no. 1, 409–454.
\bibitem{Mo1} N. Mochizuki, {Auslander correspondence for truncated connective dg algebras via extended module categories}, arXiv:2602.01761.
\bibitem{MoP} N. Mochizuki and M. Plogmann, {On the Auslander-Reiten theory for extended hearts of proper connective dg algebras}, Bull. Lond. Math. Soc. 58 (2026), no. 7, Paper No. e70413, 27 pp.
\bibitem{NP} H. Nakaoka and Y. Palu, {Extriangulated categories, Hovey twin cotorsion pairs and model structures}, Cah. Topol. Géom. Différ. Catég. 60 (2019), no. 2, 117–193.
\bibitem{Ne92} A. Neeman, {The connection between the $K$-theory localization theorem of Thomason, Trobaugh and Yao and the smashing subcategories of Bousfield and Ravenel}, Ann. scient. \'Ec. Norm. Sup. (4) 25 (5) (1992) 547-566.
\bibitem{Ro} R. Rouquier, {Dimensions of triangulated categories}, J. K-Theory, 1 (2008), 193-256.
\bibitem{Rie} C. Riedtmann, {Algebren, Darstellungsk\"ocher, Uberlagerugen und zur\"uck}, Comment. Math. Helvetici 55 (1980) 199-224.
\bibitem{TV} L. de Thanhoffer de Völcsey and M. Van den Bergh, {Explicit models for some stable categories of maximal Cohen-Macaulay modules}, Math. Res. Lett. 23 (2016), no. 5, 1507-1526.
\bibitem{To} R. Tomonaga, {Auslander correspondence for higher stable dg categories and cluster Morita thoery}, arXiv:2608.30740.
\bibitem{VdB04} M. Van den Bergh, {Non-commutative crepant resolutions}, The legacy of Niels Henrik Abel, 749–770, Springer, Berlin, 2004.
\bibitem{VdB04f} M. Van den Bergh, {Three-dimensional flops and noncommutative rings}, Duke Math. J. 122 (2004), no. 3, 423–455.
\bibitem{VdB23} M. Van den Bergh, {Noncommutative crepant resolutions, an overview}, ICM—International Congress of Mathematicians. Vol. II. Plenary lectures, 1354–1391, EMS Press, Berlin, 2023.
\bibitem{We16} M. Wemyss, {Noncommutative resolutions}, in: {Noncommutative algebraic geometry}, 239–306, Math. Sci. Res. Inst. Publ., 64, Cambridge Univ. Press, New York, 2016.
\bibitem{We18} M. Wemyss, {Flops and clusters in the homological minimal model programme}, Invent. Math. 211 (2018), no. 2, 435–521.
\bibitem{Wu23} Y. Wu, {Relative cluster categories and Higgs categories}, Adv. Math. 424 (2023), Paper No. 109040, 112 pp.
\bibitem{Ye13} A. Yekutieli, {Duality and tilting for commutative DG rings}, arXiv:1312.6411.
\end{document}